\documentclass[12pt]{amsart}
\usepackage{comment}
\usepackage[utf8]{inputenc} 
\usepackage[T1]{fontenc} 
\usepackage{geometry} 
\geometry{letterpaper}                   % ... or a4paper or a5paper or ... 
\geometry{portrait}

\geometry{left=1in,right=1in,top=1in,bottom=1in}          

\usepackage{upgreek}
\usepackage{amsfonts}
\usepackage{amssymb}
\usepackage{amsthm}
\usepackage{amsmath}
\usepackage{amscd}
\usepackage[shortlabels]{enumitem}
\usepackage{mathrsfs}
\usepackage{tikz}
\usetikzlibrary{calc,arrows,decorations.pathreplacing}
\usepackage{nicefrac, xfrac}
\usepackage{mathtools}
\usepackage[bottom]{footmisc}
\usepackage[mathcal]{euscript}
\usepackage{float}

\usepackage[pagebackref, colorlinks = true,
linkcolor = red,
urlcolor  = blue,
citecolor = blue,
anchorcolor = blue]{hyperref}
\hypersetup{pdftitle={\@title},pdfauthor={\@author}}

\theoremstyle{plain}

\newtheorem{theorem}{Theorem}[section]
\newtheorem{corollary}[theorem]{Corollary}
\newtheorem{lemma}[theorem]{Lemma}

\newtheorem{proposition}[theorem]{Proposition}

\newtheorem{claim}{Claim}[theorem]

\theoremstyle{definition}
\newtheorem{definition}[theorem]{Definition}
\newtheorem{observation}[theorem]{Observation}
\newtheorem{remark}[theorem]{Remark}

\newtheorem*{theorem*}{Theorem}

\newenvironment{subproof}[1][\proofname]{%
  \begin{proof}[#1]%
}{%
  \end{proof}%
}

\renewcommand{\phi}{\varphi}
\renewcommand{\epsilon}{\varepsilon}

\newcommand{\sub}{\ensuremath{ \subseteq}} % A shortcut for subset notation
\newcommand{\bus}{\ensuremath{ \supseteq}} % A shortcut for reverse subset notation
\newcommand{\union}{\ensuremath{\bigcup}} % unions with limits
 % intersections with limits
\newcommand{\set}[1]{\ensuremath{ \left\{#1\right\} }} % set notation
\newcommand{\norm}[1]{\ensuremath{ \left\lVert#1\right\rVert }} % norm 
 % triple barred norm 
\newcommand{\absval}[1]{\ensuremath{ \left\lvert#1\right\rvert }} % absolute value bars
\newcommand{\spot}{\ensuremath{ \makebox[1ex]{\textbf{$\cdot$}} }}

\newcommand{\ol}[1]{\overline{#1}}% overline 
% underline 
 % mathscript shortcup
 
\newcommand{\bs}{\ensuremath{\backslash}}

\newcommand{\lt}{\ensuremath{\left}}
\newcommand{\rt}{\ensuremath{\right}}

\newcommand{\Om}{\ensuremath{\Omega}}
\newcommand{\Lm}{\ensuremath{\Lambda}}
\newcommand{\Gm}{\ensuremath{\Gamma}}

\newcommand{\Sg}{\ensuremath{\Sigma}}

\newcommand{\Dl}{\ensuremath{\Delta}}

\newcommand{\Ta}{\ensuremath{\Theta}}

\newcommand{\om}{\ensuremath{\omega}}
\newcommand{\lm}{\ensuremath{\lambda}}
\newcommand{\gm}{\ensuremath{\gamma}} \newcommand{\hgm}{\ensuremath{\hat\gamma}} 
\newcommand{\al}{\ensuremath{\alpha}} \newcommand{\hal}{\ensuremath{\hat\alpha}}
\newcommand{\bt}{\ensuremath{\beta}}  
\newcommand{\sg}{\ensuremath{\sigma}}
\newcommand{\ep}{\ensuremath{\epsilon}}
\newcommand{\dl}{\ensuremath{\delta}}
\newcommand{\kp}{\ensuremath{\kappa}}
\newcommand{\zt}{\ensuremath{\zeta}}

\newcommand{\vta}{\ensuremath{\vartheta}}
\newcommand{\vkp}{\ensuremath{\varkappa}}
\newcommand{\vrho}{\ensuremath{\varrho}}

\DeclareSymbolFont{bbold}{U}{bbold}{m}{n}
\DeclareSymbolFontAlphabet{\mathbbold}{bbold}

\newcommand{\ind}{\ensuremath{\mathbbold{1}}}
\DeclareMathOperator*{\esssup}{ess\sup}
\DeclareMathOperator*{\essinf}{ess\inf}

\DeclareMathOperator{\diam}{diam}

\DeclareMathOperator{\intr}{Int}

\newcommand{\cA}{\ensuremath{\mathcal{A}}}
\newcommand{\cB}{\ensuremath{\mathcal{B}}}
\newcommand{\cC}{\ensuremath{\mathcal{C}}}
\newcommand{\cD}{\ensuremath{\mathcal{D}}}
\newcommand{\cE}{\ensuremath{\mathcal{E}}}
\newcommand{\cF}{\ensuremath{\mathcal{F}}}

\newcommand{\cH}{\ensuremath{\mathcal{H}}}
\newcommand{\cI}{\ensuremath{\mathcal{I}}}
\newcommand{\cJ}{\ensuremath{\mathcal{J}}}

\newcommand{\cL}{\ensuremath{\mathcal{L}}}
\newcommand{\cM}{\ensuremath{\mathcal{M}}}

\newcommand{\cP}{\ensuremath{\mathcal{P}}}
\newcommand{\cQ}{\ensuremath{\mathcal{Q}}}

\newcommand{\cS}{\ensuremath{\mathcal{S}}}

\newcommand{\cU}{\ensuremath{\mathcal{U}}}
\newcommand{\cV}{\ensuremath{\mathcal{V}}}

\newcommand{\cZ}{\ensuremath{\mathcal{Z}}}

\newcommand{\sB}{\ensuremath{\mathscr{B}}}
\newcommand{\sC}{\ensuremath{\mathscr{C}}} \newcommand{\fC}{\ensuremath{\mathfrak{C}}}

\newcommand{\sF}{\ensuremath{\mathscr{F}}}

% 
% 
 % Gives you a shortcut for writing the blackboard C for the complex numbers - \CC

\newcommand{\NN}{\ensuremath{\mathbb N}}%Natural Numbers

% Gives you a shortcut for writing the blackboard Q for the rational numbers - \QQ
\newcommand{\RR}{\ensuremath{\mathbb R}}% Real Numbers

\newcommand{\ZZ}{\ensuremath{\mathbb Z}} % Gives you a shortcut for writing the blackboard Z for the integer numbers - \ZZ

\newcommand{\ceil}[1]{\ensuremath{\left\lceil#1 \right\rceil}}
\newcommand{\floor}[1]{\ensuremath{\left\lfloor#1 \right\rfloor}}

\newcommand{\maeom}{$m$ a.e. $\om\in\Om$}
\newcommand{\flag}[1]{\textbf{***[#1]***}}% comment
 % to be sorted out...

\def\lra{\longrightarrow}
\def\var{\text{{\rm var}}}
\def\BV{\text{{\rm BV}}}

\def\lt{\left}
\def\rt{\right}

\providecommand{\phantomsection}{}% In case hyperref is not loaded
\AtBeginDocument{\let\textlabel\label}% http://tex.stackexchange.com/q/9939/5764
\makeatletter
\newcommand{\mylabel}[2]{\raisebox{.7\normalbaselineskip}{\phantomsection}(#1)%
	\def\@currentlabel{#1}\textlabel{#2}}
\makeatother

\renewcommand\~{\tilde}% adds a tilde to something 
\newcommand{\hcL}{\ensuremath{\hat\cL}} 

\allowdisplaybreaks

\numberwithin{equation}{section}
\title[]{Thermodynamic Formalism for Random Weighted Covering Systems}
%\date{\today}

\author{Jason Atnip}
\address{School of Mathematics and Statistics, University of New South Wales, Sydney, NSW 2052, Australia}
\email{\href{j.atnip@unsw.edu.au}{j.atnip@unsw.edu.au} }

\author{Gary Froyland}
\address{School of Mathematics and Statistics, University of New South Wales, Sydney, NSW 2052, Australia}
\email{\href{g.froyland@unsw.edu.au}{g.froyland@unsw.edu.au} }

\author{Cecilia Gonz\'alez-Tokman}
\address{School of Mathematics and Physics, The University of Queensland, St Lucia, QLD 4072, Australia}
\email{\href{cecilia.gt@uq.edu.au}{cecilia.gt@uq.edu.au} }

\author{Sandro Vaienti}
\address{Aix Marseille Université, Université de Toulon, CNRS, CPT, 13009 Marseille, France}
\email{\href{vaienti@cpt.univ-mrs.fr}{vaienti@cpt.univ-mrs.fr} }

\begin{document}
\begin{abstract}
	We develop for the first time a quenched thermodynamic formalism for random dynamical systems generated by countably branched, piecewise-monotone mappings of the interval that satisfy a random covering condition.
	Given a random contracting potential $\varphi$ (in the sense of Liverani-Saussol-Vaienti), we prove there exists a unique random conformal measure $\nu_\varphi$ and unique random equilibrium state $\mu_\varphi$.
	Further, we prove quasi-compactness of the associated transfer operator cocycle and exponential decay of correlations for $\mu_\varphi$.
	Our random driving is generated by an invertible, ergodic, measure-preserving transformation $\sigma$ on a probability space $(\Omega,\mathscr{F},m)$;  for each $\omega\in\Omega$ we associate a piecewise-monotone, surjective map $T_\omega:I\to I$.
	We consider general potentials $\varphi_\omega:I\to\mathbb R\cup\{-\infty\}$ such that the weight function $g_\omega=e^{\varphi_\omega}$ is of bounded variation.
	We provide several examples of our general theory. 
	In particular, our results apply to new examples of linear and non-linear systems including random $\beta$-transformations, randomly translated random $\beta$-transformations, countably branched random Gauss-Renyi maps, random non-uniformly expanding maps (such as intermittent maps and maps with contracting branches) composed with expanding maps, and a large class of random Lasota-Yorke maps.
\end{abstract}

%\newpage

\maketitle
\tableofcontents

\section{Introduction}

Deterministic transitive expansive dynamics $T:[0,1]\to [0,1]$ or $T:S^1\to S^1$ have a well-developed ``thermodynamic formalism'', whereby the transfer operator with potential $\varphi$ of sufficient regularity has a unique invariant probability measure $\mu$ that is absolutely continuous with respect to a conformal measure, and $\mu$ arises as a maximizer of the sum of the metric entropy and the integral of the potential \cite{bowen75,Ruelle_Thermodynamicformalism_1978} (shifts of finite type, smooth dynamics), \cite{denker1990,liverani_conformal_1998,buzzi-sarig03} (piecewise smooth dynamics), \cite{Ruelle_Thermodynamicformalism_1978,przytycki_conformal_2010} (distance expanding mappings), \cite{mauldin_graph_2003,buzzi-sarig03,arbieto-matheus06,varandas-viana10,castro-varandas13} (countable shifts, non-uniformly expanding dynamics).
%Some of these results have been extended to other potentials $\varphi$ (\flag{see LSV for some background/citations}).

Thermodynamic formalism for random dynamical systems has been exclusively restricted to the case of continuous random dynamics and maps that can be coded as random shifts.
Kifer \cite{kifer92}
%...\flag{``Equil states for expanding random transf'' in Random \& Comput. Dynam. Vol 1...I think I have it.} 
proved the existence and uniqueness of equilibrium states for smooth expanding dynamics;  Khanin and Kifer \cite{khanin-kifer96} generalized this to smooth random dynamics that are expanding on average.
In parallel work, Bogensch\"utz \cite{Bogenschutz1992} proved a variational principle for topological random dynamical systems (the domain $X$ is compact and each $T_\omega:X\to X$ is a homeomorphism), extending a result of Ledrappier and Walters \cite{LW77} to allow random potentials with measurable dependence $\omega\mapsto \varphi_\omega$.
For random subshifts of finite type, Bogensch\"utz and Gundlach \cite{Bogenschutz_RuelleTransferOperator_1995a} and Gundlach \cite{gundlach96} proved uniqueness of (random) relative equilibrium states.
Thermodynamic formalism for countable random Markov shifts has been investigated in various settings by Denker, Kifer, and Stadlbauer \cite{denker-etal08}, Stadlbauer \cite{stadlbauer_random_2010,stadlbauer_coupling_2017}, Roy and Urba\'nski \cite{roy_random_2011}, and Mayer and Urba\'nski \cite{mayer_countable_2015}.
Mayer, Skorulski, and Urba\'nski developed distance expanding random mappings in \cite{mayer_distance_2011}, which generalize the works of \cite{kifer92} and \cite{Bogenschutz_RuelleTransferOperator_1995a}. Simmons and Urba\'nski established a variational principle and the existence of a unique relative equilibrium state for these maps in \cite{simmons_relative_2013}. %\flag{finish summary of article.} 
In the recent articles \cite{stadlbauer_quenched_2020} and \cite{stadlbauer_thermodynamic_2020}, the authors considered sequential dynamics given by composition of transfer operators for continuous surjective expanding and non-uniform expanding maps; this allowed them to construct random equilibrium states, and to establish statistical laws.

%Denker/Gordin/Heinemann \cite{denker-etal02}...

%The aim of this paper is to generalise \cite{liverani_conformal_1998} to the much more challenging random dynamics setting.
In the present paper, for the first time, we develop a thermodynamic formalism in the quenched random setting for the difficult situation of discontinuous fiber maps. Furthermore, also for the first time we consider the case of countably branched random maps which cannot be coded by a random shift.
We work in the random setting in which one has a complete probability space $(\Omega,\sF,m)$ and base dynamical system $\sigma:\Omega\to\Omega$, which is merely assumed to be invertible, $m$-preserving, and ergodic.
For each base configuration $\omega\in\Omega$, we associate a corresponding surjective piecewise monotonic map (possibly with countably many branches) $T_\omega:[0,1]\to[0,1]$. 
Our random dynamical system on the interval is given by a map cocycle $T^n_\omega:=T_{\sigma^{n-1}(\omega)}\circ\cdots\circ T_{\sigma(\omega)}\circ T_\omega$, which satisfies a random covering condition;  we have no Markovian assumptions. 
Indeed, our maps may have discontinuities and do not necessarily have full branches.
The corresponding transfer operator cocycles 
$\cL^n_\omega:=\cL_{\sigma^{n-1}(\omega)}\circ\cdots\circ \cL_{\sigma(\omega)}\circ \cL_\omega$ 
are weighted using random versions of the contracting potentials introduced in \cite{liverani_conformal_1998}.

A first step in producing a thermodynamic formalism in this random setting is the construction of a conformal measure.
In the deterministic setting Liverani, Saussol, and Vaienti \cite{liverani_conformal_1998} constructed the conformal measure of a piecewise monotonic interval map with a contracting potential $\phi$ via forward iteration of the transfer operator.
This forward iteration is convenient for our random dynamics, and many of the key contributions of our work are the extension of the methodology of \cite{liverani_conformal_1998} to general random piecewise expanding dynamics.
In Theorem~\ref{main thm: summary existence of measures and density} we obtain existence of a fiber-wise collection of conformal measures $(\nu_\omega)_{\omega\in\Omega}$ satisfying $\mathcal{L}_\omega^*\nu_{\sigma(\omega)}=\lambda_{\omega}\nu_{\omega}$ from following the approach of  \cite{liverani_conformal_1998} to extend the functional we obtain via fixed point arguments, similar to those of \cite{hofbauer_equilibrium_1982}  and \cite[Chapter 3]{mayer_distance_2011}, to a random non-atomic Borel probability measure. %However, unlike the setting of \cite{hofbauer_equilibrium_1982,mayer_distance_2011}, we work in the space of BV functions to deal with discontinuities, so we must work harder use the approach of  
Existence and uniqueness of a family of invariant functions $(q_\omega)_{\omega\in\Omega}$ satisfying $\mathcal{L}_\omega q_{\omega}=\lambda_{\omega}q_{\sigma(\omega)}$, the corresponding invariant measures $\mu_\omega=q_\omega\nu_\omega$, and the uniqueness of the $\nu_\omega$, will follow from random cone contraction arguments, which in turn rely on random Lasota-Yorke inequalities.
Because our fiber maps $T_\omega$ are nonsmooth and may have discontinuities, care is needed when developing the random Lasota-Yorke inequalities;  we follow the approach of Buzzi \cite{buzzi_exponential_1999}, whereby we construct a large measure set of fibers for which we obtain cone contraction in a fixed, uniformly bounded time and then show that the ``bad'' fibers do not occur often enough to distort the obtained cone contraction too much.
We note that if all the maps preserve a common cone, one could attempt to use the recent approach of \cite{Horan2019}.

%================================================

%Decay of correlations \flag{Gupta/Ott/Torok \cite{gupta-etal13}, based on Saussol \cite{saussol00}}, Pressure, Variational Principle...

From the random cone contraction arguments we immediately obtain, in Theorems~\ref{main thm: summary quasicompactness} and \ref{main thm: decay of corr}, an exponential convergence of the sequence $(\lm_{\sg^{-n}(\om)}^n)^{-1}\cL_{\sg^{-n}(\om)}^n\ind$ to the invariant density $q_\om$ as well as a fiberwise exponential decay of correlations with respect to the invariant measure $\mu_\om$ and the ergodicity of the global random measure $\mu$.
Following \cite{simmons_relative_2013}, we define the expected topological pressure of the random contracting potential $\phi=(\phi_\om)_{\om\in\Om}$ by $\cE P(\phi)=\int_\Om \log \lm_\om \, dm(\om)$, see also \cite{atnip_critically_2020,mayer_distance_2011,urbanski_random_2018}. We then prove, in Theorem~\ref{main thm: eq states}, a variational principle for the expected pressure, a new result for infinitely branched random maps, and then show that the $T$-invariant random measure $\mu$ is in fact the unique relative equilibrium state for the random potential $\phi$, where $T:\Om\times I\to \Om\times I$ is the induced skew product map.

%BG ``Ruelle TO'' p414 for background references (this paper handles random SFTs).
%Gundlach ``Thermo formalism of SFT'' HOW IS THIS DIFFERENT TO THE PREVIOUS PAPER?
%Bogenschutz ``Entropy, pressure,... of RDS'' HOW IS THIS DIFFERENT?

%Denker/Gordin \cite{denker_gibbs_1999}...

%Denker/Kifer/Statlbauer \cite{denker-etal08}...

%================================================

Our main results are summarized by the following simplified theorem. For a complete statement of our results see Section~\ref{sec:main results}.
%\begingroup
%\setcounter{tmp}{\value{theo}}% store current value of theorem counter
%\setcounter{theo}{0} %assign desired value to theorem counter
%\renewcommand\thetheo{\Alph{theo}}% locally redefine the representation of the theorem counter
\begin{theorem*}
		Given a random weighted covering system $(\Om,\sg,m,I,(T_\om)_{\om\in\Om}, (\phi_\om)_{\om\in\Om})$ (see Definition~\ref{def: random weighted covering system}), the following hold. 
		\begin{enumerate}
			\item There exists a unique random probability measure $\nu=(\nu_\om)_{\om\in\Om}$ such that for each function $f$ of bounded variation 
			\begin{align*}
			\nu_{\sg(\om)}(\cL_\om f_\om)=\lm_\om\nu_\om(f_\om),
			\end{align*}
			where 
			\begin{align*}
			\lm_\om:=\nu_{\sg(\om)}(\cL_\om\ind_\om).
			\end{align*} 	
			\item There exists a unique, modulo $\nu$, strictly positive random function $q=(q_\om)_{\om\in\Om}$ of bounded variation such that $\nu_\om(q_\om)=1$ and 
			\begin{align*}
			\cL_\om q_\om=\lm_\om q_{\sg(\om)}
			\end{align*}
			for $m$-a.e. $\om\in\Om$.
			\item The random measure $\mu$, given by $\mu_\om=q_\om\nu_\om$, is a $T$-invariant and ergodic random probability measure.
			Furthermore, the random measure $\mu=(\mu_\om)_{\om\in\Om}$ exhibits a quenched (forward and backward) exponential decay of correlations and is the unique relative equilibrium state for the random contracting potential $\phi=(\phi_\om)_{\om\in\Om}$. 
		\end{enumerate}
\end{theorem*}
%\endgroup
Our results can treat new classes of random dynamical systems, including random $\bt$-transformations (with no upper bound on the number of branches), randomly translated $\bt$-transformations, infinitely branched random Gauss-Renyi maps, random non-uniformly expanding maps which contain an indifferent fixed point or contracting branches composed with expanding maps, and a very broad class of random Lasota-Yorke maps.

An outline of the paper is as follows.
Formal definitions, assumptions, and other properties of our random systems are described in Section \ref{sec:setup}, where we also state our main results.
In Section~\ref{sec:conformal} we prove the existence of a non-atomic random conformal measure.
Section \ref{sec:LY} develops random Lasota-Yorke inequalities for our random contracting potentials in terms of variation and the random conformal measure and in Section \ref{sec:cones} we construct the corresponding random cones.
Section~\ref{sec:good} proves that for a large ``Good'' subset of $\Omega_G\subset\Omega$, a constant number of iterates of the transfer operator beginning at $\omega\in\Omega_G$ yields a cone contraction.
The remaining ``Bad'' $\omega$-fibers are dealt with in Section \ref{sec:bad}.
Using Hilbert metric contraction arguments, Section~\ref{sec:invariant} brings together the estimates from Sections~\ref{sec:LY}, \ref{sec:cones}, \ref{sec:good}, and \ref{sec:bad} to prove our main technical lemma (Lemma~\ref{lem: exp conv in C+ cone}), which is then used to prove
(i) the existence of a random invariant density and (ii) that convergence to this invariant density occurs at an exponential rate along a subsequence.
%, and (iii) the existence of a unique random invariant measures.
Section~\ref{sec:randommeasures} verifies that the random families of conformal measures, invariant densities, and invariant measures are indeed measurable with respect to $\omega$. We also prove the uniqueness of the invariant density found in Section~\ref{sec:invariant}. 
In Section~\ref{sec:exp pres} we utilize the measurability produced in Section~\ref{sec:randommeasures} to introduce the expected pressure and then give more refined versions of the exponential convergence results of Section~\ref{sec:invariant} as well as prove the uniqueness of the random conformal and invariant measures.
In Section~\ref{sec:dec cor} we use the results of Section~\ref{sec:exp pres} to prove an exponential decay of correlations for the random invariant measures from Section~\ref{sec:randommeasures} in both the forward and backward directions. In Section~\ref{sec:equil} we prove a variational principle and show that the unique invariant measure established in Section~\ref{sec:randommeasures} is in fact a relative equilibrium state. 
Finally, in Section~\ref{sec:examples}, we apply our results to new classes of random dynamical systems with detailed calculations.

\section{Setup}\label{sec:setup}
We begin with a complete probability space $(\Om,\sF,m)$ and suppose that $\sg:\Om\to\Om$ is an ergodic, invertible map, which preserves the measure $m$, i.e. 
\begin{align*}
m\circ\sg^{-1}=m.
\end{align*}
%We also assume that $\sg^{-1}$ is measurable.
Let $I$ denote a compact interval in $\RR$,
\footnote{Our theory works equally well if $I$ is taken to be an uncountable, totally ordered, order-complete set as in \cite{liverani_conformal_1998}. In this setting the order topology makes $I$ compact. We choose to work under the assumption that $I$ is a compact interval in $\RR$ for ease of exposition.}
and for each $\om\in \Om$ we consider a map 
\begin{align*}
	T_\om:I \lra I
\end{align*}
such that there exists a countable partition $\cZ_\om^*$ of $I$ such that each $Z\in\cZ_\om^*$ satisfies the following:
\begin{enumerate}[(i)]
	\item $T_\om:I \to I$ is onto, 
	\item $T_\om(Z)$ is an interval,
	\item $T_\om\rvert_Z$ is continuous and strictly monotone. 
\end{enumerate}
Let $D_\om$ be the set of discontinuities of $T_\om$, i.e. the collection of all endpoints of intervals $Z\in\cZ_\om^*$. Denote by $\cS_\om$ the singular set of $T_\om$ which is defined by 
\begin{align}\label{eq: def of sing set}
	%\cS_\om:=\union_{k\geq 0}T_{\om}^{-k}\lt(\union_{j\geq 0}T_{\sg^{k-j}(\om)}^{j}(D_{\sg^{k-j}(\om)})\rt).
	%\\\flag{or }\\ 
	\cS_\om:=\union_{k\geq 0}T_\om^{-k}(D_{\sg^k(\om)})
	%\cS_\om:=\union_{n\geq 0}T_{\sg^{-n}(\om)}^n(D_{\sg^{-n}(\om)})\cup \union_{n\geq 1}T_\om^{-n}(D_{\sg^n(\om)}).
\end{align}
Then $\cS_\om$ is clearly backward $T_\om$ invariant in the sense that 
\begin{align}\label{def: cS_om}
	T_\om^{-1}(\cS_{\sg(\om)})\sub\cS_\om.
\end{align}
Notice that for each $\om\in\Om$, $\cS_\om$ is countable. 
For each $\om\in\Om$ let 
$$
	X_\om:=I\bs \cS_\om,
$$
which naturally inherits the subspace topology of $I$. 
In the sequel, by an interval $J\sub X_\om$, we of course mean $J=\~J\cap X_\om$ for some interval $\~J\sub I$. 
By virtue of \eqref{def: cS_om}, we have that 
\begin{align*}
	T_\om(X_\om)\sub X_{\sg(\om)}. 
\end{align*}
In what follows we will consider the dynamics of the maps
$$
	T_\om\rvert_{X_\om}: X_\om\lra X_{\sg(\om)}.
$$ 
We note that the maps $T_\om$ are continuous with respect to the topology on $X_\om$. 
We consider the global dynamics of the map $T:\Om\times I \to \Om \times I$ given by
$$
	T(\om,x)=(\sg(\om),T_\om(x)).
$$
Letting
$$
	\cJ:=\union_{\om\in\Om}\{\om\}\times X_\om,
$$
we may also consider the global dynamics $T\rvert_\cJ:\cJ\to\cJ$.
For each $\om\in\Om$ we let 
\begin{align*}
	\cZ_\om^{(1)}:=\set{Z\cap X_\om: Z\in\cZ_\om^*}.
\end{align*}
Now, for $ n>1$ let 
\begin{align*}
	\cZ_\om^{(n)}:=\bigvee_{j=0}^{n-1}T_\om^{-j}\lt(\cZ_{\sg^j(\om)}^{(1)}\rt).
\end{align*}
Then $\cZ_\om^{(n)}$ consists of countably many non-degenerate subintervals which are open with respect to $X_\om$. Now, given $Z\in\cZ_\om^{(n)}$, we denote by
$$	
	T_{\om,Z}^{-n}:T_\om^n(Z)\lra Z
$$ 
the inverse branch of $T_\om^n$ which takes $T_\om^n(x)$ to $x$ for each $x\in Z$.
We will assume that the partitions $\cZ_\om^*$ are generating, i.e. 
\begin{align}
	\bigvee_{n=1}^\infty \cZ_\om^{(n)}=\sB,
	\label{cond GP}\tag{GP}
\end{align}
where $\sB=\sB(I)$ denotes the Borel $\sg$-algebra of $I$. 

Let $\cB(I)$ denote the set of all bounded real-valued functions on $I$ and for each $f\in\cB(I)$ and each $A\sub I$ let
\begin{align*}
	\var_A(f):=\sup\set{\sum_{j=0}^{k-1}\absval{f(x_{j+1})-f(x_j)}: x_0<x_1<\dots x_k, \, x_j\in A \text{ for all } 0\leq j\leq k},
\end{align*}  
where the supremum is taken over all finite sets $\set{x_j}\sub A$,
denote the variation of $f$ over $A$. We let 
\begin{align*}
	\BV(I):=\set{f\in\cB(I): \var_I(f)<\infty}
\end{align*}
denote the set of functions of bounded variation on $I$. Let $\norm{f}_\infty$ and 
$$
\norm{f}_\BV:=\var(f)+\norm{f}_\infty
$$ 
be norms on the respective Banach spaces $\cB(I)$ and $\BV(I)$. 

Let $\cB(X_\om)$ be the set of all bounded real-valued functions on $X_\om$ and let $\BV(X_\om)$ be the set of all functions of bounded variation on $X_\om$, i.e. all functions $f\in\cB(X_\om)$ such that $\var_{X_\om}(f)<\infty$. 
For a function $f:\Om\times I\to\RR$ we let $f_\om:=f\rvert_{\set{\om}\times I}:I\to\RR$ for each $\om\in\Om$. In particular, we have that 
$$
	\ind_\om:=\ind_{X_\om}.	
$$
Note that if $f\in\cB(I)$, then $f_\om\in\cB(X_\om)$ and if $f\in\cB(I)$ then $f_\om\in\BV(X_\om)$ with $\var_{X_\om}(f)\leq \var_I(f)$. For this reason we will generally write $\var(f_\om)$, suppressing the set dependence, when dealing with the variation, either on the entire interval $I$ or fiber space $X_\om$, unless more care is needed.
\begin{definition}\label{def: random bounded and BV}
	We say that a function $f:\Om\times I\to\RR$ is \textit{random bounded} if 
\begin{enumerate}[(i)]
	\item $f_\om\in\cB(I)$ for each $\om\in\Om$, 
	\item for each $x\in I$ the function $\Om\ni\om\mapsto f_\om(x)$ is measurable, 
	\item the function $\Om\ni\om\mapsto\norm{f_\om}_\infty$ is measurable. 
\end{enumerate}
Let $\cB_\Om(I)$ denote the collection of all random bounded functions on $\Om\times I$, and let 
$$
	\cB_\Om^1(I)=\set{f\in\cB_\Om: \norm{f_\om}_\infty\in L^1_m(\Om)}.
$$ 
The set $\cB_\Om^1(I)$ becomes a Banach space when taken together with norm  
$$
	|f|_\infty:=
	\int_\Om \norm{f_\om}_\infty\, dm(\om).
$$
We say that a function $f\in\cB_\Om(I)$ is of \textit{random bounded variation} if $f_\om\in\BV(I)$ for each $\om\in\Om$.
\begin{comment}
\begin{enumerate}[(i)]
	\item $f_\om\in\BV(I)$ for each $\om\in\Om$, 
	\item the function $\Om\ni\om\mapsto\var(f_\om)$ is measurable. 
\end{enumerate}

content...
\end{comment}
We let $\BV_\Om(I)$ denote the set of all random bounded variation functions and let 
\begin{align}\label{eq: def of B_Om^l}
	\BV_\Om^l(I):=\set{f\in\BV_\Om(I): \norm{f_\om}_\infty, \log\var(f_\om), \log\inf|f_\om|\in L^1_m(\Om)}.
	%Edit: Redefined BV_Om^1 to BV_Om^l with log integrability
	%\norm{f_\om}_\BV\in L^1_m(\Om)}.
\end{align} 
\begin{comment}Edit: BV_Om^l is no longer a Banach space so this norm was removed 
The set $\BV_\Om^1(I)$ of all such functions becomes a Banach space when endowed with the norm 
$$
	|f|_\BV:=
	\int_\Om \norm{f_\om}_\BV\, dm(\om). 
$$

content...
\end{comment}
\end{definition}
For functions $f:\Om\times I\to\RR$ and $F:\Om\to\RR$ we let 
\begin{align*}
	S_{n,T}(f_\om):=\sum_{j=0}^{n-1}f_{\sg^j(\om)}\circ T_\om^j
	\quad\text{ and }\quad
	S_{n,\sg}(F):=\sum_{j=0}^{n-1}F\circ\sg^j
\end{align*}
denote the Birkhoff sums of $f$ and $F$ with respect to $T$ and $\sg$ respectively. 
We will consider a potential of the form 
$\phi:\Om\times I\to\RR\cup\set{-\infty}$ and for each $n\geq 1$ we consider the weight $g^{(n)}:\Om\times I\to\RR$ whose disintegrations are given by
\begin{align}\label{def: formula for g_om^n}
	g_\om^{(n)}:=\exp(S_{n,T}(\phi_\om))
\end{align} 
for each $\om\in\Om$. 
We define the (Perron-Frobenius) transfer operator, $\cL_{\phi,\om}:\cB(X_\om)\to \cB(X_{\sg(\om)})$, with respect to the potential $\phi:\Om\times I\to\RR\cup\set{-\infty}$, by 
\begin{align*}
	\cL_{\phi,\om}(f)(x)=\cL_\om (f)(x):=\sum_{y\in T_\om^{-1}(x)}g_\om^{(1)}(y)f(y); \quad 
	f\in \cB(X_\om), \
	x\in X_{\sg(\om)}.
\end{align*} 
We define the global operator $\cL:\cB_\Om(I) \to\cB_\Om(I) $ by 
\begin{align*}
	(\cL f)_\om: =\cL_{\sg^{-1}(\om)} f_{\sg^{-1}(\om)}.
\end{align*}
Inducting on $n$ gives that the iterates $\cL_\om^n:\cB(X_\om)\to \cB(X_{\sg^{n}(\om)})$ are given by 
\begin{align}\label{def: formula for L_om^n}
	\cL_{\phi,\om}^n(f)(x)
	&=(\cL_{\sg^{n-1}(\om)} \circ\dots\circ\cL_\om)(f)(x)
	\nonumber\\
	&=
	\sum_{y\in T_\om^{-n}(x)}g_\om^{(n)}(y)f(y); \quad 
	f\in \cB(X_\om), \,
	x\in X_{\sg^{n}(\om)}.
\end{align}
For each $\om\in\Om$ we let $\cB^*(X_\om)$ and $\BV^*(X_\om)$ denote the respective dual spaces of $\cB(X_\om)$ and $\BV(X_\om)$. We let $\cL_\om^*:\cB^*(X_{\sg(\om)})\to\cB^*(X_\om)$ be the dual transfer operator.

To ensure that these operators $\cL_\om$ are well defined and that the sum in \eqref{def: formula for L_om^n} does in fact converge, we will need some additional assumptions on the potential $\phi$. 
\begin{definition}\label{def: summable potential}
	We will say that a potential $\phi:\Om\times I\to\RR\cup\set{-\infty}$ is \textit{summable} if for $m$-a.e. $\om\in\Om$ we have 
	\begin{flalign} 
		& \inf g_\om^{(1)}\rvert_Z>0 \text{ for all } Z\in\cZ_\om^{(1)},
		\label{cond SP1} \tag{SP1} 	
		&\\
		& g_\om^{(1)}\in\BV(I), 
		\label{cond SP2} \tag{SP2} 
		\\
		& S_\om^{(1)}:=\sum_{Z\in\cZ_\om^*}\sup_Z g_\om^{(1)}<\infty.
		\label{cond SP3} \tag{SP3}
	\end{flalign}
\end{definition}
\begin{remark}\label{rem: SP1 - SP3 hold when finite}
	Note that if $\#\cZ_\om^{(1)}<\infty$ and $\phi_\om\in\BV(I)$ for each $\om\in\Om$ then \eqref{cond SP1}-\eqref{cond SP3} are immediate. Furthermore, as $\phi_\om\in\cB(I)$ we would also have that $\inf g_\om^{(n)}>0$ for each $\om\in\Om$ and $n\in\NN$.
\end{remark}
Note that \eqref{cond SP3} implies that the transfer operator $\cL_\om:\cB(X_\om)\to\cB(X_{\sg(\om)})$ is well defined. The following lemma shows that the operator $\cL_\om^n:\cB(X_\om)\to\cB(X_{\sg^n(\om)})$ is well defined for all $n\geq 1$.

\begin{lemma}\label{lem: LY setup}
	Given a summable potential $\phi:\Om\times I\to\RR\cup\set{-\infty}$, for all $n\in\NN$ and $m$-a.e. $\om\in\Om$ we have 
	\begin{flalign} 
	& g_\om^{(n)}\in\BV(I), 
	\label{(i) lem: LY setup} \tag{i} 
	&\\
	& S_\om^{(n)}:=\sum_{Z\in\cZ_\om^{(n)}}\sup_Z g_\om^{(n)}<\infty.
	\label{(ii) lem: LY setup} \tag{ii} 
	\end{flalign}
\end{lemma}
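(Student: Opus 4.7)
The plan is to prove both statements simultaneously by induction on $n$, exploiting the multiplicative decomposition
\[
g_\om^{(n+1)}(x) = g_\om^{(n)}(x)\cdot g_{\sg^n(\om)}^{(1)}\lt(T_\om^n(x)\rt)
\]
together with the fact that $\cZ_\om^{(n+1)} = \cZ_\om^{(n)}\vee T_\om^{-n}\lt(\cZ_{\sg^n(\om)}^{(1)}\rt)$ refines both $\cZ_\om^{(n)}$ and $T_\om^{-n}\lt(\cZ_{\sg^n(\om)}^{(1)}\rt)$, so that every $Z\in\cZ_\om^{(n+1)}$ is contained in some $\hat Z\in\cZ_\om^{(n)}$ and satisfies $T_\om^n(Z)\sub W$ for some $W\in\cZ_{\sg^n(\om)}^{(1)}$. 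The base case $n=1$ is exactly \eqref{cond SP2} and \eqref{cond SP3}. I would dispose of \eqref{(ii) lem: LY setup} first: the decomposition gives $\sup_Z g_\om^{(n+1)}\leq \sup_{\hat Z}g_\om^{(n)}\cdot\sup_W g_{\sg^n(\om)}^{(1)}$, and since each pair $(\hat Z,W)$ produces at most one $Z$, summation factorises to
\[
S_\om^{(n+1)} \;\leq\; \sum_{\hat Z\in\cZ_\om^{(n)}}\sup_{\hat Z}g_\om^{(n)}\cdot\sum_{W\in\cZ_{\sg^n(\om)}^{(1)}}\sup_W g_{\sg^n(\om)}^{(1)} \;=\; S_\om^{(n)}\cdot S_{\sg^n(\om)}^{(1)},
\]
which is finite by induction and \eqref{cond SP3}.

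For \eqref{(i) lem: LY setup} the natural strategy is to track the piecewise variation $V_\om^{(n)}:=\sum_{Z\in\cZ_\om^{(n)}}\var_Z(g_\om^{(n)})$ separately, and then recover $\var_I(g_\om^{(n)})$ via the standard ``interior variation + boundary jumps'' bound
\[
\var_I(g_\om^{(n)}) \;\leq\; V_\om^{(n)} + 2 S_\om^{(n)},
\]
which accounts for one-sided jumps across the countably many endpoints of the elements of $\cZ_\om^{(n)}$ (each $Z$ contributes at most twice, giving the $2S_\om^{(n)}$ factor). For the recursion on $V_\om^{(n)}$, on each $Z\in\cZ_\om^{(n+1)}$ with associated $\hat Z$ and $W$, monotonicity of $T_\om^n\rvert_Z$ gives $\var_Z\lt(g_{\sg^n(\om)}^{(1)}\circ T_\om^n\rt)=\var_{T_\om^n(Z)}\lt(g_{\sg^n(\om)}^{(1)}\rt)\leq\var_W\lt(g_{\sg^n(\om)}^{(1)}\rt)$, so the standard product rule for bounded variation yields
\[
\var_Z\lt(g_\om^{(n+1)}\rt) \;\leq\; \sup_{\hat Z}g_\om^{(n)}\cdot\var_W\lt(g_{\sg^n(\om)}^{(1)}\rt) + \sup_W g_{\sg^n(\om)}^{(1)}\cdot\var_Z\lt(g_\om^{(n)}\rt).
\]
Summing over $Z\in\cZ_\om^{(n+1)}$ and interchanging the order of summation, the first group is controlled by $\sum_{\hat Z}\sup_{\hat Z}g_\om^{(n)}\cdot\sum_W\var_W(g_{\sg^n(\om)}^{(1)}) \leq S_\om^{(n)}\var_I(g_{\sg^n(\om)}^{(1)})$, while the second, using that for fixed $\hat Z$ the sets $\{Z_{\hat Z,W}\}_W$ partition $\hat Z$ into disjoint subintervals so that $\sum_W\var_{Z_{\hat Z,W}}(g_\om^{(n)})\leq\var_{\hat Z}(g_\om^{(n)})$, is bounded by $S_{\sg^n(\om)}^{(1)}\cdot V_\om^{(n)}$. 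Hence
\[
V_\om^{(n+1)} \;\leq\; S_\om^{(n)}\cdot \var_I\lt(g_{\sg^n(\om)}^{(1)}\rt) + S_{\sg^n(\om)}^{(1)}\cdot V_\om^{(n)},
\]
finite by induction and \eqref{cond SP2}-\eqref{cond SP3}, which combined with the ``jump'' inequality above completes the inductive step for \eqref{(i) lem: LY setup}.

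The main obstacle is that a naive estimate of the form $\var_I(g_{\sg^n(\om)}^{(1)}\circ T_\om^n)\leq\sum_Z[\var_Z(g_{\sg^n(\om)}^{(1)}\circ T_\om^n)+2\sup_Z g_{\sg^n(\om)}^{(1)}\circ T_\om^n]$ is useless, since each summand is bounded below by a constant independent of $Z$ and there are countably many pieces. The trick is to never allow a $\var$ and a $\sup$ to appear on the same factor in the product rule: using $g_\om^{(n+1)} = g_\om^{(n)}\cdot(g_{\sg^n(\om)}^{(1)}\circ T_\om^n)$ puts the supremum of one factor against the variation of the other, so after summing and regrouping, every appearance of a sum over partition elements collapses into either a summability constant $S_\bullet^\bullet$ or a single total variation $\var_I(\bullet)$, both finite by hypothesis. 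The measurability caveat of ``$m$-a.e.\ $\om$'' simply propagates by taking countable intersections of the full-measure sets on which \eqref{cond SP1}-\eqref{cond SP3} hold along the orbit $\{\sg^j(\om)\}_{j=0}^{n-1}$.
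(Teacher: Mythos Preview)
Your proof is correct. For part (ii), you and the paper use the identical decomposition $g_\om^{(n+1)}=g_\om^{(n)}\cdot(g_{\sg^n(\om)}^{(1)}\circ T_\om^n)$ and obtain the same submultiplicative bound $S_\om^{(n+1)}\le S_\om^{(n)}\cdot S_{\sg^n(\om)}^{(1)}$.

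For part (i) your route differs from the paper's. You keep the ``last-step'' decomposition, work over the fine partition $\cZ_\om^{(n+1)}$, and carry the auxiliary quantity $V_\om^{(n)}=\sum_{Z\in\cZ_\om^{(n)}}\var_Z(g_\om^{(n)})$ through the induction, recovering $\var_I(g_\om^{(n)})\le V_\om^{(n)}+2S_\om^{(n)}$ at the end. The paper instead switches to the ``first-step'' decomposition $g_\om^{(n+1)}=g_\om^{(1)}\cdot(g_{\sg(\om)}^{(n)}\circ T_\om)$ and estimates $\var_I$ directly over the coarse partition $\cZ_\om^{(1)}$, obtaining the closed-form recursion
\[
\var(g_\om^{(n+1)}) \;\le\; \bigl(3S_\om^{(1)}+\var(g_\om^{(1)})\bigr)\,\|g_{\sg(\om)}^{(n)}\|_{\BV}.
\]
The advantage of the paper's choice is that the boundary-jump sum runs over $\cZ_\om^{(1)}$ rather than $\cZ_\om^{(n+1)}$, so it is immediately dominated by $2S_\om^{(1)}\|g_{\sg(\om)}^{(n)}\|_\infty$; this avoids tracking $V_\om^{(n)}$ and gives a recursion purely in $\|g^{(n)}\|_{\BV}$. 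Your approach has the virtue of using one decomposition throughout and yielding the intermediate estimate $V_\om^{(n)}<\infty$ as a byproduct. One small quibble: in bounding the second group you invoke the partition inequality $\sum_W\var_{Z_{\hat Z,W}}(g_\om^{(n)})\le\var_{\hat Z}(g_\om^{(n)})$ but then claim the factor $S_{\sg^n(\om)}^{(1)}$; these do not fit together as written. Either use the crude bound $\var_{Z_{\hat Z,W}}(g_\om^{(n)})\le\var_{\hat Z}(g_\om^{(n)})$ termwise and sum over $W$ to get $S_{\sg^n(\om)}^{(1)}V_\om^{(n)}$, or use the partition inequality to get the sharper $\|g_{\sg^n(\om)}^{(1)}\|_\infty\, V_\om^{(n)}$; both suffice.
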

\begin{proof}
	We will prove each of the claims via induction. To prove \eqref{(i) lem: LY setup}, we first note that by \eqref{cond SP2} $g_\om^{(1)}\in\BV(I)$. Now suppose that $g_\om^{(j)}\in\BV(I)$ for each $\om\in\Om$ and all $1\leq j\leq n$. Now, note that we may write 
	\begin{align*}
		g_\om^{(n+1)}=g_\om^{(1)}\cdot g_{\sg(\om)}^{(n)}\circ T_\om.
	\end{align*}
	Then, 
	\begin{align*}
	\var(g_\om^{(n+1)})
	&\leq 
	\sum_{Z\in\cZ_\om^{(1)}}
	\lt(
	\var_Z(g_\om^{(n+1)})+2\sup_Zg_\om^{(n+1)}
	\rt)
	\\
	&\leq 
	\sum_{Z\in\cZ_\om^{(1)}}
	\lt(
	\var_Z(g_\om^{(1)})\sup_{T_\om(Z)}g_{\sg(\om)}^{(n)}
	+\sup_Z g_\om^{(1)}\var_{T_\om(Z)}(g_{\sg(\om)}^{(n)})
	+2\sup_Zg_\om^{(1)}\sup_{T_\om(Z)}g_{\sg(\om)}^{(n)}
	\rt)
	\\
	&\leq 
	\norm{g_{\sg(\om)}^{(n)}}_{\BV}
	\sum_{Z\in\cZ_\om^{(1)}}	
	\lt(
 	\var_Z g_\om^{(1)}+\sup_Z g_\om^{(1)}+2\sup_Z g_\om^{(1)} 
	\rt)
	\\
	&\leq \lt(3 S_\om^{(1)}+\var(g_\om^{(1)})\rt)\norm{g_{\sg(\om)}^{(n)}}_{\BV}.
	\end{align*}
	To see \eqref{(ii) lem: LY setup}, we begin by noting that \eqref{cond SP3} implies that $S_\om^{(1)}<\infty$. Now, suppose that $S_\om^{(j)}<\infty$ for each $\om\in\Om$ and all $1\leq j\leq n$. Now, note that we may also rewrite $g_\om^{(n+1)}$ as 
	\begin{align*}
	g_\om^{(n+1)}=g_\om^{(n)}\cdot g_{\sg^n(\om)}^{(1)}\circ T_\om^n.
	\end{align*}
	Then, 
	\begin{align*}
	S_\om^{(n+1)}
	&=
	\sum_{Z\in\cZ_\om^{(n+1)}}\sup_Z g_\om^{(n+1)}
	\leq\sum_{Z\in\cZ_\om^{(n+1)}}\sup_Z g_\om^{(n)}\sup_{T_\om^n(Z)}g_{\sg^n(\om)}^{(1)}
	\\
	&\leq
	\sum_{Z'\in\cZ_\om^{(n)}}\sum_{\substack{Z\in\cZ_\om^{(n+1)}\\ Z\sub Z'}}\sup_{Z'}g_\om^{(n)} \sup_{T_\om^n(Z)}g_{\sg^n(\om)}^{(1)}
	\\
	&\leq 
	\sum_{Z'\in\cZ_\om^{(n)}}\sup_{Z'}g_\om^{(n)}\cdot\sum_{Z''\in\cZ_{\sg^n(\om)}^{(1)}}\sup_{Z''}g_{\sg^n(\om)}^{(1)}
	\\
	&\leq S_\om^{(n)}\cdot S_{\sg^n(\om)}^{(1)}<\infty,
	\end{align*}
	and thus we are done. 
\end{proof}

\subsection{Preliminaries on Random Measures}
Given a measurable space $Y$, we let $\cP(Y)$ denote the collection of all Borel probability measures on $Y$.
Recall that $\sB$ denotes the Borel $\sg$-algebra of $I$, and for each $\om\in\Om$, let $\sB_\om$ denote the Borel $\sg$-algebra of $X_\om$, i.e. the $\sg$-algebra $\sB$ restricted to the set $X_\om$.
Now, let $\sF\otimes\sB$ denote the product $\sg$-algebra of $\sB$ and $\sF$ on $\Om\times I$. 

Let $\cP_m(\Om\times I)$ denote the set of all probability measures $\mu$ on $\Om\times I$ that have marginal $m$, i.e.  
$$
\cP_m(\Om\times I):=\set{\mu\in\cP(\Om\times I):\mu\circ\pi^{-1}_\Om=m},
$$
where $\pi_\Om:\Om\times X\to\Om$ is the projection onto the first coordinate. 
\begin{definition}\label{def: random prob measures}
A map $\mu:\Om\times\sB\to[0,1]$ with $\Om\times\sB\ni(\om,B)\mapsto \mu_\om(B)$ is said to be a \textit{random probability measure} on $I$ if
\begin{enumerate}
	\item for every $B\in\sB$, the map $\Om\ni\om\mapsto\mu_\om(B)\in [0,1]$ is measurable, 
	\item for $m$-a.e. $\om\in\Om$, the map $\sB\ni B\mapsto\mu_\om(B)\in [0,1]$ is a Borel probability measure.
\end{enumerate}
We let $\cP_\Om(I)$ denote the set of all random probability measures on $I$. We will frequently denote a random measure $\mu$ by $(\mu_\om)_{\om\in\Om}$.
\end{definition}

The following proposition, which summarizes results of Crauel \cite{crauel_random_2002}, shows that the collection $\cP_m(\Om\times I)$ can be canonically identified with the collection $\cP_\Om(I)$ of all random probability measures on $I$.
\begin{proposition}[\cite{crauel_random_2002}, Propositions 3.3, 3.6]\label{prop: random measure equiv}
	For each $\mu\in\cP_m(\Om\times I)$ there exists a unique random probability measure $(\mu_\om)_{\om\in\Om}\in\cP_\Om(I)$ such that 
	\begin{align*}
	\int_{\Om\times I} f(\om,x)\,d\mu(\om,x)
	=
	\int_\Om \int_I f(\om,x) \, d\mu_\om(x)\, dm(\om)
	\end{align*}
	for every bounded measurable function $f:\Om\times I\to\RR$.
	
	Conversely, if $(\mu_\om)_{\om\in\Om}\in\cP_\Om(I)$ is a random probability measure on $I$, then for every bounded measurable function $f:\Om\times I\to\RR$ the function 
	$$
	\Om\ni\om\longmapsto \int_I f(\om,x) \, d\mu_\om(x)
	$$ 
	is measurable and 
	$$
	\sF\otimes\sB\ni A\longmapsto\int_\Om \int_I \ind_A(\om,x) \, d\mu_\om(x)\, dm(\om)
	$$
	defines a probability measure in $\cP_m(\Om\times I)$.
\end{proposition}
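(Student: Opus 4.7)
The plan is to prove the two directions of the correspondence separately, invoking the disintegration theorem for the forward direction and the monotone class theorem for the converse.

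For the forward direction, given $\mu\in\cP_m(\Om\times I)$, I would apply a standard disintegration theorem (or equivalently, construct regular conditional probabilities of $\mu$ given the sub-$\sg$-algebra $\pi_\Om^{-1}(\sF)$). Since $I$ is a compact metric space, its Borel $\sg$-algebra $\sB$ is countably generated, and $(\Om,\sF,m)$ is complete, so the hypotheses for disintegration hold. This produces a family $(\mu_\om)_{\om\in\Om}$ such that $\om\mapsto\mu_\om(B)$ is measurable for every $B\in\sB$, $\mu_\om\in\cP(I)$ for $m$-a.e. $\om$, and the Fubini-type identity holds for indicators of measurable rectangles $A\times B$ with $A\in\sF$, $B\in\sB$. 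Extending this identity to arbitrary bounded measurable $f:\Om\times I\to\RR$ is a routine application of the standard machine: linearity handles simple functions, the monotone class theorem covers indicators of arbitrary sets in $\sF\otimes\sB$, and dominated convergence (with the bound $\norm{f}_\infty$) handles general bounded measurable $f$.

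For the converse direction, given a random probability measure $(\mu_\om)_{\om\in\Om}\in\cP_\Om(I)$, the first task is to show that for every bounded measurable $f:\Om\times I\to\RR$ the function $\om\mapsto\int_I f(\om,x)\,d\mu_\om(x)$ is $\sF$-measurable. Starting from $f=\ind_{A\times B}$ with $A\in\sF$ and $B\in\sB$, this integral reduces to $\ind_A(\om)\mu_\om(B)$, which is measurable by condition (1) of Definition 2.4. Linearity gives measurability for simple functions in $\sF\otimes\sB$, and the collection of bounded measurable $f$ with measurable integral is closed under monotone limits, so by the monotone class (or $\pi$-$\lambda$) theorem it coincides with all bounded $\sF\otimes\sB$-measurable functions. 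Then define the set function on $\sF\otimes\sB$ by
\begin{equation*}
\mu(E):=\int_\Om\int_I \ind_E(\om,x)\,d\mu_\om(x)\,dm(\om);
\end{equation*}
countable additivity follows from monotone convergence applied inside both integrals, and $\mu(\Om\times I)=1$ since each $\mu_\om$ is a probability measure. The marginal on $\Om$ is $m$ because for $A\in\sF$, $\mu(A\times I)=\int_A \mu_\om(I)\,dm(\om)=m(A)$, so $\mu\in\cP_m(\Om\times I)$.

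Finally, the uniqueness assertion in the forward direction follows from a single null-set argument: if $(\mu_\om)$ and $(\mu'_\om)$ both disintegrate $\mu$, then for each $B\in\sB$ we have $\int_A\mu_\om(B)\,dm=\int_A\mu'_\om(B)\,dm$ for every $A\in\sF$, so $\mu_\om(B)=\mu'_\om(B)$ off an $m$-null set $N_B$. Since $\sB$ is countably generated, one can pick a countable generating algebra, take the union of the corresponding null sets, and conclude $\mu_\om=\mu'_\om$ for $m$-a.e. $\om$. The main technical obstacle is the careful handling of measurability on the full product $\sg$-algebra (not merely on rectangles) and consolidating the countably many $m$-null sets into one, both of which are handled cleanly by the countable generation of $\sB$ and the completeness of $(\Om,\sF,m)$.
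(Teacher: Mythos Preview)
Your proof sketch is a correct and standard approach to establishing this disintegration result. However, the paper does not actually prove this proposition: it is quoted directly from Crauel's book \cite{crauel_random_2002} (Propositions 3.3 and 3.6) and used as a black box, so there is no in-paper proof to compare against.
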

\subsection{Conditions on the Dynamics}

We now give a definition of random covering similar to the covering properties of \cite{buzzi_exponential_1999} and \cite{liverani_conformal_1998}.
\begin{definition}\label{def: covering}
	We will say that our system has \textit{random covering} if for each $\om\in\Om$ and each $J\sub I$ there exists $M_\om=M_\om(J)\in\NN$ 
	such that for all $n\geq M_\om(J)$ there exists a constant $C_{\om,n}(J)>0$ such that 
	\begin{equation}\label{cond RC} \tag{RC}
	\inf_{X_{\sg^n(\om)}}\cL_\om^{n}\ind_{J}\geq C_{\om,n}(J).
	\end{equation} 	
	
\end{definition}
\begin{remark}\label{rmk:coveringWFinitePart}
	Note that random covering implies that for all intervals $J\sub I$ there exists $M_\om=M_\om(J)\in\NN$ such that 
	\begin{align}\label{eq: finite partition cond 1}
	T_\om^{M_\om}(J)\bus X_{\sg^{M_\om}(\om)}, 
	\end{align}
	and also that for all intervals $J\sub X_\om$ there exists $M_\om\in\NN$ such that 
	\begin{align}\label{eq: finite partition cond 2}
	T_\om^{M_\om}(J)=X_{\sg^{M_\om}(\om)}.
	\end{align}
	Furthermore, note that if the partition $\cZ_\om^*$ is finite then \eqref{eq: finite partition cond 1} and \eqref{eq: finite partition cond 2} are equivalent to \eqref{cond RC}.
\end{remark}

We now concern ourselves with special finite partitions of $I$. For each $n\in\NN$, each $\om\in\Om$, $\hat \al\geq 0$, and $\hgm\geq 1$ we let $\cP_{\om,n}(\hat\al,\hgm)$ be a finite partition of $I$ such that for each $P\in\cP_{\om,n}(\hat\al,\hgm)$ the following hold: 
\begin{flalign}
&\var_P(g_\om^{(n)})\leq \hat\al\norm{g_\om^{(n)}}_\infty,
\label{cond P1}\tag{P1}
&\\
&\sum_{\substack{Z\in\cZ_\om^{(n)}\\ Z\cap P\neq \emptyset}}\sup_{Z\cap P}g_\om^{(n)}\leq \hgm\norm{g_\om^{(n)}}_\infty.
\label{cond P2}\tag{P2}
\end{flalign}
\begin{remark}\label{rem: when partitions Z_om^n=P_om,n}
	Note that the for certain potentials we may be able to satisfy \eqref{cond P1} and \eqref{cond P2} for $\hal= 0$ and $\hgm= 1$. Indeed, if $\cZ_\om^{(n)}$ is finite and if the partition $\cZ_\om^{(n)}$ satisfies \eqref{cond P1} for $\hal\geq 0$, then we may take $\hgm=1$ and  $\cP_{\om,n}(\hal,1)=\cZ_\om^{(n)}$, as \eqref{cond P2} holds trivially for $\cZ_\om^{(n)}$.
	Furthermore, if the weight functions $g_\om$ are constant then we may take $\hal=0$.
\end{remark}

The following lemma gives conditions for such partitions to exist.  
\begin{lemma}\label{lem: P1 and P2 hold}
	If $\hal>1$ and $\hgm\geq 1$ then for each $n\in\NN$ and $\om\in\Om$ there exists a partition $\cP_{\om,n}(\hal,\hgm)$ which satisfies conditions \eqref{cond P1} and \eqref{cond P2}.
\end{lemma}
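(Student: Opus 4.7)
The plan is to construct $\cP_{\om,n}(\hat\alpha,\hat\gamma)$ by a greedy left-to-right sweep across $I$, using the two finiteness facts supplied by Lemma~\ref{lem: LY setup}. Write $g := g_\om^{(n)}$, $M := \|g\|_\infty$, $V := \var(g)$, and $S := S_\om^{(n)}$; by Lemma~\ref{lem: LY setup} both $V$ and $S$ are finite. Since $g \geq 0$ is bounded by $M$, any two values of $g$ differ by at most $M$, and in particular every jump of $g$ has size at most $M$.

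First I would isolate the ``big'' branches. Set $\cZ_{\mathrm{big}} := \{Z \in \cZ_\om^{(n)} : \sup_Z g > M/2\}$; because $\sum_Z \sup_Z g = S < \infty$, this collection has at most $2S/M$ members and is finite. I insert the endpoints of every $Z \in \cZ_{\mathrm{big}}$ as forced cut points of $\cP$, so that each such $Z$ becomes a partition atom (possibly subdivided further below). On such an atom $P \subseteq Z$, only the single $Z \in \cZ_\om^{(n)}$ meets $P$, so the sum in \eqref{cond P2} collapses to $\sup_{Z\cap P} g \leq M \leq \hat\gamma M$ and \eqref{cond P2} is automatic. To also enforce \eqref{cond P1} I then cut each $Z \in \cZ_{\mathrm{big}}$ further into at most $\lceil \var_Z(g)/(\hat\alpha M)\rceil + 1$ sub-intervals of variation $\leq \hat\alpha M$; this is finite since $\var_Z(g) \leq V < \infty$, and the ``only $Z$ intersects'' property is inherited by each sub-atom, so \eqref{cond P2} is preserved.

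On each ``gap'' component $G = [a,b]$ of $I \setminus \bigcup_{Z \in \cZ_{\mathrm{big}}} Z$ (finitely many), every $Z \in \cZ_\om^{(n)}$ that meets $G$ satisfies $\sup_Z g \leq M/2$. Within $G$ I run the greedy step: set $y_0 := a$ and, inductively, let $y_j$ be the supremum of $y \in (y_{j-1}, b]$ for which $P_j := [y_{j-1}, y]$ still satisfies both \eqref{cond P1} and \eqref{cond P2}. If $y_j < b$, extending $P_j$ past $y_j$ must violate one of the two. If \eqref{cond P1} is the binding constraint, then the variation function jumps at $y_j$ by at most $M$, so $\var_{P_j}(g) > \hat\alpha M - M = (\hat\alpha - 1)M > 0$, using $\hat\alpha > 1$. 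If \eqref{cond P2} is binding, the next small $Z$ contributes at most $M/2$ to the sum-of-sups, so the sum already on $P_j$ exceeds $\hat\gamma M - M/2 \geq M/2$, using $\hat\gamma \geq 1$. Thus every closed $P_j$ consumes at least $c := \min\{(\hat\alpha-1)M,\, M/2\} > 0$ of either the variation budget $V$ or the sum-of-sups budget (which is $\leq S$ plus an $M/2$ overcount per cut from a small straddling $Z$), and a standard counting argument bounds the total number of greedy pieces in each gap by a finite quantity depending on $V$, $S$, $\hat\alpha$, $\hat\gamma$, and $M$.

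I anticipate the main obstacle to be the overlap bookkeeping at cut points: a single $Z$ straddling a cut $y_j$ contributes $\sup_{Z \cap P_j} g$ to the sum-of-sups on $P_j$ and $\sup_{Z \cap P_{j+1}} g$ to the sum on $P_{j+1}$, inflating the aggregate sum above $S$. Step~1 removes this risk for big $Z$'s (each is a standalone atom and cannot straddle any cut); in the gaps only small $Z$'s with $\sup_Z g \leq M/2$ can straddle, and since each straddling contributes overcount $\leq M/2$, the total overcount across all cuts stays proportional to the (ultimately finite) piece count. Combining Steps~1 and~2 then yields the required finite partition $\cP_{\om,n}(\hat\alpha,\hat\gamma)$, with \eqref{cond P1} and \eqref{cond P2} holding on each element by construction.
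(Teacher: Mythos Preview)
Your approach differs from the paper's, which is a two-step construction: first build a finite partition satisfying \eqref{cond P1} (citing Rychlik's lemma), then refine it using $\cZ_\om^{(n)}$-endpoints to enforce \eqref{cond P2} (citing the argument of Liverani--Saussol--Vaienti). Your single-pass greedy sweep is more self-contained and avoids external citations, but the finiteness argument you sketch at the end does not close.

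The gap is in your ``standard counting argument.'' Write $k_1$ for the number of \eqref{cond P1}-binding pieces and $k_2$ for the number of \eqref{cond P2}-binding pieces. You correctly bound $k_1 \leq V/((\hat\alpha-1)M)$ since variation is additive over a partition. For the \eqref{cond P2}-binding pieces you have $\Sigma(P_j) > \hat\gamma M - M/2 \geq M/2$, and you note the aggregate sum-of-sups across all pieces is $\leq S + (\text{overcount})$, with overcount $\leq M/2$ per cut. Putting these together yields
\[
k_2 \cdot \tfrac{M}{2} \;\leq\; S + (k_1+k_2)\cdot\tfrac{M}{2},
\]
which simplifies to $0 \leq S + k_1 M/2$ and gives \emph{no} bound on $k_2$. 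The trouble is that the per-piece consumption ($M/2$) and the per-cut overcount ($M/2$) are equal, so the budget grows as fast as it is spent. Tracking $\hat\gamma$ more carefully only rescues the argument when $\hat\gamma>1$; for $\hat\gamma=1$ (which the lemma explicitly allows) the counting is genuinely circular.

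The fix is to decouple the two constraints. If you first partition for \eqref{cond P1} (finitely many pieces $Q_1,\dots,Q_r$, $r$ bounded via $V$), and then within each $Q_i$ refine greedily for \eqref{cond P2} by cutting only at $\cZ_\om^{(n)}$-endpoints, the \eqref{cond P2}-cuts create no straddling. The only overcount then comes from the $r-1$ fixed \eqref{cond P1}-cuts, giving a total sum-of-sups budget $\leq S + rM/2$, against which each non-terminal refined piece consumes $> M/2$. This is precisely the structure of the paper's two-step proof. Alternatively, you could keep your single sweep but argue termination directly: if $y_j \uparrow y^*$, then either eventually $P_j$ lies inside a single small $Z$ (so extending within $Z$ keeps $\Sigma \leq M/2 < \hat\gamma M$, contradicting \eqref{cond P2}-binding), or $y^*$ is an accumulation point of $\cZ_\om^{(n)}$-endpoints, whence $\sum_{Z \cap [y^*-\delta,y^*) \neq \emptyset} \sup_Z g \to 0$ by summability of $S$, contradicting $\Sigma(P_j) \geq M/2$.
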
 
\begin{proof}
	Following Lemma 6 of \cite{rychlik_bounded_1983}, for any $\ep>0$ we can find a finite partition $\cP$ such that 
	$$
	\var_P(g_\om^{(n)})\leq \norm{g_\om^{(n)}}_\infty+\ep
	$$ 
	for each $P\in\cP$. Thus for any $\hat\al>1$ we can find a finite partition $\cP_{\om,n}(\hat\al)$ such that 
	\begin{align}
	\var_P(g_\om^{(n)})\leq \hat\al\norm{g_\om^{(n)}}_\infty
	\label{eq: proto cond P1'}%\tag{P1'}
	\end{align}
	for each $P\in\cP_{\om,n}(\hat\al)$.
	Now, following the argument in the proof of Sub-Lemma 4.1.1 of \cite{liverani_conformal_1998}, for $\bt\geq 1$ we can then refine the partition $\cP_{\om,n}(\hat\al)$ with $\cZ_\om^{(n)}$ to obtain a finite partition $\cP_{\om,n}(\hat\al,\hgm)$ such that 
	\begin{align}
	\sum_{\substack{Z\in\cZ_\om^{(n)}\\ Z\cap P\neq \emptyset}}\sup_{Z\cap P}g_\om^{(n)}\leq \hgm\norm{g_\om^{(n)}}_\infty
	\label{eq: proto cond P2'}%\tag{P2'}
	\end{align}
	for each $P\in\cP_{\om,n}(\hat\al,\hgm)$ since $S_\om^{(n)}<\infty$. 	
\end{proof}

\begin{remark}
	For our general theory we are only concerned that there exist $\hal\geq 0$ and $\hgm\geq 1$ and partitions $\cP_{\om,n}(\hal,\hgm)$ 
	\footnote{In \cite{liverani_conformal_1998} they consider the case $\hal=4$ and $\hgm=2$.} 
	such that \eqref{cond P1} and \eqref{cond P2} are satisfied. However, as we will see in Section~\ref{sec:examples}, the choice of $\hal$ and $\hgm$ directly impacts the amount of work necessary to verify our hypotheses for examples. So, finding (nearly) optimal values for $\hal$ and $\hgm$ may be necessary in practice. 
\end{remark}
Now, fixing $\hal\geq 0$ and $\hgm\geq1$ such that \eqref{cond P1} and \eqref{cond P2} hold for the partitions $\cP_{\om,n}(\hal,\hgm)$, since $\cZ_\om^*$ is generating, i.e. assumption \eqref{cond GP}, for each $P\in\cP_{\om,n}(\hal,\hgm)$ there exists a least number $N_{\om,n}(P)\in\NN$ and a measurable choice 
\begin{align}\label{eq: def of J(P)}
	J(P)\in\cZ_\om^{(N_{\om,n}(P))}
\end{align}
with $J(P)\sub P$.  

We will assume the following measurability and integrability conditions on our system.
\begin{flalign}
& \text{The map }T: \Om\times I\to\Om\times I \text{ is measurable}. 
\tag{M1}\label{cond M1} 
&\\
& \text{For }f\in\BV_\Om(I)\text{ we have }\cL f\in\BV_\Om(I). 
\tag{M2}\label{cond M2}
\\
& \log S_\om^{(1)}\in L^1_m(\Om),
\tag{M3}\label{cond M3} 
\\
& \log\inf\cL_\om\ind_\om\in L^1_m(\Om),
\tag{M4}\label{cond M4}
\\
& \min_{P\in\cP_{\om,n}(\hal,\hgm)}\log\inf_{J(P)}g_\om^{M_\om(J(P))}\in L^1_m(\Om) \text{ for each } n\in\NN, 
\tag{M5}\label{cond M5} 
\\		
& \max_{P\in\cP_{\om,n}(\hal,\hgm)}\log\norm{\cL_\om^{M_\om(J(P))}\ind_\om}_\infty\in L^1_m(\Om) \text{ for each } n\in\NN.
\tag{M6}\label{cond M6}  	
\end{flalign}
\begin{remark}\label{rem: inf L^n 1 log int}
	Note that condition \eqref{cond M4} implies that $\inf\cL_\om\ind_\om>0$ for $m$-a.e. $\om\in\Om$. Furthermore,although condition \eqref{cond M4} may seem restrictive, the following examples show that it can be easily checked under mild assumptions.  
	\begin{enumerate}[(a)]
		\item  If $\cZ_\om^{*}$ is finite for each $\om\in\Om$ with $\phi_\om\in\BV(I)$ and $\log\inf g_\om\in L^1_m(\Om)$, then \eqref{cond M4} holds.
		\item \eqref{cond M4} holds if for each $\om\in\Om$ there exists a finite collection $Z_{\om,1}, \dots, Z_{\om,n_\om}\in\cZ_\om^{*}$ with $T_\om(\cup_{j=1}^{n_\om} Z_{\om,j})=I$ such that $\log\inf g_\om\rvert_{\cup_{j=1}^{n_\om}Z_{\om,j}}\in L^1_m(\Om)$. In particular, \eqref{cond M4} holds if for each $\om$ there exists some $Z_\om\in\cZ_\om^{(1)}$ which supports a full branch such that $\log\inf g_\om\rvert_{Z_\om}\in L^1_m(\Om)$.
	\end{enumerate}
\end{remark}
Given a a real-valued function $f$, we let $f^+, f^-$ denote the positive and negative parts of $f$ respectively with $f^+,f^-\geq 0$ and $f=f^+-f^-$. The following lemma now follows from the previous assumptions.
\begin{lemma}\label{lem: log integrability}
	For each $n\in\NN$ we have the following: 
	\begin{flalign} 
	& \log \inf\cL_\om^n\ind_\om\in L^1_m(\Om),
	\tag{i}\label{lem: log integrability item i}  
	&\\
	& \log S_\om^{(n)}\in L^1_m(\Om),
	\tag{ii}\label{lem: log integrability item ii}
	\\
	& \log\norm{\cL_\om^n\ind_\om}_\infty\in L^1_m(\Om),
	\tag{iii}\label{lem: log integrability item iii}
	\\ 
	& \log^+ \big\|g_\om^{(n)}\big\|_\infty\in L^1_m(\Om),
	\tag{iv}\label{lem: log integrability item iv}
	\\
	& (\sup \phi_\om)^+\in L^1_m(\Om).
	\tag{v}\label{lem: log integrability item v}
	\end{flalign}
\end{lemma}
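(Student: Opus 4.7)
The plan is to derive all five items from a single chain of sandwich estimates, using only (M3), (M4), Lemma~\ref{lem: LY setup}, and the $\sigma$-invariance of $m$.

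First I would establish, for every $n \in \NN$ and $m$-a.e.\ $\om$, the chain
\begin{equation*}
\prod_{j=0}^{n-1}\inf \cL_{\sg^j(\om)}\ind_{\sg^j(\om)}
\;\leq\; \inf \cL_\om^n \ind_\om
\;\leq\; \norm{\cL_\om^n \ind_\om}_\infty
\;\leq\; S_\om^{(n)}
\;\leq\; \prod_{j=0}^{n-1} S_{\sg^j(\om)}^{(1)}.
\end{equation*}
The two middle inequalities are elementary from the definitions (bounding the preimage sum by the partition sum). The rightmost bound is the content of the inductive step already carried out at the end of the proof of Lemma~\ref{lem: LY setup} (namely $S_\om^{(n+1)}\leq S_\om^{(n)}\cdot S_{\sg^n(\om)}^{(1)}$, iterated). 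The leftmost bound is an easy induction: if $\cL_\om^j\ind_\om \geq c_j\,\ind_{\sg^j(\om)}$, then
\begin{equation*}
\cL_\om^{j+1}\ind_\om = \cL_{\sg^j(\om)}(\cL_\om^j\ind_\om)\geq c_j\,\cL_{\sg^j(\om)}\ind_{\sg^j(\om)}\geq c_j\lt(\inf \cL_{\sg^j(\om)}\ind_{\sg^j(\om)}\rt)\ind_{\sg^{j+1}(\om)},
\end{equation*}
so the product telescopes. Note that the strict positivity of $\inf \cL_\om\ind_\om$ required here is available $m$-a.e.\ by (M4), as observed in Remark~\ref{rem: inf L^n 1 log int}.

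Taking logarithms, the chain transforms to
\begin{equation*}
\sum_{j=0}^{n-1}\log\inf\cL_{\sg^j(\om)}\ind_{\sg^j(\om)}
\;\leq\;\log\inf\cL_\om^n\ind_\om
\;\leq\;\log\norm{\cL_\om^n\ind_\om}_\infty
\;\leq\;\log S_\om^{(n)}
\;\leq\;\sum_{j=0}^{n-1}\log S_{\sg^j(\om)}^{(1)}.
\end{equation*}
By (M4), (M3), and the $\sigma$-invariance of $m$, each of the two extremal sums is in $L^1_m(\Om)$. Thus each of the three interior quantities is sandwiched between $L^1$ functions and therefore lies in $L^1_m(\Om)$, which gives items \eqref{lem: log integrability item i}, \eqref{lem: log integrability item ii}, and \eqref{lem: log integrability item iii} simultaneously.

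Items \eqref{lem: log integrability item iv} and \eqref{lem: log integrability item v} are immediate consequences: since $\norm{g_\om^{(n)}}_\infty \leq S_\om^{(n)}$, we have $\log^+\norm{g_\om^{(n)}}_\infty \leq \log^+ S_\om^{(n)}$, which lies in $L^1_m(\Om)$ by \eqref{lem: log integrability item ii}, proving \eqref{lem: log integrability item iv}; and \eqref{lem: log integrability item v} is just \eqref{lem: log integrability item iv} at $n=1$ via the identity $\sup\phi_\om = \log\norm{g_\om^{(1)}}_\infty$. There is no real obstacle here beyond verifying the sandwich: the argument is a routine packaging of the submultiplicative estimate from Lemma~\ref{lem: LY setup} together with the two core integrability hypotheses (M3) and (M4).
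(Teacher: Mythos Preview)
Your proposal is correct and follows essentially the same approach as the paper: the paper establishes the identical sandwich chain \eqref{eq: log integrability 2} for items \eqref{lem: log integrability item i}--\eqref{lem: log integrability item iii}, and bounds $\log^+\|g_\om^{(n)}\|_\infty$ above by $\sum_{j=0}^{n-1}\log S_{\sg^j(\om)}^{(1)}$ (via submultiplicativity) for \eqref{lem: log integrability item iv}--\eqref{lem: log integrability item v}, which is equivalent to your route through $\|g_\om^{(n)}\|_\infty\le S_\om^{(n)}$. The only point the paper adds explicitly is a one-line remark that \eqref{cond M2} gives the measurability of the maps $\om\mapsto\inf\cL_\om\ind_\om$, $\om\mapsto\sup\phi_\om$, and $\om\mapsto\|\cL_\om\ind_\om\|_\infty$; you may wish to include this for completeness.
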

\begin{proof}
	We begin by noting that \eqref{cond M2} implies that the functions $\om\mapsto \inf\cL_\om\ind_\om$, $\om\mapsto \sup\phi_\om$ (and consequently $\om\mapsto \big\|g_\om^{(1)}\big\|_\infty$), and $\om\mapsto \norm{\cL_\om\ind_\om}_\infty$ are all $m$-measurable. Now we note that the submultiplicativity of $\big\|g_\om^{(n)}\big\|_\infty$ gives 
	\begin{align} 
	0
	\leq
	\log^+ \norm{g_\om^{(n)}}_\infty
	\leq 
	\sum_{j=0}^{n-1}\log^+\norm{g_{\sg^j(\om)}^{(1)}}_\infty
	= 
	\sum_{j=0}^{n-1}(\sup\phi_{\sg^j(\om)})^+
	\leq 
	\sum_{j=0}^{n-1} \log S_{\sg^j(\om)}^{(1)};
	\label{eq: log integrability 1}
	\end{align}
	using \eqref{cond M3} we obtain \eqref{lem: log integrability item iv} and \eqref{lem: log integrability item v}.
	Similarly, the submultiplicativity of $S_\om^{(n)}$ implied by the proof of Lemma~\ref{lem: LY setup} item \eqref{(ii) lem: LY setup} and supermultiplicativity of $\inf\cL_\om^n\ind_\om$ yields
	\begin{align}
	\sum_{j=0}^{n-1}\log\inf\cL_{\sg^j(\om)}\ind_{\sg^j(\om)}
	\leq
	\log\inf\cL_\om^n\ind_\om
	\leq
	\log \norm{\cL_\om^n\ind_\om}_\infty
	\leq
	\log S_\om^{(n)}
	\leq 
	\sum_{j=0}^{n-1} \log S_{\sg^j(\om)}^{(1)},
	\label{eq: log integrability 2}
	\end{align}
	and thus we obtain \eqref{lem: log integrability item i}--\eqref{lem: log integrability item iii} by \eqref{cond M3} and \eqref{cond M4}.
	As both the right and left-hand sides of \eqref{eq: log integrability 1} and \eqref{eq: log integrability 2} are $m$-integrable, we are done. 
\end{proof}
\begin{remark}
	Note that since the covering times $M_\om(J(P))$ may change with $\om$, our assumption \eqref{cond M6} does not necessarily follow from Lemma~\ref{lem: log integrability}.  
\end{remark}
The next definition, which is an extension of \cite[Definition~3.4]{liverani_conformal_1998} into the random setting, will be essential in the sequel.
\begin{definition}\label{def: contracting potential}
	A summable potential $\phi:\Om\times I\to\RR\cup\set{-\infty}$ is said to be \textit{contracting} if for $m$-a.e. $\om\in\Om$ we have
	\begin{equation} 
	\lim_{n\to\infty}\frac{1}{n}\log\norm{g_\om^{(n)}}_\infty  
	<
	\lim_{n\to\infty}\frac{1}{n}\log\inf \cL_\om^n\ind_\om,
	%\footnote{Note that these limits exist due to the submultiplicativity and supermultiplicativity of the respective sequences $\{\om \mapsto \|g_\om^{(n)}\|_\infty\}_{n\in\NN}$ and $\set{\om \mapsto \inf \cL_\om^n \ind_\om}_{n\in\NN}$. Furthermore, the limit on the right hand side is constant $m$-a.e. by the subadditive ergodic theorem. }
	\footnote{Note that these limits exist by the subadditive ergodic theorem due to the submultiplicativity and supermultiplicativity of the respective sequences $\{\om \mapsto \|g_\om^{(n)}\|_\infty\}_{n\in\NN}$ and $\set{\om \mapsto \inf \cL_\om^n \ind_\om}_{n\in\NN}$. Furthermore, the limits are constant $m$-a.e. and the left-hand limit may be equal to $-\infty$.}
	\label{cond CP1} \tag{CP1}
	\end{equation}
	and there exists $N\in\NN$ such that 
	\begin{align}
		-\infty< \int_\Om\log\frac{\norm{g_\om^{(N)}}_\infty}{\inf\cL_\om^N\ind_\om}\, dm(\om) <0.
		\label{cond CP2} \tag{CP2}
	\end{align} 
\end{definition}

\begin{remark}\label{rmk:checkCP}
Since the 
 sequences $\{\om \mapsto \|g_\om^{(n)}\|_\infty\}_{n\in\NN}$ and $\set{\om \mapsto \inf \cL_\om^n \ind_\om}_{n\in\NN}$  are submultiplicative and supermultiplicative, respectively, in view of Lemma~\ref{lem: log integrability} the subadditive ergodic theorem implies that 
\eqref{cond CP1} is equivalent to the assumption that there exist $N_2,N_1 \in \NN$ such that
\begin{align}\label{eq: on avg CPN1N2}
	\frac{1}{N_2}\int_\Om\log \norm{g_\om^{(N_2)}}_\infty\, dm(\om) < \frac{1}{N_1} \int_\Om\log \inf \cL_\om^{N_1} \ind_\om\, dm(\om).
\end{align}
If $\log\big\|g_\om^{(n)}\|_\infty\in L^1_m(\Om)$, then \eqref{cond CP1} is equivalent to \eqref{cond CP2} is equivalent to \eqref{eq: on avg CPN1N2}.

%\flag{Is this obvious or should we insert the proof?}
\end{remark}
\begin{comment}
\begin{remark}\label{rem: sup g^n 1 log int}
	Note that while we do not assume that $\log\big\|g_\om^{(n)}\|_\infty\in L^1_m(\Om)$, since $\sup\phi_\om=\log\big\|g_\om^{(1)}\|_\infty$, \eqref{cond M3} implies that $\log\|g_\om^{(1)}\|_\infty\in L^1_m(\Om)$. 	
	In light of Lemma~\ref{lem: log integrability} (iv), to show that $\log\big\|g_\om^{(n)}\|_\infty\in L^1_m(\Om)$ it suffices to find an integrable lower bound for $\log\big\|g_\om^{(n)}\|_\infty$.
	Such a lower bound can be found easily in the situations of Remark~\ref{rem: inf L^n 1 log int} (a) and (b) since (b) is obvious and in (a) we can always find a point $x\in I$ such that $T_\om^k(x)\in \cup_{j=1}^{n_{\sg^k(\om)}} Z_{\sg^k(\om),j}$ so that we have 
	\begin{align*}
		\log \norm{g_\om^{(k)}}_\infty
		\geq 
		\sum_{j=0}^{k}\log\inf g_{\sg^j(\om)}\rvert_{\cup_{j=1}^{n_{\sg^j(\om)}} Z_{\sg^j(\om),j}}.
	\end{align*}
\end{remark}

content...
\end{comment}
\begin{remark}
Note that in the deterministic setting our contracting potential assumption \eqref{cond CP1} implies the contracting potential assumption of \cite{liverani_conformal_1998}, and is implied by the usual assumption that $\sup \phi<P(\phi)$, where $P(\phi)$ denotes the topological pressure, see, for example, \cite{denker1990}.
\end{remark}

\begin{definition}\label{def: random weighted covering system}
	
	We will call the tuple $(\Om,\sg,m,I,T, \phi)$ a \textit{random weighted covering system} if it satisfies our assumptions \eqref{cond GP}, \eqref{cond SP1}--\eqref{cond SP3}, \eqref{cond RC}, \eqref{cond M1}--\eqref{cond M6}, and \eqref{cond CP1} and \eqref{cond CP2}. 
\end{definition}

 \subsection{Main Results}\label{sec:main results}
 Our main results are the following.
  \begin{theorem}\label{main thm: summary existence of measures and density}
  	Given a random weighted covering system $(\Om,\sg,m,I,T,\phi)$, the following hold. 
  	\begin{enumerate}
  		\item There exists a unique random probability measure $\nu\in\cP_\Om(I)$ such that 
  		\begin{align*}
  			\nu_{\sg(\om)}(\cL_\om f)=\lm_\om\nu_\om(f), %Edit: removed om dependence from f and changed BV_Om to BV
  		\end{align*}
  		 for each $f\in\BV(I)$, where 
  		\begin{align*}
  		\lm_\om:=\nu_{\sg(\om)}(\cL_\om\ind_\om).
  		\end{align*} 	
  		Furthermore, we have that $\log\lm_\om\in L^1_m(\Om)$.
  		\item There exists a function $q\in\BV_\Om(I)$ such that $\nu(q)=1$ and for $m$-a.e. $\om\in\Om$ we have 
  		\begin{align*}
  			\cL_\om q_\om=\lm_\om q_{\sg(\om)}.
  		\end{align*}
  		Moreover, $q$ is unique modulo $\nu$, 
  		%Edit: Removed unproven claim of log-integrability of inf and sup q and replaced with temperedness
	%and 
%  		\begin{align*}
%  			\log\inf q_\om, \log\norm{q_\om}_\infty\in L^1_m(\Om).\flag{Not proven}
%  		\end{align*}
		and 
		\begin{align*}
			\lim_{|n|\to \infty}\frac{1}{|n|}\log \inf q_{\sg^n(\om)}
			=
			\lim_{|n|\to \infty}\frac{1}{|n|}\log \norm{q_{\sg^n(\om)}}_\infty
			=0.
		\end{align*}
  		\item The measure $\mu:=q\nu$ is a $T$-invariant and ergodic random probability measure supported on $\cJ$.
  	\end{enumerate}
  \end{theorem}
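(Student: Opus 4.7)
The plan is to prove the three parts in sequence, following the architecture of Sections~\ref{sec:conformal}--\ref{sec:equil}.

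\emph{Part (1).} For the random conformal measure, I would adapt the forward-iteration construction of Liverani--Saussol--Vaienti to the random fibered setting (Section~\ref{sec:conformal}). The basic idea is to iterate the dual operator $\cL_\om^*$ on a convenient reference measure on $I$, renormalize by the resulting eigenvalue cocycle $\lm_\om$, and pass to a limit via a Schauder-type fixed-point argument in $\cP(I)$ to produce fiberwise measures satisfying $\cL_\om^*\nu_{\sg(\om)} = \lm_\om\nu_\om$. The main obstacles at this stage are (i) non-atomicity, which is delicate when $\cZ_\om^*$ is countably infinite and $T_\om$ is discontinuous, handled by combining \eqref{cond RC} with \eqref{cond SP1}--\eqref{cond SP3} to spread mass under forward iteration; and (ii) fiberwise measurability of $\om\mapsto\nu_\om$, which is supplied by \eqref{cond M1}--\eqref{cond M6} together with Proposition~\ref{prop: random measure equiv}. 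Log-integrability of $\lm_\om$ is then immediate from the sandwich
\begin{align*}
\inf\cL_\om\ind_\om \,\le\, \lm_\om \,=\, \nu_{\sg(\om)}(\cL_\om\ind_\om) \,\le\, \norm{\cL_\om\ind_\om}_\infty,
\end{align*}
combined with Lemma~\ref{lem: log integrability}\eqref{lem: log integrability item i},\eqref{lem: log integrability item iii}.

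\emph{Part (2).} The invariant density is produced via cone contraction in the Hilbert projective metric. First, random Lasota--Yorke inequalities for $\cL_\om$ relative to variation and the conformal measure $\nu_\om$ (Section~\ref{sec:LY}) must be derived; these crucially exploit the partitions $\cP_{\om,n}(\hal,\hgm)$ of \eqref{cond P1}--\eqref{cond P2} and the contracting-potential hypothesis \eqref{cond CP1}--\eqref{cond CP2} to ensure that potential-driven growth is dominated by dynamical expansion on a positive-measure set of fibers. These inequalities define random cones (Section~\ref{sec:cones}) of strictly positive $\BV$ functions with controlled ratio of $\BV$-seminorm to $\nu_\om$-mass, almost-invariant under normalized iteration. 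The main obstacle is non-uniformity: since $T_\om$ may be discontinuous with countably many branches, neither the cone parameters nor the time required for strict contraction are uniform in $\om$. Following Buzzi, I would isolate a positive-measure ``good'' set $\Om_G\sub\Om$ (Section~\ref{sec:good}) on which a fixed number of iterates maps the cone strictly inside itself, and then show via Birkhoff's ergodic theorem (Section~\ref{sec:bad}) that excursions through ``bad'' fibers between returns to $\Om_G$ cannot accumulate quickly enough to destroy net contraction. A Hilbert--Birkhoff argument along the subsequence of return times (Section~\ref{sec:invariant}) then shows that $(\lm_{\sg^{-n}(\om)}^n)^{-1}\cL_{\sg^{-n}(\om)}^n\ind_{\sg^{-n}(\om)}$ is Cauchy in the Hilbert metric and converges along that subsequence to a cone element $q_\om$, necessarily satisfying $\cL_\om q_\om = \lm_\om q_{\sg(\om)}$. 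Log-integrability of $\inf q_\om$ and $\norm{q_\om}_\infty$ is inherited from the cone bounds together with \eqref{cond M3}--\eqref{cond M6}; uniqueness modulo $\nu$ follows from strict Hilbert contraction, since any second cone element satisfying the eigenequation must have vanishing Hilbert distance to $q_\om$.

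\emph{Part (3).} The $T$-invariance of $\mu_\om := q_\om\nu_\om$ is a direct verification: applying the conformal relation to $f = (g\circ T_\om)\, q_\om$ and using $\cL_\om((g\circ T_\om)\, q_\om) = g\cdot \cL_\om q_\om = \lm_\om\, g\, q_{\sg(\om)}$ yields
\begin{align*}
\lm_\om\, \mu_{\sg(\om)}(g)
= \nu_{\sg(\om)}(\lm_\om\, g\, q_{\sg(\om)})
= \nu_{\sg(\om)}\bigl(\cL_\om((g\circ T_\om)\, q_\om)\bigr)
= \lm_\om\, \nu_\om((g\circ T_\om)\, q_\om)
= \lm_\om\, \mu_\om(g\circ T_\om),
\end{align*}
so $\mu_\om(g\circ T_\om) = \mu_{\sg(\om)}(g)$, which is fiberwise $T$-invariance. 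Support on $\cJ$ follows because $\cS_\om$ is countable and $\nu_\om$ is non-atomic, hence $\nu_\om(X_\om) = 1$. Ergodicity of $\mu$ as a $T$-invariant random measure is then a consequence of the uniqueness from part~(2) together with the ergodicity of $(\Om,\sg,m)$: any $T$-invariant random probability measure absolutely continuous with respect to $\nu$ must have a density lying in the cone and satisfying $\cL_\om h_\om = \lm_\om h_{\sg(\om)}$, hence must equal $q$ modulo $\nu$, ruling out any nontrivial $T$-invariant decomposition of $\mu$.
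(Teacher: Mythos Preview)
Your outline tracks the paper's architecture closely, but there is one structural point you have inverted and one uniqueness claim that is too weak.

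\textbf{Measurability of $\nu_\om$.} You write that fiberwise measurability ``is supplied by \eqref{cond M1}--\eqref{cond M6} together with Proposition~\ref{prop: random measure equiv}.'' This is not how the paper proceeds, and it is not clear that it can be done this way. The Schauder--Tichonov argument in Proposition~\ref{prop: existence of conformal family} is carried out on the product $\prod_{\om\in\Om}\BV^*(X_\om)$ and produces only a \emph{family} $(\nu_\om)_{\om\in\Om}$ with no measurable dependence on $\om$; Proposition~\ref{prop: random measure equiv} is merely an identification, not a source of measurability. The paper explicitly postpones this question (see the opening paragraph of Section~\ref{sec:conformal}) and establishes measurability only in Section~\ref{sec:randommeasures}, \emph{after} $q_\om$ has been constructed, via the limit characterization
\[
\nu_\om(f_\om)=\lim_{k\to\infty}\frac{\norm{\cL_\om^{n_k} f_\om}_\infty}{\norm{\cL_\om^{n_k}\ind_\om}_\infty}
\]
of Lemma~\ref{lem: limit equality for nu}, which itself rests on the exponential convergence to $q_\om$. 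In short, parts~(1) and~(2) are not independent: the measurability and uniqueness claims in~(1) depend on the construction in~(2).

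\textbf{Uniqueness of $q$ and ergodicity.} Your cone-contraction argument gives uniqueness only among cone elements (positive BV functions), whereas the theorem claims uniqueness modulo $\nu$, i.e.\ in $L^1_\nu$. The paper closes this gap in Proposition~\ref{prop: uniqueness of q} by approximating an arbitrary $L^1_\nu$ eigenfunction by BV functions of controlled variation on a positive-measure set of fibers. This matters for your ergodicity argument in part~(3): the density of a restricted invariant measure $\mu|_A/\mu(A)$ is $q_\om\ind_A/\mu_\om(A)$, which need not lie in $\BV$, so ``lying in the cone'' is unjustified without the $L^1_\nu$ upgrade. The paper sidesteps this by deducing ergodicity (Proposition~\ref{prop: mu is ergodic}) from decay of correlations (Theorem~\ref{thm: dec of cor}), citing \cite[Proposition~4.7]{mayer_distance_2011}; your uniqueness route is viable but only once $L^1_\nu$ uniqueness is in hand.
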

 We also show that the operator cocycle is quasi-compact.
 \begin{theorem}\label{main thm: summary quasicompactness}
 	With the same hypotheses as Theorem~\ref{main thm: summary existence of measures and density},
 	for each $f\in\BV_\Om^l(I)$ there exists a measurable function $\Om\ni\om\mapsto B_f(\om)\in(0,\infty)$ and $\kp\in(0,1)$ such that for $m$-a.e. $\om\in\Om$ and all $n\in\NN$ we have
 	\begin{align*}
 		\norm{\lt(\lm_\om^n\rt)^{-1}\cL_{\om}^n f_\om - \nu_{\om}(f_\om)q_{\sg^n(\om)}}_\infty\leq B_f(\om)\norm{f_\om}_\BV\kp^n
 	\end{align*}
 	and 
 	\begin{align*}
 	\norm{\lt(\lm_{\sg^{-n}(\om)}^n\rt)^{-1}\cL_{\sg^{-n}(\om)}^n f_{\sg^{-n}(\om)} - \nu_{\sg^{-n}(\om)}(f_{\sg^{-n}(\om)})q_{\om}}_\infty
 	\leq B_f(\om)\norm{f_{\sg^{-n}(\om)}}_\BV\kp^n.
 	\end{align*}
 \end{theorem}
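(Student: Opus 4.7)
The plan is to combine the Hilbert-metric cone contraction of Lemma~\ref{lem: exp conv in C+ cone} with the equivariance relations $\cL_\om q_\om = \lm_\om q_{\sg(\om)}$ and $\nu_{\sg(\om)}(\cL_\om f_\om) = \lm_\om \nu_\om(f_\om)$ established in Theorem~\ref{main thm: summary existence of measures and density}. The standard device is to write a general $f \in \BV_\Om^1(I)$ as a difference of two functions that both lie inside the invariant cone from Section~\ref{sec:cones} and share the same $\nu_\om$-integral, so that their images under the normalized cocycle $\hcL_\om^n := \lt(\lm_\om^n\rt)^{-1}\cL_\om^n$ converge exponentially to the same multiple of $q_{\sg^n(\om)}$, and the difference accounts exactly for the quantity we need to control.

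First I would subtract off the $q$-component. Setting $h_\om := f_\om - \nu_\om(f_\om) q_\om$, iterating $\cL_\om q_\om = \lm_\om q_{\sg(\om)}$ gives
\[
\lt(\lm_\om^n\rt)^{-1}\cL_\om^n f_\om - \nu_\om(f_\om) q_{\sg^n(\om)} = \hcL_\om^n h_\om,
\]
so only $\norm{\hcL_\om^n h_\om}_\infty$ needs bounding. Next I would pick a measurable $M(\om) > 0$, comparable to $\norm{f_\om}_\BV$ times an $\om$-dependent factor arising from the cone's defining constants (essentially involving $\inf q_\om$, $\norm{q_\om}_\BV$, and the cone parameters of Section~\ref{sec:cones}), large enough that both $M(\om)q_\om + h_\om$ and $M(\om)q_\om - h_\om$ lie in the invariant cone. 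These two functions have identical $\nu_\om$-integral, namely $M(\om)$, so applying Lemma~\ref{lem: exp conv in C+ cone} to each and subtracting yields
\[
\norm{\hcL_\om^n h_\om}_\infty \leq B_f(\om)\,\norm{f_\om}_\BV\,\kp^n,
\]
with $B_f$ absorbing all the $\om$-dependent prefactors; measurability of $B_f$ follows from the measurability results of Section~\ref{sec:randommeasures}.

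The backward estimate is obtained by applying the forward statement at base point $\sg^{-n}(\om)$, which produces the constant $B_f(\sg^{-n}(\om))$ on the right-hand side. Since every $\om$-dependent quantity entering $B_f$ is built from functions whose logarithms are in $L^1_m(\Om)$ by Lemma~\ref{lem: log integrability}, the Birkhoff ergodic theorem forces $B_f$ to be tempered; hence $B_f(\sg^{-n}(\om)) \leq B_f(\om)\,e^{\ep n}$ for $m$-a.e.\ $\om$ and all sufficiently large $n$. Choosing $\ep < -\log\kp$ and slightly inflating $\kp$ gives the backward bound with the same $\om$-dependent constant $B_f(\om)$.

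The main obstacle is not any individual calculation but the simultaneous triple requirement on $B_f(\om)$: it must be (i) measurable in $\om$, (ii) linear in $\norm{f_\om}_\BV$, and (iii) tempered with respect to $\sg$ in a quantitative sense strong enough that pulling back along $\sg^{-n}$ can be absorbed into the rate $\kp$. Verifying (iii) entails tracking every $\om$-dependent constant arising from the random Lasota--Yorke inequalities of Section~\ref{sec:LY} and the cone construction of Section~\ref{sec:cones} through Lemma~\ref{lem: exp conv in C+ cone}, and checking that each is log-integrable via Lemma~\ref{lem: log integrability}; this is the crux of promoting the forward estimate to the backward one.
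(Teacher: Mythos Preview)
Your forward-direction argument via the decomposition $M(\om)q_\om \pm h_\om$ is a perfectly good alternative to the paper's route (which instead splits $f=f^+-f^-$, normalizes each piece to have $\nu_\om$-integral $1$, and applies Corollary~\ref{cor: exp conv to dens in sup norm}). Both lead to the same forward bound.

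The gap is in your treatment of the backward estimate. You propose to apply the forward bound at base point $\sg^{-n}(\om)$ and then absorb $B_f(\sg^{-n}(\om))$ into the rate using temperedness of $B_f$. But temperedness requires $\log B_f\in L^1_m(\Om)$, and the ingredients of $B_f$ do \emph{not} satisfy this. Tracing back through Corollary~\ref{cor: exp conv to dens in sup norm}, the constant involves the waiting time $N_2(\om)$ from Lemma~\ref{lem: exp conv in C+ cone} and, through it, the constant $C_\ep(\om)$ from Proposition~\ref{lem: LY ineq 2}. Proposition~\ref{lem: LY ineq 2} only asserts that $C_\ep(\om)$ is measurable and $m$-a.e.\ finite; it is built from tempered-type constants $C^{(2)}_\dl(\om)$, $C^{(3)}(\om)$, $C^{(4)}(\om)$ (themselves defined as suprema over orbits of quantities like $K^{(N_*)}_{\sg^j(\om)}e^{-\ep j}$), and there is no reason for $\log C_\ep$ to be integrable. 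Lemma~\ref{lem: log integrability} gives integrability for $\log S_\om^{(n)}$, $\log\inf\cL_\om^n\ind$, etc., but not for these derived waiting-time constants.

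The paper sidesteps this entirely: Lemma~\ref{lem: exp conv in C+ cone} is proved \emph{simultaneously} for all $|p|\le n$, with the reference point fixed at $\om$. The good/bad block decomposition, the choice of $y_*$, $d_*$, and the threshold $N_2(\om)$ are all computed once, relative to $\om$, using the two-sided Birkhoff limits \eqref{def j*1}--\eqref{def j*2}; the same $N_2(\om)$ then works uniformly for every start $\sg^p(\om)$ with $|p|\le n$. Theorem~\ref{thm: exp convergence of tr op} then just specializes to $p=0$ and $p=-n$, and no temperedness of $B_f$ is ever needed. To make your argument work you would need to go back to the proof of Lemma~\ref{lem: exp conv in C+ cone} and observe that the parameter $p$ is already built in.
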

From quasi-compactness we easily deduce the exponential decay of correlations. 
\begin{theorem}\label{main thm: decay of corr}
	With the same hypotheses as Theorem~\ref{main thm: summary existence of measures and density}, for every $h\in \BV_\Om^l(I)$ and every $\vkp\in(\kp,1)$, with $\kp$ as in Theorem~\ref{main thm: summary quasicompactness}, there exists a measurable function $\Om\ni\om\mapsto C_h(\om)\in(0,\infty)$ such that 
	for $m$-a.e. $\om\in\Om$,  every $n\in\NN$, and every $f\in L^1_{\mu}(\Om\times I)$  we have 
	\begin{align*}
		\absval{
		\mu_{\om}
		\lt(\lt(f_{\sg^{n}(\om)}\circ T_{\om}^n\rt)h_{\om} \rt)
		-
		\mu_{\sg^{n}(\om)}(f_{\sg^{n}(\om)})\mu_{\om}(h_{\om})
		}
		\leq C_h(\om)
		\norm{f_{\sg^n(\om)}}_{L^1_{\mu_{\sg^n(\om)}}}\norm{h_\om}_\BV\vkp^n,
	\end{align*} 
	and 
	\begin{align*}
		\absval{
		\mu_{\sg^{-n}(\om)}
		((f_\om\circ T_{\sg^{-n}(\om)}^n)h_{\sg^{-n}(\om)} )
		-
		\mu_\om(f_\om)\mu_{\sg^{-n}(\om)}(h_{\sg^{-n}(\om)})
		}
		\leq C_h(\om)%\mu_\om(|f_\om|)
		\norm{f_\om}_{L^1_{\mu_\om}}\norm{h_{\sg^{-n}(\om)}}_\BV\vkp^n.
	\end{align*}	
\end{theorem}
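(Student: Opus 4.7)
The plan is to reduce the decay of correlations to the quasi-compactness statement (Theorem~\ref{main thm: summary quasicompactness}) by rewriting each correlation in terms of the transfer operator via the conformality of $\nu$ and the intertwining identity $\cL_\om^n((f\circ T_\om^n)\cdot g) = f\cdot \cL_\om^n g$. Throughout write $\lm_\om^n:=\prod_{j=0}^{n-1}\lm_{\sg^j(\om)}$.

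\textbf{Forward direction.} First I would unpack the correlation using $d\mu_\om = q_\om\, d\nu_\om$ and the $n$-fold conformality $\nu_{\sg^n(\om)}(\cL_\om^n u) = \lm_\om^n\nu_\om(u)$ to obtain
\begin{align*}
\mu_\om\bigl((f_{\sg^n(\om)}\circ T_\om^n)h_\om\bigr)
&=\nu_\om\bigl((f_{\sg^n(\om)}\circ T_\om^n)h_\om q_\om\bigr)\\
&= \nu_{\sg^n(\om)}\bigl(f_{\sg^n(\om)}\cdot (\lm_\om^n)^{-1}\cL_\om^n(h_\om q_\om)\bigr).
\end{align*}
Adding and subtracting $\mu_\om(h_\om)q_{\sg^n(\om)} = \nu_\om(h_\om q_\om)q_{\sg^n(\om)}$ inside the integrand, and noting $\nu_{\sg^n(\om)}(f_{\sg^n(\om)} q_{\sg^n(\om)}) = \mu_{\sg^n(\om)}(f_{\sg^n(\om)})$, the correlation splits as
\begin{align*}
\mu_{\sg^n(\om)}(f_{\sg^n(\om)})\mu_\om(h_\om) + \nu_{\sg^n(\om)}\bigl(f_{\sg^n(\om)}\cdot E_n(\om)\bigr),
\end{align*}
where $E_n(\om):=(\lm_\om^n)^{-1}\cL_\om^n(h_\om q_\om) - \mu_\om(h_\om)q_{\sg^n(\om)}$. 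To convert the $L^1_\nu$-pairing into the required $L^1_\mu$-pairing, I divide and multiply by $q_{\sg^n(\om)}$ to get
\begin{align*}
|\nu_{\sg^n(\om)}(f_{\sg^n(\om)} E_n(\om))|
\leq \norm{f_{\sg^n(\om)}}_{L^1_{\mu_{\sg^n(\om)}}}\cdot\frac{\norm{E_n(\om)}_\infty}{\inf q_{\sg^n(\om)}}.
\end{align*}
Applying Theorem~\ref{main thm: summary quasicompactness} to $h\cdot q\in\BV_\Om^1(I)$ yields $\norm{E_n(\om)}_\infty \leq B_{hq}(\om)\,\norm{h_\om q_\om}_\BV\,\kp^n$, and a standard BV product estimate gives $\norm{h_\om q_\om}_\BV \leq \norm{h_\om}_\BV(\norm{q_\om}_\infty+\var(q_\om))$.

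\textbf{Backward direction.} The argument is entirely parallel: starting from $\mu_{\sg^{-n}(\om)}((f_\om\circ T_{\sg^{-n}(\om)}^n)h_{\sg^{-n}(\om)})$, the same unpacking applied over the fiber $\sg^{-n}(\om)\to\om$ produces
\begin{align*}
\nu_\om\bigl(f_\om\cdot (\lm_{\sg^{-n}(\om)}^n)^{-1}\cL_{\sg^{-n}(\om)}^n(h_{\sg^{-n}(\om)}q_{\sg^{-n}(\om)})\bigr),
\end{align*}
to which the backward sup-norm estimate of Theorem~\ref{main thm: summary quasicompactness} applies, converging to $\mu_{\sg^{-n}(\om)}(h_{\sg^{-n}(\om)})q_\om$ at rate $\kp^n$. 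The same $L^1_\mu$ trick (now dividing by $\inf q_\om$) and the BV product estimate for $h_{\sg^{-n}(\om)}q_{\sg^{-n}(\om)}$ produce the claimed inequality.

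\textbf{Main obstacle.} The genuine dynamical work has been done in Theorem~\ref{main thm: summary quasicompactness}; what remains is measurable bookkeeping along the orbit. The chief difficulty is that the prefactors $B_{hq}(\om)$, $\norm{q_\om}_\BV$, and $1/\inf q_{\sg^n(\om)}$ (or $1/\inf q_\om$ in the backward case) need not be uniformly bounded in $\om$, only tempered. Temperedness is guaranteed by the $m$-integrability of $\log\inf q$ and $\log\norm{q}_\infty$ from Theorem~\ref{main thm: summary existence of measures and density}(2), together with the analogous log-integrability of $\norm{q_\om}_\BV$ and of $B_{hq}$; this is precisely why the decay rate must be relaxed from $\kp$ to an arbitrary $\vkp\in(\kp,1)$, so the extra factor $(\vkp/\kp)^n$ can absorb the subexponential growth of these prefactors into a single measurable function $C_h(\om)$.
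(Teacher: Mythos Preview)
Your approach is essentially the paper's, with one packaging difference: the paper works throughout with the fully normalized operator $\hat\cL_\om f := q_{\sg(\om)}^{-1}(\lm_\om)^{-1}\cL_\om(fq_\om)$, for which the duality reads $\mu_\om\bigl((f_{\sg^n(\om)}\circ T_\om^n)h_\om\bigr) = \mu_{\sg^n(\om)}\bigl(f_{\sg^n(\om)}\,\hat\cL_\om^n h_\om\bigr)$ directly, so the correlation is $\mu_{\sg^n(\om)}\bigl(f_{\sg^n(\om)}(\hat\cL_\om^n h_\om - \mu_\om(h_\om))\bigr)$ and one just bounds $\|\hat\cL_\om^n h_\om - \mu_\om(h_\om)\ind\|_\infty \le C_h(\om)\|h_\om\|_\BV\vkp^n$ (this is Theorem~\ref{thm: exp convergence of tr op} in the paper, with the $\vkp$ already built in). Your unpacking to $\cL$ and $\nu$, followed by the manual division by $\inf q_{\sg^n(\om)}$ and absorption into $\vkp$, is precisely the computation carried out inside the proof of that $\hat\cL$-estimate; the paper just records it once as a separate lemma.

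One technical caveat worth flagging: you invoke Theorem~\ref{main thm: summary quasicompactness} for $hq$, asserting $hq\in\BV_\Om^1(I)$, but this need not hold---one has $\|h_\om q_\om\|_\BV \le 2\|h_\om\|_\BV\|q_\om\|_\BV$, and the product of an $L^1_m$ function with a merely log-integrable one is not in general $L^1_m$. The paper sidesteps this by tracing the $\kp^n$-estimate for $\~\cL_\om^n(h_\om q_\om)$ back to the cone-contraction machinery (Corollary~\ref{cor: exp conv to dens in sup norm}), which applies not to $\BV_\Om^1(I)$ but to the larger tempered class $\cD$, and $hq\in\cD$ whenever $h\in\BV_\Om^1(I)$ precisely because $\log\|q_\om\|_\BV$ and $\log\inf q_\om$ are $m$-integrable. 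With that adjustment your argument is complete.
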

Theorem~\ref{main thm: summary quasicompactness} is proven in Section~\ref{sec:exp pres}, while Theorems~\ref{main thm: summary existence of measures and density} and \ref{main thm: decay of corr} are proven in Section~\ref{sec:dec cor}.

Let $\cP_T(\Om\times I)\sub\cP_\Om(I)$ denote the set of all $T$-invariant random probability measures on $I$. 
For $\eta\in\cP_T(\Om\times I)$ we denote the (fiberwise) conditional information of the partition $\cZ_\om^*$ given $T_\om^{-1}\sB$, with respect to $\eta_\om$, by $I_{\eta_\om}$, which is given by 
\begin{align*}
	I_{\eta_\om}=I_{\eta_\om}[\cZ_{\om}^*|T_\om^{-1}\sB]:=-\log g_{\eta,\om},
\end{align*} 
where 
\begin{align*}
	g_{\eta,\om}:=\sum_{Z\in\cZ_\om^*}\ind_Z E_{\eta_\om}\lt(\ind_Z \rvert T_\om^{-1}\sB \rt)
\end{align*}
and $E_{\eta_\om}\lt(\ind_Z \rvert T_\om^{-1}\sB \rt)$ denotes the conditional expectation
of $\ind_Z$ with respect to $\eta_\om$ given the $\sg$-algebra $T_\om^{-1}\sB$.
\begin{definition}
	Given a contracting potential $\phi:\Om\times I\to\RR\cup\set{-\infty}$ we define the expected pressure of $\phi$ to be 
	\begin{align*}
		\cE P(\phi):=\int_\Om\log\lm_\om\, dm(\om),
	\end{align*} 
	and we say that a random measure $\eta\in\cP_T(\Om\times I)$ is a relative equilibrium state if 
	\begin{align*}
		\cE P(\phi)=\int_\Om
				\lt(
				\int_{I}I_{\eta_\om}+\phi_\om \, d\eta_\om
				\rt)
				 \, dm(\om).
	\end{align*}
\end{definition}
For countable partitions, it is possible that both the entropy and the integral of the potential could be infinite, which is why we state the following variational principle in terms of the conditional information as in \cite{liverani_conformal_1998}.
\begin{theorem}\label{main thm: eq states}
	The $T$-invariant, ergodic random probability measure $\mu$ produced in Theorem~\ref{main thm: summary existence of measures and density} is the unique relative equilibrium state for $\phi$, i.e. 
	\begin{align*}
		\cE P(\phi)
		=
		\int_\Om
		\lt(
		\int_{I}I_{\mu_\om}+\phi_\om \, d\mu_\om
		\rt)
		\, dm(\om)
		\geq
		\sup_{\eta\in \cP_T(\Om\times I)}\int_\Om
		\lt(
		\int_{I}I_{\eta_\om}+\phi_\om \, d\eta_\om
		\rt)
		\, dm(\om),
	\end{align*} 
	where equality holds if and only if $\eta_\om = \mu_\om$ for $m$-a.e. $\om\in\Om$.
\end{theorem}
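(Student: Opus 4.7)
The plan is a three-step strategy, extending the deterministic argument of Liverani--Saussol--Vaienti \cite{liverani_conformal_1998} to the quenched random setting: (i) compute the entropy-plus-potential integral exactly for $\mu$, (ii) obtain the upper bound for an arbitrary $\eta\in\cP_T(\Om\times I)$ via Jensen's inequality, and (iii) use equality in Jensen together with uniqueness of the conformal measure to conclude $\eta=\mu$.

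For (i), using $\mu_\om=q_\om\nu_\om$, the conformality $\cL_\om^*\nu_{\sg(\om)}=\lm_\om\nu_\om$, and the eigenfunction equation $\cL_\om q_\om=\lm_\om q_{\sg(\om)}$, a direct change-of-variables on each atom $Z\in\cZ_\om^*$ identifies the random Jacobian
\begin{align*}
g_{\mu,\om}(x)=\frac{g_\om^{(1)}(x)\,q_\om(x)}{\lm_\om\,q_{\sg(\om)}(T_\om x)},\qquad x\in Z,
\end{align*}
so that $I_{\mu_\om}(x)+\phi_\om(x)=\log\lm_\om+\log q_{\sg(\om)}(T_\om x)-\log q_\om(x)$. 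Integrating against $\mu_\om$ and then $m$, the $\log q$ terms telescope via $T$-invariance of $\mu$---legitimate because the bounds $\log\inf q_\om,\log\norm{q_\om}_\infty\in L^1_m(\Om)$ from Theorem~\ref{main thm: summary existence of measures and density}(2) dominate $\absval{\log q_\om}$---and one recovers $\int_\Om\int(I_{\mu_\om}+\phi_\om)\,d\mu_\om\,dm=\int_\Om\log\lm_\om\,dm=\cE P(\phi)$.

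For (ii), fix $\eta\in\cP_T(\Om\times I)$; if $\int\phi\,d\eta\,dm=-\infty$ the bound is trivial, so assume not. Since $\sum_{x\in T_\om^{-1}(y)}g_{\eta,\om}(x)=1$ almost surely, Jensen's inequality for the concave function $\log$ gives
\begin{align*}
\sum_{x\in T_\om^{-1}(y)}g_{\eta,\om}(x)\log\frac{g_\om^{(1)}(x)\,q_\om(x)}{\lm_\om\,g_{\eta,\om}(x)\,q_{\sg(\om)}(y)}\;\leq\;\log\frac{\cL_\om q_\om(y)}{\lm_\om\,q_{\sg(\om)}(y)}=0.
\end{align*}
Integrating against $\eta_{\sg(\om)}$ and using both the conditional-expectation identity $E_{\eta_\om}(h|T_\om^{-1}\sB)(x)=\sum_{x'\in T_\om^{-1}(T_\om x)}g_{\eta,\om}(x')h(x')$ and the invariance $T_\om\eta_\om=\eta_{\sg(\om)}$ to move the integral back to $X_\om$, then integrating over $m$ and cancelling the telescoping $\log q$ terms exactly as in (i), yields $\int_\Om\int(I_{\eta_\om}+\phi_\om)\,d\eta_\om\,dm\leq\cE P(\phi)$.

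For (iii), strict concavity of $\log$ combined with the identity $\sum_x g_\om^{(1)}(x)q_\om(x)/(\lm_\om q_{\sg(\om)}(y))=1$ forces $g_{\eta,\om}=g_{\mu,\om}$ for $\eta_\om$-a.e.\ $x$ and $m$-a.e.\ $\om$. Define $\tilde\nu_\om:=q_\om^{-1}\eta_\om$ (finite, since $q_\om$ is bounded below). The identity $g_{\eta,\om}=g_{\mu,\om}$ together with the conditional-expectation formula yields $\int f/q_\om\,d\eta_\om=\lm_\om^{-1}\int\cL_\om f/q_{\sg(\om)}\,d\eta_{\sg(\om)}$, which is exactly $\cL_\om^*\tilde\nu_{\sg(\om)}=\lm_\om\tilde\nu_\om$. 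Setting $c_\om:=\tilde\nu_\om(I)$ and $\tilde\nu'_\om:=\tilde\nu_\om/c_\om$, the normalization $\tilde\nu'$ is a random probability measure satisfying the eigenvalue equation with eigenvalue $(c_\om/c_{\sg(\om)})\lm_\om$; uniqueness of the conformal random probability measure (Theorem~\ref{main thm: summary existence of measures and density}(1)) then forces $\tilde\nu'=\nu$ and $c_\om/c_{\sg(\om)}=1$. Ergodicity of $\sg$ makes $c_\om$ constant and matching probability normalizations gives $c_\om=1$, so $\eta=q\nu=\mu$. The principal technical obstacles are: (a) handling $\phi_\om=-\infty$ on sets of positive $\eta$-measure so all preimage sums are well-defined; (b) justifying the telescoping $\log q$ cancellations via the integrability bounds of Theorem~\ref{main thm: summary existence of measures and density}(2) combined with Fubini; and (c) verifying the $\om$-measurability of $\tilde\nu'$ so that the uniqueness of Theorem~\ref{main thm: summary existence of measures and density}(1) is applicable.
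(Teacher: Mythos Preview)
Your three-step outline is correct in spirit, and steps (i) and (ii) essentially match the paper's Lemmas~\ref{lem: mu is an eq state} and \ref{lem: VP on fibers}. The paper replaces your direct Jensen by the equivalent inequality $\log x\le x-1$ together with an explicit truncation $F_\om(n)$ supported on finitely many $Z\in\cZ_\om^{(1)}$; this is exactly what resolves your obstacle (a), which you flag but leave open.

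There is, however, a genuine gap. Your Jensen inequality at a point $y$ uses $\cL_\om q_\om(y)=\lm_\om q_{\sg(\om)}(y)$, but $q_\om\in\BV(X_\om)$ and the eigenfunction equation holds only on $X_{\sg(\om)}=I\setminus\cS_{\sg(\om)}$. If $\eta\in\cP_T(\Om\times I)$ charges the countable singular set $\cS_\om$---which it may, since a periodic orbit through an endpoint of a monotonicity interval carries an invariant Dirac mass---your argument breaks down: $q_\om$ is undefined there and the right-hand side of Jensen is no longer zero. The paper addresses this in Lemma~\ref{lem: vp on Om x I} by decomposing $\eta_\om=c_\om\eta_\om^c+a_\om\eta_\om^a$ into non-atomic and purely atomic parts. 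The non-atomic part automatically has $\eta_\om^c(\cS_\om)=0$, so it lies in $\cP_T(\cJ)$ and the Jensen-type argument applies. For the atomic part one observes $I_{\eta_\om^a}=0$ $\eta_\om^a$-a.e.\ (ergodic invariant atomic measures are supported on single periodic orbits, along which the Jacobian is identically $1$), so the integrand reduces to $\phi_\om$ alone, and then the contracting-potential hypothesis \eqref{cond CP1} together with Lemma~\ref{lem: conv of pressure limits} yields the \emph{strict} inequality $\int_\Om\int_I\phi_\om\,d\eta_\om^a\,dm<\cE P(\phi)$. This last step uses \eqref{cond CP1} in an essential way and is not recoverable from Jensen.

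For the uniqueness step (iii), your route via uniqueness of the conformal measure is a legitimate alternative, but the paper argues more directly: once $g_{\eta,\om}=\hat g_\om$ $\eta_\om$-a.e., one has $\cL_{\eta,\om}=\hcL_\om$, and the exponential convergence Theorem~\ref{thm: exp convergence of tr op} gives $|\eta_\om(f)-\mu_\om(f)|=|\eta_{\sg^n(\om)}(\hcL_\om^n f-\mu_\om(f)\ind)|\le C_f(\om)\|f\|_\BV\vkp^n\to0$ for every $f\in\BV(X_\om)$, so $\eta_\om=\mu_\om$. This bypasses your obstacle (c) entirely.
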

The proof of Theorem~\ref{main thm: eq states} is presented in Section~\ref{sec:equil}.
\begin{remark}
	For $\eta\in \cP_T(\Om\times I)$ let $H_{\eta_\om}(\cZ_\om^*)$ denote the entropy of the partition $\cZ_\om^*$, which is given by 
	\begin{align*} 
	H_{\eta_\om}(\cZ_\om^*)
	:=-\sum_{Z\in\cZ_\om^*}\eta_\om(Z)\log \eta_\om(Z), 
	\end{align*}
	and let $h_\eta(T)$ denote the fiber entropy of $T$ with respect to $\eta$. Since $\cZ_\om^*$ is assumed to be generating, if $m(H_{\eta_\om}(\cZ_\om^*))<\infty$, then we have 
	\begin{align*}
	h_\eta(T)
	=\int_\Om H_{\eta_\om}(\cZ_\om^*|T_\om^{-1}\sB) \, dm(\om)
	=\int_\Om \int_I I_{\eta_\om} \, d\eta_\om\, dm(\om).
	\end{align*}
	Thus, we note that in the case that $m(H_{\mu_\om}(\cZ_\om^*))<\infty$, Theorem~\ref{main thm: eq states} reduces to the usual variational principle, that is 
	\begin{align*}
		\cE P(\phi)
		=
		h_{\mu}(T)+
		\int_\Om\int_{I}\phi_\om \, d\mu_\om\, dm(\om)
		\geq
		\sup_{\substack{\eta\in \cP_T(\Om\times I) \\ m(H_{\eta_\om}(\cZ_\om^*))<\infty}}
		h_{\eta}(T)+
		\int_\Om\int_{I}\phi_\om \, d\eta_\om\, dm(\om).
	\end{align*}
	See \cite{bogenschutz_eqilibrium_1993} for details.
	In particular, this holds if $\cZ_\om^*$ is finite for $m$-a.e. $\om\in\Om$.
\end{remark}
In what follows, we will assume throughout that we are working with a random weighted covering system $(\Om,\sg,m,I,T, \phi)$.
\section{Existence of a Conformal Family of Measures}\label{sec:conformal}

In this section we prove the existence of a conformal family of measures. We obtain these measures, as is often the case, via fixed point methods, see for example \cite{hofbauer_equilibrium_1982} or \cite[Chapter 3]{mayer_distance_2011}, for the deterministic and random settings respectively. In the continuous random setting, one considers the transfer operator $\cL_\om$ acting on the space of continuous bounded real-valued functions, and then applies the Schauder-Tichonov Fixed Point Theorem to the map $\nu_{\sg(\om)}\mapsto\cL_\om^*\nu_{\sg(\om)}/\cL_\om^*\nu(\ind_{\sg(\om)})$, where $\cL_\om^*$ is the dual operator,  to obtain a functional $\nu_\om$ such that $\cL_\om^*\nu_{\sg(\om)}=\lm_\om\nu_{\om}$ %Edit: Fixed fiber typo on nu in dual operator
where $\lm_\om=\nu_{\sg(\om)}(\cL_\om\ind_\om)$. In this setting, the functional $\nu_\om$ can be uniquely identified with a Borel probability measure via the Riesz Representation Theorem. However, to deal with discontinuities we work with BV functions rather than continuous functions, so our strategy differs from that of \cite{hofbauer_equilibrium_1982} or \cite{mayer_distance_2011} in that our functional $\Lm_\om$, which we obtain via the Schauder-Tichonov Fixed Point Theorem, must first be extended to the space of continuous functions so that we may identify $\Lm_\om$ with a measure $\nu_\om$ via Riesz's Theorem. We must then show that the measure $\nu_\om$ is equal to the functional $\Lm_\om$ when restricted to BV functions. After obtaining such a family of measures $(\nu_\om)_{\om\in\Om}$ it is natural to then ask whether this family is a random probability measure as in Definition~\ref{def: random prob measures}. As we do not currently have the tools to deal with the question of $m$-measurability, we will leave this task for later, specifically Section~\ref{sec:randommeasures}. For now, we simply prove that a conformal family of measures exists. 

The main result of this section is the following.
\begin{proposition}\label{prop: existence of conformal family}
	There exists a family $(\nu_\om)_{\om\in\Om}$ of non-atomic Borel probability measures, i.e. $\nu_\om\in\cP(I)$, with 
	\begin{align}\label{eq: conformal measure property}
		\int_I\cL_\om f\, d\nu_{\sg(\om)}=\lm_\om\int_I f\,d\nu_\om
	\end{align}
	for each $\om\in\Om$ and $f\in L^1_{\nu_\om}(I)$, where 
	\begin{align*}
		\lm_\om:=\nu_{\sg(\om)}(\cL_\om\ind_\om), 
	\end{align*}
	such that 
	\begin{enumerate}[(i)]
		\item $\nu_\om(X_\om)=1$,
		\item $\nu_\om(J)>0$	
	\end{enumerate}
	for each non-degenerate interval $J\sub I$.
\end{proposition}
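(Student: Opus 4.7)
The plan is to proceed in three main steps: a Schauder--Tichonov fixed-point argument on a product of BV-dual spaces to construct conformal functionals $(\Lm_\om)_\om$; an extension of each $\Lm_\om$ to a Borel measure $\nu_\om$ on $I$ via $C(I)$ and the Riesz representation theorem; and verification of non-atomicity and positivity on intervals using the contracting potential condition together with the random covering condition.

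For the fixed-point step, let $\sF_\om$ denote the set of positive linear functionals $\Lm$ on $\BV(X_\om)$ with $\Lm(\ind_\om) = 1$. Positivity plus this normalization forces $\absval{\Lm(f)} \leq \norm{f}_\infty$, so $\sF_\om$ embeds into $\prod_{f \in \BV(X_\om)} [-\norm{f}_\infty, \norm{f}_\infty]$ and is thus compact and convex in the topology of pointwise convergence on $\BV(X_\om)$; by Tychonov the product $\prod_{\om \in \Om} \sF_\om$ is itself compact and convex. Define
\begin{align*}
\lt(\Phi(\Lm)\rt)_\om(f) := \frac{\Lm_{\sg(\om)}(\cL_\om f)}{\Lm_{\sg(\om)}(\cL_\om \ind_\om)}, \qquad f \in \BV(X_\om).
\end{align*}
Assumption \eqref{cond M2} ensures $\cL_\om f \in \BV$, so the numerator is continuous in $\Lm_{\sg(\om)}$; assumption \eqref{cond M4} keeps the denominator bounded away from zero. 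Hence $\Phi$ is a continuous self-map of a compact convex set and Schauder--Tichonov delivers a fixed family $(\Lm_\om)_\om$ satisfying $\Lm_{\sg(\om)}(\cL_\om f) = \lm_\om \Lm_\om(f)$ for $f \in \BV(X_\om)$, with $\lm_\om := \Lm_{\sg(\om)}(\cL_\om \ind_\om)$.

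Since $\absval{\Lm_\om(f)} \leq \norm{f}_\infty$ and $\BV(X_\om) \cap C(I)$ is sup-norm dense in $C(I)$, each $\Lm_\om$ extends by continuity to a positive linear functional on $C(I)$ of norm one, which the Riesz representation theorem identifies with a Borel probability measure $\nu_\om \in \cP(I)$. To recover agreement with $\Lm_\om$ on all of $\BV(I)$, I would first establish non-atomicity of $\nu_\om$. The key computation uses $\cL_\om^n \ind_{\{x_0\}} = g_\om^{(n)}(x_0) \, \ind_{\{T_\om^n(x_0)\}}$ combined with iterated conformality of $\Lm_\om$ to bound
\begin{align*}
\Lm_\om\lt(\ind_{\{x_0\}}\rt) = \frac{g_\om^{(n)}(x_0) \, \Lm_{\sg^n(\om)}(\ind_{\{T_\om^n(x_0)\}})}{\lm_\om^n} \leq \frac{\norm{g_\om^{(n)}}_\infty}{\inf \cL_\om^n \ind_\om},
\end{align*}
where $\lm_\om^n = \Lm_{\sg^n(\om)}(\cL_\om^n \ind_\om) \geq \inf \cL_\om^n \ind_\om$; the contracting potential assumption \eqref{cond CP1} drives this quantity to zero exponentially. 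Sandwiching $\ind_{\{x_0\}}$ between continuous bump functions on shrinking neighborhoods transfers this estimate to non-atomicity of $\nu_\om$. Non-atomicity then permits one to approximate any interval indicator by continuous functions from above and below with vanishing error, producing $\nu_\om(J) = \Lm_\om(\ind_J)$ for each interval $J$; a monotone-class argument extends this to $\nu_\om(f) = \Lm_\om(f)$ for all $f \in \BV(I)$, after which the conformality identity \eqref{eq: conformal measure property} propagates from BV to $L^1_{\nu_\om}$ by density. Property (i) is then immediate from non-atomicity plus countability of $\cS_\om$, and property (ii) follows from \eqref{cond RC}: for a non-degenerate interval $J$ there exist $M_\om = M_\om(J)$ and $C > 0$ with $\cL_\om^{M_\om} \ind_J \geq C$ on $X_{\sg^{M_\om}(\om)}$, whence $\lm_\om^{M_\om} \nu_\om(J) = \nu_{\sg^{M_\om}(\om)}(\cL_\om^{M_\om} \ind_J) \geq C > 0$.

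The principal obstacle is reconciling the two routes from $\Lm_\om$ to a measure --- extension to $C(I)$ followed by Riesz on the one hand, and direct evaluation against BV indicators on the other --- which is precisely what forces the non-atomicity argument sketched above. The need to work with $\BV$ rather than $C(I)$ in order to accommodate the discontinuities of $T_\om$ is what distinguishes this construction from the continuous-map approaches of \cite{hofbauer_equilibrium_1982, mayer_distance_2011}, and the contracting-potential assumption is doing all the essential work in ensuring that the extension is unambiguous. Measurability in $\om$ of the family $(\nu_\om)_\om$ is not addressed at this stage and is deferred to Section~\ref{sec:randommeasures}.
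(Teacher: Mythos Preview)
Your overall strategy matches the paper's proof: Schauder--Tichonov on a product of BV-duals, extension to $C(I)$ and Riesz, then reconciliation of the functional $\Lm_\om$ with the measure $\nu_\om$ on $\BV$. Two of your steps, however, are stated in a way that does not quite work.

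First, the singleton estimate
\[
\Lm_\om(\ind_{\{x_0\}}) \leq \frac{\norm{g_\om^{(n)}}_\infty}{\inf \cL_\om^n\ind_\om}
\]
does not by itself transfer to non-atomicity of $\nu_\om$. To bound $\nu_\om(\{x_0\})$ you need a continuous $f \geq \ind_{\{x_0\}}$ with $\Lm_\om(f)$ small, and for that you must control $\Lm_\om(\ind_J)$ for a small \emph{interval} $J \ni x_0$, not merely $\Lm_\om(\ind_{\{x_0\}})$. The fix is immediate: the same mechanism (injectivity of $T_\om^n$ on $Z$) gives $\cL_\om^n\ind_Z \leq \norm{g_\om^{(n)}}_\infty$ for every $Z \in \cZ_\om^{(n)}$, hence $\Lm_\om(\ind_Z) \leq \norm{g_\om^{(n)}}_\infty / \inf\cL_\om^n\ind_\om$, and this is exactly the estimate the paper uses. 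From there one builds open neighborhoods of any point (including points of $\cS_\om$, where your singleton formula breaks down) as finite unions of such $Z$'s with arbitrarily small total $\Lm_\om$-mass; continuous bump functions supported in these neighborhoods then give the sandwich.

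Second, the phrase ``a monotone-class argument extends this to $\nu_\om(f) = \Lm_\om(f)$ for all $f \in \BV(I)$'' is too loose: $\Lm_\om$ is a linear functional on $\BV$, not a set-function on a $\sg$-algebra, so monotone-class reasoning is not the right tool. The paper's route is cleaner: once $\nu_\om$ and $\Lm_\om$ agree on interval indicators and on $C(I)$, they agree on piecewise continuous functions; any $f \in \BV(I)$ has only finitely many jumps of size exceeding $\ep$, so it can be approximated in $\norm{\cdot}_\infty$ by piecewise continuous functions, and both $\nu_\om$ and $\Lm_\om$ are $\norm{\cdot}_\infty$-continuous. Neither gap is fatal, but both need to be closed for the proof to be complete.
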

\begin{proof}
	For each $\om\in\Om$ let $\cM^1_\om$ denote the set of all positive linear functionals $\Gm\in\BV^*(X_\om)$ such that $\Gm(\ind_\om)=1$. We begin by noting that $\cM^1_\om$ is a convex and weak*-closed subset of the norm unit ball in $\BV^*(X_\om)$, which is weak*-compact by the Banach-Alaoglu Theorem. Thus we have that the set   
	$$
	\cM^1=\set{\Gm=(\Gm_\om)_{\om\in\Om}: \Gm_\om\in\cM_\om^1}.
	$$
	is a compact and convex subset of a locally convex topological vector space, namely the product  $\prod_{\om\in\Om}\BV^*(X_\om)$. Define the map $\Psi=(\Psi_\om)_{\om\in\Om}:\cM^1\to\cM^1$ by 
	\begin{align*}
		\Psi_\om(\Gm_{\sg(\om)})=\frac{\cL_\om^*\Gm_{\sg(\om)}}{\cL_\om^*\Gm_{\sg(\om)}(\ind_\om)}.
	\end{align*}
	Clearly $\Psi_\om$ is weak*-continuous, and thus $\Psi$ is continuous with respect to coordinate convergence, and thus $\Psi$ is continuous with respect to the product topology. Applying the Schauder-Tichonov Theorem 
	%to $\Psi_\om$ then 
	produces a $\Lm\in\cM^1$ which is a fixed point of $\Psi$. In other words, we have that there exists $\Lm=(\Lm_\om)_{\om\in\Om}$ such that 
	\begin{align}\label{eq: equivariance of functional}
		\cL_\om^*\Lm_{\sg(\om)}(f)=\lm_\om\Lm_\om(f)
	\end{align}
	for all $f\in\BV(X_\om)$, where 
	\begin{equation*}
		\lm_\om:=\Lm_{\sg(\om)}(\cL_\om\ind_\om).
	\end{equation*}
	In particular, iteration of \eqref{eq: equivariance of functional} gives that
	\begin{align}\label{eq: equivariance of functional for all n}
			\Lm_{\sg^n(\om)}(\cL_\om^nf)=\lm_\om^n\Lm_\om(f)
	\end{align}
	for each $n\in\NN$ and all $f\in\BV(X_\om)$, where 
	\begin{align}\label{eq: lm^n = prod lm}
		\lm_\om^n:=\Lm_{\sg^n(\om)}(\cL_\om^n\ind_\om)=\prod_{j=0}^{n-1}\lm_{\sg^j(\om)}.
	\end{align}
	Note that we must have 
	\begin{align}\label{eq: lm>0}
		\lm_\om>0.
	\end{align} 
	To see this we note that the random covering hypothesis \eqref{cond RC} implies the existence of some $M_\om(I)$ such that for all $n\geq M_\om(I)$, $\inf\cL_\om^n\ind_\om>0$, which implies that $\lm_\om^n>0$, which, in light of \eqref{eq: lm^n = prod lm}, in turn implies that $\lm_\om>0$.
	
	We can then extend $\Lm_\om$ to $\BV(I)$ by 
	$$
		\Lm_\om(h):=\Lm_\om(h\rvert_{X_\om}), \qquad h\in\BV(I).
	$$	
	By the compactness of $I$, each continuous function $f\in\cC(I)$ can be approximated in $\cC(I)$ by a sequence of functions in $\BV(I)$. Thus, each random continuous function can be approximated uniformly by random $\BV$ functions, which means that for each $\om\in\Om$ we can extend $\Lm_\om$ to $\cC(I)$. 
	By the Riesz Theorem we can then identify the functional $\Lm_\om$ with a random Borel probability measure $\nu_\om$ on $I$ which agrees with $\Lm_\om$ on $\cC(I)$.	
	
	Working towards showing that $\nu_\om$ agrees with $\Lm_\om$ on $\BV(X_\om)$, we first show that $\nu_\om(J)=\Lm_\om(\ind_J)>0$ for all non-degenerate intervals $J\sub I$. 
	\begin{claim}\label{prop: nu(J) positive}\label{clm: nu(J) positive}
		For all non-degenerate intervals $J\sub I$ we have 
		\begin{align*}
			\nu_\om(J)=\Lm_\om(\ind_J)>0.
		\end{align*}
	\end{claim}
	\begin{subproof}
	We first prove that $\Lm_\om(\ind_J)>0$ for all non-degenerate intervals $J\sub I$. 
	To that end, we let $M=M_\om(J)$ and $C=C_{\om,M_\om(J)}(J)$ be the constants coming from our random covering assumption \eqref{cond RC}.
	Then we note that \eqref{eq: equivariance of functional for all n} gives 
	\begin{align*}
		\lm_\om^{M}\Lm_\om(\ind_J)=\Lm_{\sg^{M}(\om)}(\cL_\om^{M}\ind_J)\geq C>0, 
	\end{align*}
	and hence, 
	\begin{align}
	\Lm_\om(\ind_J)\geq \frac{C}{\lm_\om^M}>0.
	\end{align}
	Now suppose that $\ol J=[a,b]$ and let $\ep>0$. In light of our contracting potential assumption \eqref{cond CP1}, choose $n\in\NN$ so large that 
	\begin{align}\label{eq: choice n large sup g/inf L1 small}
		\frac{\norm{g_\om^{(n)}}_\infty}{\inf\cL_\om^n\ind_\om}<\frac{\ep}{4}.
	\end{align}
	Note that since $T_\om^n$ is injective on each element of $\cZ_\om^{(n)}$, we must have that 
	\begin{align}\label{eq: L 1_Z leq g/inf L 1}
		\cL_\om^n \ind_Z\leq \norm{g_\om^{(n)}}_\infty
	\end{align} 
	for each $Z\in\cZ_\om^{(n)}$. 
	Thus, in light of \eqref{eq: equivariance of functional for all n}, \eqref{eq: choice n large sup g/inf L1 small}, and \eqref{eq: L 1_Z leq g/inf L 1},
	for all $n\in\NN$ and all $Z\in\cZ_\om^{(n)}$ we have that 
	\begin{align}\label{eq: Lm(1_Z)leq ep/4}
		\Lm_\om(\ind_Z)=(\lm_\om^n)^{-1}\Lm_{\sg^n(\om)}(\cL_\om^n\ind_Z)\leq \frac{\norm{g_\om^{(n)}}_\infty}{\inf\cL_\om^n\ind_\om}<\frac{\ep}{4}.
	\end{align}
	We now claim that there exist open intervals $V_a,V_b\sub I$ containing $a$ and $b$, respectively, such that $\Lm_\om(V_a)+\Lm_\om(V_b)<\ep$. To see this, we first construct $V_a$ by noting that \eqref{eq: Lm(1_Z)leq ep/4} taken together with the fact that $\sum_{Z\in\cZ_\om^{(n)}}\Lm_\om(\ind_Z)=1$ allows us to find a (not necessarily finite) sub-collection $\cQ_a\sub\cZ_\om^{(n)}$ such that
		\begin{enumerate}
			\item $\sum_{Z\in\cQ_a}\Lm_\om(\ind_Z)<\frac{\ep}{2}$,
			\item there exist $\al_a,\bt_a\in X_\om$
				\footnote{If $a\in X_\om$ we may take $\al_a=a=\bt_a$.}
			with $\al_a\leq a\leq \bt_a$ such that 
			\begin{equation*}
				\al_a,\bt_a\in\intr\lt(\bigcup_{Z\in\cQ_a} Z\rt).
			\end{equation*}
		\end{enumerate}
	Finally, take $V_a=\intr\lt(\bigcup_{Z\in\cQ_a} Z\rt)$. The construction of $V_b$ proceeds in a similar manner. Let $H=J\cup V_a\cup V_b$ and let  $f\in\BV(I)\cap \cC(I)$ such that 
	\begin{align*}
		\ind_J\leq f\rvert_H\leq 1 
	\end{align*}
	and $f\rvert_{H^c}=0$. 
	\footnote{One could prove the existence of such a function in a similar manner to Urysohn's Lemma to show that for $x<y$ there exists a continuous, increasing function (hence BV) from $[x,y]$ onto $[0,1]$. }
	Thus, we have 
	\begin{align*}
		\nu_\om(\ind_J)\leq \Lm_\om(f)\leq \Lm_\om(\ind_J)+\Lm_\om(\ind_{V_a})+\Lm_\om(\ind_{V_b}) < \Lm_\om(\ind_J)+\ep.
	\end{align*}
	As $\ep>0$ was arbitrary, we must have that $\nu_\om(J)\leq \Lm_\om(\ind_J)$ for all intervals $J\sub I$. To see the opposite inequality we simply note that for any disjoint intervals $A,B\sub I$ with $A\cup B=I$ we have 
	\begin{align*}
		1=\nu_\om(A)+\nu_\om(B)\leq \Lm_\om(\ind_A)+\Lm_\om(\ind_B)=1.
	\end{align*}
	Thus, taking $A=J$ and $B=J^c$ in the above equation, we have must in fact have $\Lm_\om(\ind_J)=1-\nu_\om(J^c)=\nu_\om(J)$ for all intervals $J\sub I$. 
	\end{subproof}
	Note that we have just shown in the proof of Claim~\ref{clm: nu(J) positive} that given any $a\in X_\om$, any $\ep>0$, and sufficiently large $n\in\NN$ we can find a collection $\cQ\sub\cZ_\om^{(n)}$ such that 
	\begin{enumerate}
		\item $a\in \cup_{Z\in\cQ}Z=:V$,
		\item $0<\nu_\om(V)<\ep$.
	\end{enumerate}
	In conjunction with the fact that $\nu_\om(X_\om)=\Lm_\om(\ind_\om)=1$, we see that $\nu_\om$ has no atoms, and in particular that the singular set has measure zero, i.e. 
	$$
		\nu_\om(\cS_\om)=\nu_\om(I\bs X_\om)=0.
	$$ 
	We are now able to show that $\nu_\om$ agrees with $\Lm_\om$ on $\BV(I)$. 
	\begin{claim}\label{clm: nu agrees Lm on BV} 
		For all $f\in\BV(I)$ we have 
		\begin{align*}
			\nu_\om(f)=\Lm_\om(f).
		\end{align*}
	\end{claim}
	\begin{subproof}
		Let $\ep>0$. Since the jump discontinuities of $f$ can be larger than $\ep$ on only a finite set $D_{\om,\ep}$, we can find a piecewise continuous function $f_\ep:I\to\RR$ such that $\norm{f-f_\ep}_\infty<\ep$. Since $f_\ep$ is piecewise continuous we have that $\nu_\om(f_\ep)=\Lm_\om(f_\ep)$, and hence
		\begin{align*}
			|\nu_\om(f)-\Lm_\om(f)|
			&\leq 
			|\nu_\om(f-f_\ep)|+|\nu_\om(f_\ep)-\Lm_\om(f_\ep)|+|\Lm_\om(f-f_\ep)|
			\\
			&\leq 2\norm{f-f_\ep}_\infty<2\ep.			
		\end{align*}  
		As $\ep>0$ is arbitrary, we must have that $\nu_\om(f)=\Lm_\om(f)$.
	\end{subproof}
	An immediate consequence of Claim~\ref{clm: nu agrees Lm on BV} is that 
	\begin{align}\label{eq: equivariance prop of nu}
		\nu_{\sg(\om)}(\cL_\om f)=\Lm_{\sg(\om)}(\cL_\om f)=\lm_\om\Lm_\om(f)=\lm_\om\nu_\om(f)
	\end{align}
	for all $f\in L^1_{\nu_\om}(I)$, which finishes the proof of Proposition~\ref{prop: existence of conformal family}. 
	\end{proof}

We can immediately see, cf. \cite{denker_existence_1991}, that the conformality of the family $(\nu_\om)_{\om\in\Om}$ produced in Proposition~\ref{prop: existence of conformal family} can be characterized equivalently as: for each $n\geq 1$ and each set $A$ on which $T_\om^n\rvert_A$ is one-to-one we have 
\begin{align*}
	\nu_{\sg^n(\om)}(T_\om^n(A))=\lm_\om^n\int_A e^{-S_{n,T}(\phi_\om)} \, d\nu_\om
\end{align*}
where 
\begin{align}\label{eq: lm = integral and product}
	\lm_\om^n=\nu_{\sg^n(\om)}(\cL_\om^n\ind_\om)=\prod_{j=0}^{n-1}\lm_{\sg^j(\om)}.
\end{align}
In particular, this gives that for each $n\geq 1$ and each $Z\in\cZ_\om^{(n)}$ we have 
\begin{align*}
	\nu_{\sg^n(\om)}(T_\om^n(Z))=\lm_\om^n\int_Z e^{-S_{n,T}(\phi_\om)} \, d\nu_\om.
\end{align*}

\begin{definition}
	We define the normalized operator (for any $n\geq 1$) $\~\cL_\om^n:\cB(X_\om)\to\cB(X_{\sg^n(\om)})$ by 
	$$
		\~\cL_\om^n(f):=(\lm_\om^n)^{-1}\cL_\om^n f, \qquad f\in\cB(X_\om).
	$$
\end{definition}
In light of \eqref{eq: equivariance prop of nu}
 we immediately see that this normalized operator satisfies the following:
\begin{align}\label{eq: equivariance prop of nu norm op}
	\nu_{\sg^n(\om)}(\~\cL_\om^nf)=\nu_\om(f), 
	\qquad 
	f\in L^1_{\nu_\om}(I).
\end{align}

\section{Lasota-Yorke Inequalities}\label{sec:LY}
In this section we prove Lasota-Yorke inequalities. We first prove a random version of the Lasota-Yorke inequality appearing in  \cite{liverani_conformal_1998}. However, the coefficients produced in this inequality are highly $\om$-dependent and may grow too much with further iteration. To rectify this, we then follow \cite{buzzi_exponential_1999} to prove a more refined Lasota-Yorke inequality, Proposition~\ref{lem: LY ineq 2}, with more manageable $\om$-dependent coefficients.

The following Lasota-Yorke type inequality gives a useful bound on the variation of the image of the transfer operator, which will be crucial in the sequel, and in fact implies that 
\begin{align*}
	\cL_\om^n(\BV(X_\om))\sub \BV(X_{\sg^n(\om)})
\end{align*}
for all $n\in\NN$ and $\om\in\Om$.
\begin{lemma}\label{lem: LY ineq}
	Suppose that $\hal\geq 0$ and $\hgm\geq1$ and that for each $\om\in\Om$ and $n\in\NN$ there exists a partition $\cP_{\om,n}(\hal,\hgm)$ which satisfies \eqref{cond P1} and \eqref{cond P2}. Then for all $\om\in\Om$, all $f\in\BV(X_\om)$, and all $n\in\NN$ there exist positive, measurable constants $A_\om^{(n)}$ and $B_\om^{(n)}$ such that 
	\begin{align*}
		\var(\cL_\om^nf)\leq A_\om^{(n)}\lt(\var(f)+B_\om^{(n)}\nu_\om(|f|)\rt), 
	\end{align*}
	where
	\begin{align*}
		A_\om^{(n)}:=(\hal+2\hgm+1)\norm{g_\om^{(n)}}_\infty 
		\quad\text{ and }\quad
		B_\om^{(n)}:=\frac{(\hal+2\hgm)\norm{\cL_\om^{M_{\om,n}} \ind_\om}_\infty}{\inf_{J(P_{\om,n})}g_\om^{(M_{\om,n})}}
	\end{align*}
	where $P_{\om,n}$ is some element of the finite partition $\cP_{\om,n}(\hal,\hgm)$, $J(P_{\om,n})$ is the monotonicity partition element coming from \eqref{eq: def of J(P)}, and $M_{\om,n}=M_\om(J(P_{\om,n}))$.
\end{lemma}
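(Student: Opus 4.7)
We adapt the deterministic scheme of \cite{liverani_conformal_1998} fiberwise. Using the disintegration
\[
\cL_\om^n f \;=\; \sum_{Z \in \cZ_\om^{(n)}} \bigl(g_\om^{(n)} f\bigr)\circ T_{\om,Z}^{-n}\,\ind_{T_\om^n(Z)},
\]
we estimate $\var(\cL_\om^n f)$ by subdividing $I$ according to the finite partition $\cP_{\om,n}(\hal,\hgm)$. The product-rule inequality for variation, applied on each monotonicity branch $Z$ meeting a given $P \in \cP_{\om,n}$, gives
\[
\var_{P \cap T_\om^n(Z)}\bigl(g_\om^{(n)}\cdot (f\circ T_{\om,Z}^{-n})\bigr) \;\leq\; \var_{Z\cap P}(g_\om^{(n)})\sup_{Z\cap P}|f| \;+\; \sup_{Z\cap P}g_\om^{(n)}\,\var_{Z\cap P}(f),
\]
plus boundary terms bounded by $\sup_{Z\cap P}(g_\om^{(n)}|f|)$ at each endpoint of $T_\om^n(Z)\cap P$. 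Summing over $Z \in \cZ_\om^{(n)}$ (the absolute convergence is supplied by \eqref{cond SP1}--\eqref{cond SP3} and Lemma~\ref{lem: LY setup}), using \eqref{cond P1} to bound $\var_P(g_\om^{(n)}) \leq \hal\|g_\om^{(n)}\|_\infty$ and \eqref{cond P2} to bound $\sum_{Z \cap P \neq \emptyset}\sup_{Z\cap P} g_\om^{(n)} \leq \hgm\|g_\om^{(n)}\|_\infty$, and then summing over $P \in \cP_{\om,n}$, we arrive at a bound of the form
\[
\var(\cL_\om^n f) \;\leq\; A_\om^{(n)}\var(f) \;+\; (\hal+2\hgm)\|g_\om^{(n)}\|_\infty \sum_{P\in\cP_{\om,n}}\inf_{J(P)}|f|,
\]
after using $\sup_P|f| \leq \inf_{J(P)}|f|+\var_P(f)$, valid because $J(P)\sub P$.

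\textbf{Trading the infimum for a conformal integral.} To replace $\inf_{J(P)}|f|$ by $\nu_\om(|f|)$ we invoke conformality (Proposition~\ref{prop: existence of conformal family}) together with \eqref{cond RC}. For each $P \in \cP_{\om,n}$, set $M := M_\om(J(P))$. Because $J(P) \in \cZ_\om^{(N_{\om,n}(P))}$ is a monotonicity element for $T_\om^{N_{\om,n}(P)}$, the further iterate $T_\om^M|_{J(P)}$ remains injective, and by Remark~\ref{rmk:coveringWFinitePart} we have $T_\om^M(J(P)) \bus X_{\sg^M(\om)}$. Conformality therefore yields
\[
1 \;=\; \nu_{\sg^M(\om)}\bigl(X_{\sg^M(\om)}\bigr) \;\leq\; \nu_{\sg^M(\om)}\bigl(T_\om^M(J(P))\bigr) \;=\; \lm_\om^M\int_{J(P)} e^{-S_{M,T}(\phi_\om)}\,d\nu_\om \;\leq\; \frac{\lm_\om^M\,\nu_\om(J(P))}{\inf_{J(P)}g_\om^{(M)}}.
\]
Combined with $\lm_\om^M = \nu_{\sg^M(\om)}(\cL_\om^M\ind_\om) \leq \|\cL_\om^M\ind_\om\|_\infty$ and $\inf_{J(P)}|f| \leq \nu_\om(|f|)/\nu_\om(J(P))$, this delivers
\[
\inf_{J(P)}|f| \;\leq\; \frac{\|\cL_\om^M\ind_\om\|_\infty}{\inf_{J(P)}g_\om^{(M)}}\;\nu_\om(|f|).
\]
Since $\cP_{\om,n}(\hal,\hgm)$ is finite, selecting $P_{\om,n}$ to maximize the right-hand coefficient and absorbing the (finite) cardinality of $\cP_{\om,n}$ into the prefactor collects the estimates into precisely the stated $A_\om^{(n)}$ and $B_\om^{(n)}$.

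\textbf{Main obstacle.} The only subtle point is the combinatorial bookkeeping that turns the per-branch, per-$P$ estimates into the clean coefficient $\hal+2\hgm+1$ on $\var(f)$: the boundary jumps introduced by cutting along $\cP_{\om,n}$, and the jumps at endpoints of $T_\om^n(Z)$, must be distributed so that each contributes to either the $\var_{Z\cap P}$ terms or the $\sup_{Z\cap P}|f|$ terms without being double-counted, while the countability of $\cZ_\om^{(n)}$ forces us to rely on Lemma~\ref{lem: LY setup} to rearrange freely. Measurability of $A_\om^{(n)}$ and $B_\om^{(n)}$ in $\om$ then reduces to checking the integrability hypotheses \eqref{cond M3}, \eqref{cond M5} and \eqref{cond M6}, which have been tailored precisely for this purpose.
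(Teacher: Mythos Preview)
Your overall scheme is the same as the paper's, but there is a genuine gap in how you pass from $\sum_P \inf_{J(P)}|f|$ to $\nu_\om(|f|)$. You bound $\inf_{J(P)}|f| \leq \nu_\om(|f|)/\nu_\om(J(P))$ with the \emph{global} integral on the right and then propose to ``absorb the (finite) cardinality of $\cP_{\om,n}$ into the prefactor''. That produces
\[
B_\om^{(n)} \;=\; \#\cP_{\om,n}(\hal,\hgm)\cdot\frac{(\hal+2\hgm)\,\norm{\cL_\om^{M_{\om,n}}\ind_\om}_\infty}{\inf_{J(P_{\om,n})}g_\om^{(M_{\om,n})}},
\]
which is \emph{not} the constant asserted in the lemma: the cardinality of $\cP_{\om,n}$ appears nowhere in the stated $B_\om^{(n)}$, and none of \eqref{cond M1}--\eqref{cond M6} control it. The remedy, which is what the paper uses, is the \emph{local} estimate
\[
\inf_P|f| \;\leq\; \frac{\nu_\om(|f|\,\ind_P)}{\nu_\om(P)}
\]
before summing; then $\sum_P \nu_\om(|f|\,\ind_P)=\nu_\om(|f|)$ and only $\max_P 1/\nu_\om(P)$ enters, yielding exactly the stated $B_\om^{(n)}$ with no cardinality factor.

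A smaller point: you assert that $T_\om^M|_{J(P)}$ is injective ``because $J(P)\in\cZ_\om^{(N_{\om,n}(P))}$'', but nothing forces $M=M_\om(J(P))\leq N_{\om,n}(P)$, and for $M>N_{\om,n}(P)$ injectivity can fail. Your conformality equality should therefore be an inequality ``$\leq$'' (still true by decomposing $J(P)$ into $\cZ_\om^{(M)}$-branches, and still sufficient). The paper sidesteps this entirely by bounding $\nu_\om(P)$ through the operator,
\[
\nu_\om(P)\;=\;(\lm_\om^M)^{-1}\nu_{\sg^M(\om)}\bigl(\cL_\om^M\ind_P\bigr)\;\geq\;(\lm_\om^M)^{-1}\inf_{J(P)}g_\om^{(M)},
\]
using only that $T_\om^M(J(P))\bus X_{\sg^M(\om)}$.
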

\begin{proof}
	Recall that Lemma~\ref{lem: LY setup} gives that $g_\om^{(n)}\in\BV(X_\om)$ for each $n\in\NN$ and $\om\in\Om$. Thus, for $f\in\BV(X_\om)$ and $n\geq 0$ we have  
	\begin{align}
		\var(\cL_\om^n f)
		&\leq 
		\var\lt(\sum_{Z\in\cZ_\om^{(n)}}\ind_{T_\om^n(Z)}\lt((g_\om^{(n)}f)\circ T_{\om,Z}^{-n}\rt)\rt)
		\nonumber\\
		&\leq 
		\sum_{Z\in\cZ_\om^{(n)}}\var\lt(\ind_{T_\om^n(Z)}\lt((g_\om^{(n)}f)\circ T_{\om,Z}^{-n}\rt)\rt)
		\nonumber\\
		&\leq 
		\sum_{Z\in\cZ_\om^{(n)}}\var\lt(g_\om^{(n)}f\ind_Z\rt)
		\nonumber\\
		&\leq 
		\sum_{Z\in\cZ_\om^{(n)}} \lt( \var_Z(g_\om^{(n)}f) +2\absval{\sup_Z g_\om^{(n)}f} \rt)
		\nonumber\\
		&\leq 
		\var(g_\om^{(n)}f)+2\sum_{Z\in\cZ_\om^{(n)}}\absval{\sup_Z g_\om^{(n)}f}.\label{LY ineq part 1}
	\end{align} 
	Thus, using \eqref{LY ineq part 1} and \eqref{cond P1}--\eqref{cond P2}, we get 
	\begin{align}
		\var(\cL_\om^nf)
		&\leq 
		\sum_{P\in\cP_{\om,n}(\hal,\hgm)}\lt(\var_{P}(g_\om^{(n)}f)+2\sum_{\substack{Z\in\cZ_\om^{(n)}\\ Z\cap P\neq \emptyset}}\absval{\sup_{Z\cap P}g_\om^{(n)}f}\rt)
		\nonumber\\
		&\leq 
		\sum_{P\in\cP_{\om,n}(\hal,\hgm)}\lt(\norm{g_\om^{(n)}}_\infty\var_{P}(f)+\norm{f\ind_{P}}_\infty\lt(\var_{P}(g_\om^{(n)})+2\sum_{\substack{Z\in\cZ_\om^{(n)}\\ Z\cap P\neq \emptyset}}\sup_{Z\cap P}g_\om^{(n)}\rt)\rt)
		\nonumber\\
		&\leq 
		\sum_{P\in\cP_{\om,n}(\hal,\hgm)}\lt(\norm{g_\om^{(n)}}_\infty\var_{P}(f)+(\hal+2\hgm)\norm{g_\om^{(n)}}_\infty\norm{f\rvert_{P}}_\infty\rt)
		\nonumber\\
		&\leq 
		\sum_{P\in\cP_{\om,n}(\hal,\hgm)}\lt((\hal+2\hgm+1)\norm{g_\om^{(n)}}_\infty\var_{P}(f)+(\hal+2\hgm)\norm{g_\om^{(n)}}_\infty \inf\absval{ f\rvert_{P}}\rt)
		\nonumber\\
		&\leq 
		\sum_{P\in\cP_{\om,n}(\hal,\hgm)}\lt(
		(\hal+2\hgm+1)\norm{g_\om^{(n)}}_\infty\var_{P}(f)+(\hal+2\hgm)\norm{g_\om^{(n)}}_\infty 
		\frac{\nu_\om(|f\rvert_{P}|)}{\nu_\om(P)}
		\rt).
		\label{final LY calc}
	\end{align}
	Note that Claim~\ref{clm: nu(J) positive} from the proof of Proposition~\ref{prop: existence of conformal family} gives that $\nu_\om(P)>0$ for all $P\in\cP_{\om,n}(\hal,\hgm)$.
	Recall that since $\cZ_\om^{(1)}$ is generating, see \eqref{cond GP}, for each $P\in\cP_{\om,n}(\hal,\hgm)$ there exists a least number $N_{\om,n}(P)\in\NN$ and a measurable choice 
	\begin{align*}
	J(P)\in\cZ_\om^{(N_{\om,n}(P))}
	\end{align*}
	with $J(P)\sub P$. 
	In light of \eqref{cond SP1} we have 
	\begin{align*}
	\inf \cL_\om^{M_{\om}(J(P))}\ind_{P}\geq \inf_{J(P)} g_\om^{M_{\om}(J(P))}>0
	\end{align*} 	
	for each $P\in\cP_{\om,n}(\hal,\hgm)$, and thus for each $n\in\NN$ and each $P\in\cP_{\om,n}(\hal,\hgm)$ we denote
	\begin{align*}
		\hat B_\om^{(n)}(P):=
		\frac{\norm{\cL_\om^{M_{\om}(J(P))} \ind_\om}_\infty}
		{\inf_{J(P)}g_\om^{(M_{\om}(J(P)))}}
	\end{align*}
	and let $P_{\om,n}$ be the element of $\cP_{\om,n}(\hal,\hgm)$ which maximizes this quantity, i.e. 
	\begin{align*}
		\hat B_\om^{(n)}(P_{\om,n}):=\max_{P\in\cP_{\om,n}}\hat B_\om^{(n)}(P).
	\end{align*}
	Then for each $P\in\cP_{\om,n}(\hal,\hgm)$ we have 
	\begin{align}\label{eq: lower bdd nu meas finite part elem}
	\nu_\om(\ind_{P})
	&=
	\nu_{\sg^{M_{\om}(J(P))}(\om)}(\~\cL_\om^{M_{\om}(J(P))} \ind_{P})
	\geq
	(\lm_\om^{M_{\om}(J(P))})^{-1} \inf_{J(P)}g_\om^{(M_{\om}(J(P)))}
	\nonumber\\
	&\geq 
	\left(\hat B_\om^{(n)}(P)\right)^{-1}
	\geq 
	\left(\hat B_\om^{(n)}(P_{\om,n})\right)^{-1}.
	\end{align}
	Inserting \eqref{eq: lower bdd nu meas finite part elem} into \eqref{final LY calc}, we have 
	\begin{align*}
		\var(\cL_\om^n f)
		\leq 
		A_\om^{(n)}\lt(\var(f)+B_\om^{(n)}\nu_\om(|f|)\rt),
	\end{align*}
	where 	
	$$
	M_{\om,n}:=M_\om(J(P_{\om,n}))
	$$ 
	denotes the covering time of $J(P_{\om,n})$ and
	\begin{align}\label{eq: def of A and B constants}
		A_\om^{(n)}:=(\hal+2\hgm+1)\norm{g_\om^{(n)}}_\infty 
		\quad\text{ and }\quad
		B_\om^{(n)}:=\frac{(\hal+2\hgm)\norm{\cL_\om^{M_{\om,n}} \ind_\om}_\infty}{\inf_{J(P_{\om,n})}g_\om^{(M_{\om,n})}},
	\end{align}
	which finishes the proof.
\end{proof}
Define the random constants
\begin{align}\label{eq: def of Q and K}
	Q_\om^{(n)}:=\frac{A_\om^{(n)}}{\inf\cL_\om^n\ind_\om}
	\quad \text{ and }\quad
	K_\om^{(n)}:=Q_\om^{(n)}B_\om^{(n)}. %Edit: Removed max of QB and 55^n and placed 55^n later to apply to L^n
\end{align}
In light of our contracting potential assumption \eqref{cond CP1} we see that $Q_\om^{(n)}\to 0$ exponentially quickly for each $\om\in\Om$. 
\begin{remark}\label{rem: optimal P1 and P2 constants}
	In order to achieve the exponential decay of the random constants $Q_\om^{(n)}$ it would suffice to replace the fixed numbers $\hal\geq 0$ and $\hgm\geq 1$ with increasing polynomials $\hal(n),\hgm(n)$. 	
\end{remark}
The following proposition now follows from Lemma~\ref{lem: log integrability} and assumptions \eqref{cond M5} and \eqref{cond M6}. \begin{proposition}\label{prop: log integr of Q and K}
	For each $n\in\NN$, $\log^+ Q_\om^{(n)}, \log K_\om^{(n)}\in L^1_m(\Om)$ 
\end{proposition}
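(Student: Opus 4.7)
The plan is to unpack the definitions of $Q_\om^{(n)}$ and $K_\om^{(n)}$ and bound $\log^+$ of each term by quantities whose $m$-integrability is either tautological or already established in Lemma~\ref{lem: log integrability} and the structural assumptions \eqref{cond M5}, \eqref{cond M6}. Measurability in $\om$ of all quantities involved follows from \eqref{cond M1}, \eqref{cond M2}, together with the facts that $\cP_{\om,n}(\hal,\hgm)$ is a finite partition and $P_{\om,n}$ is selected as a measurable maximizer (so the auxiliary index $M_{\om,n}=M_\om(J(P_{\om,n}))$ is a measurable integer), so the real work is only the integrability estimates.

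First I would handle $Q_\om^{(n)}$. From its definition,
\begin{align*}
\log Q_\om^{(n)} = \log(\hal+2\hgm+1) + \log\norm{g_\om^{(n)}}_\infty - \log\inf\cL_\om^n\ind_\om,
\end{align*}
so
\begin{align*}
\log^+ Q_\om^{(n)} \leq \log^+(\hal+2\hgm+1) + \log^+\norm{g_\om^{(n)}}_\infty + \bigl(\log\inf\cL_\om^n\ind_\om\bigr)^-.
\end{align*}
The first summand is a constant, the second lies in $L^1_m(\Om)$ by Lemma~\ref{lem: log integrability}\eqref{lem: log integrability item iv}, and the third lies in $L^1_m(\Om)$ by Lemma~\ref{lem: log integrability}\eqref{lem: log integrability item i} (which says $\log\inf\cL_\om^n\ind_\om\in L^1_m(\Om)$, and hence so does its negative part). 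This gives the first claim.

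For the second claim, since $K_\om^{(n)}\geq 6^n\geq 1$ we have $\log K_\om^{(n)}\geq 0$, and writing $K_\om^{(n)}\leq Q_\om^{(n)}B_\om^{(n)} + 6^n \leq 2\max\{Q_\om^{(n)}B_\om^{(n)},6^n\}$ yields
\begin{align*}
\log K_\om^{(n)} \leq \log 2 + \log^+\bigl(Q_\om^{(n)}B_\om^{(n)}\bigr) + n\log 6 \leq \log 2 + n\log 6 + \log^+ Q_\om^{(n)} + \log^+ B_\om^{(n)}.
\end{align*}
The first two terms are constants in $\om$, and the third has just been shown to be integrable. It only remains to handle $\log^+ B_\om^{(n)}$. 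Because $P_{\om,n}$ is (by construction) the maximizer of $\hat B_\om^{(n)}(P)$ over the finite partition $\cP_{\om,n}(\hal,\hgm)$, we have
\begin{align*}
\log B_\om^{(n)} \leq \log(\hal+2\hgm) + \max_{P\in\cP_{\om,n}(\hal,\hgm)}\log\norm{\cL_\om^{M_\om(J(P))}\ind_\om}_\infty - \min_{P\in\cP_{\om,n}(\hal,\hgm)}\log\inf_{J(P)}g_\om^{(M_\om(J(P)))},
\end{align*}
and the right-hand side is $m$-integrable by \eqref{cond M6} and \eqref{cond M5} respectively. Passing to positive parts completes the argument.

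There is no genuine obstacle here; the proof is essentially a bookkeeping exercise to verify that every piece appearing in $Q_\om^{(n)}$ and $K_\om^{(n)}$ has been designed so that its logarithm is controlled by one of the standing integrability hypotheses. The only point requiring any care is that the covering time $M_{\om,n}$ depends on $\om$ via the choice of $P_{\om,n}$, which is exactly the reason \eqref{cond M5} and \eqref{cond M6} were formulated as uniform bounds via $\max_P$ and $\min_P$ over the finite partition $\cP_{\om,n}(\hal,\hgm)$; this is noted in the remark immediately preceding the proposition.
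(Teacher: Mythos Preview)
Your argument is correct and is precisely the detailed unpacking of what the paper asserts in one line, namely that the proposition follows from Lemma~\ref{lem: log integrability} together with assumptions \eqref{cond M5} and \eqref{cond M6}. There is no difference in approach; you have simply written out the bookkeeping the paper leaves to the reader.
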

Now, considering the normalized operator, we arrive at the following immediate corollary.
\begin{corollary}\label{cor: normalized LY ineq 1}
	For all $\om\in\Om$, all $f\in\BV(X_\om)$, and all $n\in\NN$ we have 
	\begin{align*}
		\var(\~\cL_\om^nf)\leq Q_\om^{(n)}\var(f)+K_\om^{(n)}\nu_\om(|f|).
	\end{align*}
\end{corollary}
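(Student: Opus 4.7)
The corollary is an immediate consequence of Lemma~\ref{lem: LY ineq} together with the definitions of the normalized operator $\~\cL_\om^n$ and the random constants $Q_\om^{(n)}$, $K_\om^{(n)}$. The plan is to start from the Lasota--Yorke bound
\begin{align*}
\var(\cL_\om^n f)\leq A_\om^{(n)}\bigl(\var(f)+B_\om^{(n)}\nu_\om(|f|)\bigr),
\end{align*}
divide both sides by $\lm_\om^n$ (which is strictly positive, as observed in \eqref{eq: lm>0}), and then relate the resulting coefficient to $Q_\om^{(n)}$.

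The key observation, which is essentially the only nontrivial point, is the lower bound $\lm_\om^n\geq \inf\cL_\om^n\ind_\om$. This follows directly from the identity $\lm_\om^n=\nu_{\sg^n(\om)}(\cL_\om^n\ind_\om)$ established in \eqref{eq: lm = integral and product} and the fact that $\nu_{\sg^n(\om)}$ is a probability measure on $I$. Consequently
\begin{align*}
\frac{A_\om^{(n)}}{\lm_\om^n}\leq \frac{A_\om^{(n)}}{\inf \cL_\om^n\ind_\om}=Q_\om^{(n)},
\end{align*}
and multiplying this by $B_\om^{(n)}$ yields $A_\om^{(n)}B_\om^{(n)}/\lm_\om^n\leq Q_\om^{(n)}B_\om^{(n)}\leq K_\om^{(n)}$ by the definition of $K_\om^{(n)}$ in \eqref{eq: def of Q and K}.

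Putting these estimates together gives
\begin{align*}
\var(\~\cL_\om^n f)
=(\lm_\om^n)^{-1}\var(\cL_\om^n f)
\leq \frac{A_\om^{(n)}}{\lm_\om^n}\var(f)+\frac{A_\om^{(n)}B_\om^{(n)}}{\lm_\om^n}\nu_\om(|f|)
\leq Q_\om^{(n)}\var(f)+K_\om^{(n)}\nu_\om(|f|),
\end{align*}
as desired. There is no genuine obstacle here; the argument is just routine bookkeeping using conformality of $\nu$ to pass between $\lm_\om^n$ and $\inf\cL_\om^n\ind_\om$. The only subtlety worth flagging explicitly is that the inclusion of the $6^n$ factor in the definition of $K_\om^{(n)}$ is not needed for the inequality itself (the bound $Q_\om^{(n)}B_\om^{(n)}\leq K_\om^{(n)}$ is trivial), but presumably anticipates its use in later cone-contraction arguments, so no further work is required at this step.
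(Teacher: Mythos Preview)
Your proof is correct and matches the paper's approach exactly; the paper in fact gives no explicit proof at all, simply labeling this an ``immediate corollary'' of Lemma~\ref{lem: LY ineq} and the definitions in \eqref{eq: def of Q and K}. The only substantive step you spell out --- the bound $\lm_\om^n\geq \inf\cL_\om^n\ind_\om$ from conformality of $\nu$ --- is precisely the implicit ingredient the paper has in mind.
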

\begin{definition}%Edit: Changed the placement of def of N_* to properly define L_om^n as in ORLY
	In view of our contracting potential assumption \eqref{cond CP2} and the submultiplicativity of $\big\|g_\om^{(n)}\big\|_{\infty}/\inf\cL_\om^n\ind_\om$, we let $N_*\in\NN$ be the minimum integer $n\geq 1$ such that
	\begin{align}\label{eq: def of N}
		-\infty< \int_\Om \log Q_\om^{(n)} dm(\om) <0, 
	\end{align}
	and we define the number
	\begin{align}\label{eq: def of xi}
		\xi:=-\frac{1}{N_*}\int_\Om \log Q_\om^{(N_*)} dm(\om).
	\end{align}
\end{definition}

In light of Corollary~\ref{cor: normalized LY ineq 1} we may now find an appropriate upper bound for the BV norm of the normalized transfer operator. 
\begin{lemma}\label{lem: buzzi LY1}%Edit: Changed to match version in ORLY so that we do in fact get that L^n is the product of Ls along the cocycle and thus rho defined below is well defined. 
	
%	For each $n\in\NN$ and each $\om\in\Om$ there exists a measurable constant $L_\om^{(n)}>55^n$ with $\log L_\om^{(n)}\in L^1_m(\Om)$ such that for all $f\in\BV(X_\om)$ we have 
%	\begin{align}\label{eq: BV norm bound using L}
%		\norm{\~\cL_\om^n f}_\BV \leq L_\om^{(n)}\lt(\var(f)+\nu_\om(|f|)\rt).
%	\end{align}
		There exists a measurable function $\om\mapsto L_{\om}\in(0,\infty)$ with $\log L_{\om}\in L^1_m(\Om)$ such that for all $f\in\BV(I)$ and each $1\leq n\leq N_*$ we have 
	\begin{align}\label{eq: BV norm bound using L}
		\norm{\~\cL_{\om}^n f}_\BV \leq L_{\om}^n\lt(\var(f)+\nu_\om\lt(|f|\rt)\rt),
	\end{align}
	where 
	\begin{align*}
		L_\om^n=L_\om L_{\sg(\om)}\cdots L_{\sg^{n-1}(\om)}\geq 55^n.
	\end{align*}
\end{lemma}
\begin{proof}	
	Corollary~\ref{cor: normalized LY ineq 1} and \eqref{eq: equivariance prop of nu} give 
	\begin{align*}
		\norm{\~\cL_\om^n f}_\BV
		&=
		\var(\~\cL_\om^n f)+\norm{\~\cL_\om^n f}_\infty
		\leq 
		2\var(\~\cL_\om^n f)+\nu_{\sg^n(\om)}(\~\cL_\om^n f)
		\\
		&\leq
		2\lt(Q_\om^{(n)}\var(f)+K_\om^{(n)}\nu_{\om}(|f|)\rt)+\nu_\om(|f|)
		\\
		&\leq
		2Q_{\om}^{(n)}\var(f)+\lt(2K_{\om}^{(n)}+1\rt)\nu_\om(|f|).
	\end{align*}
	Now, set
	\begin{equation}\label{eq: L geq 6}
		\~L_{\om}^{(n)}:=\max\set{55, 2Q_{\om}^{(n)}, 2K_{\om}^{(n)}+1}
		\footnote{$55$ is chosen because $\sfrac{1}{\log 55}<\sfrac{1}{4}$, which will be useful in Section~\ref{sec:bad}.}.
	\end{equation}
	Finally, setting 
	\begin{align}\label{eq: defn of L_om^n}
		L_\om:=\max\set{\~L_{\om}^{(j)}: 1\leq j\leq N_*} %Edit: Changed L^(n) to L^n throughout text
	\end{align}
	and 
	\begin{align*}
		L_\om^n:=\prod_{j=0}^{n-1}L_{\sg^j(\om)}
	\end{align*}
	for all $n\geq 1$ suffices. The $\log$-integrability of $L_\om^n$ follows from Proposition~\ref{prop: log integr of Q and K}.
\end{proof}
\begin{comment}
\begin{remark}\label{rem: L is submult}
	Note that it follows from \eqref{eq: defn of L_om^n} that $L_\om^{(n)}$ is submultiplicative for each $n$, and thus we have 
	\begin{align}\label{eq: L^n is submultiplicative}
		L_\om^{(n)}\leq L_\om\cdots L_{\sg^{n-1}(\om)}.
	\end{align}  
\end{remark}

content...
\end{comment}
\begin{definition}
In light of Lemma~\ref{lem: buzzi LY1} we now define the number
\begin{align}\label{eq: def of rho}
	\rho:=\frac{1}{N_*}\int_\Om \log L_\om^{N_*} dm(\om). %Changed K to L 
	%\rho:=\frac{1}{N_*}\int_\Om \log K_\om^{(N_*)} dm(\om).
\end{align}
\end{definition}
\begin{remark}\label{rem: cond M5' and M6' and tilde N*}
	We would like to note that the hypotheses \eqref{cond M5} and \eqref{cond M6} may be difficult to check for examples. In the sequel we will only need that these conditions hold for $n=N_*$. Thus, we will actually prove our results under the following weaker assumptions: 
	\begin{flalign} 
	&\min_{P\in\cP_{\om,N_*}}\log\inf_{J(P)}g_\om^{M_\om(J(P))}\in L^1_m(\Om), 
	\tag{M5'}\label{cond M5'} 
	&\\		
	& \max_{P\in\cP_{\om,N_*}}\log\norm{\cL_\om^{M_\om(J(P))}\ind_\om}_\infty\in L^1_m(\Om).
	\tag{M6'}\label{cond M6'} 
	%\\
	%&\inf_{P\in\cP_{\om,N_*}}\log C_{\om,M_\om(J(P))}(J(P))\in L^1_m(\Om), 
	%\tag{M5RC'}\label{cond M5RC'} 
	\end{flalign}
	Of course this means that we need only to consider \eqref{cond P1} and \eqref{cond P2} for $n=N_*$ as well. 
	\begin{comment}
	Furthermore, in view of Remark~\ref{rem: optimal P1 and P2 constants}, if we define
	$$
		\tilde Q_\om^{(n)}=\frac{\tilde A_\om^{(n)}}{\inf\cL_\om^n\ind_\om},
	$$ 
	where $\tilde A_\om^{(n)}$ is defined by \eqref{eq: tilde A^n def}, and let $\tilde N_*=\tilde N_*(\hat\al,\hgm)$ be the minimum integer $n\geq 1$ such that
	\begin{align}\label{eq: def of tilde N*}
	\int_\Om \log \tilde Q_\om^{(n)} dm(\om) <0, 
	\end{align} 
	then it suffices to check hypotheses \eqref{cond M5'} and \eqref{cond M6'} (and thus also \eqref{cond P1} and \eqref{cond P2}) with $N_*$ replaced with $\tilde N_*$.
	
	content...
	\end{comment}
\end{remark}
The constants $K_\om^{(n)}$ from Corollary~\ref{cor: normalized LY ineq 1} grow to infinity with $n$, making them difficult to use. We therefore now follow Buzzi \cite[Lemma 2.1]{buzzi_exponential_1999} to  prove the main result of this section, a similar, but more useful Lasota-Yorke type inequality. 

\begin{proposition}\label{lem: LY ineq 2}
	For each $\ep>0$ there exists a measurable, $m$-a.e. finite function $C_\ep(\om)>0$ such that for $m$-a.e. $\om\in\Om$, each $f\in\BV(X_{\sg^{-n}(\om)})$, and all $n\in\NN$ we have 
	\begin{align*}
		\var(\~\cL_{\sg^{-n}(\om)}^nf)\leq C_\ep(\om)e^{-(\xi-\ep)n}\var(f)+C_\ep(\om)\nu_{\sg^{-n}(\om)}(|f|).
	\end{align*}
\end{proposition}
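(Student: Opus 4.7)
The plan is to iterate Corollary~\ref{cor: normalized LY ineq 1} in blocks of $N_*$ iterates and then combine with Birkhoff's ergodic theorem. Write $n = kN_* + r$ with $0 \leq r < N_*$, and decompose
\[
\~\cL_{\sg^{-n}(\om)}^n = \~\cL_{\sg^{-r}(\om)}^r \circ \~\cL_{\sg^{-n}(\om)}^{kN_*}.
\]
For the block part, set $\alpha_j := \sg^{jN_*}(\sg^{-n}(\om))$ for $j=0,\ldots,k$, so $\alpha_0 = \sg^{-n}(\om)$, $\alpha_k = \sg^{-r}(\om)$, and $\~\cL_{\sg^{-n}(\om)}^{kN_*} = \~\cL_{\alpha_{k-1}}^{N_*} \circ \cdots \circ \~\cL_{\alpha_0}^{N_*}$. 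Applying Corollary~\ref{cor: normalized LY ineq 1} to each block, and using the $\nu$-equivariance \eqref{eq: equivariance prop of nu norm op} to bound $\nu_{\alpha_j}(|\~\cL_{\alpha_0}^{jN_*} f|) \leq \nu_{\sg^{-n}(\om)}(|f|)$, a straightforward induction gives
\begin{align*}
\var(\~\cL_{\sg^{-n}(\om)}^{kN_*} f) \leq \prod_{j=0}^{k-1} Q_{\alpha_j}^{(N_*)} \var(f) + \left[\sum_{j=0}^{k-1}\prod_{\ell=j+1}^{k-1} Q_{\alpha_\ell}^{(N_*)} \cdot K_{\alpha_j}^{(N_*)}\right] \nu_{\sg^{-n}(\om)}(|f|).
\end{align*}

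After reindexing via $i = k - j$ (and similarly in the inner product), the $Q$-product and $K$-sum are expressed in terms of the backward orbit $\tilde\alpha_i := \sg^{-iN_*}(\sg^{-r}(\om))$ of the ergodic, $m$-preserving transformation $\sg^{-N_*}$. Birkhoff's ergodic theorem applied to $\log Q^{(N_*)} \in L^1_m(\Om)$ (integrability from Proposition~\ref{prop: log integr of Q and K}, mean $-N_*\xi$ from \eqref{eq: def of xi and rho}) yields a measurable, $m$-a.e.\ finite $K_\ep$ with $\prod_{i=1}^{k} Q_{\tilde\alpha_i}^{(N_*)} \leq K_\ep(\sg^{-r}(\om))\, e^{-N_*(\xi-\ep)k}$ for every $k \in \NN$. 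Since $N_* k \geq n - N_*$, this controls the first term by a constant multiple of $e^{-(\xi-\ep)n}\var(f)$.

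For the second term, the standard consequence of Birkhoff that $\tfrac{1}{m}\log K_{\sg^{-mN_*}(\om')}^{(N_*)} \to 0$ a.e.\ (since $\log K^{(N_*)} \in L^1_m$) combined with the $Q$-product bound shows that the $m$-th summand of the (reindexed) infinite series $\sum_{m\geq 1}\prod_{i=1}^{m-1} Q_{\tilde\alpha_i}^{(N_*)} K_{\tilde\alpha_m}^{(N_*)}$ decays like $e^{-(N_*\xi-\ep)m}$ up to a subexponential factor, so the entire series converges to a measurable, $m$-a.e.\ finite quantity $S_\ep(\sg^{-r}(\om))$. The remaining $r < N_*$ iterates are absorbed via one more application of Corollary~\ref{cor: normalized LY ineq 1}, introducing the factors $Q_{\sg^{-r}(\om)}^{(r)}$ and $K_{\sg^{-r}(\om)}^{(r)}$ (both measurable and $m$-a.e.\ finite); taking the maximum over the finite range $r \in \{0,\dots,N_*-1\}$ bundles all contributions into a single measurable $C_\ep(\om)$. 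The main technical obstacle is the $m$-a.e.\ convergence of the inhomogeneous series $\sum_m \prod Q \cdot K$, which crucially requires both the strict negativity $-N_*\xi < 0$ from \eqref{cond CP1}--\eqref{cond CP2} and the $L^1$-integrability of $\log K^{(N_*)}$ from Proposition~\ref{prop: log integr of Q and K}.
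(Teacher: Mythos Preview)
Your overall strategy---iterate Corollary~\ref{cor: normalized LY ineq 1} in $N_*$-blocks and control the $Q$-products via Birkhoff---is the right one, and matches the paper's. However, there is a genuine gap at the step where you invoke Birkhoff's ergodic theorem for $\sg^{-N_*}$ and claim the limit is the \emph{constant} $-N_*\xi$. The hypotheses only give that $\sg$ is ergodic; $\sg^{N_*}$ need not be. Birkhoff applied to $\sg^{-N_*}$ therefore only yields convergence to the conditional expectation $E\bigl[\log Q^{(N_*)}\,\big|\,\mathcal I_{N_*}\bigr]$, which can exceed $-N_*\xi$ on some ergodic components while being compensated by others. Since your remainder $r=n\bmod N_*$ varies with $n$, the base point $\sg^{-r}(\om)$ cycles through all components, and on a ``bad'' component your bound $\prod Q\le K_\ep\,e^{-N_*(\xi-\ep)k}$ fails for small $\ep$. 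Taking the maximum over $r\in\{0,\dots,N_*-1\}$ does not help: on bad components the would-be constant $K_\ep(\sg^{-r}(\om))$ is simply infinite.

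The paper fixes this with a pigeonhole argument at the very start: since averaging over residue classes recovers the $\sg$-ergodic mean, there exists an $\om$-dependent residue $0\le r_\om<N_*$ along which the $\sg^{-N_*}$-Birkhoff averages of $\log Q^{(N_*)}$ are bounded above by the overall mean. Because $r_\om$ is fixed (independent of $n$) while $n\bmod N_*$ is not, this forces a \emph{three}-part decomposition $n=d_\om+s_\om N_*+r_\om$ with both an inner tail of length $d_\om<N_*$ and an outer tail of length $r_\om<N_*$, each handled by iterating Lemma~\ref{lem: buzzi LY1} one step at a time (this is where the $L^{(1)}$-factors and the tempered estimate $L_{\sg^k(\om)}^{(1)}\le C_\dl^{(2)}(\om)e^{\dl|k|}$ enter). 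Your two-part split $n=kN_*+r$ cannot accommodate this alignment. Once you introduce $r_\om$ and the extra inner tail, the rest of your argument (the induction for the $Q$-product, the convergent $K$-series, bundling the tail contributions) goes through essentially as you wrote it.
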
 

\begin{proof}	
	We begin by noting that since $\sg$ is ergodic and writing 
	\begin{align*}
		\sum_{k=0}^{nN_*-1}\psi\circ \sg^k=\sum_{j=0}^{N_*-1}\sum_{k=0}^{n-1}\psi\circ \sg^{kN_*+j},
	\end{align*}
	where $\psi:\Om\to\RR$, there must exist an integer $0\leq r_\om<N_*$ such that 
	\begin{align}\label{eq: limit Q N*}
		\lim_{n\to \infty}\frac{1}{n}\sum_{k=0}^{n-1}\log Q_{\sg^{-r_\om-kN_*}(\om)}^{(N_*)}\leq \xi.
	\end{align}
	Now for each $n\in\NN$, we may write $n=d_\om+s_\om N_*+r_\om$ with $s_\om\geq 0$, $0\leq d_\om<N_*$.
	Note that $s_\om$ and $d_\om$ depend on $\om$ through their dependence on $r_\om$. For the remainder of the proof we will drop the dependence on $\om$ from our notation and simply refer to $r$, $s$, and $d$ for $r_\om$, $s_\om$, and $d_\om$ respectively.
	Thus, for $f\in\BV(X_{\sg^{-n}(\om)})$, we can write
	\begin{align}\label{eq: inner/outer compositions}
		\~\cL_{\sg^{-n}(\om)}^n f = \~\cL_{\sg^{-r}(\om)}^r \circ \~\cL_{\sg^{-n+d}(\om)}^{sN_*} \circ \~\cL_{\sg^{-n}(\om)}^d (f). 
	\end{align}
	We begin by looking at the outer composition in \eqref{eq: inner/outer compositions}. Consider some function $h\in\BV(X_{\sg^{-r}(\om)})$. Recursively applying Lemma~\ref{lem: buzzi LY1} and \eqref{eq: equivariance prop of nu} gives
	\begin{align*}
		\var(\~\cL_{\sg^{-r}(\om)}^r (h))
		&=
		\var(\~\cL_{\sg^{-1}(\om)}(\~\cL_{\sg^{-r}(\om)}^{r-1} h))
		\nonumber\\
		&\leq 
		L_{\sg^{-1}(\om)}\lt(\var(\~\cL_{\sg^{-r}(\om)}^{r-1} h)+ \nu_{\sg^{-1}(\om)}(\big|\~\cL_{\sg^{-r}(\om)}^{r-1} h\big|)\rt)
		\nonumber\\
		&\leq
		L_{\sg^{-1}(\om)}\lt(\var(\~\cL_{\sg^{-r}(\om)}^{r-1} h)+ \nu_{\sg^{-r}(\om)}(|h|)\rt)
		\nonumber\\
		&\leq 
		L_{\sg^{-1}(\om)}L_{\sg^{-2}(\om)}\lt(\var(\~\cL_{\sg^{-r}(\om)}^{r-2} h)+ \nu_{\sg^{-r}(\om)}(|h|)\rt) +L_{\sg^{-1}(\om)}\nu_{\sg^{-r}(\om)}(h)
		\nonumber\\
		&\leq 
		\prod_{j=1}^r L_{\sg^{-j}(\om)}\var(h) + \nu_{\sg^{-r}(\om)}(|h|)\cdot \sum_{j=1}^r \prod_{k=1}^j L_{\sg^{-k}(\om)}.
	\end{align*}
	As $r<N_*$, we can write 
	\begin{align}\label{eq: LY2 inner comp ineq}
		\var(\~\cL_{\sg^{-r}(\om)}^r (h))
		\leq 
		C^{(1)}(\om)\lt(\var(h)+\nu_{\sg^{-r}(\om)}(|h|)\rt),
	\end{align}
	where 
	\begin{align*}
		C^{(1)}(\om):= N_*\cdot \prod_{j=1}^{N_*} L_{\sg^{-j}(\om)}.
	\end{align*}
	To deal with the innermost composition in \eqref{eq: inner/outer compositions} we will use similar techniques as when dealing with the outer composition. However, we first remark that since $\log L_\om\in L^1_m(\Om)$, Birkhoff's Ergodic Theorem implies that 
	\begin{align*}
		\lim_{k\to\pm\infty} \frac{1}{|k|}\log L_{\sg^k(\om)}=0,
	\end{align*}
	and thus, we must also have that for each $\dl>0$ there exists some measurable constant $C_\dl^{(2)}(\om)\geq 1$ such that we have
	\begin{align}\label{eq: L(om) subexp bound}
		L_{\sg^k(\om)}\leq C_\dl^{(2)}(\om)e^{\dl|k|}
	\end{align}
	for each $k\in\ZZ$.
	Continuing as in \eqref{eq: LY2 inner comp ineq} together with \eqref{eq: L(om) subexp bound}, we see
	\begin{align}
	\var(\~\cL_{\sg^{-n}(\om)}^d (f))
	&\leq 
	\prod_{j=n-d+1}^n L_{\sg^{-j}(\om)}\var(f) + \sum_{j=n-d+1}^n \prod_{k=n-d+1}^j L_{\sg^{-k}(\om)}\nu_{\sg^{-n}(\om)}(|f|)
	\nonumber\\
	&\leq 
	\lt(C_\dl^{(2)}(\om)\rt)^d\exp\lt(\dl\sum_{j=n-d+1}^n j\rt) \var(f)
	\nonumber\\
	&\qquad\qquad+ 
	\sum_{j=n-d+1}^n \lt(C_\dl^{(2)}(\om)\rt)^{j-n+d} \exp\lt(\dl\sum_{k=n-d+1}^j k\rt)\nu_{\sg^{-n}(\om)}(|f|)
	\nonumber\\
	&\leq 
	d\lt(C_\dl^{(2)}(\om)\rt)^{N_*}e^{nd\dl}\lt(\var(f)+\nu_{\sg^{-n}(\om)}(|f|)\rt)
	\nonumber\\
	&\leq 
	C^{(2)}(\om)e^{n\frac{\ep}{2}}\lt(\var(f)+\nu_{\sg^{-n}(\om)}(|f|)\rt)
	\label{eq: LY2 outer comp ineq}
	\end{align}
	where $\dl$ has been chosen to be smaller than $\sfrac{\ep}{2N_*}$ and 
	\begin{align*}
		C^{(2)}(\om):=N_*\lt(C_\dl^{(2)}(\om)\rt)^{N_*}.
	\end{align*}
	To deal with the middle composition of \eqref{eq: inner/outer compositions} we let $\tau=\sg^{-n+d}(\om)$ and consider some function $h\in\BV(X_{\tau})$. Applying Corollary~\ref{cor: normalized LY ineq 1} and \eqref{eq: equivariance prop of nu} repeatedly yields
	\begin{align}
	\var(\~\cL_{\tau}^{sN_*} (h))
	&=
	\var(\~\cL_{\sg^{(s-1)N_*}(\tau)}^{N_*}(\~\cL_{\tau}^{(s-1)N_*} h))
	\nonumber\\
	&\leq 
	Q_{\sg^{(s-1)N_*}(\tau)}^{(N_*)} \var(\~\cL_{\tau}^{(s-1)N_*} h)+ K_{\sg^{(s-1)N_*}(\tau)}^{(N_*)}\nu_\tau(|h|)
	\nonumber\\
	&\leq  
	\prod_{j=1}^s Q_{\sg^{(s-j)N_*}(\tau)}^{(N_*)}\var(h) + \sum_{j=1}^s \prod_{k=1}^j \lt(Q_{\sg^{(s-k)N_*}(\tau)}^{(N_*)}K_{\sg^{(s-j)N_*}(\tau)}^{(N_*)}\rt)
	\nu_{\tau}(|h|).
	\label{eq: LY2 middle comp ineq part 1}
	\end{align}
	In light of the definitions of $N_*$ \eqref{eq: def of N} and $\xi$ \eqref{eq: def of xi} and using \eqref{eq: limit Q N*}, we can find a measurable constant $C^{(3)}(\om)>0$ such that for any $j\geq 1$ we have 
	\begin{align}\label{eq: prod Q^N exp bound}
		\prod_{k=1}^j Q_{\sg^{(s-k)N_*}(\tau)}^{(N_*)} \leq C^{(3)}(\om)e^{-(\xi-\frac{\ep}{2})jN_*}.
	\end{align}
	Furthermore, since $\log K_\om^{(N_*)}\in L^1_m(\Om)$ we can find a measurable constant $C^{(4)}(\om)\geq C^{(3)}(\om)$ such that for any $j\geq 1$ we have 
	\begin{align}\label{eq: prod K^N exp bound}
		K_{\sg^{(s-j)N_*}(\tau)}^{(N_*)} \leq C^{(4)}(\om)e^{\frac{\ep}{2}jN_*}.
	\end{align}
	Inserting \eqref{eq: prod Q^N exp bound} and \eqref{eq: prod K^N exp bound} into \eqref{eq: LY2 middle comp ineq part 1} gives 
	\begin{align}
		\var(\~\cL_{\tau}^{sN_*} (h))
		&\leq  
		\prod_{j=1}^s Q_{\sg^{(s-j)N_*}(\tau)}^{(N_*)}\var(h) + \sum_{j=1}^s \prod_{k=1}^j Q_{\sg^{(s-k)N_*}(\tau)}^{(N_*)}K_{\sg^{(s-j)N_*}(\tau)}^{(N_*)}\nu_{\tau}(|h|)
		\nonumber\\
		&\leq 
		C^{(4)}(\om)e^{-(\xi-\frac{\ep}{2})sN_*} \var(h) 
		\nonumber\\
		&\qquad\qquad
		+ 
		\sum_{j=1}^s C^{(4)}(\om)e^{-(\xi-\frac{\ep}{2})(s-j)N_*}\cdot C^{(4)}(\om)e^{\frac{\ep}{2}(s-j)N_*}\nu_{\tau}(|h|)
		\nonumber\\
		&\leq 
		C^{(4)}(\om)e^{-(\xi-\frac{\ep}{2})sN_*} \var(h) 
		+
		\frac{\lt(C^{(4)}(\om)\rt)^2}{1-e^{-(\xi-\ep)N_*}}\nu_{\tau}(|h|)
		\nonumber\\
		&\leq 
		C^{(5)}(\om)e^{-(\xi-\frac{\ep}{2})sN_*} \var(h) 
		+
		C^{(5)}(\om)\nu_{\tau}(|h|),
		\label{eq: LY2 middle comp ineq part 2}
	\end{align}
	where the measurable constant $C^{(5)}(\om)$ is given by  
	\begin{align*}
		C^{(5)}(\om):=\max\set{C^{(4)}(\om), \frac{\lt(C^{(4)}(\om)\rt)^2}{1-e^{-(\xi-\ep)N_*}}}.
	\end{align*}
	For the remainder of the proof we will ignore the $\om$ dependence of the constants $C^{(j)}(\om)$ and simply write $C^{(j)}$ instead for $j=1,\dots, 5$.
	Note that since $sN_*=n-d-r$ and since $0\leq d,r\leq N_*$, we can write 
	\begin{align}\label{eq: LY2 comb note}
		-\lt(\xi-\frac{\ep}{2}\rt)sN_*+\frac{n\ep}{2}=-(\xi-\ep)n +\lt(\xi-\frac{\ep}{2}\rt)(d+r) 
		\leq 
		-(\xi-\ep)n+2(\xi-\frac{\ep}{2})N_*	
	\end{align}
	Finally, assembling our estimates \eqref{eq: LY2 inner comp ineq}, \eqref{eq: LY2 outer comp ineq}, and \eqref{eq: LY2 middle comp ineq part 2}, in view of \eqref{eq: LY2 comb note} we get 
	\begin{align*}
		&\var(\~\cL_{\sg^{-n}(\om)}^n f)
		= 
		\var(\~\cL_{\sg^{-r}(\om)}^r \circ \~\cL_{\sg^{-n+d}(\om)}^{sN_*} \circ \~\cL_{\sg^{-n}(\om)}^d (f))
		\\
		&\quad
		\leq 
		C^{(1)}\lt(\var(\~\cL_{\sg^{-n+d}(\om)}^{sN_*} \circ \~\cL_{\sg^{-n}(\om)}^d (f))+\nu_{\sg^{-n}(\om)}(|f|)\rt)
		\\
		&\quad
		\leq  
		C^{(1)}\lt(C^{(5)}e^{-(\xi-\frac{\ep}{2})sN_*}\var(\~\cL_{\sg^{-n}(\om)}^d (f))+(C^{(5)}+1)\nu_{\sg^{-n}(\om)}(|f|)\rt)
		\\
		&\quad
		\leq  
		C^{(1)}\lt(C^{(5)}e^{-(\xi-\frac{\ep}{2})sN_*}C^{(2)}e^{\frac{n\ep}{2}}
		\lt(\var(f)+\nu_{\sg^{-n}(\om)}(|f|)\rt)
		+(C^{(5)}+1)\nu_{\sg^{-n}(\om)}(|f|)\rt)
		\\
		&\quad
		\leq
		C^{(1)}C^{(5)}C^{(2)}e^{2(\xi-\ep)N_*}e^{-(\xi-\ep)n}
		\lt(\var(f)+(C^{(5)}+1)\nu_{\sg^{-n}(\om)}(|f|)\rt)
		\\
		&\quad
		\leq 
		C_\ep(\om)e^{-(\xi-\ep)n}\var(f)+C_\ep(\om)\nu_{\sg^{-n}(\om)}(|f|), 
	\end{align*}
	where the measurable constant $C_\ep(\om)>0$ is taken so large that 
	\begin{align*}
		C_\ep(\om)\geq (C^{(1)}C^{(5)}C^{(2)})(C^{(5)}+1)e^{2(\xi-\ep)N_*}, 
		%\flag{=? all log integrable?}
	\end{align*}
	which completes the proof.
	
\end{proof}

\section{Random Birkhoff Cones and Hilbert Metrics}\label{sec:cones}
In this section we first recall the theory of convex cones first used by Birkhoff in \cite{birkhoff_lattice_1940}, and then present the random cones on which our operator $\~\cL_\om$ will act as a contraction. The use of convex cones in dynamical systems was first investigated in \cite{mayer_approach_1984,ferrero_produits_1988,liverani_decay_1995}.
We begin with a definition. 
\begin{definition}
	Given a vector space $\cV$, we call a subset $\cC\sub\cV$ a \textit{convex cone} if $\cC$ satisfies the following:
	\begin{enumerate}
		\item $\cC\cap-\cC=\emptyset$,
		\item for all $\al>0$, $\al\cC=\cC$, 
		\item $\cC$ is convex,
		\item for all $f, h\in\cC$ and all $\al_n\in\RR$ with $\al_n\to\al$ as $n\to\infty$, if $h-\al_nf\in\cC$ for each $n\in\NN$, then $h-\al f\in\cC\cup\set{0}$.
	\end{enumerate}
\end{definition}
\begin{lemma}[Lemma 2.1 \cite{liverani_conformal_1998}]
	The relation $\leq $ defined on $\cV$ by 
	$$
		f\leq h \text{ if and only if } h-f\in\cC\cup\set{0}
	$$
	is a partial order satisfying the following:
	\begin{flalign*}
		& f\leq 0\leq f  \implies f=0,
		\tag{i}
		&\\
		& \lm>0 \text{ and } f\geq 0 \iff \lm f\geq 0,
		\tag{ii} \\
		& f\leq h \iff 0\leq h-f,
		\tag{iii}\\
		& \text{for all } \alpha_n\in\RR \text{ with } \al_n\to\al, \, \al_nf\leq h \implies \al f\leq h,
		\tag{iv} \\
		& f\geq 0 \text{ and } h\geq 0 \implies f+h\geq 0.
		\tag{v}
	\end{flalign*}
\end{lemma}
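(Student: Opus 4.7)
The plan is to first verify that $\leq$ is a partial order on $\cV$ and then derive each of the five properties directly from the cone axioms (1)--(4). Reflexivity is immediate because $f - f = 0 \in \cC \cup \{0\}$. Antisymmetry follows by noting that if $h - f$ and $f - h$ both lie in $\cC \cup \{0\}$ and neither is zero, then $\cC \cap -\cC \neq \emptyset$, contradicting axiom (1). Transitivity follows by combining convexity (axiom (3)) with positive homogeneity (axiom (2)): if $g-f,h-g\in\cC$, then $\tfrac{1}{2}((g-f)+(h-g)) = \tfrac{1}{2}(h-f)\in\cC$, and rescaling by $2$ keeps us in $\cC$; the cases where one summand vanishes are trivial.

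Properties (i), (ii), (iii), and (v) are then essentially restatements of what has already been proved. Property (i) is antisymmetry applied to the pair $(f, 0)$. Property (ii) is axiom (2), since $\lambda \cC = \cC$ for $\lambda > 0$ forces $f \in \cC \cup \{0\}$ if and only if $\lambda f \in \cC \cup \{0\}$. Property (iii) is the definition of $\leq$ applied to the elements $0$ and $h - f$. Property (v) is the same convexity-plus-homogeneity argument used for transitivity, applied directly to $f$ and $h$.

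The main obstacle, and the only step requiring genuine care, is property (iv). The hypothesis reads $h - \alpha_n f \in \cC \cup \{0\}$ for all $n$ with $\alpha_n \to \alpha$, and the conclusion asks for $h - \alpha f \in \cC \cup \{0\}$. My plan is to first dispose of the degenerate cases: if $f = 0$ the claim is trivial, and if $h - \alpha_n f = 0$ for infinitely many $n$ then since $f \neq 0$ all such scalars $\alpha_n$ must agree (otherwise two distinct $\alpha_n$ would yield $(\alpha_n-\alpha_m)f=0$ with $f\neq 0$), hence equal $\alpha$ by the limit hypothesis, giving $h - \alpha f = 0$. In the remaining generic case one has $h - \alpha_n f \in \cC$ for all sufficiently large $n$, and the conclusion is precisely the content of axiom (4). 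A subtlety worth addressing is that axiom (4) is phrased with $f, h \in \cC$; I will work around this by picking two indices $n_0, n_1$ with $\alpha_{n_0}<\alpha_{n_1}$ and expressing $h - \alpha_n f$ as a real affine combination of the elements $u:=h - \alpha_{n_0} f$ and $v:=h - \alpha_{n_1} f$, both in $\cC$. When $\alpha$ is bracketed by $\alpha_{n_0}$ and $\alpha_{n_1}$ the combination is convex and the conclusion follows from axiom (3); in the remaining one-sided case the axiom (4) closedness still applies directly to the pair $(u, v)$, since $h - \alpha_n f$ differs from $u$ by a scalar multiple of $v - u$.
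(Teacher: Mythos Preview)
Your argument is correct. The paper does not supply its own proof of this lemma; it simply quotes it from \cite{liverani_conformal_1998}, so there is nothing to compare against. Your treatment of (iv) is the only nontrivial step, and your reduction to axiom (4) via the auxiliary pair $u=h-\alpha_{n_0}f,\ v=h-\alpha_{n_1}f\in\cC$ is sound: writing $h-\alpha_n f=(1-t_n)u+t_n v$ with $t_n\to t$, the bracketed case $0\le t\le 1$ uses convexity, and in the one-sided case one divides by $t_n>0$ (eventually) to get $v-\tfrac{t_n-1}{t_n}u\in\cC$, whereupon axiom (4) applied to $u,v\in\cC$ gives $v-\tfrac{t-1}{t}u\in\cC\cup\{0\}$, and rescaling by $t$ recovers $h-\alpha f\in\cC\cup\{0\}$. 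You might make this last manipulation explicit, and note separately that if the $\alpha_n$ with $h-\alpha_n f\in\cC$ are eventually constant then that constant equals $\alpha$ and the conclusion is immediate, so two distinct indices $n_0,n_1$ can indeed be found in the remaining case.
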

The Hilbert metric on $\cC$ is given by the following definition.
\begin{definition}
	Define a distance $\Ta(f,h)$ by 
	\begin{align*}
		\Ta(f,h):=\log\frac{\bt(f,h)}{\al(f,h)},
	\end{align*}
	where 
	\begin{align*}
		\al(f,h):=\sup\set{a>0: af\leq h} 
		\quad\text{ and }\quad
		\bt(f,h):=\inf\set{b>0: bf\geq h}.
	\end{align*}
\end{definition}
Note that $\Ta$ is a pseudo-metric as two elements in the cone may be at an infinite distance from each other. Furthermore, $\Ta$ is a projective metric because any two proportional elements must be zero distance from each other. The next theorem, which is due to Birkhoff \cite{birkhoff_lattice_1940}, shows that every positive linear operator that preserves the cone is a contraction provided that the diameter of the image is finite. 
\begin{theorem}[\cite{birkhoff_lattice_1940}]\label{thm: cone distance contraction}
	Let $\cV_1$ and $\cV_2$ be vector spaces with convex cones $\cC_1\sub\cV_1$ and $\cC_2\sub\cV_2$ and a positive linear operator $\cL:\cV_1\to\cV_2$ such that $\cL(\cC_1)\sub \cC_2$. If $\Ta_i$ denotes the Hilbert metric on the cone $\cC_i$ and if 
	\begin{comment}
	$$
		\Dl=\sup_{f,h\in\cL(\cC_1)}\Ta_2(f,h),
	$$
	then 
	$$
		\sup_{f,h\in\cC_1}\Ta_2(\cL f,\cL h)\leq \tanh\lt(\frac{\Dl}{4}\rt)\Ta_1(f,h).
	$$
	
	The equations were rewritten to make more sense as in ORLY
	\end{comment}
	$$
		\Dl=\sup_{f,h\in\cC_1}\Ta_2(\cL f,\cL h),
	$$
	then 
	$$
		\Ta_2(\cL f,\cL h)\leq \tanh\lt(\frac{\Dl}{4}\rt)\Ta_1(f,h).
	$$
	for all $f,h\in\cC_1$.
	\end{theorem}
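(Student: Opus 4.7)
My plan is to give the standard Birkhoff argument. First, I will dispose of degenerate cases: if $\Ta_1(f,h)=0$ then $f,h$ are proportional in $\cC_1$, forcing $\cL f,\cL h$ proportional and $\Ta_2(\cL f,\cL h)=0$; if $\Ta_1(f,h)=\infty$ the inequality is vacuous; and if $\Dl=\infty$ the claim reduces to $\Ta_2(\cL f,\cL h)\leq \Ta_1(f,h)$, which follows immediately from positivity of $\cL$ and the definitions of $\alpha(f,h),\beta(f,h)$. This reduces me to the case where all quantities are finite and positive.

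For the main case I will set $\al:=\alpha(f,h)$, $\bt:=\beta(f,h)$, $A:=\cL(h-\al f)$, and $B:=\cL(\bt f-h)$. By property (iv) of the partial order, $h-\al f$ and $\bt f-h$ lie in $\cC_1\cup\set{0}$; since $\Ta_1(f,h)>0$, neither is zero, so $A,B\in\cL(\cC_1)\sub\cC_2$ and hence $\Ta_2(A,B)\leq\Dl$. Setting $a:=\alpha(A,B)$ and $b:=\beta(A,B)$ (both positive and finite, again by property (iv) applied in $\cC_2$), the defining inequalities $aA\leq B\leq bA$ rearrange in $\cC_2$ to
\begin{align*}
\frac{\bt+b\al}{1+b}\,\cL f\;\leq\;\cL h\;\leq\;\frac{\bt+a\al}{1+a}\,\cL f,
\end{align*}
whence
\begin{align*}
e^{\Ta_2(\cL f,\cL h)}\;\leq\;\frac{(\bt+a\al)(1+b)}{(1+a)(\bt+b\al)}.
\end{align*}
Writing $r:=\bt/\al=e^{\Ta_1(f,h)}$ and $s:=b/a=e^{\Ta_2(A,B)}\leq e^\Dl$, and treating $\mu:=a$ as a free positive parameter (only the ratio $s$ is constrained), I will maximize $F(\mu):=\frac{(r+\mu)(1+s\mu)}{(1+\mu)(r+s\mu)}$ over $\mu>0$. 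A short computation gives
\begin{align*}
\frac{F'(\mu)}{F(\mu)}\;=\;\frac{(s-1)(r-1)(r-s\mu^2)}{(r+\mu)(r+s\mu)(1+\mu)(1+s\mu)},
\end{align*}
so the unique maximum sits at $\mu=\sqrt{r/s}$ with value $\lt(\frac{1+\sqrt{rs}}{\sqrt r+\sqrt s}\rt)^2$.

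Finally, with $\phi:=\log r=\Ta_1(f,h)$ and $\psi:=\log s\leq\Dl$, the hyperbolic identity
\begin{align*}
\frac{1+\sqrt{rs}}{\sqrt r+\sqrt s}\;=\;\frac{\cosh\bigl((\phi+\psi)/4\bigr)}{\cosh\bigl((\phi-\psi)/4\bigr)}\;=\;\frac{1+\tanh(\phi/4)\tanh(\psi/4)}{1-\tanh(\phi/4)\tanh(\psi/4)},
\end{align*}
together with $\log\frac{1+x}{1-x}=2\,\mathrm{artanh}(x)$, will yield
\begin{align*}
\Ta_2(\cL f,\cL h)\;\leq\;4\,\mathrm{artanh}\bigl(\tanh(\Ta_1(f,h)/4)\,\tanh(\Dl/4)\bigr).
\end{align*}
The proof then concludes via the elementary inequality $\tanh(cx)\geq c\tanh(x)$ for $c\in[0,1]$ and $x\geq 0$ (immediate from $\mathrm{sech}^2$ being decreasing on $[0,\infty)$): applied with $c=\tanh(\Dl/4)$ and $x=\Ta_1(f,h)/4$ it gives $\mathrm{artanh}(c\tanh x)\leq cx$, and hence $\Ta_2(\cL f,\cL h)\leq\tanh(\Dl/4)\,\Ta_1(f,h)$. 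The main obstacle I anticipate is identifying the optimal $\mu$ in the calculus maximization; once this is in hand, the hyperbolic substitution makes the rest of the argument essentially automatic.
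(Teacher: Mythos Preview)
Your argument is the classical Birkhoff proof and is correct in all details; the calculus optimization at $\mu=\sqrt{r/s}$, the hyperbolic identity, and the final step via $\tanh(cx)\geq c\tanh(x)$ for $c\in[0,1]$ are all handled properly. The paper itself does not supply a proof of this theorem---it is simply quoted as a known result of Birkhoff---so there is nothing to compare against, but what you have written is a complete and standard self-contained proof.
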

Note that it is not clear whether $(\cC,\Ta)$ is complete.
The following lemma of \cite{liverani_conformal_1998} addresses this problem by linking the metric $\Ta$ with a suitable norm $\norm{\spot}$ on $\cV$.
\begin{lemma}[\cite{liverani_conformal_1998}, Lemma 2.2]\label{lem: birkhoff cone contraction}
	Let $\norm{\spot}$ be a norm on $\cV$ such that for all $f,h\in\cV$ if $-f\leq h\leq f$, then $\norm{h}\leq \norm{f}$, and let  $\vrho:\cC\to (0,\infty)$ be a homogeneous and order-preserving function, which means that for all $f,h\in\cC$ with $f\leq h$ and all $\lm>0$ we have 
	$$
		\vrho(\lm f)=\lm\vrho(f) \qquad\text{ and }\qquad \vrho(f)\leq \vrho(h).
	$$
	Then, for all $f,h\in\cC$ $\vrho(f)=\vrho(h)>0$ implies that 
	$$
		\norm{f-h}\leq \lt(e^{\Ta(f,h)}-1\rt)\min\set{\norm{f},\norm{h} }.
	$$
\end{lemma}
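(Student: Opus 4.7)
The plan is to unpack the Hilbert pseudo-metric on $\cC$, use the assumption $\vrho(f)=\vrho(h)$ to pin down the location of $1$ between $\al(f,h)$ and $\bt(f,h)$, and then exhibit a single multiple of $f$ (respectively of $h$) that sandwiches $f-h$ from both sides in the partial order induced by $\cC$, so that the assumed monotonicity of $\norm{\spot}$ applies directly.

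First I would write $\al:=\al(f,h)$ and $\bt:=\bt(f,h)$, so that $e^{\Ta(f,h)}=\bt/\al$. From the definitions of $\al$ and $\bt$, together with the closedness property (iv) of the partial order, one obtains $\al f\le h\le \bt f$. Applying the homogeneous, order-preserving functional $\vrho$ to $\al f\le h$ and using $\vrho(f)=\vrho(h)>0$ yields $\al\le 1$; symmetrically $\bt\ge 1$. The main step is then to propose the sandwiching element $g:=(\bt/\al-1)f$ and verify $-g\le f-h\le g$. The upper inequality $f-h\le g$ is equivalent to $h\ge (2-\bt/\al)f$, which follows from $h\ge \al f$ provided $\al\ge 2-\bt/\al$, i.e.\ $\al^2+\bt\ge 2\al$; this holds because $(\al-1)^2\ge 0$ gives $\al^2+1\ge 2\al$ and $\bt\ge 1$. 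The lower inequality $-g\le f-h$ is equivalent to $h\le (\bt/\al)f$, which follows from $h\le \bt f$ and $\al\le 1$. The norm hypothesis then delivers $\norm{f-h}\le \norm{g}=(\bt/\al-1)\norm{f}=(e^{\Ta(f,h)}-1)\norm{f}$.

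Finally, the Hilbert pseudo-metric is symmetric, since $\al(h,f)=1/\bt(f,h)$ and $\bt(h,f)=1/\al(f,h)$ give $\Ta(h,f)=\Ta(f,h)$; running the identical argument with the roles of $f$ and $h$ interchanged produces $\norm{f-h}\le (e^{\Ta(f,h)}-1)\norm{h}$, and taking the smaller of the two bounds yields the claim. I do not expect a serious obstacle: the only non-obvious move is choosing the sandwiching element proportional to $f$ with the sharp coefficient $\bt/\al-1$ rather than the naive $\max(1-\al,\bt-1)$, and recognizing that the small algebraic inequality $\al^2+\bt\ge 2\al$ is precisely what legitimizes this sharper choice.
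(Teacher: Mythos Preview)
The paper does not supply its own proof of this lemma; it is quoted directly from \cite{liverani_conformal_1998} without argument. Your proof is correct and is essentially the standard one: using $\vrho$ to pin down $\al\le 1\le\bt$, then sandwiching $f-h$ between $\pm(\bt/\al-1)f$ (your algebraic inequality $\al^2+\bt\ge 2\al$ is exactly the content of $1-\al\le \bt/\al-1$, while $\bt-1\le\bt/\al-1$ is immediate from $\al\le 1$). The only case you leave implicit is $\Ta(f,h)=\infty$, where the right-hand side is $+\infty$ and the inequality is vacuous.
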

\begin{remark}
	Note that the choice $\vrho(\spot)=\norm{\spot}$ satisfies the hypothesis, however in the sequel we shall be interested in the choice of $\vrho=\nu_\om$ in Section~\ref{sec:invariant}.
\end{remark}
\begin{definition}\label{def: + and a cones}
	
For each $a>0$ and $\om\in\Om$ let 
\begin{align*}
\sC_{\om,a}:=\set{f\in\BV(X_\om): f\geq 0,\, \var(f)\leq a\nu_\om(f)}.
\end{align*}
To see that this cone is non-empty, we note that the function $f+c\in\sC_{\om,a}$ for $f\in\BV$ and $c\geq a^{-1}\var(f)-\inf_{X_\om}f$. We also define the cone 
\begin{align*}
\sC_{\om,+}:=\set{f\in\BV(X_\om): f\geq 0}.
\end{align*}

\end{definition}
\begin{comment}
We also consider the global cone 
\begin{align*}
\fC_a:=\set{f\in\BV(\cJ): f_\om\in\sC_{\om,a} \text{ for each } \om\in\Om}.
\end{align*}
We define the global cone $\fC_+$ similarly. 

content...
\end{comment}
Let $\Ta_{\om,a}$ and $\Ta_{\om,+}$ denote the Hilbert metrics induced on the respective cones $\sC_{\om,a}$ and $\sC_{\om,+}$. The following lemma collects together the main properties of these metrics. 
\begin{lemma}[\cite{liverani_conformal_1998}, Lemmas 4.2, 4.3, 4.5]\label{lem: summary of cone dist prop}
	For $f, h\in\sC_{\om,+}$ the $\Ta_{\om,+}$ distance between $f,h$ is given by 
	\begin{align*}
	\Ta_{\om,+}(f,h)=\log\sup_{x,y\in X_\om}\frac{f(y)h(x)}{f(x)h(y)}
	\end{align*}
	If $f, h\in\sC_{\om,a}$, then
	\begin{align}\label{eq: Ta+ leq Ta}
	\Ta_{\om,+}(f,h)\leq \Ta_{\om,a}(f,h), 
	\end{align}
	and if $f\in\sC_{\om,\sfrac{a}{2}}$ we then have
	\begin{align*}
	\Ta_{\om,a}(\ind,f)\leq \log\frac{\norm{f}_\infty+\frac{1}{2}\nu_\om(f)}{\min\set{\inf_{X_\om} f, \frac{1}{2}\nu_\om(f)}}.
	\end{align*}
\end{lemma}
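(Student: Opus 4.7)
My plan is to handle the three assertions separately, all by direct computation of the functions $\alpha(\cdot,\cdot)$ and $\beta(\cdot,\cdot)$ defining the Hilbert pseudo-metric, since neither cone involves machinery that requires indirect arguments.

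For the first assertion, I would unwind the definition of the partial order induced by $\sC_{\om,+}$: for $f,h\in\sC_{\om,+}$, $af\le h$ means $h-af$ is (pointwise) nonnegative on $X_\om$. Hence $\al_+(f,h)=\inf_{x\in X_\om}\lt(h(x)/f(x)\rt)$ and $\bt_+(f,h)=\sup_{x\in X_\om}\lt(h(x)/f(x)\rt)$ (taking the obvious conventions on the null set where $f$ vanishes and using that both $f,h$ are bounded BV functions, so the sup/inf are finite). Forming the ratio
\begin{align*}
\Ta_{\om,+}(f,h)=\log\frac{\bt_+(f,h)}{\al_+(f,h)}=\log\sup_{x,y\in X_\om}\frac{h(x)f(y)}{h(y)f(x)},
\end{align*}
which is the claimed formula.

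For the second assertion, the key observation is just that $\sC_{\om,a}\sub \sC_{\om,+}$, so the partial order coming from $\sC_{\om,a}$ is \emph{stronger} than that from $\sC_{\om,+}$. Hence the sets $\set{a>0:af\le_a h}$ and $\set{b>0:bf\ge_a h}$ over which $\al_a,\bt_a$ are computed are \emph{smaller} than the corresponding sets for the $+$ cone. This forces $\al_a(f,h)\le\al_+(f,h)$ and $\bt_a(f,h)\ge\bt_+(f,h)$, which by taking the log-ratio gives $\Ta_{\om,+}(f,h)\le\Ta_{\om,a}(f,h)$.

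The third assertion requires the most bookkeeping but no real obstacle. I would compute $\al_a(\ind,f)$ and $\bt_a(\ind,f)$ by writing out both conditions defining the cone: $f-c\ind\in\sC_{\om,a}$ iff (i) $c\le\inf_{X_\om}f$ and (ii) $\var(f-c\ind)=\var(f)\le a\nu_\om(f-c\ind)=a(\nu_\om(f)-c)$, i.e.\ $c\le\nu_\om(f)-\var(f)/a$. Similarly $b\ind-f\in\sC_{\om,a}$ iff $b\ge\norm{f}_\infty$ and $b\ge\nu_\om(f)+\var(f)/a$. Hence
\begin{align*}
\al_a(\ind,f)&=\min\set{\inf_{X_\om}f,\ \nu_\om(f)-\var(f)/a},\\
\bt_a(\ind,f)&=\max\set{\norm{f}_\infty,\ \nu_\om(f)+\var(f)/a}.
\end{align*}
Now I invoke the hypothesis $f\in\sC_{\om,a/2}$, which gives $\var(f)/a\le\tfrac12\nu_\om(f)$, so $\nu_\om(f)-\var(f)/a\ge\tfrac12\nu_\om(f)$ and $\nu_\om(f)+\var(f)/a\le\tfrac32\nu_\om(f)$. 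Combined with the trivial bound $\nu_\om(f)\le\norm{f}_\infty$ (since $\nu_\om$ is a probability measure and $f\ge 0$), this yields $\bt_a(\ind,f)\le\norm{f}_\infty+\tfrac12\nu_\om(f)$ and $\al_a(\ind,f)\ge\min\set{\inf_{X_\om}f,\tfrac12\nu_\om(f)}$, and dividing and taking the logarithm produces the stated estimate. The only subtle point worth flagging in the writeup is that the minimum in the denominator is genuinely needed because $\inf_{X_\om}f$ and $\tfrac12\nu_\om(f)$ are not comparable a priori, but since we only need an upper bound on $\Ta_{\om,a}(\ind,f)$, taking the minimum is exactly right.
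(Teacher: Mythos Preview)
Your proof is correct and follows the standard direct computation of $\al$ and $\bt$ for each cone; the paper itself does not prove this lemma but simply cites it from \cite{liverani_conformal_1998}, where the same argument appears. There is nothing to add.
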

As a consequence of Lemma~\ref{lem: LY ineq} we have that 
\begin{align}\label{eq: L_om is a weak contraction on C_+}
	\~\cL_\om\lt(\sC_{\om,+}\rt)\sub \sC_{\sg(\om),+},
\end{align}
and thus $\~\cL_\om$ is a weak contraction on $\sC_{\om,+}$. Our eventual goal is to show a similar statement for the cones $\sC_{\om,a}$. We begin that journey with the following lemma, a version of which first appeared in \cite[Lemma 3.2]{liverani_decay_1995-1}. 
\begin{lemma}\label{lem: LSV lemma 4.6}
	Given $a>0$, for each $\om\in\Om$ there exists a finite partition $\cU_{\om,a}$ of $X_\om$ and a positive integer $t_{\om,a}$ such that 
	\begin{align*}
		b_\om:=\sup_{U\in\cU_{\om,a}}\norm{\frac{\cL_\om^{t_{\om,a}}\ind_U}{\cL_\om^{t_{\om,a}}\ind_\om}}_\infty
		<
		\frac{1}{2a},
	\end{align*}
	and for all $f\in\sC_{\om,a}$ there exists $U_f\in\cU_{\om,a}$ such that $\inf f\rvert_{U_f}\geq \frac{1}{2}\nu_\om(f)$.
\end{lemma}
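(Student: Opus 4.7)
The plan is to construct $\cU_{\om,a}$ and $t_{\om,a}$ in two stages: first a preliminary partition $\cP = \{P_1, \ldots, P_N\}$ of $X_\om$ which by itself delivers the infimum condition in the second half of the lemma, and then a refinement of $\cP$ designed to force the operator-ratio bound of the first half. Crucially, any piece contained in $P_{j_0}$ inherits the pointwise lower bound from $P_{j_0}$, so refining $\cP$ does not spoil the infimum condition.

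For the preliminary partition, I would exploit the non-atomicity of $\nu_\om$ (established in the proof of Proposition~\ref{prop: existence of conformal family}) to partition $X_\om$ into $N$ intervals $P_1,\dots,P_N$ of equal $\nu_\om$-mass $1/N$, with $N$ to be fixed of order $a(1+a)$. Given $f \in \sC_{\om,a}$, the trivial bounds $\sup f - \inf f \leq \var(f) \leq a\nu_\om(f)$ and $\inf f \leq \nu_\om(f) \leq \sup f$ yield $\sup f \leq (1+a)\nu_\om(f)$. Since variations are sub-additive across an interval partition, at most $O(a)$ of the $P_j$ can have $\var_{P_j}(f) > \epsilon\nu_\om(f)$ for a fixed $\epsilon$ (take $\epsilon = 1/4$); the remaining ``good'' pieces then carry $\nu_\om$-mass close to $1$ and, since $\sup f \leq (1+a)\nu_\om(f)$, also carry most of the integral $\int f\,d\nu_\om = \nu_\om(f)$. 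A pigeonhole across the good pieces yields some good $P_{j_0}$ whose average (hence supremum) of $f$ exceeds $(1/2+\epsilon)\nu_\om(f)$ for $N$ of order $a(1+a)/\epsilon^2$, whence $\inf_{P_{j_0}} f \geq \sup_{P_{j_0}} f - \var_{P_{j_0}}(f) \geq \nu_\om(f)/2$.

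For the operator-ratio bound, I would invoke the contracting potential assumption \eqref{cond CP1} to choose $t = t_{\om,a}$ so large that $\|g_\om^{(t)}\|_\infty / \inf\cL_\om^t \ind_\om < 1/(4a)$, and then use Lemma~\ref{lem: LY setup}\eqref{(ii) lem: LY setup} (which gives $S_\om^{(t)} < \infty$) to fix finitely many $Z_1,\dots,Z_K \in \cZ_\om^{(t)}$ whose complementary tail satisfies $\sum_{i > K} \sup_{Z_i} g_\om^{(t)} < (4a)^{-1}\inf\cL_\om^t\ind_\om$. I would then refine each $P_j$ into the (finitely many non-empty) sub-intervals $P_j \cap Z_i$ for $i \leq K$ together with the remainder $R_j := P_j \setminus \bigcup_{i \leq K} Z_i$, and take $\cU_{\om,a}$ to be this collection. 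For any piece $U$ contained in some $Z_i$ with $i \leq K$, injectivity of $T_\om^t$ on $Z_i$ gives $\cL_\om^t \ind_U \leq \sup_{Z_i} g_\om^{(t)} \leq \|g_\om^{(t)}\|_\infty$; for $U = R_j$, injectivity on each $Z_i$ with $i > K$ together with the disjointness of the $Z_i$'s bounds $\cL_\om^t \ind_U$ by the tail sum. In either case the ratio is bounded by $1/(4a) < 1/(2a)$, and property (ii) is inherited from $\cP$ by choosing $U_f$ to be any piece of $\cU_{\om,a}$ contained in $P_{j_0}$.

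The main obstacle is reconciling (i) and (ii) on a single finite partition: condition (ii) wants enough intervals for pigeonhole to force a low-variation piece near the supremum of $f$, while condition (i) wants each piece ``small'' relative to the countable partition $\cZ_\om^{(t)}$ at a late time $t$ where the contracting bound has taken effect. The resolution uses Lemma~\ref{lem: LY setup}\eqref{(ii) lem: LY setup} to absorb the infinite tail of $\cZ_\om^{(t)}$ into a single remainder piece per $P_j$ whose operator image is already small by the tail estimate, keeping $\cU_{\om,a}$ finite while bounding every piece's contribution uniformly.
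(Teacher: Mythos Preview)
Your proof is correct, but takes a genuinely different route from the paper's. You decouple the two conclusions: you first secure the infimum condition on a preliminary equal-$\nu_\om$-mass partition $\cP$ by a direct pigeonhole/variation count, and only then refine with pieces of $\cZ_\om^{(t)}$ (plus a tail remainder) to force the operator-ratio bound. The paper instead builds $\cU_{\om,a}$ in one pass directly from $\cZ_\om^{(t_{\om,a})}$ (merging the infinite tail into finitely many residual intervals when $\cZ_\om^{(t_{\om,a})}$ is infinite), and then proves the infimum condition \emph{using} the operator-ratio bound: assuming each $U$ contains a point where $f<\tfrac12\nu_\om(f)$, they sum the inequalities $\cL_\om^{t_{\om,a}}(f\ind_U)\le \cL_\om^{t_{\om,a}}(\ind_U)\tfrac{\nu_\om(f)}{2}+b_\om\cL_\om^{t_{\om,a}}(\ind_\om)\var_U(f)$, integrate against $\nu_{\sg^{t_{\om,a}}(\om)}$, and obtain $\nu_\om(f)<(\tfrac12+ab_\om)\nu_\om(f)<\nu_\om(f)$, a contradiction. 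Their argument thus exploits the exact pairing $b_\om<\tfrac{1}{2a}$ with $\var(f)\le a\nu_\om(f)$, and produces a partition that is essentially $\cZ_\om^{(t_{\om,a})}$ itself; your argument is more elementary and makes the two halves logically independent, at the cost of a two-step construction and a somewhat larger partition. A minor remark: your remainder pieces $R_j$ need not be intervals, but since the lemma only asks for a finite partition and your infimum bound is inherited from the containing $P_{j_0}$, this causes no trouble.
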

\begin{proof}
	First, we let $t_{\om,a}\in\NN$ be given by  
	\begin{align*}
		t_{\om,a}:=\min\set{n_0\in\NN: 
		\frac{\big\|g_\om^{(n)}\big\|_\infty}{\inf\cL_\om^n\ind_\om}
		<
		\frac{1}{2a} \text{ for all } n\geq n_0}.
	\end{align*} 
	Then for each $U\in\cZ_\om^{(t_{\om,a})}$ we have 
	\begin{align*}
		\norm{\frac{\cL_\om^{t_{\om,a}}\ind_U}{\cL_\om^{t_{\om,a}}\ind_\om}}_\infty
		\leq 
		\frac{\big\|g_\om^{(t_{\om,a})}\big\|_\infty}{\inf\cL_\om^{t_{\om,a}}\ind_\om}
		<
		\frac{1}{2a}.
	\end{align*}
	If $\cZ_\om^{(t_{\om,a})}$ is finite then we may set $\cU_{\om,a}:=\cZ_\om^{(t_{\om,a})}$. If, on the other hand, $\cZ_\om^{(t_{\om,a})}$ is infinite, noting that Lemma~\ref{lem: LY setup} gives
	\begin{align*}
		\cL_\om^{t_{\om,a}}\ind_\om 
		= 
		\sum_{U\in\cZ_\om^{(t_{\om,a})}}\cL_\om^{t_{\om,a}}\ind_U
		\leq 
		\sum_{U\in\cZ_\om^{(t_{\om,a})}}\norm{\cL_\om^{t_{\om,a}}\ind_U}_\infty
		\leq 
		S_\om^{(t_{\om,a})}
		<\infty,
	\end{align*} 
	we can choose intervals $U_1,\dots,U_\ell\in \cZ_\om^{(t_{\om,a})}$ 
	\begin{comment}	
	with 
		\begin{align*}
			\norm{\frac{\cL_\om^{t_{\om,a}}\ind_{U_j}}{\cL_\om^{t_{\om,a}}\ind_\om}}_\infty
			\geq 
			\norm{\frac{\cL_\om^{t_{\om,a}}\ind_{U_k}}{\cL_\om^{t_{\om,a}}\ind_\om}}_\infty
		\end{align*}
		for $1\leq j\leq k\leq \ell$ 
		
			content...
			\end{comment}
		such that 
		\begin{align*}
			\norm{\frac{\cL_\om^{t_{\om,a}}\ind_{X_\om\bs\cup_{j=1}^\ell U_j}}{\cL_\om^{t_{\om,a}}\ind_\om}}_\infty 
			<
			\frac{1}{2a}.
		\end{align*}
	Now the set $X_\om\bs\cup_{j=1}^\ell U_j$ must consist of at most $\ell+1$ many intervals (which are not necessarily elements of $\cZ_\om^{(t_{\om,a})}$, though they are unions of elements of $\cZ_\om^{(t_{\om,a})}$), say $U_{\ell+1},\dots, U_L$. Setting $\cU_{\om,a}=\set{U_j: 1\leq j\leq L}$ finishes the first claim.
	Now fix	$f\in\sC_{\om,a}$ and set 
	$$
	\cA=\set{U\in\cU_{\om,a}: \text{ there exists } x_U\in U \text{ such that } f(x_U)<\frac{1}{2}\nu_\om(f) }.
	$$
	To prove the second part of Lemma~\ref{lem: LSV lemma 4.6} we will show that $\cA\neq\cU_{\om,a}$. Towards contradiction, suppose that for each $U\in\cU_{\om,a}$ there exists $x_U\in U$ such that $f(x_U)<\sfrac{1}{2}\nu_\om(f)$. Now, for $n\geq t_{\om,a}$ we have 
	\begin{align*}
	\cL_\om^n(f\ind_U)
	\leq
	\cL_\om^n(\ind_U)\lt(f(x_U)+\var_U(f)\rt)
	<
	\cL_\om^n(\ind_U)\frac{\nu_\om(f)}{2}+b_\om\cL_\om^n(\ind_\om)\var_U(f).
	\end{align*}
	Summing over all $U\in\cU_{\om,a}$ gives
	\begin{align*}
	\cL_\om^n f
	<
	\cL_\om^n\ind_\om \cdot \frac{\nu_\om(f)}{2}+b_\om \cL_\om^n\ind_\om\cdot \var(f).
	\end{align*}
	Now, integrating both sides with respect to $\nu_{\sg^n(\om)}$ and then dividing by $\lm_\om^n$ gives
	\begin{align*}
	\frac{\nu_{\sg^n(\om)}(\cL_\om^n f)}{\lm_\om^n}
	<
	\frac{\nu_\om(f)}{2}+b_\om \var(f) 
	\leq 
	\lt(\frac{1}{2}+ab_\om\rt)\nu_\om(f).
	\end{align*}
	As the left hand side is equal to $\nu_\om(f)$ and noting that $ab_\om<\sfrac{1}{2}$, we have arrived at our desired contradiction, and thus, we are done. 
\end{proof}

As an immediate consequence of the previous lemma and its proof we may define the measurable function $N_{\om,a}(\cU_{\om,a}):\Om\to\NN$ by 
\begin{align}\label{eq: def of N_om P_om}
	N_{\om,a}(\cU_{\om,a}):=\max\set{M_\om(U): U\in\cU_{\om,a}}\geq t_{\om,a}.
\end{align}

%The goal is to establish the invariance of the family of cones $\sC_{\om,a}$ under the sufficiently large iterates of the normalized transfer operator $\~\cL_\om$.

\section{Cone Contraction on Good Fibers}\label{Sec: good fibers}\label{sec:good}

In this section we follow Buzzi's approach \cite{buzzi_exponential_1999}, and describe the good behavior across a large measure set of fibers. In particular, we will show that, for sufficiently many iterates $R_*$, the normalized transfer operator $\~\cL_\om^{R_*}$ uniformly contracts the cone $\sC_{\om,a}$ on ``good'' fibers $\om$. However, first we must describe what exactly makes a fiber ``good''.  

Recall that the numbers $\xi$ and $\rho$ are given by 
\begin{align*}
	\xi:=-\frac{1}{N_*}\int_\Om \log Q_\om^{(N_*)} dm(\om)
	\quad \text{ and } \quad
	\rho:=\frac{1}{N_*}\int_\Om \log L_\om^{N_*} dm(\om). % changed K to L
	%\rho:=\frac{1}{N_*}\int_\Om \log K_\om^{(N_*)} dm(\om). 
\end{align*}
Note that \eqref{eq: L geq 6}, \eqref{eq: defn of L_om^n}, and the ergodic theorem imply that 
\begin{align}\label{eq: rho geq log 6}
	\log 55\leq \rho = \lim_{n\to\infty}\frac{1}{nN_*}\sum_{k=0}^{n-1}\log L_{\sg^{kN_*}(\om)}^{N_*}.
\end{align}
The following definition is adapted from \cite[Definition~2.4]{buzzi_exponential_1999}.
\begin{definition}
	We will say that $\omega$ is \textit{good} with respect to the numbers $\ep$, $a$, $B_*,\,R_a=q_aN_*,\,C_*\geq 1$, and  $0<\al_*\leq C_*$ if the following: 
	\begin{flalign} 
	& B_*q_ae^{-\frac{\xi}{2}R_a}\leq \frac{1}{3}.
	%\var(\~\cL_\om^{R_a}h)\leq \frac{1}{3}\var(h)+B_*\nu_\om(h), 
	\tag{G1}\label{G1} 
	&\\
	%& B_*e^{-(\xi-\ep)R_a}\leq , %\qquad This could be  \frac{1}{4}
	%\tag{oldG2}\label{oldG2} 
	%\\
	& \frac{1}{R_a}\sum_{k=0}^{\sfrac{R_a}{N_*}-1}\log L_{\sg^{kN_*}(\om)}^{N_*} %changed K to L
	%\frac{1}{R_a}\sum_{k=0}^{\sfrac{R_a}{N_*}-1}\log K_{\sg^{kN_*}(\om)}^{(N_*)}
	%=\frac{1}{q_1N_*}\sum_{k=0}^{q_1-1}\log K_{\sg^{kN_*}(\om)}^{(N_*)}
	\in[\rho-\ep,\rho+\ep], 
	\tag{G2}\label{G2}
	\\
	& R_a\geq N_{\om,a}(\cU_{\om,a}), 
	\tag{G3}\label{G3} 
	\\
	& C_*^{-1}\leq \inf\cL_\om^{R_a}\ind_\om\leq \lm_\om^{R_a}\leq \norm{\cL_\om^{R_a}\ind_\om}_\infty\leq  C_*, 
	\tag{G4}\label{G4} 
	\\
	& \inf\cL_\om^{R_a}\ind_U \geq \al_* \text{ for all } U\in\cU_{\om,a}. 
	\tag{G5}\label{G5} 
	\end{flalign}
\end{definition}
Now, we denote
\begin{align}\label{eq: def of ep_0}
\ep_0:=\min\set{1, \frac{\xi}{2}}.
\end{align}
The following lemma describes the prevalence of the good fibers as well as how to find them. 
\begin{lemma}\label{lem: constr of Om_G}
	Given $\ep<\ep_0$ and $a>0$, there exists parameters $B_*$, $R_a$, $\al_*$, $C_*$ (all of which depend on $\ep$) such that there is a set $\Om_G\sub \Om$ of good $\om$ with $m(\Om_G)\geq 1-\sfrac{\ep}{4}$. 
\end{lemma}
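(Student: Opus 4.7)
The plan is to fix a large integer $R_a = q_a N_*$ depending on $\ep$ and to choose parameters $B_*$, $C_*$, $\al_*$ so that each of (G1)--(G5) holds on a measurable subset of $\Om$ of $m$-measure at least $1 - \ep/20$; then $\Om_G$ is taken to be the intersection of these five sets, giving $m(\Om_G) \geq 1 - \ep/4$ by the union bound. The order of operations is crucial: first I would truncate the relevant $\om$-dependent quantities on high-measure sets to obtain uniform bounds, then select $R_a$ large enough for the asymptotic estimates to take effect, and finally read off $B_*$, $C_*$, $\al_*$.

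For the asymptotic conditions (G2) and (G4), I invoke the log-integrability of $\lm_\om$, $\inf\cL_\om\ind_\om$, $\norm{\cL_\om\ind_\om}_\infty$, and $L_\om^{(N_*)}$ (from Lemma~\ref{lem: log integrability} and Proposition~\ref{prop: log integr of Q and K}) together with Birkhoff's ergodic theorem. Egorov's theorem then upgrades a.e.\ convergence to uniform convergence on a set of measure at least $1 - \ep/20$. In particular, $\frac{1}{R_a}\sum_{k=0}^{q_a - 1}\log L_{\sg^{k N_*}\om}^{(N_*)} \to \rho$ by the definition of $\rho$ in \eqref{eq: def of xi and rho}, yielding (G2) for $R_a$ large, while selecting $C_*$ sufficiently large produces (G4).

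For (G1), I apply the refined Lasota--Yorke bound from Proposition~\ref{lem: LY ineq 2} after substituting $\om \mapsto \sg^n\om$, giving
\begin{equation*}
	\var(\~\cL_\om^n f) \leq C_\ep(\sg^n \om) e^{-(\xi - \ep)n}\var(f) + C_\ep(\sg^n \om)\, \nu_\om(|f|).
\end{equation*}
Since $C_\ep$ is measurable and a.e.\ finite, I pick a truncation threshold $M_C$ so that $\set{\om : C_\ep(\sg^{R_a}\om) \leq M_C}$ has $m$-measure at least $1 - \ep/20$, using $\sg$-invariance of $m$. Multiplying through by $\lm_\om^{R_a} \leq C_*$ (on the (G4)-good set) and choosing $R_a$ so large that $C_* M_C e^{-(\xi - \ep)R_a} \leq 1/3$ yields (G1) with $B_* := C_* M_C$. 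Condition (G3) is secured by taking $R_a$ beyond a uniform bound on $N_{\om,a}(\cU_{\om,a})$, which exists on a set of measure $\geq 1 - \ep/20$ since $N_{\om,a}$ is measurable and a.e.\ finite by Lemma~\ref{lem: LSV lemma 4.6}. For (G5), the map $\om \mapsto \min_{U \in \cU_{\om,a}}\inf\cL_\om^{R_a}\ind_U$ is measurable and a.e.\ strictly positive once $R_a \geq N_{\om,a}(\cU_{\om,a})$ (applying \eqref{cond RC} at time $M_\om(U)$ and iterating using the lower bound on $\inf\cL^k\ind$ from (G4) for the remaining $R_a - M_\om(U)$ steps); a small enough $\al_* > 0$ then works on a set of the required measure.

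The main obstacle is coordinating a single integer $R_a$ to satisfy all five conditions at once: the exponential factor in (G1) must beat the truncated constants $C_* M_C$, while the same $R_a$ must suffice both to concentrate the Birkhoff averages in (G2) and to exceed the measurable bound on $N_{\om,a}(\cU_{\om,a})$ in (G3). The resolution is to first commit to the truncation thresholds, so that $C_*$, $M_C$, and the bound on $N_{\om,a}$ are determined independently of $R_a$, and only then to pick $R_a$ large enough to satisfy all three (now independent) smallness conditions simultaneously.
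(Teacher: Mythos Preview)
Your treatment of (G1) contains a genuine circularity that your proposed resolution does not fix. You pass from the normalized Lasota--Yorke inequality to the unnormalized one by multiplying through by $\lm_\om^{R_a}\le C_*$, and then require $C_* M_C e^{-(\xi-\ep)R_a}\le 1/3$. But $C_*$ is, by definition in (G4), an upper bound for $\|\cL_\om^{R_a}\ind\|_\infty$ on a set of large measure, and this quantity typically grows exponentially in $R_a$ (at rate $\cE P(\phi)$, once that is defined). There is no reason for $C_* e^{-(\xi-\ep)R_a}$ to be small, and your claim that ``$C_*$ \ldots\ [is] determined independently of $R_a$'' is simply false: $C_*$ is a bound on an $R_a$-dependent quantity.

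The paper avoids this entirely because (G1) is meant for the \emph{normalized} operator $\~\cL$ (the symbol $\cL$ in the stated (G1) is a typo; every subsequent use, e.g.\ in Lemma~\ref{lem: cone contraction for good om} and equation \eqref{eq: LY ineq on good fibers}, is with $\~\cL$). With the normalized operator, $\lm_\om^{R_a}$ never enters and $B_*$ is chosen \emph{first} as a truncation level for $C_\ep(\cdot)$ alone, giving $\Om'=\{\om:C_\ep(\om)\le B_*\}$ with $m(\Om')\ge 1-\ep/8$. Then $R_a=q_aN_*$ is taken large enough that $B_*e^{-(\xi-\ep)R_a}\le 1/3$, and only \emph{after} $R_a$ is fixed are $C_*$ and $\al_*$ chosen so that (G2)--(G5) hold on a set $\Om''$ of measure $\ge 1-\ep/8$. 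The good set is $\Om_G=\Om''\cap\sg^{-R_a}(\Om')$. The essential order is $B_*\to R_a\to (C_*,\al_*)$, with $B_*$ decoupled from $R_a$; your order tries to couple them through $C_*$, which cannot work.

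Your handling of (G2)--(G5) is broadly in the right spirit and close to the paper's (truncation plus Egorov/Birkhoff), though note a small slip in (G5): (G4) bounds $\inf\cL_\om^{R_a}\ind$, not $\inf\cL_{\sg^{M_\om(U)}\om}^{R_a-M_\om(U)}\ind$, so your iterated lower bound is not quite what you want. It is simpler to observe directly from \eqref{cond RC} that once $R_a\ge N_{\om,a}(\cU_{\om,a})\ge M_\om(U)$ for all $U\in\cU_{\om,a}$, the map $\om\mapsto\min_{U}\inf\cL_\om^{R_a}\ind_U$ is a.e.\ positive, and then pick $\al_*$ by truncation.
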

\begin{proof}
	We begin by letting 
	\begin{align}\label{eq: def of Om'}
	\Om'=\Om'(B_*):=\set{\om\in\Om: C_\ep(\om)\leq B_*},
	\end{align}
	where $C_\ep(\om)>0$ is the $m$-a.e. finite measurable constant coming from Proposition~\ref{lem: LY ineq 2}.
	Choose $B_*$ sufficiently large such that $m(\Om')\geq 1-\sfrac{\ep}{8}$. Then for any $\om\in\Om$ with $\sg^{qN_*}(\om)\in\Om'$ for any $q\in\NN$, Proposition~\ref{lem: LY ineq 2} gives 
	\begin{align}\label{eq: LY ineq on good fibers}
	\var(\~\cL_\om^{qN_*}f)\leq B_*e^{-(\xi-\ep)qN_*}\var(f)+B_*\nu_\om(f)
	\end{align}
	for any $f\in\BV(X_\om)$. Noting that $\ep<\xi/2$ by \eqref{eq: def of ep_0}, we set $R_0=q_0N_*$ and choose $q_0$ sufficiently large such that 
	\begin{align*}
	B_*q_0e^{-(\xi-\ep)R_0}\leq B_*q_0e^{-\frac{\xi}{2}R_0}\leq\frac{1}{3}.
	\end{align*}
	Now for $q_1\geq q_0$ and define the set 
	\begin{align*}
	\Om''=\Om''(q_1, C_*, \al_*):=\set{\om\in\Om: \eqref{G2}-\eqref{G5} \text{ hold for the value } R_1=q_1N_* }.
	\end{align*}
	Now choose 
	$q_a\geq q_1$, %Editted: Added geq q_1
	$C_*$, and $\al_*$, with $C_*\geq \al_*$, such that $m(\Om''(q_a,C_*,\al_*))\geq 1-\sfrac{\ep}{8}$. Set $R_a:=q_aN_*$. Set $\Om_G:=\Om''\cap \sg^{-R_a}(\Om')$. Then $\Om_G$ is the set of all $\om\in\Om$ which are good with respect to the numbers $B_*$, $R_a$, $\al_*$, $C_*$ and 
	$m(\Om_G)\geq 1-\sfrac{\ep}{4}$, and so we are done.  
\end{proof}

In what follows, given a value $B_*$, we will consider cone parameters 
\begin{align}\label{eq cone param a}
	a\geq a_0:=6B_* %\qquad This could be  4B_*
\end{align}
and we set 
\begin{align}\label{eq: def of R*}
	q_*=q_{a_0} 
	\quad\text{ and }\quad 
	R_*:=R_{a_0}=q_*N_*.
\end{align}
Note that \eqref{G1} together with Proposition~\ref{lem: LY ineq 2} %Editted: Replaced Lemma with Proposition
implies that, for $\ep<\ep_0$ and $\om\in\Om_G$, we have 
\begin{align}%Editted: Added f in first inequality and changed h to f in last inequality
	\var(\~\cL_\om^{R_*}f)
	&\leq 
	B_*e^{-(\xi-\ep)R_*}\var(f)+B_*\nu_\om(f)
	\nonumber\\
	&\leq 
	B_*q_*e^{-\frac{\xi}{2}R_*}\var(f)+B_*\nu_\om(f)
	\nonumber\\
	&\leq 
	\frac{1}{3}\var(f)+B_*\nu_\om(f).
	\label{G1 cons}
\end{align}
For each $\om\in\Om$, $a>0$, and some set $Y\sub \sC_{\om,a}$ we let
\begin{align*}
	\diam_{\om,a}(Y):=\sup_{x,y\in Y} \Ta_{\om,a}(x,y)
\end{align*}
and 
\begin{align*}
	\diam_{\om,+}(Y):=\sup_{x,y\in Y} \Ta_{\om,+}(x,y)
\end{align*}
denote the diameter of $Y$ in the respective cones $\sC_{\om,a}$ and $\sC_{\om,+}$ with respect to the respective metrics $\Ta_{\om,a}$ and $\Ta_{\om,+}$.
We come now to the final, and main, result of this section which shows that the normalized operator is a contraction on the fiber cones $\sC_{\om,a}$ and that the image has finite diameter.
\begin{lemma}\label{lem: cone contraction for good om}
	If $\om$ is good with respect to the numbers $\ep$, $a_0$, $B_*$, $R_*$, $\al_*$, $C_*$ then for each $a\geq a_0$ we have 
	\begin{align*}
	\~\cL_\om^{R_*}(\sC_{\om,a})\sub \sC_{\sg^{R_*}(\om), \sfrac{a}{2}}\sub \sC_{\sg^{R_*}(\om),a}
	\end{align*}
	and that 
	\begin{align*}
		\diam_{\sg^{R_*}(\om), a}\lt(\~\cL_\om^{R_*}(\sC_{\om,a})\rt)\leq \Dl_a:= 2\log \frac{C_*(3+a)}{\al_*} <\infty. 
	\end{align*}
\end{lemma}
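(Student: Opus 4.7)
The plan is to treat the two assertions in sequence. For the inclusion, I would combine the (normalized) Lasota-Yorke estimate \eqref{G1} with the choice $a_0=6B_*$. Given $f\in\sC_{\om,a}$ with $a\geq a_0$, the cone condition $\var(f)\leq a\nu_\om(f)$ gives
\[
\var(\~\cL_\om^{R_*}f)\leq \tfrac{1}{3}\var(f)+B_*\nu_\om(f)\leq \Bigl(\tfrac{a}{3}+\tfrac{a}{6}\Bigr)\nu_\om(f)=\tfrac{a}{2}\nu_\om(f),
\]
and conformality \eqref{eq: equivariance prop of nu norm op} replaces $\nu_\om(f)$ with $\nu_{\sg^{R_*}(\om)}(\~\cL_\om^{R_*}f)$, yielding $\~\cL_\om^{R_*}f\in\sC_{\sg^{R_*}(\om),a/2}$. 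The second inclusion $\sC_{\sg^{R_*}(\om),a/2}\sub\sC_{\sg^{R_*}(\om),a}$ is immediate from the cone definition.

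For the diameter, the triangle inequality for $\Ta_{\sg^{R_*}(\om),a}$ reduces the task to bounding $\Ta_{\sg^{R_*}(\om),a}(\ind,\~\cL_\om^{R_*}h)$ uniformly over $h\in\sC_{\om,a}$. By projective invariance of the Hilbert metric I may normalize $h$ so that $\nu_\om(h)=1$, whence conformality gives $\nu_{\sg^{R_*}(\om)}(f)=1$ for $f:=\~\cL_\om^{R_*}h$. Since the first part puts $f$ in $\sC_{\sg^{R_*}(\om),a/2}$, Lemma~\ref{lem: summary of cone dist prop} applies and yields
\[
\Ta_{\sg^{R_*}(\om),a}(\ind,f)\leq \log\frac{\norm{f}_\infty+\tfrac{1}{2}}{\min\bigl\{\inf f,\,\tfrac{1}{2}\bigr\}}.
\]

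Estimating the numerator is direct: $f\in\sC_{\sg^{R_*}(\om),a/2}$ forces $\var(f)\leq a/2$, and $\inf f\leq\nu_{\sg^{R_*}(\om)}(f)=1$, so $\norm{f}_\infty\leq \inf f+\var(f)\leq 1+a/2$, bounding the numerator by $(a+3)/2$. For the denominator I would invoke Lemma~\ref{lem: LSV lemma 4.6} to produce some $U\in\cU_{\om,a_0}$ with $\inf_U h\geq \tfrac{1}{2}$; positivity of $\cL_\om^{R_*}$ together with \eqref{G5} and \eqref{G4} then gives
\[
\inf f\geq (\lm_\om^{R_*})^{-1}\cdot\tfrac{1}{2}\inf \cL_\om^{R_*}\ind_U\geq \frac{\al_*}{2C_*}.
\]
Since $\al_*\leq C_*$ forces $\al_*/(2C_*)\leq \tfrac{1}{2}$, the denominator is at least $\al_*/(2C_*)$, and substitution produces $\Ta_{\sg^{R_*}(\om),a}(\ind,f)\leq \log\bigl(C_*(a+3)/\al_*\bigr)$; doubling via the triangle inequality yields $\Dl_a$.

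The delicate step is the invocation of Lemma~\ref{lem: LSV lemma 4.6}: its proof requires $ab_\om<\tfrac{1}{2}$, which holds automatically only for $a=a_0$ since $b_\om<1/(2a_0)$ for the partition $\cU_{\om,a_0}$. For $a>a_0$ one can ensure $ab_\om<\tfrac{1}{2}$ over the relevant range of $a$ by enlarging $t_{\om,a_0}$ and thereby refining $\cU_{\om,a_0}$ (and possibly $R_*$); the contracting potential assumption \eqref{cond CP1} guarantees $\norm{g_\om^{(n)}}_\infty/\inf\cL_\om^n\ind_\om\to 0$, so $b_\om$ can be made arbitrarily small. With this adjustment, the structural argument above goes through unchanged.
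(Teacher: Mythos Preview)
Your proof is essentially the paper's own: the inclusion comes from \eqref{G1} together with $a_0=6B_*$, and the diameter bound from Lemma~\ref{lem: summary of cone dist prop} applied to $\~\cL_\om^{R_*}f\in\sC_{\sg^{R_*}(\om),a/2}$, with $\norm{\~\cL_\om^{R_*}f}_\infty$ controlled by the cone condition and $\inf\~\cL_\om^{R_*}f$ controlled by Lemma~\ref{lem: LSV lemma 4.6} in conjunction with \eqref{G4}--\eqref{G5}. Your normalization $\nu_\om(h)=1$ is purely cosmetic; the paper carries the factor $\nu_{\sg^{R_*}(\om)}(\~\cL_\om^{R_*}f)$ throughout instead.

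The subtlety you flag about Lemma~\ref{lem: LSV lemma 4.6} is genuine and well spotted: applied with the partition $\cU_{\om,a_0}$, that lemma only guarantees the existence of $U_f$ for $f\in\sC_{\om,a_0}$, since its proof requires $ab_\om<\tfrac{1}{2}$ and one only has $b_\om<\tfrac{1}{2a_0}$. The paper's own proof simply writes ``apply Lemma~\ref{lem: LSV lemma 4.6} to find $U_f\in\cU_{\om,a_0}$'' for $f\in\sC_{\om,a}$ without addressing this, so you have identified a point the paper glosses over as well. However, your proposed remedy---enlarging $t_{\om,a_0}$ and possibly $R_*$ after the fact---does not work as stated: these parameters are fixed once and for all in the construction of $\Om_G$ (Lemma~\ref{lem: constr of Om_G}), and altering them would change $\al_*$, $C_*$, and the set $\Om_G$ itself, creating circularity. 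Since the lemma is ultimately invoked downstream only with the single value $a=a_*$, a coherent repair would build the goodness conditions \eqref{G3} and \eqref{G5} using $\cU_{\om,a_*}$ from the outset, but this requires some care in the order of selection of constants that neither your sketch nor the paper carries out.
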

\begin{proof}
	For $\om$ good and $f\in\sC_{\om,a}$, \eqref{G1 cons}, \eqref{eq cone param a}, and \eqref{eq: equivariance prop of nu} give
	\begin{align*}
		\var(\~\cL_\om^{R_*} f)
		&\leq 
		\frac{1}{3}\var(f) + B_*\nu_\om(f) 
		\leq 
		\lt(\frac{a}{3} + B_* \rt)\nu_\om(f) 
		\\
		&\leq 
		\lt(\frac{a}{3}+\frac{a}{6}\rt)\nu_\om(f) 
		=\frac{a}{2}\nu_{\sg^{R_*}(\om)}(\~\cL_\om^{R_*} f).
	\end{align*}
	Hence we have 
	\begin{align*}
		\~\cL_\om^{R_*}(\sC_{\om,a})\sub \sC_{\sg^{R_*}(\om), \sfrac{a}{2}}\sub \sC_{\sg^{R_*}(\om),a}
	\end{align*}
	as desired. Now, towards finding the diameter of $\~\cL_\om^{R_*}(\sC_{\om,a})$ in $\sC_{\sg^{R_*}(\om),a}$ we note that Lemma~\ref{lem: summary of cone dist prop} implies 
	\begin{align}\label{sup dist to one in future cone}
		\Ta_{\sg^{R_*}(\om),a}(\~\cL_\om^{R_*}f,\ind_{\sg^{R_*}(\om)})
		\leq 
		\log\frac{\norm{\~\cL_\om^{R_*} f}_\infty+\frac{1}{2}
			\nu_{\sg^{R_*}(\om)}(\~\cL_\om^{R_*}f)}
		{\min\set{\inf \~\cL_\om^{R_*} f, \frac{1}{2}\nu_{\sg^{R_*}(\om)}(\~\cL_\om^{R_*} f)}}.
	\end{align}
	Thus, to finish the proof we look for bounds for $\norm{\~\cL_\om^{R_*} f}_\infty$ and $\inf \~\cL_\om^{R_*} f$ in terms of $\nu_{\sg^{R_*}(\om)}(\~\cL_\om^{R_*} f)$. To that end, we note that since $\~\cL_\om^{R_*} f\in\sC_{\sg^{R_*}(\om),a/2}$ we have 
	\begin{align}\label{sup norm L_om^R bound for finite diam}
		\norm{\~\cL_\om^{R_*} f}_\infty
		\leq 
		\nu_{\sg^{R_*}(\om)}(\~\cL_\om^{R_*} f)+\var(\~\cL_\om^{R_*} f)
		\leq 
		\lt(1+\frac{a}{2}\rt)\nu_{\sg^{R_*}(\om)}(\~\cL_\om^{R_*} f).
	\end{align}
	Since $R_*\geq N_{\om,a_0}(\cU_{\om,a_0})\geq t_{\om,a_0}$ we may apply Lemma~\ref{lem: LSV lemma 4.6} to find $U_f\in\cU_{\om,a_0}$, which, taken together with \eqref{G4} -- \eqref{G5}, gives 
	\begin{align}\label{inf L_om^R bound for finite diam}
		\inf\~\cL_\om^{R_*} f 
		\geq 
		\inf\~\cL_\om^{R_*} (f\ind_{U_f}) 
		\geq 
		\frac{1}{2}\nu_\om(f)\inf\~\cL_\om^{R_*}\ind_{U_f}
		\geq 
		\frac{1}{2}\nu_\om(f)\frac{\al_*}{\lm_\om^{R_*}}
		\geq 
		\frac{\al_*}{2C_*}\nu_{\sg^{R_*}(\om)}(\~\cL_\om^{R_*}f).
	\end{align}
	Inserting \eqref{sup norm L_om^R bound for finite diam} and \eqref{inf L_om^R bound for finite diam} into \eqref{sup dist to one in future cone}, simplifying, and applying the triangle inequality finishes the proof.
\end{proof}
\begin{comment}
To end this section we note that, in light of the proof of Lemma~\ref{lem: constr of Om_G}, we must have that 
\begin{align}
	B_*q_*e^{-\frac{\xi}{2}R_*}\leq \frac{1}{3}.
\end{align}

content...
\end{comment}

\section{Dealing With Bad Fibers}\label{sec:bad}
In this section we again follow Buzzi \cite{buzzi_exponential_1999} to show that for fibers in the small measure set of ``bad'' fibers, $\Om_B:=\Om\bs\Om_G$, the cone $\sC_{\om,a}$ of positive functions is invariant after sufficiently many iterations for sufficiently large parameters $a>0$. We accomplish this by introducing the concept of coating intervals, which we then show make up a relatively small portion of an orbit. 

Recall that $R_*$ is given by \eqref{eq: def of R*}, and for each $\om\in\Om$ we let 
\begin{align}\label{eq: def of y_*}
	0\leq y_*(\om)<R_*
\end{align} 
be the smallest integer such that for either choice of sign $+$ or $-$ we have 
\begin{flalign} 
& \lim_{m\to\infty} \frac{1}{m}\#\set{0\leq k< m: \sg^{\pm kR_*+y_*(\om)}(\om)\in\Om_G} >1-\ep,
\label{def j*1} 
&\\
& \lim_{m\to\infty} \frac{1}{m}\#\set{0\leq k< m: C_\ep\lt(\sg^{\pm kR_*+y_*(\om)}(\om)\rt)\leq B_*} >1-\ep.
\label{def j*2} 
\end{flalign}
This is possible, since by e.g.\ Lemma 33 \cite{gonzalez-tokman_stability_2018} ergodicity of $\sigma$ yields the existence of a $\sg^{R_*}$-invariant subset $E\subset \Omega$ and a divisor $d$ of $R_*$ such that  the ergodic components of $m$ under $\sg^{R_*}$ are the sets $\sigma^{-i} (E)$, $i=0,\ldots,d-1$, and $m(E)=1/d$.
Thus, for $m$-a.e.\ $\omega\in \Omega$, both limits (\ref{def j*1}) and (\ref{def j*2}) exist by Birkhoff.
When $\sigma^{y_*(\om)}(\om)\in \sigma^{-i}(E)$, by Birkhoff the limit (\ref{def j*1}) equals $m(\sigma^{-i}(E)\cap \Om_G)\cdot d$.
Similarly, since $\sigma^{-R_*}(\Omega')\subset \Omega_G$ (proof of Lemma \ref{lem: constr of Om_G}), and (\ref{def j*1}) is a stronger condition by the definition of $\Omega_G$, the limit (\ref{def j*2}) is not less than $m(\sigma^{-i}(E)\cap \Om_G)\cdot d$.
Since $m(\Om_G)>1-\ep/4$ and since $\Omega_G$ is fixed, we may choose $y_*(\om)$ so that the limits (\ref{def j*1}) and (\ref{def j*2}) are not less than $1-(\epsilon/4)\cdot d$ (this worst case is achieved when the mass of $\Omega_G$ is distributed equally amongst the $\sigma^{-i} (E)$, $i=0,\ldots,d-1$).

Clearly, $y_*:\Om\to\NN$ is a measurable function such that 
\begin{flalign} 
& y_*(\sg^{y_*(\om)}(\om))=0,
\label{prop j*1} 
&\\
& y_*(\sg^{R_*}(\om))=y_*(\om).
\label{prop j*2} 
\end{flalign}
In particular, \eqref{prop j*1} and \eqref{prop j*2} together imply that 
\begin{align}\label{prop j*3}
	y_*(\sg^{y_*(\om)+kR_*}(\om))=0
\end{align}
for all $k\in\NN$.
Let 
\begin{equation}\label{def Gm}
	\Gm(\om):=\prod_{k=0}^{q_*-1} L_{\sg^{kN_*}(\om)}^{N_*}
	%\footnote{Note that our $\Gm(\om)$ differs from the $K^R(\om)$ constant of Buzzi \cite{buzzi_exponential_1999} by a factor of $q_*$}
	,
\end{equation}
and for each $\om\in\Om$ we define the \textit{coating length} $\ell(\om)$ as follows:
\begin{itemize}
	\item if $\om\in\Om_G$ then set $\ell(\om):=1$, 
	\item if $\om\in\Om_B$ then 
	\begin{align}\label{def: coating length}
		\ell(\om):=\min\set{n\in\NN: \frac{1}{n}\sum_{0\leq k< n} \lt(\ind_{\Om_B}\log \Gm\rt)(\sg^{kR_*}(\om)) \leq \log \ep^{\frac{1}{2}}\rho R_*}.
	\end{align}
	If the minimum is not attained we set $\ell(\om)=\infty$.
\end{itemize}
Since $L_\om^{N_*}\geq 55^{N_*}$ by Lemma~\ref{lem: buzzi LY1}, we must have that 
\begin{align}\label{eq: Bm geq 55^R}
	\Gm(\om)\geq 55^{R_*}
\end{align}
for all $\om\in\Om$. 
It follows from Lemma~\ref{lem: buzzi LY1} %(applied repeatedly to $q_*$ many blocks of length $N_*$) 
that for $m$-a.e. $\om\in\Om$ we have 
\begin{align}\label{eq: LY ineq for bad fibers}
	\var(\~\cL_\om^{R_*}f)
	&\leq L_\om^{R_*}(\var(f)+\nu_\om(f))
	= \Gm(\om)(\var(f)+\nu_\om(f)).  
\end{align}
Furthermore, if $\om\in\Om_G$ it follows from \eqref{G2} that 
\begin{align}\label{eq: up and low bds for Gamma on good om}
	R_*(\rho-\ep)\leq \log\Gm(\om)\leq R_*(\rho+\ep).
\end{align}
Now let
\begin{align}\label{eq: def of ep_9}
\ep<\ep_1:=\min\set{\left(\frac{\rho}{2+\rho}\right)^2, \lt(\frac{\log 55}{\rho}\rt)^2 }.
\end{align}
The following proposition collects together some of the key properties of the coating length $\ell(\om)$. 
\begin{proposition}\label{prop: ell(om) props}
	Concerning the number $\ell(\om)$, we have the following.
		\begin{flalign*}
		& \text{For }m\text{-a.e. } \om\in\Om \text{ such that } y_*(\om)=0 \text{ we have } \ell(\om)<\infty,
		\tag{i}\label{prop: ell(om) props item i}
		&\\
		& \text{If }\om\in\Om_B \text{ then }\ell(\om)\geq 2.
		\tag{ii} \label{prop: ell(om) props item ii}
		\end{flalign*}
\end{proposition}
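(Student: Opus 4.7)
The plan is to handle the two claims separately. Claim~\eqref{prop: ell(om) props item ii} is immediate, whereas claim~\eqref{prop: ell(om) props item i} requires Birkhoff's ergodic theorem on $\sg^{R_*}$ paired with the defining properties of $y_*$.

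For \eqref{prop: ell(om) props item ii}, I will simply test $n=1$ in the criterion of \eqref{def: coating length}. If $\om\in\Om_B$, the single-term average equals $\log\Gm(\om)\geq R_*\log 6$ by \eqref{eq: Bm geq 6^R}. The choice $\ep<\ep_1\leq(\log 6/\rho)^2$ from \eqref{eq: def of ep_9} gives $\ep^{1/2}\rho R_*<R_*\log 6\leq\log\Gm(\om)$, so $n=1$ fails and $\ell(\om)\geq 2$.

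For \eqref{prop: ell(om) props item i}, since $\log L_\om^{(N_*)}\in L^1_m(\Om)$ by Lemma~\ref{lem: buzzi LY1} and the definition \eqref{def Gm} is a finite Birkhoff sum of such terms along $\sg^{N_*}$, we have $\phi_B:=\ind_{\Om_B}\log\Gm\in L^1_m(\Om)$. Birkhoff's ergodic theorem applied to the $m$-preserving map $\sg^{R_*}$ yields $m$-a.e.\ convergence of $S_n(\om):=n^{-1}\sum_{k=0}^{n-1}\phi_B(\sg^{kR_*}\om)$ to a $\sg^{R_*}$-invariant limit $\bar S(\om)$. To obtain $\ell(\om)<\infty$ it is enough to prove the strict bound $\bar S(\om)<\ep^{1/2}\rho R_*$ for $m$-a.e.\ $\om\in\{y_*=0\}$; convergence then forces $S_n(\om)\leq\ep^{1/2}\rho R_*$ for all sufficiently large $n$. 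To establish this bound I will pass to the ergodic decomposition $m=d^{-1}\sum_{i=0}^{d-1}m_i$ of $\sg^{R_*}$ introduced around \eqref{prop j*1}, so that $\bar S(\om)=\int\phi_B\,dm_{i(\om)}$ for the component index $i(\om)$. The hypothesis $y_*(\om)=0$ together with \eqref{def j*1} gives $m_{i(\om)}(\Om_G)>1-\ep$, and hence $m_{i(\om)}(\Om_B)<\ep$. Writing $\log\Gm=\ind_{\Om_G}\log\Gm+\phi_B$, inserting the bounds $R_*(\rho-\ep)\leq\log\Gm\leq R_*(\rho+\ep)$ on $\Om_G$ from \eqref{eq: up and low bds for Gamma on good om}, and combining with the global identity $\int\log\Gm\,dm=R_*\rho$ and the cyclic action of $\sg$ on the components, I arrive at $\bar S(\om)\leq R_*\ep(1+\rho/4)$. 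Finally, $\ep<\ep_1\leq(\rho/(2+\rho))^2$ from \eqref{eq: def of ep_9} rearranges to $\ep^{1/2}(1+\rho/4)<\rho$, so $R_*\ep(1+\rho/4)<\ep^{1/2}\rho R_*$ as needed.

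The main technical obstacle is controlling $\bar S(\om)$ on the specific ergodic component containing $\om$: the identity $\sum_i\int\log\Gm\,dm_i=dR_*\rho$ only constrains the average over components, so individual integrals $\int\log\Gm\,dm_i$ may a priori deviate substantially from $R_*\rho$. Closing this gap is where the \emph{second} frequency condition \eqref{def j*2} in the definition of $y_*$, the inclusion $\sg^{-R_*}(\Om')\subset\Om_G$ pointed out in the proof of Lemma~\ref{lem: constr of Om_G}, and the sharp minimality of $y_*$ must all be exploited in concert, to guarantee that the component selected by the condition $y_*(\om)=0$ is one where $\int\log\Gm\,dm_{i(\om)}$ is sufficiently close to the average $R_*\rho$ for the bound above to go through.
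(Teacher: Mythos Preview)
Your treatment of (ii) is correct and matches the paper exactly.

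For (i), your proof has a gap that you yourself acknowledge in the final paragraph. You correctly reduce to bounding $\bar S(\om)=\int\ind_{\Om_B}\log\Gm\,dm_{i(\om)}$ on the $\sg^{R_*}$-ergodic component $m_{i(\om)}$, and you correctly bound $\int_{\Om_G}\log\Gm\,dm_{i(\om)}$ from below using \eqref{def j*1} and \eqref{eq: up and low bds for Gamma on good om}. But you give no justification for the upper bound on $\int\log\Gm\,dm_{i(\om)}$: the global identity $\int\log\Gm\,dm=R_*\rho$ controls only the average over components, and none of the tools you list (\eqref{def j*2}, the inclusion $\sg^{-R_*}(\Om')\subset\Om_G$, minimality of $y_*$) is actually shown to bound this single-component integral. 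The asserted inequality $\bar S(\om)\leq R_*\ep(1+\rho/4)$ is therefore unsupported; in fact the paper uses none of these ingredients at this step.

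The paper avoids the component-by-component analysis entirely. Since $\log\Gm(\om)=\sum_{j=0}^{R_*/N_*-1}\log L_{\sg^{jN_*}(\om)}^{(N_*)}$ is itself a $\sg^{N_*}$-Birkhoff sum, one telescopes
\[
\frac{1}{n}\sum_{k=0}^{n-1}\log\Gm(\sg^{kR_*}\om)=\frac{R_*}{N_*}\cdot\frac{1}{nR_*/N_*}\sum_{l=0}^{nR_*/N_*-1}\log L_{\sg^{lN_*}(\om)}^{(N_*)},
\]
and invokes the limit \eqref{eq: rho geq log 6} directly to bound the right side by $(\rho+\ep)R_*$ for all large $n$. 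Subtracting the $\Om_G$-lower bound $(1-\ep)(\rho-\ep)R_*$ then yields
\[
\frac{1}{n}\sum_{k=0}^{n-1}\ind_{\Om_B}(\sg^{kR_*}\om)\log\Gm(\sg^{kR_*}\om)\leq\ep(2+\rho)R_*\leq\ep^{1/2}\rho R_*,
\]
the last inequality being exactly the first term in the definition \eqref{eq: def of ep_9} of $\ep_1$. This is the estimate you were missing, obtained without any ergodic decomposition of $\sg^{R_*}$.
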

%Although we deal here with transfer operators with general potentials, the proof of Proposition~\ref{prop: ell(om) props} is exactly the same as the details which laid out in Section~3.1 of \cite{buzzi_exponential_1999}.
\begin{proof}
To see \eqref{prop: ell(om) props item i} we first note that 
\begin{align}\label{eq: ell(om) props item i num 1}
	\frac{1}{n}\sum_{0<k\leq n}\lt(\ind_{\Om_B}\log \Gm\rt)(\sg^{kR_*}(\om))
	=
	\frac{1}{n}\sum_{0<k\leq n}\log\Gm(\sg^{kR_*}(\om))
	-
	\frac{1}{n}\sum_{0<k\leq n }\lt(\ind_{\Om_G}\log \Gm\rt)(\sg^{kR_*}(\om)).
\end{align}
Towards estimating each of the terms in the right-hand side of \eqref{eq: ell(om) props item i num 1}, 
we recall that $R_*=q_{a_0}N_*$ and use \eqref{def Gm} 
and \eqref{eq: rho geq log 6} to see that 
\begin{align}\label{eq: ell(om) props item i num 2}
	\frac{1}{n}\sum_{0<k\leq n}\log\Gm(\sg^{kR_*}(\om))
	&=
	 %EDIT
	\frac{1}{n}\sum_{k=0}^{n-1}\sum_{j=0}^{q_{a_0}-1} \log L_{\sg^{jN_*+kR_*}(\om)}^{N_*}
	\leq 
	(\rho+\ep)R_*
\end{align}
for all $n\in\NN$ sufficiently large, while  \eqref{def j*1} (with $y_*(\om)=0$) together with \eqref{eq: up and low bds for Gamma on good om} ensure that 
\begin{align}\label{eq: ell(om) props item i num 3}
	\frac{1}{n}\sum_{0<k\leq n}\lt(\ind_{\Om_G}\log \Gm\rt)(\sg^{kR_*}(\om))
	\geq 
	(1-\ep)\lt((\rho-\ep)R_*\rt)
\end{align}
for all $n\in\NN$ sufficiently large. Now, combining \eqref{eq: ell(om) props item i num 2} and \eqref{eq: ell(om) props item i num 3} together with \eqref{eq: ell(om) props item i num 1}
and then using \eqref{eq: def of ep_9}, for $n\in\NN$ sufficiently large we have 
\begin{align}
	\frac{1}{n}\sum_{0<k\leq n }\lt(\ind_{\Om_B}\log \Gm\rt)(\sg^{kR_*}(\om))
	&\leq (\rho+\ep)R_* - (1-\ep)\lt((\rho-\ep)R_*\rt)
	\nonumber\\
	&\leq \ep(2+\rho)R_*
	\label{eq: Buzzi 3.2 equiv first ineq}
	\\
	&\leq \ep^{\frac{1}{2}}\rho R_*.
	\label{eq: Buzzi 3.2 equiv}
\end{align}
In light of the definition of $\ell(\om)$, \eqref{def: coating length}, we see that $\ell(\om)<\infty$.

Now to see \eqref{prop: ell(om) props item ii} we suppose that $\om\in\Om_B$ and derive a contradiction if $\ell(\om)=1$. Indeed, in light of \eqref{eq: Bm geq 55^R}, we see that $\ell(\om)=1$ implies that  
\begin{align*}
	R_*\log 55 \leq \log\Gm(\om)\leq \ep^{\frac{1}{2}}\rho R_*,
\end{align*} 
and thus that 
\begin{align*}
	\ep\geq \lt(\frac{\log 55}{\rho}\rt)^2,
\end{align*}
which contradicts the fact that we have chosen $\ep<\ep_1$.
\end{proof}
\begin{remark}\label{rem: y*=0 implies ell finite}
	Given $\om_0\in\Om$, for each $j\geq 0$ let $\om_{j+1}=\sg^{\ell(\om_j)R_*}(\om_j)$. 
	As a consequence of Proposition~\ref{prop: ell(om) props} item \eqref{prop: ell(om) props item i} and \eqref{prop j*2}, we see that for $m$-a.e. $\om_0\in\Om$ with $y_*(\om_0)=0$, we must have that $\ell(\om_j)<\infty$ for all $j\geq 0$.
\end{remark}
\begin{definition}\label{def: coating intervals}	
	We will call a (finite) sequence $\om, \sg(\om), \dots, \sg^{\ell(\om)R_*-1}(\om)$ of $\ell(\om)R_*$ many fibers a \textit{good block} (originating at $\om$) if $\om\in\Om_G$ (which implies that $\ell(\om)=1$). If, on the other hand, $\om\in\Om_B$ we call such a sequence a \textit{bad block} (originating at $\om$).

For each $\om\in\Om$ we define the \textit{coating intervals} along the orbit starting at $\om$ to be bad blocks of the form $\sg^{a_jR_*}(\om), \sg^{a_jR_*+1}(\om), \dots, \sg^{b_jR_*-1}(\om)$  where $a_j,b_j$ are given by
\begin{align*}
a_j:=\min\set{k\geq b_{j-1}: \sg^{kR_*}(\om)\in\Om_B}
\quad\text{ and }\quad
b_j=a_j+\ell(\sg^{a_jR_*}(\om))
\end{align*}
for $j\geq 1$ and setting $b_0:=-1$.
%\footnote{Note that there is no number $a_0$ nor interval of the form $[a_0,b_0)$ in the definition of the coating intervals.}

\end{definition}

Choosing $\ep< \ep_1$, 
%which implies that 
%\begin{align*}
%	\frac{\ep^{\frac{1}{2}}\rho}{\log 55}=\frac{\ep^{\frac{1}{2}}\rho R_*}{R_*\log 55}<1,
%\end{align*}
we may define the number
\begin{align}\label{eq: def of gm constant}
\gm=\gm(\ep):=\frac{\ep^{\frac{1}{2}}\rho R_*}{R_*\log 55}<1.
\end{align}
Note that since $\sfrac{1}{\log 55}<\sfrac{1}{4}$ we can find $\ep_2<\ep_1$ sufficiently small such that $\gm<\sfrac{1}{4}$.
%
%$q_*\to\infty$ as $\ep\to 0$ (since $q_*$ was chosen in Lemma~\ref{lem: constr of Om_G} depending on $\ep$) and since $R_*=q_*N_*$, we must have that for $\ep>0$ sufficiently small there exists $\gm<1$ such that 
%
%Edit: since limit is not necessarily increasing but may in fact be decreasing we just find some ep small and redefine gm
%\begin{align*}
%	\gm(\ep)\nearrow \frac{\ep^{\frac{1}{2}}\rho}{\log 55}\flag{Fix like in ORLY}
%\end{align*}
%as $\ep\to 0$. 
Now we set
\begin{align*}
\ep_3:=\min\set{\ep_2,   \lt(\frac{\xi}{8\rho}\rt)^2}.
\end{align*}

\begin{observation} 
	By our choice of $\ep<\ep_3$ we must have that 
	\begin{align}\label{obs: expon leq xi/2}
	-\frac{\xi}{2}> 2\ep^{\frac{1}{2}}\rho-(\xi-\ep)(1-\gm) > \ep^{\frac{1}{2}}\rho-(\xi-\ep)(1-\gm).
	\end{align}
	To see this we note that $\ep<\ep_3$ implies that 
	\begin{align*}
		2\ep^{\frac{1}{2}}\rho < \frac{\xi}{4}
		\quad\text{ and }\quad 
		\gm<\frac{1}{4},
	\end{align*}
	which in turn implies that 
	\begin{align*}
		2\ep^{\frac{1}{2}}\rho-(\xi-\ep)(1-\gm)
		&<
		\frac{\xi}{4}+\xi(\gm-1)
		< 
		\frac{\xi}{4}-\frac{3\xi}{4}
		=-\frac{\xi}{2}	
	\end{align*}
	as desired.
\end{observation}

The next lemma shows that the normalized operator is weakly contracting (i.e. non-expanding) on the fiber cones $\sC_{\om,a}$ for sufficiently large values of $a>a_0$.
\begin{lemma}\label{lem: cone cont for coating blocks}
	For every $\ep<\ep_3$ and all $\om\in\Om$ with $\ell(\om)<\infty$ we have that 
	\begin{align*}
		\~\cL_\om^{\ell(\om)R_*}(\sC_{\om,a_*})\sub \sC_{\sg^{\ell(\om)R_*}(\om), a_*}
	\end{align*}
	where 
	\begin{align}\label{eq: def of a_*}
	a_*=a_*(\ep):=a_0e^{\ep^{\frac{1}{2}}\rho R_*}=6B_*e^{\ep^{\frac{1}{2}}\rho R_*}.
	\end{align}
\end{lemma}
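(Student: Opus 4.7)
The plan is to split into two cases according to whether $\om\in\Om_G$ or $\om\in\Om_B$. If $\om\in\Om_G$ then $\ell(\om)=1$, and since $a_*\geq a_0=6B_*$, Lemma~\ref{lem: cone contraction for good om} gives directly $\~\cL_\om^{R_*}(\sC_{\om,a_*})\sub\sC_{\sg^{R_*}(\om),a_*/2}\sub\sC_{\sg^{R_*}(\om),a_*}$. All the real work therefore lies in the case $\om\in\Om_B$, for which $n:=\ell(\om)\geq 2$ by Proposition~\ref{prop: ell(om) props}\eqref{prop: ell(om) props item ii}.

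Fix such an $\om$ and an $f\in\sC_{\om,a_*}$, and write $\om_k:=\sg^{kR_*}(\om)$, $\psi_k:=\~\cL_\om^{kR_*}f$, and $V_k:=\var(\psi_k)$. Conformality via \eqref{eq: equivariance prop of nu norm op} gives $\nu_{\om_k}(\psi_k)=\nu_\om(f)=:N$ for every $k$, so the goal reduces to showing $V_n\leq a_* N$. The relevant Lasota--Yorke inequality at each step produces an affine recursion $V_{k+1}\leq\al_k V_k+\bt_k N$, where $(\al_k,\bt_k)=(1/3,B_*)$ when $\om_k\in\Om_G$ (from~\eqref{G1}) and $(\al_k,\bt_k)=(\Gm(\om_k),\Gm(\om_k))$ when $\om_k\in\Om_B$ (from~\eqref{eq: LY ineq for bad fibers}). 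Iterating yields $V_n\leq V_0\prod_{k=0}^{n-1}\al_k+N\sum_{j=0}^{n-1}\bt_j\prod_{k=j+1}^{n-1}\al_k$, and I intend to dominate this by $a_* N$ using $V_0\leq a_* N$.

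Three structural facts about the coating block control the recursion. First, the last fiber $\om_{n-1}$ must lie in $\Om_G$: otherwise the minimality encoded in~\eqref{def: coating length}, together with $\log\Gm\geq R_*\log 6$ from~\eqref{eq: Bm geq 6^R}, would force $\ep\geq(\log 6/\rho)^2$, contradicting $\ep<\ep_1$. Second, since each bad fiber contributes at least $R_*\log 6$ to the sum $\sum_{k=0}^{n-1}(\ind_{\Om_B}\log\Gm)(\om_k)\leq n\ep^{1/2}\rho R_*$ provided by~\eqref{def: coating length}, the set $B$ of bad indices in $\{0,\ldots,n-1\}$ satisfies $|B|\leq n\gm$ with $\gm=\ep^{1/2}\rho/\log 6<1$, so there are at least $n(1-\gm)$ good fibers contributing a contraction factor of $1/3$. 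Third, the product of the bad multipliers satisfies $\prod_{k\in B}\Gm(\om_k)\leq e^{n\ep^{1/2}\rho R_*}$, and a subtraction argument using minimality of $\ell(\om)$ at each intermediate index refines this to $\sum_{k\in B\cap[j,n-1]}\log\Gm(\om_k)\leq(n-j)\ep^{1/2}\rho R_*$ for each $0\leq j<n$.

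Combining these, the full multiplier obeys $\prod_{k=0}^{n-1}\al_k\leq 3^{-n(1-\gm)}e^{n\ep^{1/2}\rho R_*}$, which by~\eqref{obs: expon leq xi/2} and the choice $\ep<\ep_2$ is geometrically small in $n$. For the additive part I split the $j$-sum: if $j\in B$ one has $\bt_j\prod_{k>j}\al_k=\prod_{k\geq j}\al_k$, while if $j\in G$ it equals $3B_*\prod_{k\geq j}\al_k$, so the refined tail estimate on $\prod_{k\geq j}\al_k$ collapses the whole sum into a geometric series that is dominated by a constant multiple of $B_* e^{n\ep^{1/2}\rho R_*}$. The choice $a_*=6B_*e^{\ep^{1/2}\rho R_*}$ is calibrated so that the multiplicative term $a_*\prod_k\al_k$ and the additive geometric term together fit inside $a_* N$, delivering $V_n\leq a_* N$. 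The main obstacle is precisely this final quantitative accounting: a single bad step can inflate the variation by a factor $\Gm\geq 6^{R_*}$, which the $1/3$-factor of a single subsequent good step cannot undo, so the invariance of $\sC_{\om,a_*}$ is a \emph{global} property of the entire coating block — it hinges on the minimality of $\ell(\om)$ forcing the total bad expansion to be distributed over enough good contractions, and on $a_*$ being exponentially large in $\ep^{1/2}\rho R_*$ to absorb the worst-case accumulation.
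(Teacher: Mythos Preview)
Your overall architecture mirrors the paper's: split into good/bad, set up the recursion $V_{k+1}\le\al_k V_k+\bt_k N$, control the tail products via the minimality of $\ell(\om)$ (your third structural fact is exactly the paper's key estimate \eqref{eq: avg sum log Gm for bad om}), and sum a geometric series. The gap is quantitative but decisive: you take $(\al_k,\bt_k)=(1/3,B_*)$ on good fibers, citing \eqref{G1}. That is too weak. With $\al_k=1/3$ for $\om_k\in\Om_G$ and $\al_k=\Gm(\om_k)$ otherwise, your own tail estimate gives
\[
\prod_{k\ge j}\al_k \le \bigl(3^{-(1-\gm)}e^{\ep^{1/2}\rho R_*}\bigr)^{\,n-j},
\]
and for this to be geometrically small you need $(1-\gm)\log 3>\ep^{1/2}\rho R_*$. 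Nothing in the construction forces this: $\ep_2$ in \eqref{eq: def of ep_1} has no $R_*$ in it, while $R_*$ is chosen (in Lemma~\ref{lem: constr of Om_G}) large, so $\ep^{1/2}\rho R_*$ may well exceed $\log 3$. Your appeal to \eqref{obs: expon leq xi/2} is misplaced: that inequality controls $\ep^{1/2}\rho-(\xi-\ep)(1-\gm)$, an expression involving the decay rate $\xi$, not $\log 3$.

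The fix is to use, on good fibers, the stronger bound $\al_k=B_*e^{-(\xi-\ep)R_*}$ from \eqref{eq: LY ineq on good fibers} (recall $\om_k\in\Om_G\subset\sg^{-R_*}(\Om')$, so \eqref{eq: LY ineq on good fibers} applies). Then the tail product becomes $\bigl(B_*^{1-\gm}e^{(\ep^{1/2}\rho-(\xi-\ep)(1-\gm))R_*}\bigr)^{n-j}$, and now \eqref{obs: expon leq xi/2} together with \eqref{eq: B*e leq 1/3 simple} legitimately bounds the base by $B_*e^{-\xi R_*/2}\le 1/3$. This is precisely the role of the $R_*$-dependent contraction: the bad expansion factor scales like $e^{\ep^{1/2}\rho R_*}$ per step, and only an $R_*$-dependent good contraction can beat it. With that single change, your recursion collapses exactly as the paper's does, yielding $\var(\~\cL_\om^{\ell R_*}f)\le\frac{1}{3}\var(f)+2B_*e^{\ep^{1/2}\rho R_*}\nu_\om(f)$ and hence $\~\cL_\om^{\ell R_*}(\sC_{\om,a_*})\sub\sC_{\sg^{\ell R_*}(\om),2a_*/3}$. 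Your observation that $\om_{n-1}\in\Om_G$ is correct but not needed.
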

\begin{proof}
	Lemma~\ref{lem: cone contraction for good om} covers the case of $\om\in\Om_G$, so suppose $\om\in\Om_B$. Throughout the proof we will denote $\ell(\om)=\ell\geq 2$. Using \eqref{eq: LY ineq on good fibers} on good fibers and \eqref{eq: LY ineq for bad fibers} on bad fibers, for any $p\geq 1$ and $f\in\sC_{\om,+}$ we have 
	\begin{align}\label{eq: var coating length}
		\var(\~\cL_\om^{pR_*} f)
		&\leq
		\lt(\prod_{j=0}^{p-1} \Phi_{\sg^{jR_*}(\om)}^{(R_*)}\rt)\var(f)
		+ 
		\sum_{j=0}^{p-1}\lt(D_{\sg^{jR_*}(\om)}^{(R_*)}\cdot \prod_{k=j+1}^{p-1}\Phi_{\sg^{kR_*}(\om)}^{(R_*)}\rt)\nu_\om(f),
	\end{align}
	where 
	\begin{equation}\label{eq: def of Phi_tau^R}
	\Phi_\tau^{(R_*)}=
	\begin{cases}
	B_* e^{-(\xi-\ep)R_*} &\text{for } \tau\in\Om_G\\
	\Gm(\tau) &\text{for } \tau\in\Om_B
	\end{cases}
	\end{equation}
	and 
	\begin{equation}\label{eq: def of D_tau^R}
	D_\tau^{(R_*)}=
	\begin{cases}
	B_* &\text{for } \tau\in\Om_G\\
	\Gm(\tau) &\text{for } \tau\in\Om_B.
	\end{cases}
	\end{equation}
	For any $0\leq j<\ell$ we can write 
	\begin{align*}
		\sum_{0\leq k< \ell}\lt(\ind_{\Om_B}\log \Gm\rt)(\sg^{kR_*}(\om))
		=
		\sum_{0\leq k< j }\lt(\ind_{\Om_B}\log \Gm\rt)(\sg^{kR_*}(\om))
		+
		\sum_{j\leq k< \ell }\lt(\ind_{\Om_B}\log \Gm\rt)(\sg^{kR_*}(\om)).
	\end{align*}
	The definition of $\ell(\om)$, \eqref{def: coating length}, then implies that 
	\begin{align*}
		\frac{1}{j}\sum_{0\leq k< j }\lt(\ind_{\Om_B}\log \Gm\rt)(\sg^{kR_*}(\om))
		> 
		\ep^{\frac{1}{2}}\rho R_*,
	\end{align*}
	and consequently that 
	\begin{align}\label{eq: avg sum log Gm for bad om}
		\frac{1}{\ell-j}\sum_{j\leq k< \ell}\lt(\ind_{\Om_B}\log \Gm\rt)(\sg^{kR_*}(\om))
		\leq
		\ep^{\frac{1}{2}}\rho R_*. 
	\end{align}
	Now, using \eqref{eq: Bm geq 55^R}, \eqref{eq: avg sum log Gm for bad om}, and \eqref{eq: def of gm constant} we see that the proportion of bad blocks is given by 
	\begin{align}
		\frac{1}{\ell-j}\#\set{j\leq k<\ell: \sg^{kR_*}(\om)\in\Om_B}
		&=
		\frac{1}{\ell-j}\sum_{j\leq k<\ell} \lt(\ind_{\Om_B}\rt)(\sg^{kR_*}(\om))
		\nonumber\\
		&\leq
		\frac{1}{(\ell-j)R_*\log 55}\sum_{j\leq k<\ell} \lt(\ind_{\Om_B}\log \Gm\rt)(\sg^{kR_*}(\om))
		\leq \gm.
		\label{eq: proportion bad blocks}
	\end{align}
	In view of \eqref{eq: def of Phi_tau^R}, using \eqref{eq: avg sum log Gm for bad om}, \eqref{eq: proportion bad blocks}, for any $0\leq j<\ell$ we have 
	\begin{align}
		\prod_{k=j}^{\ell-1} \Phi_{\sg^{kR_*}(\om)}^{(R_*)}
		&=
		\prod_{\substack{j\leq k<\ell \\ \sg^{kR_*}(\om)\in\Om_G}}
		B_*e^{-(\xi-\ep)R_*}
		\cdot 
		\prod_{\substack{j\leq k<\ell \\ \sg^{kR_*}(\om)\in\Om_B}}
		\Gm(\sg^{kR_*}(\om))
		\nonumber\\
		&\leq 
		\lt(B_*e^{-(\xi-\ep)R_*}\rt)^{(1-\gm)(\ell-j)}\cdot \exp\lt(\lt(\ep^{\frac{1}{2}}\rho R_*\rt)(\ell-j)\rt)
		\nonumber\\
		&=
		\lt(B_*^{1-\gm}\exp\lt(\lt(\ep^{\frac{1}{2}}\rho-(\xi-\ep)(1-\gm)\rt)R_*\rt)\rt)^{\ell-j}
		\nonumber\\
		&< 
		\lt(B_*\exp\lt(\lt(\ep^{\frac{1}{2}}\rho-(\xi-\ep)(1-\gm)\rt)R_*\rt)\rt)^{\ell-j}.
		\label{eq: est for coating lengths 1}
	\end{align}
	Now, since $B_*,\Gm(\om)\geq 1$ for all $\om\in\Om$, using \eqref{eq: def of D_tau^R} and \eqref{eq: avg sum log Gm for bad om}, we have that for $0\leq j<\ell$ 
	\begin{align}\label{eq: est for coating lengths 2}
		D_{\sg^{jR_*}(\om)}^{(R_*)}
		\leq
		B_*\Gm(\sg^{jR_*}(\om))
		\leq 
		B_*\cdot\prod_{\substack{j\leq k< \ell \\ \sg^{kR_*}(\om)\in\Om_B}} \Gm(\sg^{kR_*}(\om))
		\leq
		B_*\lt(e^{\ep^{\frac{1}{2}}\rho R_*}\rt)^{(\ell-j)}.		
	\end{align}
	Thus, inserting \eqref{eq: est for coating lengths 1} and \eqref{eq: est for coating lengths 2} into \eqref{eq: var coating length} we see that
	\begin{align*}
		\var\lt(\~\cL_\om^{\ell R_*} f\rt)
%		&\leq
%		\prod_{j=1}^\ell \Phi_{\sg^{(\ell-j)R_*}(\om)}^{(R_*)}\var(f)+\sum_{j=0}^{\ell-1}\prod_{k=1}^j\Phi_{\sg^{(\ell-k)R_*}(\om)}^{(R_*)}D_{\sg^{(\ell-j)R_*}(\om)}^{(R_*)}\nu_\om(f)
%		\\
		&\leq
		\lt(\prod_{j=0}^{\ell-1} \Phi_{\sg^{jR_*}(\om)}^{(R_*)}\rt)\var(f)
		+ 
		\sum_{j=0}^{\ell-1}\lt(D_{\sg^{jR_*}(\om)}^{(R_*)}\cdot \prod_{k=j+1}^{\ell-1}\Phi_{\sg^{kR_*}(\om)}^{(R_*)}\rt)\nu_\om(f),
		\\
		&\leq 
		\lt(B_*\exp\lt(\lt(\ep^{\frac{1}{2}}\rho-(\xi-\ep)(1-\gm)\rt)R_*\rt)\rt)^{\ell}\var(f)
		\\
		&\qquad
		+
		\nu_\om(f)\sum_{j=0}^{\ell-1}
		B_*\lt(e^{\ep^{\frac{1}{2}}\rho R_*}\rt)^{(\ell-j)}
		\cdot 
		\lt(B_*\exp\lt(\lt(\ep^{\frac{1}{2}}\rho-(\xi-\ep)(1-\gm)\rt)R_*\rt)\rt)^{\ell-j-1}
		\\
		&
		=
		\lt(B_*\exp\lt(\lt(\ep^{\frac{1}{2}}\rho-(\xi-\ep)(1-\gm)\rt)R_*\rt)\rt)^{\ell}\var(f)
		\\
		&\qquad
		+
		B_*e^{\ep^{\frac{1}{2}}\rho R_*}\cdot \nu_\om(f)\sum_{j=0}^{\ell-1}		
		\lt(B_*\exp\lt(\lt(2\ep^{\frac{1}{2}}\rho-(\xi-\ep)(1-\gm)\rt)R_*\rt)\rt)^{\ell-j-1}.
	\end{align*}
	Therefore, using \eqref{obs: expon leq xi/2} in conjunction with \eqref{G1}, we have that
	\begin{align*}
		\var\lt(\cL_\om^{\ell R_*}f\rt)
		&\leq
		\lt(B_*e^{-\frac{\xi}{2}R_*}\rt)^\ell\var(f)
		+
		B_*e^{\ep^{\frac{1}{2}}\rho R_*}\cdot \nu_\om(f)\sum_{j=0}^{\ell-1}		
		\lt(B_*e^{-\frac{\xi}{2}R_*}\rt)^{\ell-j-1} 
		\\
		&\leq 
		\lt(\frac{1}{3}\rt)^\ell\var(f)
		+
		B_*e^{\ep^{\frac{1}{2}}\rho R_*}\cdot \nu_\om(f)\sum_{j=0}^{\ell-1}		
		\lt(\frac{1}{3}\rt)^{\ell-j-1},
	\end{align*} 
	and so we must have that 
	\begin{equation*}
		\var\lt(\~\cL_\om^{\ell R_*} f\rt)
		\leq 
		\frac{1}{3}\var(f)+2B_*e^{\ep^{\frac{1}{2}}\rho R_*}\nu_\om(f).
		\footnote{Our final estimate of Lemma~\ref{lem: cone cont for coating blocks} differs from Buzzi's \cite[Lemma 3.4]{buzzi_exponential_1999} because of a term in the sum that was overlooked. The problem can be corrected by carefully choosing $R_*=R_{a_0}$ and then by defining $a_*\geq a_0$ in terms of $R_*$ as we have done here.}
	\end{equation*}
	Thus, for any $f\in\sC_{\om,a_*}$ we have that 
	\begin{align*}
		\var(\cL_\om^{\ell R_*} f)
		\leq
		\frac{a_*}{3}\nu_\om(f)+\frac{a_*}{3}\nu_\om(f), 
	\end{align*}
	and consequently we have 
	\begin{align*}
		\~\cL_\om^{\ell R_*}(\sC_{\om,a_*})\sub \sC_{\sg^{\ell R_*}(\om),\sfrac{2a_*}{3}}\sub \sC_{\sg^{\ell R_*}(\om),a_*}
	\end{align*}
	as desired.
\end{proof}

The next lemma shows that the total length of the bad blocks take up only a small proportion of an orbit, however before stating the result we establish the following notation. For each  $n\in\NN$ we let $K_n\geq 0$ be the integer such that 
\begin{align}\label{eq: n KR+zt decomp}
	n=K_nR_*+\zt(n)
\end{align}
where $0\leq \zt(n)<R_*$ is a remainder term. Given $\om_0\in\Om$, let 
\begin{align}\label{eq: om_j notation}
	\om_j=\sg^{\ell(\om_{j-1})R_*}(\om_{j-1})
\end{align} 
for each $j\geq 1$. Then for each $n\in\NN$ we can break the $n$-length $\sg$-orbit of $\om_0$ in $\Om$ into $k_{\om_0}(n)$ blocks of length $\ell(\om_j)R_*$ (for $0\leq j\leq k_{\om_0}(n)$) plus some remaining block of length $r_{\om_0}(n)R_*$ where $0\leq r_{\om_0}(n)<\ell(\om_{k_{\om_0}(n)+1})$ plus a remainder segment of length $\zt(n)$, i.e. we can write 
\begin{align}\label{eq: n length orbit in om_j}
	n=\sum_{0\leq j\leq k_{\om_0}(n)}\ell(\om_j)R_* +r_{\om_0}(n)R_* + \zt(n); 
\end{align}
see Figure~\ref{figure: om_j}. We also note that \eqref{eq: n KR+zt decomp} and \eqref{eq: n length orbit in om_j} imply that 
\begin{align}\label{eq: Kn equality}
	K_n=\sum_{0\leq j\leq k_{\om_0}(n)}\ell(\om_j) +r_{\om_0}(n).
\end{align}
	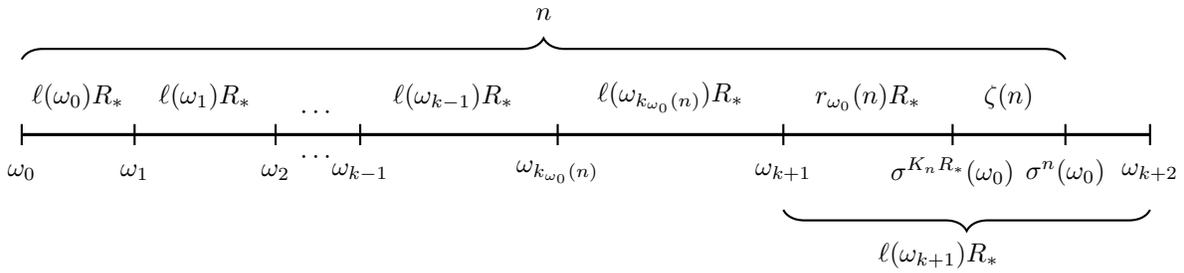
\begin{figure}[h]
	\centering	
	\begin{tikzpicture}[y=1cm, x=1.5cm, thick, font=\footnotesize]    
	% axis
	\draw[line width=1.2pt,  >=latex'](0,0) -- coordinate (x axis) (10,0);       
	
	% time points
	\draw (0,-4pt)   -- (0,4pt) {};
	\node at (0, -.5) {$\om_0$};
	
	\node at (.5, .5) {$\ell(\om_0)R_*$};
	
	\draw (1,-4pt)   -- (1,4pt) {};
	\node at (1,-.5) {$\om_1$};

	\node at (1.625,.5) {$\ell(\om_1)R_*$};
	
	\draw (2.25,-4pt)  -- (2.25,4pt)  {};
	\node at (2.25,-.5) {$\om_2$};
	
	\node at (2.625,.3) {$\cdots$};
	\node at (2.625,-.3) {$\cdots$};
	
	\draw (3,-4pt)  -- (3,4pt)  {};
	\node at (3,-.5) {$\om_{k-1}$};
	
	\node at (3.825,.5) {$\ell(\om_{k-1})R_*$};
	
	\draw (4.75,-4pt)  -- (4.75,4pt)  {};
	\node at (4.75,-.5) {$\om_{k_{\om_0}(n)}$};
	
	\node at (5.75,.5) {$\ell(\om_{k_{\om_0}(n)})R_*$};	
		
	\draw (6.75,-4pt)  -- (6.75,4pt)  {};
	\node at (6.75,-.5) {$\om_{k+1}$};
	
	\node at (7.5,.5) {$r_{\om_0}(n)R_*$};
	
	\draw (8.25,-4pt)  -- (8.25,4pt)  {};
	\node at (8.25,-.5) {$\sg^{K_nR_*}(\om_0)$};
	
	\node at (8.75,.5) {$\zt(n)$};
	
	\draw (9.25,-4pt)  -- (9.25,4pt)  {};
	\node at (9.25,-.5) {$\sg^{n}(\om_0)$};
	
	\draw (10,-4pt)  -- (10,4pt)  {};
	\node at (10,-.5) {$\om_{k+2}$};

	\draw[decorate,decoration={brace,amplitude=8pt,mirror}] 
	(6.75,-1)  -- (10,-1) ; 
	\node at (8.125,-1.6){$\ell(\om_{k+1})R_*$};
	
	\draw[decorate,decoration={brace,amplitude=8pt}] 
	(0,1)  -- (9.25,1) ; 
	\node at (4.625,1.6){$n$};
	%\draw [decorate,decoration={brace,amplitude=3pt},rotate=180]
	%(5,-1.5)  -- (10,-1.5) ;
	
	\end{tikzpicture}
	\caption{The decomposition of $n=\sum_{0\leq j\leq k_{\om_0}(n)}\ell(\om_j)R_*+r_{\om_0}(n)R_* +\zt(n)$ and the fibers $\om_j$.}
	\label{figure: om_j}
\end{figure}
\begin{lemma}\label{lem: proportion of bad blocks}
	There exists a measurable function $N_0:\Om\to\NN$ such that for all $n\geq N_0(\om_0)$ and for $m$-a.e. $\om_0\in\Om$ with $y_*(\om_0)=0$  we have 
	\begin{align*}
		&E_{\om_0}(n):=
		\sum_{\substack{0\leq j \leq k_{\om_0}(n) \\ \om_j\in\Om_B}}\ell(\om_j) +r_{\om_0}(n)
		<
		H\cdot \ep K_n
		\leq 
		\frac{H}{R_*}\ep n
	\end{align*}
	where 
	\begin{align}\label{eq: def of H}
	H=H_\ep:=\frac{2((2+\rho) R_*)}{\ep^{\frac{1}{2}}\rho R_*},
	\end{align}
	and where $K_n$ is as in \eqref{eq: n KR+zt decomp}, 
	 $\om_j$ is as in \eqref{eq: om_j notation}, and $k_{\om_0}(n)$ and $r_{\om_0}(n)$ are as in \eqref{eq: n length orbit in om_j}.
\end{lemma}	
\begin{comment}
\begin{remark}
	Note that we refer to two separate decompositions of $n$ in the previous lemma. In general $K\neq k_{\om_0}(n)$ and $K\neq \sum_{0\leq j\leq k_{\om_0}(n)}\ell(\om_j)$. In fact we have that 
	\begin{align*}
		K\geq \sum_{0\leq j\leq k_{\om_0}(n)}\ell(\om_j) \geq k_{\om_0}(n)+1
	\end{align*}
	with both of the above equalities holding if and only if $\ell(\om_j)=1$ for each $0\leq j\leq k_{\om_0}(n)$.
	Furthermore, we must also have that $r_{\om_0}(n)\geq \zt(n) $ with equality holding if and only if $\ell(\om_j)=1$ for each $0\leq j\leq k_{\om_0}(n)$.
\end{remark}

content...
\end{comment}
\begin{proof}%[Proof of Lemma~\ref{lem: proportion of bad blocks}]
	Using \eqref{def: coating length}, we see that for any $\om\in\Om_B$ and $2\leq m\leq \ell(\om)$ then $m-1\geq m/2$, and thus 
	\begin{align}\label{eq: lower bound for sum of log Gamma}
		\sum_{0\leq j < m}\lt(\ind_{\Om_B}\log \Gm\rt)(\sg^{jR_*}(\om))
		>
		(\ep^{\frac{1}{2}}\rho R_*)(m-1)
		\geq 
		\frac{m}{2}(\ep^{\frac{1}{2}}\rho R_*).
	\end{align}
	Now, in view of Remark~\ref{rem: y*=0 implies ell finite}, we suppose that $\om_0\in\Om$ with $y_*(\om_0)=0$ and $\ell(\om_j)<\infty$ for each $j\geq 0$.
	Writing $n=K_nR_*+\zt(n)$ and using \eqref{eq: Kn equality} and \eqref{eq: lower bound for sum of log Gamma} we have 
	\begin{align}
		\sum_{0\leq j < K_n}\lt(\ind_{\Om_B}\log \Gm\rt)(\sg^{jR_*}(\om_0))
		&=
		\sum_{\substack{0\leq j \leq k_{\om_0}(n) \\ \om_j\in\Om_B}}\log\Gm(\om_j)
		+
		\sum_{\substack{0\leq j < r_{\om_0}(n) \\ \sg^{jR_*}(\om_{k_{\om_0}(n)})\in\Om_B}}
		\log\Gm(\sg^{jR_*}(\om_{k_{\om_0}(n)}))
		\nonumber\\
		&>
		\sum_{\substack{0\leq j \leq k_{\om_0}(n) \\ \om_j\in\Om_B}}
		\lt(\frac{\ell(\om_j)}{2}(\ep^{\frac{1}{2}}\rho R_*)\rt)
		+
		\frac{r_{\om_0}(n)}{2}(\ep^{\frac{1}{2}}\rho R_*)
		\nonumber\\
		&=
		\frac{1}{2}(\ep^{\frac{1}{2}}\rho R_*) \lt(\sum_{\substack{0\leq j \leq k_{\om_0}(n) \\ \om_j\in\Om_B}}\ell(\om_j)
		+
		r_{\om_0}(n)\rt)
		\nonumber\\
		&=
		\frac{1}{2}(\ep^{\frac{1}{2}}\rho R_*) E_{\om_0}(n).
		\label{eq: lower bound for sum of log Gamma with E(n)} 
	\end{align}
	Now using \eqref{eq: lower bound for sum of log Gamma with E(n)} and \eqref{eq: Buzzi 3.2 equiv first ineq} for all $n$ sufficiently large, say $n\geq N_0(\om_0)$, we can write 
	\begin{align*}
		\frac{1}{2}(\ep^{\frac{1}{2}}\rho R_*) E_{\om_0}(n)
		<
		\sum_{0\leq j < K_n }\lt(\ind_{\Om_B}\log \Gm\rt)(\sg^{jR_*}(\om_0))
		\leq 
		\ep((2+\rho) R_*)K_n,
	\end{align*}
	and thus we have 
	\begin{align*}
		E_{\om_0}(n)
		<
		\frac{2((2+\rho) R_*)}{+\ep^{\frac{1}{2}}\rho R_*}\ep K_n
		\leq 
		\frac{2((2+\rho) R_*)}{(\ep^{\frac{1}{2}}\rho R_*)R_*}\ep n
	\end{align*}
	as desired.
\end{proof}
To end this section we note that $\ep \cdot H_\ep\to 0$ as $\ep\to 0$. Thus, we let 
\begin{align}\label{eq: def ep 4}
	\ep_4:=\min\set{\ep_3, \sup\set{\ep>0: \ep \cdot H_\ep<\frac{1}{4}}}.
\end{align}

\section{Invariant Family of Measures}\label{sec:invariant}

In this section we adapt the methods of \cite{liverani_conformal_1998} and \cite{buzzi_exponential_1999} to establish the existence of invariant measures on the fibers $X_\om$ as well as some of their properties. 
We now present the main technical result from which all the rest of our convergence results follow. Set 
\begin{align*}
	\Dl:=\Dl_{a_*}=2\log \frac{C_*(3+a_*)}{\al_*}
\end{align*}
to be the diameter value defined in Lemma~\ref{lem: cone contraction for good om} for the cone parameter $a_*$ defined in Lemma~\ref{lem: cone cont for coating blocks}, and set 
\begin{align}\label{eq: def of ep 5}
	\ep_5:=\min\set{\ep_4, \frac{\xi}{2(4+3\xi)}}.
\end{align}
\begin{lemma}\label{lem: exp conv in C+ cone}
	Let $\ep<\ep_5$ and $V:\Om\to(0,\infty)$ not necessarily measurable. Then there exists $\vta\in(0,1)$ and a (not necessarily measurable) function $N_3:\Om\to\NN$ such that for $m$-a.e. $\om\in\Om$, all $n\geq N_3(\om)$, all $l\geq 0$, and all $|p|\leq n$ we have 
	\begin{align}\label{eq: lem 8.1 ineq}
		\Ta_{\sg^{n+p}(\om), +}\lt(\~\cL_{\sg^p(\om)}^n f, \~\cL_{\sg^{p-l}(\om)}^{n+l} h\rt) \leq \Dl\vta^n
	\end{align} 
	for all $f\in\sC_{\sg^p(\om), +}$ with $\nu_{\sg^p(\om)}(f)=1$ and $\var(f)\leq e^{\ep n}V(\om)$, and all $h\in\sC_{\sg^{p-l}(\om), +}$ with $\nu_{\sg^{p-l}(\om)}(h)=1$ and $\var(h)\leq e^{\ep (n+l)}V(\om)$. Furthermore, $\Dl$ and $\vta$ do not depend on $V$.
\end{lemma}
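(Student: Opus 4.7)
The strategy is to reduce to a Birkhoff-type Hilbert metric contraction on the invariant cone $\sC_{\cdot,a_*}$, after a brief ``burn-in'' that brings the initial functions into that cone. Using the cocycle property, I would first rewrite $\~\cL_{\sg^{p-l}(\om)}^{n+l} h = \~\cL_{\sg^p(\om)}^n \tilde h$ where $\tilde h := \~\cL_{\sg^{p-l}(\om)}^l h \in \sC_{\sg^p(\om),+}$ satisfies $\nu_{\sg^p(\om)}(\tilde h) = 1$ and, by Proposition~\ref{lem: LY ineq 2} applied to the hypothesis $\var(h)\leq e^{\ep(n+l)}V(\om)$,
\begin{align*}
\var(\tilde h) \leq C_\ep(\sg^p(\om))\lt(e^{-(\xi-2\ep)l+\ep n} V(\om) + 1\rt) \leq C_\ep(\sg^p(\om))\lt(e^{\ep n}V(\om)+1\rt),
\end{align*}
since $\xi > 2\ep$ (because $\ep < \ep_0$). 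This reduces the claim to bounding $\Ta_{\sg^{n+p}(\om),+}(\~\cL_{\sg^p(\om)}^n f, \~\cL_{\sg^p(\om)}^n \tilde h)$ with both $f,\tilde h \in \sC_{\sg^p(\om),+}$ of unit $\nu$-mass and variation at most $W(\om) e^{\ep n}$ for some $m$-a.e.\ finite $W(\om)$.

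I would next fix a small $\bt\in(0,1)$ with $(\xi-\ep)\bt > \ep$ and choose a burn-in time $s=s(\om,n)$ with $\bt n \leq s \leq \bt n+R_*$ such that the fiber $\sg^{p+s}(\om)$ lies in the set $\Om'=\{C_\ep\leq B_*\}$ (from the proof of Lemma~\ref{lem: constr of Om_G}) and also satisfies $y_*(\sg^{p+s}(\om))=0$. Since both conditions cut out sets of positive $m$-measure, Birkhoff's ergodic theorem guarantees such an $s$ exists for $m$-a.e.\ $\om$ and all $n\geq N_2(\om)$, where the threshold $N_2(\om)$ absorbs the dependence on $V(\om)$ and on the ergodic-average convergence rates. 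A further application of Proposition~\ref{lem: LY ineq 2} with this $s$ then yields $\var(\~\cL_{\sg^p(\om)}^s f),\,\var(\~\cL_{\sg^p(\om)}^s \tilde h) \leq a_*$, placing both images in $\sC_{\sg^{p+s}(\om),a_*}$.

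For the remaining $n-s$ iterates I would apply the good/bad block decomposition of Section~\ref{sec:bad} starting at $\sg^{p+s}(\om)$. By Lemma~\ref{lem: cone contraction for good om}, each good block of length $R_*$ sends $\sC_{\cdot,a_*}$ into a $\Ta_{\cdot,a_*}$-set of diameter $\Dl$, so Birkhoff's Theorem~\ref{thm: cone distance contraction} contracts the Hilbert metric by a uniform factor $\tau:=\tanh(\Dl/4)<1$; by Lemma~\ref{lem: cone cont for coating blocks}, bad blocks preserve $\sC_{\cdot,a_*}$ without expanding the metric. Since $y_*(\sg^{p+s}(\om))=0$, Lemma~\ref{lem: proportion of bad blocks} ensures the number of good blocks among the first $\lfloor(n-s)/R_*\rfloor$ is at least $(1-c_0\ep^{1/2})(n-s)/R_*$, with $c_0:=2(2+\rho)/\rho$. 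Using $\Ta_{\cdot,+}\leq\Ta_{\cdot,a_*}$ from Lemma~\ref{lem: summary of cone dist prop}, multiplying the per-block contraction factors, and setting $\vta:=\tau^{(1-c_0\ep^{1/2})(1-\bt)/R_*}\in(0,1)$ (valid for $\ep,\bt$ small), one concludes $\Ta_{\sg^{n+p}(\om),+}(\~\cL_{\sg^p(\om)}^n f,\~\cL_{\sg^p(\om)}^n \tilde h) \leq \Dl \vta^n$, as required.

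The main obstacle is the simultaneous alignment in the burn-in step: $s$ must be roughly of order $\bt n$, yet land on a fiber where both $C_\ep\leq B_*$ and $y_*=0$. This is precisely why $N_2$ cannot be made measurable, and why the dependence on $V(\om)$ must be absorbed entirely into $N_2(\om)$, so that $\vta$ and $\Dl$ remain independent of $V$.
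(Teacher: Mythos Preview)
Your overall architecture matches the paper's: reduce $h$ to the same starting fiber, burn in to land both images in the invariant cone $\sC_{\cdot,a_*}$, then count cone contractions along good blocks using the machinery of Sections~\ref{sec:good}--\ref{sec:bad}. However, the burn-in step as you describe it has two genuine gaps.

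\textbf{First}, splitting the $(s+l)$-step iteration of $h$ into an $l$-step piece followed by an $s$-step piece forces you to control $\var(\tilde h)$ at the intermediate fiber $\sg^p(\om)$, which costs a factor $C_\ep(\sg^p(\om))$. Since $|p|\le n$ is arbitrary and $C_\ep$ is only known to be $m$-a.e.\ finite (not $\log$-integrable), you cannot absorb $C_\ep(\sg^p(\om))$ into an $\om$-only constant $W(\om)$. The paper avoids this by applying Proposition~\ref{lem: LY ineq 2} \emph{once}, over the full $l+\hat y_*$ steps, so that only the endpoint constant $C_\ep(\sg^{p+\hat y_*}(\om))\le B_*$ enters.

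\textbf{Second, and more seriously}, your claim that Birkhoff's theorem furnishes an $s\in[\bt n,\bt n+R_*]$ with $\sg^{p+s}(\om)\in\Om'\cap\{y_*=0\}$ is incorrect. The set $\{y_*=0\}$ is $\sg^{R_*}$-invariant (property~\eqref{prop j*2}), so along any length-$R_*$ window of the $\sg$-orbit there may be only \emph{one} index landing in $\{y_*=0\}$, and that single index need not lie in $\Om'$. Birkhoff gives you the correct visit \emph{density}, not a visit in every short window. The paper sidesteps this by exploiting the structure of $y_*$: it sets $\hat y_*=y_*(\sg^p(\om))+d_*R_*$, so that $y_*(\sg^{p+\hat y_*}(\om))=0$ holds \emph{automatically} by~\eqref{prop j*3}, and then searches only for the $\Om'$ condition along the $\sg^{R_*}$-orbit---which is guaranteed with density $>1-\ep$ by the very definition~\eqref{def j*2} of $y_*$. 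This yields $\hat y_*\le R_*(1+\ep n)$, a burn-in of order $\ep n$ rather than your $\bt n$, but with the crucial alignment built in rather than hoped for.
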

\begin{proof}
We begin by noting that by \eqref{eq: L_om is a weak contraction on C_+} for each $l\geq 0$ we have that $\~\cL_{\sg^{p-l}(\om)}^l h\in\sC_{\sg^{p}(\om),+}$ for each $h\in\sC_{\sg^{p-l}(\om),+}$, and let 
\begin{align*}
	h_l=\~\cL_{\sg^{p-l}(\om)}^l h.
\end{align*}
Set $y_*=y_*(\sg^p(\om))$ (defined in Section~\ref{sec:bad}) and let $d_*=d_*(\sg^p(\om))\geq 0$ be the smallest integer that satisfies 
\begin{flalign} 
	& y_*+d_*R_*\geq \frac{\ep n+\log V(\om)}{\xi-\ep}, 
	\label{eq: d prop1} 
	&\\
	& C_\ep(\sg^{p+y_*+d_*R_*}(\om))\leq B_*.
	\label{eq: d prop2}  
\end{flalign}
where $\xi$ was defined in \eqref{eq: def of xi}.
Choose 
\begin{align}\label{eq: coose N_1 ge log V}
	N_1(\om)\ge \frac{\log V(\om)}{\ep}
\end{align} % and assume $\epsilon<\xi/2$ 
and let $n\geq N_1(\om)$. Now using \eqref{eq: coose N_1 ge log V} to write
\begin{align}\label{eq: d_* low bound}
	\frac{4\ep n}{\xi}=\frac{\ep n+\ep n}{\xi/2}\geq \frac{\ep n+\log V(\om)}{\xi/2}
\end{align}
and then using \eqref{eq: def of ep_0} and \eqref{eq: def of y_*}, we see that \eqref{eq: d prop1} is satisfied for any $d_*R_*\geq 4\epsilon n/\xi$.
Using \eqref{def j*2}, the construction of $y_*$, and the ergodic decomposition of $\sigma^{R_*}$ following \eqref{def j*2}, we have for $m$-a.e. $\omega\in\Om$ there is an infinite, increasing sequence of integers $d_j\ge 0$ satisfying \eqref{eq: d prop2}. Furthermore, \eqref{def j*2} implies that 
\begin{align*}
	\lim_{n\to\infty}\frac{1}{n/R_*}\#\set{0\leq k< \frac{n}{R_*}: C_\ep\lt(\sg^{\pm kR_*+y_*(\om)}(\om)\rt)> B_*} <\ep,
\end{align*}
and thus for $n\in\NN$ sufficiently large (depending measurably on $\om$), say $n\geq N_2(\om)\geq N_1(\om)$, we have that 
\begin{align}\label{eq: d_* up bound}
	\#\set{0\leq k< \frac{n}{R_*}: C_\ep\lt(\sg^{\pm kR_*+y_*(\om)}(\om)\rt)> B_*} < \frac{\ep n}{R_*}.
\end{align}
Thus, combining \eqref{eq: d_* low bound} and \eqref{eq: d_* up bound}, we see that  the smallest integer $d_*$ satisfying \eqref{eq: d prop1} and \eqref{eq: d prop2} also satisfies 
\begin{align}\label{eq: dR is Oepn}
	d_*R_*\leq\frac{4\ep n}{\xi}+\ep n=\lt(\frac{4+\xi}{\xi }\rt)\ep n.
\end{align}
Let 
\begin{align}\label{eq: def of hat y_*}
	\hat y_*=y_*+d_*R_*.
\end{align}
	\begin{figure}[h]
		\centering	
		\begin{tikzpicture}[y=1cm, x=1.5cm, thick, font=\footnotesize]    
		% axis
		\draw[line width=1.2pt,  >=latex'](0,0) -- coordinate (x axis) (10,0);       
		
		% time points
		\draw (0,-4pt)   -- (0,4pt) {};
		\node at (0, -.5) {$\tau_{-l}$};
		
		\node at (.5, .5) {$l$};
		
		\draw (1,-4pt)   -- (1,4pt) {};
		\node at (1,-.5) {$\tau=\sg^p(\om)$};
		
		\node at (1.5,.5) {$\hat y_*$};
		
		\draw (2,-4pt) -- (2,4pt) {};
		\node at (2,-.5) {$\tau_0$};
		
		\node at (2.5,.5) {$\ell(\tau_0)R_*$};
		
		\draw (3,-4pt)  -- (3,4pt)  {};
		\node at (3,-.5) {$\tau_1$};
		
		\node at (3.5,.3) {$\cdots$};
		\node at (3.5,-.3) {$\cdots$};
		
		\draw (4,-4pt)  -- (4,4pt)  {};
		\node at (4,-.5) {$\tau_{j}$};
		
		\node at (4.5,.5) {$\ell(\tau_{j})R_*$};
		
		\draw (5,-4pt)  -- (5,4pt)  {};
		\node at (5,-.5) {$\tau_{0}^*$};
		
		\node at (5.5,.5) {$\ell(\tau_{0}^*)R_*$};	
		
		\draw (6,-4pt)  -- (6,4pt)  {};
		\node at (6,-.5) {$\tau_1^*$};
		
		\node at (6.5,.3) {$\cdots$};
		\node at (6.5,-.3) {$\cdots$};

		\draw (7,-4pt)  -- (7,4pt)  {};
		\node at (7,-.5) {$\tau_{k}^*$};
		
		\node at (7.6,.5) {$\hat r_{\tau_0}(n)R_*$};
		
		\draw (8.2,-4pt)  -- (8.2,4pt)  {};
		\node at (8.7,.5) {$\hat\zt(n)$};
		
		\draw (9.2,-4pt)  -- (9.2,4pt)  {};
		\node at (9.2,-.5) {$\sg^n(\tau)$};
		
		\draw (10,-4pt)  -- (10,4pt)  {};
		\node at (10,-.5) {$\tau_{k+1}^*$};

		% curly braces
		
		\draw[decorate,decoration={brace,amplitude=8pt,mirror}] 
		(2,-2.1)  -- (8.2,-2.1) ; 
		\node at (5.1,-2.7){$\Sg R_*$};		
	%	\draw [dashed] (8.2,-1.6) -- (8.2,-.4);
		
	\draw[decorate,decoration={brace,amplitude=8pt,mirror}] 
		(2,-1)  -- (5,-1) ; 
		\node at (3.5,-1.6){$\Sg_I R_*$};	
		
	\draw[decorate,decoration={brace,amplitude=8pt,mirror}] 
		(5,-1)  -- (7,-1) ; 
		\node at (5.9,-1.6){$\Sg_C R_*$};

		\draw[decorate,decoration={brace,amplitude=8pt,mirror}] 
		(7,-1)  -- (10,-1) ; 
		\node at (8.4,-1.6){$\ell(\tau_{k}^*)R_*$};
		
		\draw[decorate,decoration={brace,amplitude=8pt}] 
		(1,1)  -- (9.2,1) ; 
		\node at (5.1,1.6){$n$};
		%\draw [decorate,decoration={brace,amplitude=3pt},rotate=180]
		%(5,-1.5)  -- (10,-1.5) ;
		
		\end{tikzpicture}
		\caption{The fibers $\tau_i$, $\tau_i^*$ and the decomposition of $n=\hat y_*+\Sg R_*+\hat \zt(n)$.}
		\label{figure: om_k}
	\end{figure}
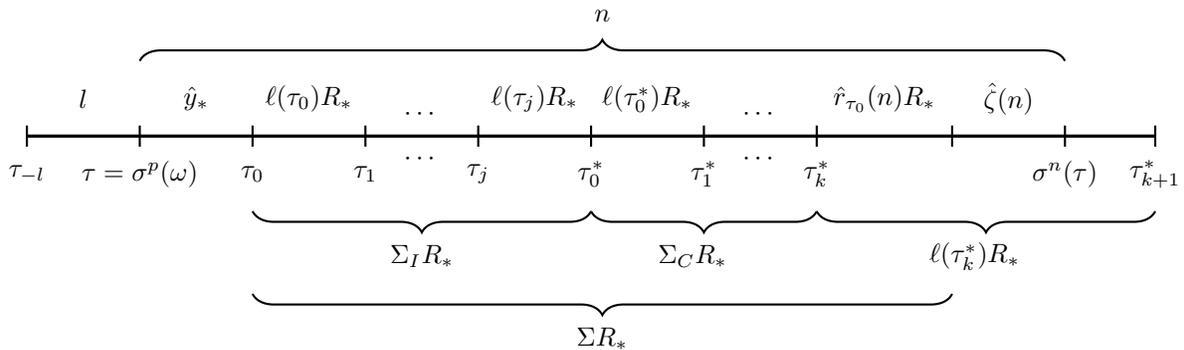
Now, we wish to examine the iteration of our operator cocycle along a collection $\Sg R_*$ of blocks, each of length $\ell(\om)R_*$, so that the images of $\~\cL_\om^{\ell(\om)R_*}$ are contained in $\sC_{\sg^{\ell(\om)R_*}(\om),a_*}$ as in Lemma~\ref{lem: cone cont for coating blocks}. This collection $\Sg R_*$
\footnote{Here the number $\Sg$ is playing the same role as the number $K_{n-\hat y_*}$ from Lemma~\ref{lem: proportion of bad blocks}.}
will be comprised of an initial segment, $\Sg_I R_*$, of bad blocks followed by a segment $\Sg_C R_*$ of blocks, beginning with a good block, over which we obtain cone contraction with a finite diameter image, and finally a remainder segment, of length $\hat r_{\tau_0}(n)R_*$, which is not long enough to ensure cone contraction; see Figure~\ref{figure: om_k}.

We begin by establishing some simplifying notation. To that end, set $\tau=\sg^{p}(\om)$, $\tau_{-l}=\sg^{p-l}(\om)$, and $\tau_0=\sg^{p+\hat y_*}(\om)$; see Figure~\ref{figure: om_k}. Note that in light of \eqref{prop j*3} and \eqref{eq: def of hat y_*} we have that 
\begin{align}\label{eq: y_* tau_0}
	y_*(\tau_0)=y_*(\sg^{p+\hat y_*}(\om))=0.
\end{align}
Now, by our choice of $d_*$, we have that if $f\in\sC_{\tau,+}$ with $\nu_{\tau}(f)=1$ and $\var(f)\leq e^{\ep n}V(\om)$, then 
\begin{align}\label{eq: f in a cone}
	\~\cL_\tau^{\hat y_*} f\in\sC_{\tau_0,a_*}.
\end{align}
Indeed, applying Proposition~\ref{lem: LY ineq 2} and \eqref{eq: d prop1} and \eqref{eq: d prop2} from  above, we have 
\begin{align}
	\var(\~\cL_\tau^{\hat y_*}f)
	&\leq 
	C_{\ep}(\sg^{\hat y_*}(\tau))e^{-(\xi-\ep)\hat y_*}\var(f)+C_{\ep}(\sg^{\hat y_*}(\tau))\nu_{\tau}(f)
	\nonumber\\
	&\leq 
	B_*e^{-(\xi-\ep)\hat y_*}\var(f)+B_*\nu_{\tau}(f)
	\nonumber\\&
	\leq 
	B_*\frac{\var(f)}{e^{\ep n}V(\om)}+B_* \leq 2B_*\leq \frac{a_*}{3}, 
\end{align}
where we recall that $a_*> 6B_*$ is defined in \eqref{eq: def of a_*}. 
A similar calculation yields that if $h\in\sC_{\tau_{-l},+}$ with $\nu_{\tau_{-l}}(h)=1$ and $\var(h)\leq e^{\ep (n+l)}V(\om)$, then $\~\cL_{\tau_{-l}}^{l+\hat y_*} h\in\sC_{\tau_0,a_*}$.

Next, we wish to find the first good block; we denote the origin of this first good block by $\tau_0^*$. If $\tau_0\in\Om_G$, 
then we are done, and we set $\tau_0=\tau_0^*$. However, it may be the case that $\tau_0\in\Om_B$ and there may even be several bad blocks (originating at $\tau_0$) in a row before we encounter the first good block. 

Now, if $\tau_0\in\Om_B$, for each $i\geq 1$ while $\sg^{\ell(\tau_{i-1})R_*}(\tau_{i-1})\in\Om_B$ set $\tau_i=\sg^{\ell(\tau_{i-1})R_*}(\tau_{i-1})$. Note that since $y_*(\tau_0)=0$, by Remark~\ref{rem: y*=0 implies ell finite}, we must have that $\ell(\tau_i)<\infty$ for each $i\geq 0$.
Let $j\geq 1$ be the largest integer such that $\tau_i\in\Om_B$ for each $1\leq i\leq j$, and let 
\begin{align*}
	\Sg_I:=\sum_{i=0}^j\ell(\tau_i)
\end{align*}  
be the total length of the initial coating intervals starting at $\tau_0$. Hence $\Sg_IR_*$ is the total length of all the consecutive initial bad blocks; see Figure~\ref{figure: om_k}. Such an integer $j$ must exist for $m$-a.e. $\om\in\Om$ as Lemma~\ref{lem: proportion of bad blocks} %(with $p=0$) 
ensures that, for all $n\geq N_1(\om)\geq N_0(\om)$, we have
\begin{align*}
	\Sg_I
	\leq E_{\tau_0}(n-\hat y_*) \leq E_{\tau_0}(n)
	< \frac{H}{R_*}\ep n, 
\end{align*}
and thus an initial segment of consecutive bad blocks can only take up a small portion of an orbit of length $n$. 
Therefore, we set $\tau_0^*=\sg^{\ell(\tau_{j})R_*}(\tau_{j})$ to be the origin of the first good block after $\tau$, and for each $i\geq 1$ let $\tau_i^*=\sg^{\ell(\tau_{i-1}^*)R_*}(\tau_{i-1}^*)$.

As there are only finitely many blocks (good and bad) that will occur within an orbit of length $n$, let $k\geq 1$ be the integer such that 
\begin{align*}
	\hat y_*+\Sg_IR_*+\sum_{i=0}^{k-1} \ell(\tau_i^*)R_* \leq n < \hat y_*+\Sg_IR_*+\sum_{i=0}^{k} \ell(\tau_i^*)R_*,
\end{align*}
and let 
\begin{equation*}
	\Sg_C:=\sum_{i=0}^{k-1} \ell(\tau_i^*)
	\qquad \text{ and } \qquad
	\hat r_{\tau_0}(n):=r_{\tau_0}(n-\hat y_*)
\end{equation*} 
where $r_{\tau_0}(n-\hat y_*)$ is the number defined in \eqref{eq: n length orbit in om_j}.
In other words, $\Sg_CR_*$ is the total length of the consecutive good and bad blocks beginning with the first good block originating from $\tau_0^*$. 
Finally setting 
$$
 	\Sg=\Sg_I+\Sg_C+\hat r_{\tau_0}(n)
 	\footnote{Note that if $\tau_0\in\Om_G$, in which case we have $\tau_0=\tau_0^*$, then $\Sg_I=0$ and $\Sg=\Sg_+\hat r_{\tau_0}(n)$.}
 	\qquad \text{ and } \qquad
 	\hat \zt(n):=n-\hat y_*-\Sg R_*,
$$
we have the right decomposition of our orbit length $n$ into blocks which do not expand distances in the fiber cones $\sC_{\om,a_*}$ and $\sC_{\om,+}$.
Now let
\begin{align}\label{eq: def of N_3}
	n\geq N_3(\om):=\max\set{N_2(\om), \frac{R_*}{\ep}}.
\end{align}
Since $y_*, \hat\zt(n)\leq R_*$, by \eqref{eq: def of N_3}, \eqref{eq: dR is Oepn}, and our choice of $\ep<\ep_5$ \eqref{eq: def of ep 5}, we must have that
\begin{align}\label{eq: SgR is big O ep n}
	\Sg R_*
	&=
	n-\hat y_*-\hat \zt(n)
	=
	n-y_*-d_*R_*-\hat\zt(n)
	\nonumber\\
	&\geq 
	n-\lt(\frac{4+\xi}{\xi}\rt)\ep n-2R_*
	\geq 
	n-\lt(\frac{4+\xi}{\xi}\rt)\ep n-2\ep n
	\nonumber\\
	&\geq 
	n\lt(1-\ep\lt(\frac{4+3\xi}{\xi}\rt)\rt) > \frac{n}{2}.
\end{align}
Now we note that since $\~\cL_{\tau_k^*}^{\hat\zt(n)}(\sC_{\tau_k^*,+})\sub\sC_{\sg^{n+p}(\om), +}$ we have that $\~\cL_{\tau_k^*}^{\hat\zt(n)}$ is a weak contraction, and hence, we have 
\begin{align}\label{eq: L^r a weak contraction on C_+}
	\Ta_{\sg^{n+p}(\om), +}\lt(\~\cL_{\tau_k^*}^{\hat\zt(n)} f' , \~\cL_{\tau_k^*}^{\hat\zt(n)} h' \rt)\leq \Ta_{\tau_k^*,+}(f',h'), 
	\qquad f',h'\in\Ta_{\tau_k^*,+}.	
\end{align}
Recall that $E_{\tau_0}(n-\hat y_*)$, defined in Lemma~~\ref{lem: proportion of bad blocks}, is the total length of the coating intervals (bad blocks) of the $n-\hat y_*$ length orbit starting at $\tau_0$, i.e. 
\begin{align*}
	E_{\tau_0}(n-\hat y_*)&=\Sg_I+\sum_{\substack{0\leq j< k \\ \tau_j^*\in\Om_B}} \ell(\tau_j^*) +r_{\tau_0}(n-\hat y_*).
\end{align*}
Lemma~\ref{lem: proportion of bad blocks} then gives that 
\begin{align}\label{eq: est of S}
	E_{\tau_0}(n-\hat y_*)<H\ep\Sg. 
\end{align}
We are now poised to calculate \eqref{eq: lem 8.1 ineq}, but first we note that we can write
\begin{align}
	n&=\hat y_* + \Sg R_* +\hat\zt(n)
	\nonumber\\
	&=\hat y_* + \Sg_I R_* +\Sg_C R_*+\hat\zt(n)
	\nonumber\\
	&=\hat y_* + \Sg_I R_* +R_*+(\Sg_C-1) R_*+\hat\zt(n)
	\label{eq: lem 8.1 decomp of n final ineq}	
\end{align}
and that the number of good blocks contained in the orbit of length $n-\hat y_*$ is given by 
\begin{align}\label{eq: number of good blocks}
	\Sg_G:=\#\set{0\leq j\leq k: \tau_j^*\in\Om_G}
	=\Sg-E_{\tau_0}(n-\hat y_*)
	\leq \Sg_C.
\end{align} 
Now, using \eqref{eq: lem 8.1 decomp of n final ineq} we combine (in order) \eqref{eq: L^r a weak contraction on C_+}, \eqref{eq: Ta+ leq Ta}, and Lemma~\ref{lem: cone contraction for good om} (repeatedly) in conjunction with the fact that $\tau_0^*\in\Om_G$ to see that
\begin{align}
	&\Ta_{\sg^{n+p}(\om), +}
	\lt(\~\cL_{\tau}^{n} (f), \~\cL_{\tau_{-l}}^{n+l} (h) \rt)
	\nonumber\\
	&\quad
	=
	\Ta_{\sg^{n+p}(\om), +}
	\lt(
	\~\cL_{\tau_k^*}^{\hat\zt(n)} \circ \~\cL_{\tau_0}^{\Sg R_*} \circ \~\cL_{\tau}^{\hat y_*} (f)
	,
	\~\cL_{\tau_k^*}^{\hat\zt(n)} \circ \~\cL_{\tau_0}^{\Sg R_*} \circ \~\cL_{\tau}^{\hat y_*}\circ \~\cL_{\tau_{-l}}^{l} (h) 
	\rt)
	\nonumber\\
	&\quad
	\leq
	\Ta_{\tau_k^*, +}
	\lt(
	\~\cL_{\tau_0}^{\Sg R_*} \circ \~\cL_{\tau}^{\hat y_*} (f)
	,
	\~\cL_{\tau_0}^{\Sg R_*} \circ \~\cL_{\tau}^{\hat y_*} (h_l) 
	\rt)	
	\nonumber\\
	&\quad
	\leq
	\Ta_{\tau_k^*, a_*}
	\lt(
	\~\cL_{\tau_0}^{\Sg R_*} \circ \~\cL_{\tau}^{\hat y_*} (f)
	,
	\~\cL_{\tau_0}^{\Sg R_*} \circ \~\cL_{\tau}^{\hat y_*} (h_l) 
	\rt)
	\nonumber\\
	&\quad
	=
	\Ta_{\tau_k^*, a_*}
	\lt(
	\~\cL_{\tau_1^*}^{(\Sg_C-1) R_*} \circ \~\cL_{\tau_0^*}^{R_*} \circ
	\~\cL_{\tau_0}^{\Sg_I R_*} \circ \~\cL_{\tau}^{\hat y_*} (f)
	,
	\~\cL_{\tau_1^*}^{(\Sg_C-1) R_*} \circ \~\cL_{\tau_0^*}^{R_*} \circ
	\~\cL_{\tau_0}^{\Sg_I R_*} \circ \~\cL_{\tau}^{\hat y_*} (h_l) 
	\rt)
	\nonumber\\	
	&\quad
	\leq
	\lt(\tanh\lt(\frac{\Dl}{4}\rt)\rt)^{\Sg_G-1}
	\Ta_{\tau_1^*, a_*}
	\lt(
	\~\cL_{\tau_0^*}^{R_*} \circ
	\~\cL_{\tau_0}^{\Sg_I R_*} \circ \~\cL_{\tau}^{\hat y_*} (f)
	,
	\~\cL_{\tau_0^*}^{R_*} \circ
	\~\cL_{\tau_0}^{\Sg_I R_*} \circ \~\cL_{\tau}^{\hat y_*} (h_l) 
	\rt).
	\label{eq: cone Cauchy 1}
\end{align} 
Now since $\tau_0^*\in \Om_G$ and since 
$\~\cL_{\tau_0}^{\Sg_I R_*} \circ \~\cL_{\tau}^{\hat y_*}$ is a weak contraction (and thus  non-expansive with respect to $\Ta_{\tau_1^*, a_*}$), in light of \eqref{eq: f in a cone}, applying Lemmas~\ref{lem: cone contraction for good om} and \ref{lem: cone cont for coating blocks} gives that 
\begin{align}\label{eq: cone Cauchy Ta leq Dl}
	\Ta_{\tau_1^*, a_*}
	\lt(
	\~\cL_{\tau_0^*}^{R_*} \circ
	\~\cL_{\tau_0}^{\Sg_I R_*} \circ \~\cL_{\tau}^{\hat y_*} (f)
	,
	\~\cL_{\tau_0^*}^{R_*} \circ
	\~\cL_{\tau_0}^{\Sg_I R_*} \circ \~\cL_{\tau}^{\hat y_*} (h_l) 
	\rt)
	\leq \Dl.
\end{align}
Using \eqref{eq: number of good blocks}, the fact that $E_{\tau_0}(n-\hat y_*)\geq 1$, \eqref{eq: est of S}, and \eqref{eq: SgR is big O ep n}, we see that
\begin{align}
	\Sg_G-1
	&= 
	\Sg-(E_{\tau_0}(n-\hat y_*)+1)
	\nonumber\\
	&\geq 
	\Sg-2E_{\tau_0}(n-\hat y_*)
	\nonumber\\
	&\geq
	\Sg-2H\ep\Sg
	\nonumber\\
	&=
	\Sg\lt(1-2H\ep\rt)
	\nonumber\\
	&\geq
	\frac{\lt(1-2H\ep\rt)n}{2R_*}.
	\label{eq: cone Cauchy exponent}
\end{align}
Note that $1-2H\ep>0$ by our choice of $\ep<\ep_5\leq\ep_4$ \eqref{eq: def ep 4}.
Finally, inserting \eqref{eq: cone Cauchy Ta leq Dl} and \eqref{eq: cone Cauchy exponent} into \eqref{eq: cone Cauchy 1} gives
\begin{align*}
	&\Ta_{\sg^{n+p}(\om), +}
	\lt(\~\cL_{\tau}^{n} (f), \~\cL_{\tau_{-l}}^{n+l} (h) \rt)
	\leq 
	\Dl \vta^n,
\end{align*}
where 
	$$
		\vta:=\lt(\tanh\lt(\frac{\Dl}{4}\rt)\rt)^{\frac{\lt(1-2H\ep\rt)}{2R_*}}<1,
	$$
	which completes the proof.
\end{proof}
\begin{remark}\label{rem: N_1 measurable}
	Note that if $V:\Om\to(0,\infty)$ is measurable then so is $d_*:\Om\to\NN$. Thus we may choose $N_1,N_2:\Om\to\NN$ measurable, which implies that $N_3:\Om\to\NN$ is also measurable. Also note that our choice of $N_3$ depends on $V$.
\end{remark}
The following corollary establishes the existence of a family of fiberwise invariant (with respect to the normalized operator $\~\cL_\om$) densities (with respect to $\nu_\om$).
\begin{corollary}\label{cor: exist of unique dens q}
	There exists a family of functions $(q_\om)_{\om\in\Om}$ with $q_\om\in\BV(X_\om)$ and $\nu_\om(q_\om)=1$ for $m$-a.e. $\om\in\Om$ such that 
	\begin{align*}
		\cL_\om q_\om = \lm_\om q_{\sg(\om)}.
	\end{align*}
	%Moreover, $q_\om$ is unique modulo $\nu_\om$.
\end{corollary}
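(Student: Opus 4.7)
The plan is to realize $q_\om$ as the sup-norm limit of the backward sequence $q_{n,\om} := \~\cL_{\sg^{-n}(\om)}^n \ind_{\sg^{-n}(\om)}$, which by the conformality relation \eqref{eq: equivariance prop of nu norm op} satisfies $\nu_\om(q_{n,\om}) = 1$ for every $n$. The key input is Lemma~\ref{lem: exp conv in C+ cone}: fixing any $\ep < \ep_2$ and taking the trivial choice $V \equiv 1$, apply the lemma with $p = -n$ (so that $\sg^{n+p}(\om) = \om$ and $|p| = n \leq n$), $f = \ind_{\sg^{-n}(\om)}$, and $h = \ind_{\sg^{-n-l}(\om)}$. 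The variation hypothesis $\var(f) \leq e^{\ep n}$ is trivial since $\var(\ind) = 0$, and both functions are normalized. For $m$-a.e.\ $\om$ this yields
\begin{align*}
\Ta_{\om,+}\bigl(q_{n,\om}, q_{n+l,\om}\bigr) \leq \Dl \vta^n
\qquad \text{for all } n \geq N_2(\om),\ l \geq 0.
\end{align*}

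Next I would translate this Hilbert-metric Cauchy estimate into a sup-norm Cauchy estimate via Lemma~\ref{lem: birkhoff cone contraction} applied with $\vrho = \nu_\om$ and the supremum norm (which satisfies the monotonicity hypothesis on the positive cone). Since $\nu_\om(q_{n,\om}) = \nu_\om(q_{n+l,\om}) = 1$, this gives
\begin{align*}
\norm{q_{n,\om} - q_{n+l,\om}}_\infty \leq \bigl(e^{\Dl\vta^n}-1\bigr)\min\bigl\{\norm{q_{n,\om}}_\infty, \norm{q_{n+l,\om}}_\infty\bigr\}.
\end{align*}
To turn this into genuine decay, I need a uniform $L^\infty$ bound on $q_{n,\om}$. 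Applying Proposition~\ref{lem: LY ineq 2} to $f = \ind_{\sg^{-n}(\om)}$ gives $\var(q_{n,\om}) \leq C_\ep(\om)$ independently of $n$; combined with $\nu_\om(q_{n,\om}) = 1$ (which forces $\inf q_{n,\om} \leq 1$), this yields $\norm{q_{n,\om}}_\infty \leq 1 + C_\ep(\om)$. Hence $(q_{n,\om})_n$ is Cauchy in $\norm{\cdot}_\infty$, and I define $q_\om := \lim_n q_{n,\om}$.

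Finally, I would verify the three claimed properties of $q_\om$. Lower semicontinuity of variation under uniform convergence gives $\var(q_\om) \leq C_\ep(\om) < \infty$, so $q_\om \in \BV(X_\om)$; dominated convergence yields $\nu_\om(q_\om) = 1$. For the equivariance, note the telescoping identity $\~\cL_\om q_{n,\om} = q_{n+1,\sg(\om)}$ which holds by definition. Since $\~\cL_\om$ is continuous in the sup-norm (the operator norm is bounded by $\norm{\~\cL_\om \ind}_\infty$, finite by summability \eqref{cond SP3}), passing to the limit gives $\~\cL_\om q_\om = q_{\sg(\om)}$, equivalently $\cL_\om q_\om = \lm_\om q_{\sg(\om)}$.

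I expect no serious obstacles: Lemma~\ref{lem: exp conv in C+ cone} does all the heavy lifting, and the remaining steps are the standard translation from projective contraction to norm convergence plus a routine continuity argument. The one subtlety is choosing the right comparison sequence — using the backward iterates rather than forward iterates is crucial so that the target fiber $\om$ stays fixed, allowing the use of $\vrho = \nu_\om$ in Lemma~\ref{lem: birkhoff cone contraction}. Uniqueness of $q$ modulo $\nu$ is not asserted here and is deferred to a later section.
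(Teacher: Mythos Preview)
Your proposal is correct and follows essentially the same approach as the paper: both use Lemma~\ref{lem: exp conv in C+ cone} with $p=-n$ applied to the backward iterates of $\ind$ to obtain a Hilbert-metric Cauchy sequence in $\sC_{\om,+}$, then extract the limit $q_\om$ and verify equivariance via the telescoping identity. Your version is in fact more explicit than the paper's on the completeness step---the paper simply asserts the limit exists in $\sC_{\om,+}$, whereas you correctly invoke Lemma~\ref{lem: birkhoff cone contraction} together with the uniform variation bound from Proposition~\ref{lem: LY ineq 2} to upgrade Hilbert-metric Cauchy to sup-norm Cauchy; the paper also additionally shows the limit is independent of the starting function $f$, but this is not required by the corollary as stated.
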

\begin{proof}
	Let $\ep<\ep_5$, and let $f\in\BV(I)$ with $f\geq 0$ such that $\nu_\om(f_\om)=1$ for $m$-a.e. $\om\in\Om$. Take $V(\om)=\var(f)$. Then $f_\om\in\sC_{\om,+}$ for each $\om\in\Om$. Setting 
	\begin{align*}
		f_{\om,n}:=\~\cL_{\sg^{-n}(\om)}^n f_{\sg^{-n}(\om)} 
	\end{align*}
	for each $n\geq 0$, we note that Lemma~\ref{lem: exp conv in C+ cone} (with $p=-n$) and Lemma~\ref{lem: birkhoff cone contraction} (with $\vrho=\nu_{\sg^{p+n}(\om)}$ and $\norm{\spot}=\norm{\spot}_\infty$) together imply that the sequence $(f_{\om,n})_{n\geq 0}$ is Cauchy in $\sC_{\om,+}$. Indeed, we have 
	\begin{align*}
		\norm{ f_{\om,n} -f_{\om,n+l}}_\infty
		&\leq 
		\norm{f_{\om,n}}_\infty \lt(e^{\Dl\vta^n}-1\rt)
	\end{align*} 
%	\begin{align*} %Edit: Changed Theta psuedo metric to sup norm 
%		\Ta_{\om,+}\lt(f_{\om,n}, f_{\om,n+l}\rt)\leq \Dl\vta^n \flag{Replace with sup norm and quote Birkhoff Cone Thm }
%	\end{align*}
	for each $n\geq N_3(\om)$ and $l\geq 0$. Thus, there must exist some $q_{\om,f}\in\sC_{\om,+}\sub \BV(X_\om)$ such that 
	\begin{align}\label{eq: lim def of inv dens}
		\lim_{n\to \infty} f_{\om,n}=q_{\om,f}.
	\end{align}
	Since $\nu_\om(f_\om)=1$ for $m$-a.e. $\om\in\Om$ we must also have that $\nu_\om(q_{\om,f})=1$ for $m$-a.e. $\om\in\Om$.
	\footnote{Indeed, one can see this by considering the continuous linear functional $\Lm_\om$ which is equivalent to $\nu_\om$ on $\BV(I)$. As $f_{\om,n}\to q_{\om,f}$ we must have $1=\Lm_{\om}(f_{\om,n})\to \Lm(q_{\om,f})=\nu_\om(q_{\om,f})=1$.} 
	
	Now, to see that the limit in \eqref{eq: lim def of inv dens} does not depend on our choice of $f$, we suppose that $h\in\BV(I)$ with $\nu_\om(h_\om)=1$ for $m$-a.e. $\om\in\Om$ and $V(\om)=\var(h)$. Then, as before we obtain 
	\begin{align*}
		q_{\om,h}=\lim_{n\to \infty} h_{\om,n}. 
	\end{align*} 
	Using Lemma~\ref{lem: exp conv in C+ cone} again, we get 
	\begin{align}
		\Ta_{\om,+}\lt(q_{\om,f}, q_{\om,h}\rt)
		\leq 
		\Ta_{\om,+}\lt(q_{\om,f}, f_{\om,n}\rt)
		+
		\Ta_{\om,+}\lt(f_{\om,n}, h_{\om,n}\rt)
		+
		\Ta_{\om,+}\lt(q_{\om,h}, h_{\om,n}\rt)
		\leq 
		3\Dl\vta^n
		\label{eq: Ta estimate for q_om,f}
	\end{align}
	for each $n\geq N_3(\om)$. 
	Inserting \eqref{eq: Ta estimate for q_om,f} into Lemma~\ref{lem: birkhoff cone contraction} with $\vrho=\nu_{\sg^{p+n}(\om)}$ and $\norm{\spot}=\norm{\spot}_\infty$ then gives 
	\begin{align*}
		\norm{q_{\om,f} - q_{\om,h}}_\infty 
		&\leq 
		\lt(e^{\Ta_{\om,+}\lt(q_{\om,f}, q_{\om,h}\rt)}-1\rt)\norm{q_{\om,f}}_\infty
		\\
		&\leq
		\lt(e^{3\Dl\vta^n}-1\rt)\norm{q_{\om,f}}_\infty,
	\end{align*}
	which goes to zero exponentially fast. 
	But this of course implies that $q_{\om,f}=q_{\om,h}$ for $m$-a.e. $\om\in\Om$, and thus we denote their common value by $q_\om$. 
	In particular, \eqref{eq: lim def of inv dens} gives that 
	\begin{align}\label{eq: q_om fixed by norm tr op}
		\~\cL_\om q_\om =\~\cL_\om\lt(\lim_{n\to\infty}\~\cL_{\sg^{-n}(\om)}^n\ind_{\sg^{-n}(\om)} \rt)
		=\lim_{n\to\infty} \~\cL_{\sg^{-n}(\om)}^{n+1}\ind_{\sg^{-n}(\om)}
		=q_{\sg(\om)}.
	\end{align}

\end{proof}
\begin{remark}
	The proof of the previous Lemma shows that $q_\om\in\BV(X_\om)$ for each $\om\in\Om$. Furthermore, we note that  it follows from Proposition~\ref{lem: LY ineq 2} and \eqref{eq: q_om fixed by norm tr op} that 
	\begin{align}\label{eq: var q leq C_ep}
		\var(q_\om)\leq C_\ep(\om).
	\end{align}
	Indeed, for each $n\in\NN$, Proposition~\ref{lem: LY ineq 2} gives 
	\begin{align*}
		\var(\~\cL_{\sg^{-n}(\om)}^n\ind_{\sg^{-n}(\om)})
		\leq 
		C_\ep(\om)e^{-(\xi-\ep)n}\var(\ind_{\sg^{-n}(\om)})+C_\ep(\om),
	\end{align*}
	and in light of \eqref{eq: q_om fixed by norm tr op}, we must in fact have \eqref{eq: var q leq C_ep}.
	
	Using \eqref{eq: var q leq C_ep} we get that 
	\begin{align}\label{eq: measurable up bdd for q}
		\norm{q_\om}_{\BV}
		=
		\norm{q_{\om}}_{\infty}+\var(q_\om)
		\leq 
		2\var(q_\om)+\nu_\om(q_\om)
		\leq 
		2C_\ep(\om)+1.
	\end{align}
\end{remark}
The following lemma shows that the $\BV$ norm of the invariant density $q_\om$ does not grow too much along a $\sg$-orbit of fibers by providing a measurable upper bound. 
\begin{lemma}\label{lem: BV norm q om growth bounds}
	For all $\dl>0$ there exists a measurable random constant $C(\om,\dl)>0$ such that for all $k\in\ZZ$ and $m$-a.e. $\om\in\Om$ we have 
	\begin{align*}
		\norm{q_{\sg^k(\om)}}_{\BV}
		=
		\norm{q_{\sg^k(\om)}}_{\infty}+\var(q_{\sg^k(\om)})
		\leq
		C(\om,\dl)e^{\dl|k|}.
	\end{align*} 
\end{lemma}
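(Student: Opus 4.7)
The plan is to bootstrap from \eqref{eq: measurable up bdd for q}, which gives $\norm{q_\om}_\BV\leq 2C_\ep(\om)+1$ for $m$-a.e.\ $\om$, where $C_\ep$ is the $m$-a.e.\ finite measurable function produced in Proposition~\ref{lem: LY ineq 2}. Substituting $\sg^k(\om)$ for $\om$ yields $\norm{q_{\sg^k(\om)}}_\BV\leq 2C_\ep(\sg^k(\om))+1$, so for each fixed $\dl>0$ the task reduces to producing a measurable $D(\om,\dl)>0$ with $C_\ep(\sg^k(\om))\leq D(\om,\dl)e^{\dl|k|}$ for every $k\in\ZZ$ and $m$-a.e.\ $\om$, for a sufficiently small choice of the internal parameter $\ep$ of Proposition~\ref{lem: LY ineq 2}. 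One then sets $C(\om,\dl):=2D(\om,\dl)+1$.

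In other words, it suffices to show that $C_\ep$ is \emph{tempered at rate $\dl$} along the $\sg$-action. I would prove this by tracing the construction of $C_\ep$ in the proof of Proposition~\ref{lem: LY ineq 2}, which expresses $C_\ep(\om)$ as a finite product of factors of two kinds. The first kind consists of sliding products such as $C^{(1)}(\om)=N_*\prod_{j=1}^{N_*}L_{\sg^{-j}(\om)}^{(1)}$; these are tempered at every positive rate, because $\log L_\om^{(1)}\in L^1_m(\Om)$ by Lemma~\ref{lem: buzzi LY1} and hence $L_\om^{(1)}$ has a sub-exponential envelope along $\sg$-orbits at any prescribed rate, via the standard tempered-function principle (a direct consequence of Birkhoff's ergodic theorem). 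The second kind comprises the orbit-supremum constants $C^{(2)}_{\dl'},C^{(3)},C^{(4)},C^{(5)}$ appearing in the proof of Proposition~\ref{lem: LY ineq 2}, each essentially of the form $\sup_{k\in\ZZ}\psi(\sg^k(\om))e^{-\dl'|k|}$ for $\psi\in\{L_\om^{(1)},Q_\om^{(N_*)},K_\om^{(N_*)}\}$, whose logarithms are $L^1_m$-integrable by Lemma~\ref{lem: buzzi LY1} and Proposition~\ref{prop: log integr of Q and K}; by construction each such factor is $m$-a.e.\ finite and tempered at the rate $\dl'$ built into its definition.

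Since temperedness rates add under finite products, and only finitely many factors appear, $C_\ep$ is itself tempered at a rate that is a fixed linear function of $\ep$. Choosing $\ep=\ep(\dl)$ small enough forces this final rate to be at most $\dl$, whereupon one may take $D(\om,\dl):=\sup_{k\in\ZZ}C_\ep(\sg^k(\om))e^{-\dl|k|}$.

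The hard part will be the bookkeeping in the second family of factors: the orbit-sup constants are tempered only at the rates encoded in their definitions, so one must track how those individual rates combine across the product structure of $C_\ep$ and then tune $\ep$ (as a fixed linear function of $\dl$) so that the composite rate stays below $\dl$. This is feasible precisely because the product has a uniformly bounded number of terms, but it requires careful matching between the internal parameter $\ep$ of Proposition~\ref{lem: LY ineq 2} and the target rate $\dl$; once this is done, the desired sub-exponential bound on $\norm{q_{\sg^k(\om)}}_\BV$ follows immediately.
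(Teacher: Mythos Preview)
Your plan diverges from the paper's and contains a real gap in the treatment of $C^{(3)}(\om)$. That constant is \emph{not} of the form $\sup_{k\in\ZZ}\psi(\sg^k(\om))e^{-\dl'|k|}$ for a single observable $\psi$; it controls Birkhoff \emph{products}, namely $\prod_{k=1}^j Q_{\sg^{-r_\om-kN_*}(\om)}^{(N_*)}\leq C^{(3)}(\om)e^{-(\xi-\ep/2)jN_*}$. Writing $S_j(\om)=\sum_{k=1}^j\log Q_{\sg^{-r_\om-kN_*}(\om)}^{(N_*)}$, the natural choice is $C^{(3)}(\om)=\sup_{j\geq1}\exp\big[S_j(\om)+(\xi-\ep/2)jN_*\big]$, and a direct computation (keeping the same offset $r_\om$, which is legitimate since the $\sg^{N_*}$-ergodic component is unchanged) gives
\[
C^{(3)}(\sg^{-lN_*}(\om))\leq \exp\big[-S_l(\om)-(\xi-\ep/2)lN_*\big]\cdot C^{(3)}(\om).
\]
Since $S_l(\om)/l\to-\xi'N_*$ where $\xi'\geq\xi$ is the ergodic average on the component selected by $r_\om$, the prefactor grows like $e^{(\xi'-\xi+\ep/2)N_*l}$. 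Nothing in the hypotheses forces $\sg^{N_*}$ to be ergodic, so one may have $\xi'>\xi$ strictly, and then $C^{(3)}$ (hence $C_\ep$) is tempered only at a rate bounded below by $\xi'-\xi>0$, \emph{independently of $\ep$}. Your proposal to tune $\ep$ as a linear function of $\dl$ therefore cannot drive the composite rate below an arbitrary $\dl$. One could try to repair this by reworking the construction of $C^{(3)}$ in Proposition~\ref{lem: LY ineq 2} using the component-specific average $\xi'$ in place of $\xi$, but that is a modification of the proposition, not an application of it.

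The paper's argument avoids this entirely and is structurally different: it never shows $C_\ep$ is tempered. Instead it exploits the invariance $q_{\sg^k(\om)}=\~\cL_{\sg^p(\om)}^{k-p}q_{\sg^p(\om)}$, which you did not use. One fixes a large-measure set on which simultaneously $C_\ep\leq A_\dl$ and the Birkhoff sums of $\log L^{(1)}_{\sg^{\pm j}(\om)}$ are controlled; by ergodicity, for each large $|k|$ there is a time $p$ in this good set with $0<k-p=O(\dl|k|)$. One then iterates the one-step bound of Lemma~\ref{lem: buzzi LY1} over the short window $[p,k]$, starting from $\var(q_{\sg^p(\om)})\leq A_\dl$. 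This needs only that $C_\ep$ be $m$-a.e.\ finite and $\log L^{(1)}_\om\in L^1_m(\Om)$, both of which are already in hand.
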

 \begin{proof}
	Let $\ep<\ep_5$ and let $0<\dl<2(\rho_0+1)$ where 
	\begin{align*}
		\rho_0:=\int_\Om \log L_\om \, dm(\om).
	\end{align*}
	By the ergodic theorem for $m$-a.e. $\om\in\Om$ there exists a measurable $n_\dl(\om)\in\NN$ such that for all $n\geq n_\dl(\om)$ and each choice of sign $+$ or $-$, we have
	\begin{align}\label{eq: sum of log L}
		\sum_{j=0}^{n-1}\log L_{\sg^{\pm j}(\om)} \leq (\rho_0+1)n.
	\end{align}
	Now let $n_*\in\NN$ so large that 
	\begin{align*}
		m\lt(\set{\om\in\Om: n_\dl(\om)> n_*}\rt)<\frac{\dl}{32(\rho_0+1)}.
	\end{align*}
	By \eqref{eq: var q leq C_ep} we have that $\var(q_\om)\leq C_\ep(\om)<\infty$ for $m$-a.e. $\om\in\Om$, and so for $A_\dl\geq 1$ sufficiently large we have 
	\begin{align*}
		m\lt(\set{\om\in\Om: C_\ep(\om)>A_\dl}\rt)<\frac{\dl}{32(\rho_0+1)}.
	\end{align*}
	Thus the ergodic theorem gives that for $m$-a.e. $\om\in\Om$ we have 
	\begin{align}\label{eq: Buzzi eq 4.1}
		\lim_{n\to\pm\infty}\frac{1}{|n|}\#\set{0\leq j<|n|: C_\ep(\sg^{\pm j}(\om))>A_\dl \text{ or } n_\dl(\sg^{\pm j}(\om))>n_*}
		<
		\frac{\dl}{8(\rho_0+1)}
		<
		1.
	\end{align}
	Then for $m$-a.e. $\om\in\Om$ and all large $2|k|$ (depending on $\om$) there exists an integer  
	\begin{align}\label{eq: choice of m}
		p\in \lt(k-\frac{\dl|k|}{2(\rho_0+1)},\, k-\frac{\dl|k|}{4(\rho_0+1)}\rt)
	\end{align}
	such that 
	\begin{align}\label{eq: var q and n_dl good bound}
		\var(q_{\sg^p(\om)})\leq C_\ep(\sg^{p}(\om))\leq A_\dl
		\quad\text{ and }\quad
		n_\dl(\sg^{p}(\om))\leq n_*.
	\end{align}
		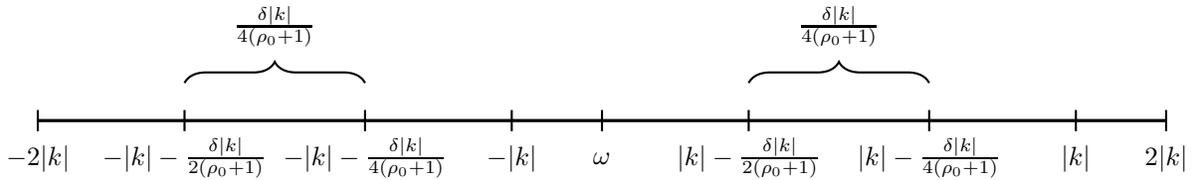
\begin{figure}[H]
		\centering	
		\begin{tikzpicture}[y=1cm, x=1.5cm, thick, font=\footnotesize]    
		% axis
		\draw[line width=1.2pt,  >=latex'](0,0) -- coordinate (x axis) (10,0);       
		
		% time points
		\draw (0,-4pt)   -- (0,4pt) {};
		\node at (0, -.5) {$-2|k|$};
		
		\draw (4.2,-4pt) -- (4.2,4pt) {};
		\node at (4.2,-.5) {$-|k|$};
		
		\draw (5,-4pt)  -- (5,4pt)  {};
		\node at (5,-.5) {$\om$};
		
		\draw (9.2,-4pt)  -- (9.2,4pt)  {};
		\node at (9.2,-.5) {$|k|$};
		
		\draw (10,-4pt)  -- (10,4pt)  {};
		\node at (10,-.5) {$2|k|$};
		
		\draw (1.3,-4pt)  -- (1.3,4pt)  {};
		\node at (1.3,-.5) {$-|k|-\frac{\dl|k|}{2(\rho_0+1)}$};
		
		\draw (2.9,-4pt)  -- (2.9,4pt)  {};
		\node at (2.9,-.5) {$-|k|-\frac{\dl|k|}{4(\rho_0+1)}$};
		
		\draw (6.3,-4pt)  -- (6.3,4pt)  {};
		\node at (6.3,-.5) {$|k|-\frac{\dl|k|}{2(\rho_0+1)}$};
		
		\draw (7.9,-4pt)  -- (7.9,4pt)  {};
		\node at (7.9,-.5) {$|k|-\frac{\dl|k|}{4(\rho_0+1)}$};
		
		\draw[decorate,decoration={brace,amplitude=8pt}] 
		(1.3,.5)  -- (2.9,.5) ; 
		\node at (2.1,1.25){$\frac{\dl|k|}{4(\rho_0+1)}$};
		
		\draw[decorate,decoration={brace,amplitude=8pt}] 
		(6.3,.5)  -- (7.9,.5) ; 
		\node at (7.1,1.25){$\frac{\dl|k|}{4(\rho_0+1)}$};
		\end{tikzpicture}
		\caption{The $2|k|$ length forward and backward $\sg$-orbits originating at $\om$.}
		\label{figure: 2k orbit}
	\end{figure}
	Without loss of generality we assume that 
	\begin{align}\label{eq: k large}
		|k|\geq \frac{4n_*(\rho_0+1)}{\dl}.
	\end{align}
	Now we note that since $k>p$, we have that 
	\begin{align}\label{eq: k-p positive}
		0<k-p<|k|
	\end{align} 
	for each $k\in\ZZ$ sufficiently large. 
	Furthermore, using \eqref{eq: choice of m} and \eqref{eq: k large} we have that 
	\begin{align}\label{eq: k-m is good}
		n_*\leq \frac{\dl|k|}{4(\rho_0+1)}<  k-p < \frac{\dl|k|}{2(\rho_0+1)} <|k|\frac{\dl}{2}.
	\end{align} 
	Now using \eqref{eq: k-m is good} and the fact that $x<e^x$ for $x>0$ gives that 
	\begin{align}\label{eq: Adl +k-p est}
		A_\dl+k-p
		< 
		A_\dl + |k|\frac{\dl}{2}
		<
		A_\dl\cdot |k|\frac{\dl}{2}
		<
		A_\dl e^{|k|\frac{\dl}{2}}.
	\end{align}
	Using, in order, \eqref{eq: q_om fixed by norm tr op} and \eqref{eq: BV norm bound using L} (with $n=1$) repeatedly followed by \eqref{eq: var q and n_dl good bound} and \eqref{eq: Adl +k-p est}, we see that, for all large $|k|$, say $|k|\geq k_\dl(\om)\geq 1$ with $k_\dl:\Om\to\NN$ measurable and satisfying \eqref{eq: choice of m} and \eqref{eq: k large}, \eqref{eq: sum of log L} gives that 
	\begin{align}
		\norm{q_{\sg^k(\om)}}_{\BV}
		&=
		\norm{\~\cL_{\sg^p(\om)}^{k-p}q_{\sg^p(\om)}}_{\BV}
		\nonumber\\
		&=
		\var\lt(\~\cL_{\sg^{k-1}}\circ\dots\circ\~\cL_{\sg^p(\om)}(q_{\sg^p(\om)})\rt)
		+
		\norm{\~\cL_{\sg^{k-1}}\circ\dots\circ\~\cL_{\sg^p(\om)}(q_{\sg^p(\om)})}_{\infty}
		\nonumber\\
		&\leq 
		L_{\sg^{k-1}(\om)}\lt(\var\lt(\~\cL_{\sg^{k-2}}\circ\dots\circ\~\cL_{\sg^p(\om)}(q_{\sg^p(\om)})\rt)+1\rt)
		\nonumber\\
		&\leq 
		\prod_{j=0}^{k-p-1} L_{\sg^{p+j}(\om)}\var(q_{\sg^p(\om)})
		+
		\sum_{j=0}^{k-p-1}\prod_{i=j}^{k-p-1} L_{\sg^{p+i}(\om)}
		\label{eq: unrefined up bdd q BV norm}
		\\
		&\leq 
		\lt(A_\dl+k-p\rt)\prod_{j=0}^{k-p-1} L_{\sg^{p+j}(\om)}
		\nonumber\\
		&< 
		A_\dl e^{|k|\frac{\dl}{2}}\prod_{j=0}^{k-p-1} L_{\sg^{p+j}(\om)}
		\leq 
		A_\dl e^{|k|\frac{\dl}{2}}e^{(k-p)(\rho_0+1)}
		\leq 
		A_\dl e^{|k|\frac{\dl}{2}}e^{|k|\frac{\dl}{2}}
		=A_\dl e^{|k|\dl}.
		\label{eq: var q subexp for large k}
	\end{align}
	To obtain the inequality \eqref{eq: var q subexp for large k} for all $k\in\ZZ$, and not just $|k|\geq k_\dl(\om)$, we use \eqref{eq: measurable up bdd for q} to define the measurable constant 
	\begin{align*}
		C(\om,\dl):=\max_{|k|<k_\dl(\om)}\set{2C_\ep(\sg^k(\om))+1}.
	\end{align*}
	Thus combining with \eqref{eq: var q subexp for large k}, we have 
	\begin{align*}
		\norm{q_{\sg^k(\om)}}_{\BV}
		\leq 
		C(\om,\dl)e^{|k|\dl}
	\end{align*}
	for all $k\in\ZZ$, which finishes the proof.
	
\end{proof}

\begin{definition}\label{defn: definition of cD}
	Given $f\in\prod_{\om\in\Om}\BV(I)$, $\om\in\Om$, and $n\in\ZZ$ we will say that \eqref{cond cD1}, respectively \eqref{cond cD2}, hold for the triple $(f,\om,n)$ if for each $\ep>0$ there exists $V_{f,\ep}:\Om\to(0,\infty)$ such that the following respective conditions hold:
	\begin{flalign}
	& \var(f_{\sg^n(\om)})\leq V_{f,\ep}(\om)e^{\ep|n|},
	%\flag{Maybe $e^{\ep|n|}$ for both?}
	\label{cond cD1}\tag{\cD1}
	&\\
	&\nu_{\sg^n(\om)}(|f_{\sg^n(\om)}|)\geq V_{f,\ep}^{-1}(\om)e^{-\ep |n|}.
	\label{cond cD2}\tag{\cD2}
	\end{flalign}
	We let $\cD$ denote the collections of functions $f\in\prod_{\om\in\Om}\BV(I)$ such that \eqref{cond cD1} and \eqref{cond cD2} hold for the triple $(f,\om,n)$ for $m$-a.e. $\om\in\Om$ and each $n\in\ZZ$. 
	Let $\cD^+\sub\cD$ denote the collection of all functions $f\in\cD$ such that $f_\om\geq 0$ for each $\om\in\Om$.
\end{definition}
\begin{remark}\label{rem: remark on def of cD}
	 Note that $\cD^+$ is nonempty as it contains the functions $\ind$ and $q$ by Lemma~\ref{lem: BV norm q om growth bounds}.
	Furthermore, note that \eqref{cond cD1} and \eqref{cond cD2} are clearly satisfied for all $f\in\BV_\Om^l(I)$, which was defined in \eqref{eq: def of B_Om^l}. 
	%with $\log\norm{f_\om}_\BV\in L^1_m(\Om)$, however, as $(\nu_\om)_{\om\in\Om}$ is not yet known to be a random probability measure, it is not clear whether \eqref{cond cD2} is satisfied.
	%Edit: BV_Om^l implies both the D conditions so I removed this last sentence
\end{remark}

The following lemma provides conditions ensuring that \eqref{cond cD1} and \eqref{cond cD2} hold for products and quotients.  Its proof consists entirely of elementary calculations, and is therefore left to the reader.
\begin{lemma}\label{lem: props of cD}
	If $f\in\cD$ then 
	\begin{flalign}
		& \lt(\frac{f_\om}{\nu_\om(f_\om)}\rt)_{\om\in\Om}\in\cD.&
		\tag{a} \label{lem: props of cD item c}
	\end{flalign}
	Furthermore, if $h\in\prod_{\om\in\Om}BV^+(I)$ and there exists a sequence $(n_k)_{k\in\NN}$ 
	%\flag{bi-infinite?} 
	such that 
	$$
		\inf h_{\sg^{n_k}(\om)}>0 
		\quad\text{ and }\quad
		\lim_{k\to\infty} \frac{1}{n_k}\log \inf h_{\sg^{n_k}(\om)}
		%=\lim_{k\to\infty} \frac{1}{n_k}\log  \norm{h_{\sg^{n_k}(\om)}}_\infty
		=0 
		%\flag{is this right assum?}
	$$ 
	and \eqref{cond cD1} holds for $(h,\om,n_k)$ for each $n_k$, then we have the following:	
	\begin{flalign*}
		& \eqref{cond cD1}\text{ and }\eqref{cond cD2} \text{ hold for } (f h,\om,n_k) \text{ for each } n_k,
		\tag{b}\label{lem: props of cD item a}
		&\\
		& \eqref{cond cD1}\text{ and }\eqref{cond cD2} \text{ hold for } \lt(\frac{f}{h}, \om,n_k\rt) \text{ for each } n_k.
		\tag{c} \label{lem: props of cD item b}
	\end{flalign*}
\end{lemma}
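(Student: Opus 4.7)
The proof is a direct bookkeeping exercise based on the three elementary estimates
\[
    \var(FG)\leq \|F\|_\infty\var(G)+\|G\|_\infty\var(F), \quad \|F\|_\infty\leq \var(F)+\nu(|F|), \quad \var(1/G)\leq \var(G)/(\inf G)^2,
\]
valid for $F,G\in\BV(I)$ with $\inf G>0$ and any probability measure $\nu$ on $I$. The plan is to verify \eqref{cond cD1} and \eqref{cond cD2} for each of the three new functions by combining the above with the hypotheses on $f$ and $h$, and then to absorb any rate inflation by running each estimate with $\ep$ replaced by $\ep/C$ for a harmless absolute constant $C$; this is permissible because the defining conditions of $\cD$ must hold for every positive $\ep$.

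For part \eqref{lem: props of cD item c}, the normalised function has $\nu$-integral identically $1$, so \eqref{cond cD2} is trivial. For \eqref{cond cD1} one has $\var(f_{\sg^n(\om)}/\nu_{\sg^n(\om)}(f_{\sg^n(\om)}))=\var(f_{\sg^n(\om)})/|\nu_{\sg^n(\om)}(f_{\sg^n(\om)})|$, with the numerator bounded above via \eqref{cond cD1} for $f$ and the denominator below via \eqref{cond cD2} for $f$ (using $|\nu(f)|=\nu(|f|)$ when $f\geq 0$, which is the case of interest since $\cD^+\subset\cD$). Combining yields \eqref{cond cD1} at rate $2\ep$ with constant $V_{f,\ep}(\om)^2$, whence the claim after the preliminary rescaling of $\ep$.

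For part \eqref{lem: props of cD item a}, the product rule expresses $\var((fh)_{\sg^{n_k}(\om)})$ in terms of $\var f,\var h,\|f\|_\infty,\|h\|_\infty$; the sup-norms are bounded by $\var+\nu(|\cdot|)$, hence reduced to quantities controlled by \eqref{cond cD1}--\eqref{cond cD2} for $f$ and \eqref{cond cD1} for $h$. The crucial step is that the subexponential hypothesis $n_k^{-1}\log\inf h_{\sg^{n_k}(\om)}\to 0$ allows any factor of $(\inf h_{\sg^{n_k}(\om)})^{\pm 1}$ arising in the bookkeeping to be absorbed into an $e^{\ep n_k}$. Condition \eqref{cond cD2} for $fh$ follows from $\nu(|fh|)\geq \inf h\cdot \nu(|f|)$, both factors being bounded below by the same subexponential hypothesis and by \eqref{cond cD2} for $f$. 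Part \eqref{lem: props of cD item b} is handled identically after writing $f/h=f\cdot(1/h)$ and invoking $\var(1/h)\leq \var(h)/(\inf h)^2$ and $\|1/h\|_\infty\leq 1/\inf h$; the only new ingredient is that for \eqref{cond cD2} one uses $\nu(|f/h|)\geq \nu(|f|)/\|h\|_\infty$ and bounds $\|h\|_\infty$ above by $\var(h)+\nu(|h|)$, both quantities already controlled by the hypotheses. The sole point demanding genuine care is the tracking of exponents and constants, dealt with by the preliminary rescaling of $\ep$ described above; there is no substantive obstacle.
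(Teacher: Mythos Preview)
Your proposal is correct and matches the paper's intended argument: the paper itself declares that the proof ``consists entirely of elementary calculations, and is therefore left to the reader,'' and the sketch it had in mind (visible in commented-out form in the source) carries out exactly the same bookkeeping you describe---the product and quotient rules for variation, $\|F\|_\infty\leq \var(F)+\nu(|F|)$, and the $\ep\mapsto\ep/2$ rescaling to absorb the doubled exponents. Your observation that part \eqref{lem: props of cD item c} tacitly requires $f\geq 0$ (so that $|\nu(f)|=\nu(|f|)$ and \eqref{cond cD2} can be invoked for the denominator) is well spotted; the lemma is only ever applied in the paper to nonnegative $f$, so this is harmless, but worth flagging.
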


We now wish to describe the rate at which the convergence to the invariant density $q_\om$ from Corollary~\ref{cor: exist of unique dens q} is achieved.

\begin{corollary}\label{cor: exp conv to dens in sup norm}
There exists $0<\kp<1$ such that if $f\in\prod_{\om\in\Om}BV^+(I)$, with $\nu_{\om}(f_{\om})=1$ for $m$-a.e. $\om\in\Om$ then there exists $A_f(\om):\Om\to(0,\infty)$ such that for all $n\in\NN$ and all $|p|\leq n$, with \eqref{cond cD1} holding for $(f,\sg^p(\om), n)$, we have 
	\begin{align}\label{eq: cor state exp conv to density}
		\norm{\~\cL_{\sg^{p}(\om)}^n f_{\sg^{p}(\om)} - q_{\sg^{p+n}(\om)} }_\infty 
		%\leq \norm{q_{\sg^{p+n}(\om)}}_\infty \~\kp^n
		\leq A_f(\om) \norm{f_{\sg^p(\om)}}_\BV \kp^n.
	\end{align} 
  %\flag{$A_f$ measurable only if $N_3$ measurable.}
\end{corollary}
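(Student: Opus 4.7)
The plan is to convert the Hilbert projective metric contraction of Lemma~\ref{lem: exp conv in C+ cone} into a sup-norm bound via Lemma~\ref{lem: birkhoff cone contraction}, and then absorb the subexponential growth of $\|q_{\sg^{p+n}(\om)}\|_\infty$ (controlled by Lemma~\ref{lem: BV norm q om growth bounds}) into a slightly enlarged contraction rate $\kp$.

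First I would fix a small $\ep<\ep_2$ and set $V(\om):=\max\{V_{f,\ep}(\om),1\}$, where $V_{f,\ep}$ is furnished by \eqref{cond cD1}, so that $\var(f_{\sg^p(\om)})\leq V(\om)e^{\ep n}$ while $\var(\ind_{\sg^{p-l}(\om)})=0$ satisfies the analogous bound trivially.  Applying Lemma~\ref{lem: exp conv in C+ cone} with the pair $(f_{\sg^p(\om)},\ind_{\sg^{p-l}(\om)})$ yields, for all $n\geq N_2(\om)$ and all $l\geq 0$,
\begin{equation*}
\Ta_{\sg^{n+p}(\om),+}\lt(\~\cL_{\sg^p(\om)}^n f_{\sg^p(\om)},\,\~\cL_{\sg^{p-l}(\om)}^{n+l}\ind_{\sg^{p-l}(\om)}\rt)\leq \Dl\vta^n.
\end{equation*}
After reindexing, the second argument is precisely the Cauchy sequence in $\Ta_{\sg^{n+p}(\om),+}$ that defines $q_{\sg^{n+p}(\om)}$ in the proof of Corollary~\ref{cor: exist of unique dens q}; the triangle inequality together with the $l$-independent bound above then lets us pass to the limit $l\to\infty$ to obtain
\begin{equation*}
\Ta_{\sg^{n+p}(\om),+}\lt(\~\cL_{\sg^p(\om)}^n f_{\sg^p(\om)},\,q_{\sg^{n+p}(\om)}\rt)\leq \Dl\vta^n.
\end{equation*}

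Next I would apply Lemma~\ref{lem: birkhoff cone contraction} with the order-preserving functional $\vrho:=\nu_{\sg^{n+p}(\om)}$ and norm $\|\cdot\|_\infty$, which satisfies the hypothesis of that lemma for the pointwise order on $\sC_{\sg^{n+p}(\om),+}$.  By \eqref{eq: equivariance prop of nu norm op} and the normalisation $\nu_{\sg^p(\om)}(f_{\sg^p(\om)})=1$, both arguments have $\vrho$-value $1$; using $e^x-1\leq 2x$ for $0\leq x\leq 1$, once $n$ is large enough that $\Dl\vta^n\leq 1$ we deduce
\begin{equation*}
\bigl\|\~\cL_{\sg^p(\om)}^n f_{\sg^p(\om)}-q_{\sg^{n+p}(\om)}\bigr\|_\infty\leq 2\Dl\vta^n\cdot\|q_{\sg^{n+p}(\om)}\|_\infty.
\end{equation*}
Choosing $\dl>0$ so that $\kp:=\vta e^{2\dl}<1$ and using $|n+p|\leq 2n$, Lemma~\ref{lem: BV norm q om growth bounds} gives $\|q_{\sg^{n+p}(\om)}\|_\infty\leq C(\om,\dl)e^{2\dl n}$, which turns the previous display into the desired rate $\kp^n$ for $n\geq N_2(\om)$.

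For the remaining finitely many indices $n<N_2(\om)$, Lemma~\ref{lem: buzzi LY1} yields the crude bound $\|\~\cL_{\sg^p(\om)}^n f_{\sg^p(\om)}\|_\infty\leq 2L_{\sg^p(\om)}^{(n)}\|f_{\sg^p(\om)}\|_\BV$, while $\|q_{\sg^{n+p}(\om)}\|_\infty$ is finite by Lemma~\ref{lem: BV norm q om growth bounds}; enlarging $A_f(\om)$ by the factor $\kp^{-N_2(\om)}$ absorbs these cases, and the factor $\|f_{\sg^p(\om)}\|_\BV\geq 1$ (which follows from $\nu_{\sg^p(\om)}(f_{\sg^p(\om)})=1$ and $f\geq 0$) can be inserted on the right without loss.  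The main obstacle is the passage to the limit in Step~2: since $\Ta_+$ is only a pseudo-metric and not obviously lower semi-continuous in the norm topology, the argument must be phrased as a pure triangle-inequality estimate, with Lemma~\ref{lem: exp conv in C+ cone} itself providing the Cauchy property that underpins the density construction of Corollary~\ref{cor: exist of unique dens q}.
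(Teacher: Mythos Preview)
Your proposal is correct, and the final paragraph correctly identifies and resolves the one genuine subtlety: the $\Ta_+$-limit as $l\to\infty$ is not directly justified (since $\Ta_+$ need not be lower semicontinuous and $q>0$ is only proved later in Proposition~\ref{prop: upper and lower bound for density}), but applying Lemma~\ref{lem: birkhoff cone contraction} \emph{first} and then passing to the limit in $\|\cdot\|_\infty$, using that $\~\cL_{\sg^{p-l}(\om)}^{n+l}\ind\to q_{\sg^{p+n}(\om)}$ in sup norm from Corollary~\ref{cor: exist of unique dens q}, closes the argument cleanly.

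The paper takes a shorter route that sidesteps this limit passage entirely. Instead of comparing $\~\cL_{\sg^p(\om)}^n f_{\sg^p(\om)}$ with $\~\cL_{\sg^{p-l}(\om)}^{n+l}\ind$, it compares directly with $\~\cL_{\sg^p(\om)}^n q_{\sg^p(\om)}=q_{\sg^{p+n}(\om)}$, i.e.\ it applies Lemma~\ref{lem: exp conv in C+ cone} with $h=q_{\sg^p(\om)}$ and $l=0$. This requires feeding the variation control for $q$ (namely $\var(q_{\sg^p(\om)})\leq C(\om,\ep)e^{\ep|p|}$ from Lemma~\ref{lem: BV norm q om growth bounds}) into the choice of $V(\om)$, which the paper does by setting $V(\om)=\max\{C(\om,\ep),V_{f,\ep}(\om)\}$. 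Your choice $V(\om)=\max\{V_{f,\ep}(\om),1\}$ is smaller, but you pay for this by needing the limit argument. Since Lemma~\ref{lem: BV norm q om growth bounds} is already available (and you use it anyway to control $\|q_{\sg^{p+n}(\om)}\|_\infty$), the paper's direct comparison with $q$ is the more economical path; nothing else in the two arguments differs materially.
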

\begin{proof}
We begin by noting that Lemma~\ref{lem: buzzi LY1}, together with the fact that $\norm{f_\om}_\BV,\, \norm{q_\om}_\infty\geq 1$ for $m$-a.e. $\om\in\Om$, implies that for each $n\in\NN$ we have 
\begin{align}
	\norm{\~\cL_{\sg^{p}(\om)}^n f_{\sg^{p}(\om)} - q_{\sg^{p+n}(\om)}}_\infty
	&\leq 
	\norm{\~\cL_{\sg^{p}(\om)}^n f_{\sg^{p}(\om)}}_\infty +\norm{q_{\sg^{p+n}(\om)}}_\infty
	\nonumber\\
	&\leq 
	2L_{\sg^{p}(\om)}^{n}\norm{f_{\sg^p(\om)}}_\BV \norm{q_{\sg^{p+n}(\om)}}_\infty	
	\nonumber\\
	&\leq 
	2L_{\sg^{-n}(\om)}^{3n}\norm{f_{\sg^p(\om)}}_\BV \norm{q_{\sg^{p+n}(\om)}}_\infty.
	\label{eq: Cor 8.6 ineq for all n}
\end{align}
Now, fix $\ep<\ep_5$ small as in Lemma~\ref{lem: exp conv in C+ cone}. Lemma~\ref{lem: BV norm q om growth bounds} implies that 
\begin{align*}
	\var(q_{\sg^{p+n}(\om)})\leq C(\om,\ep)e^{\ep |p+n|}.
\end{align*}
for all $n\geq 0$. 
Let 
\begin{align}\label{eq: defn of V(om)}
	V(\om)=\max\set{C(\om,\ep), V_{f,\ep}(\om)}.
\end{align}
Then Lemma~\ref{lem: exp conv in C+ cone} gives that 
\begin{align*}
	\Ta_{\sg^{p+n}(\om),+}\lt(\~\cL_{\sg^{p}(\om)}^nf_{\sg^{p}(\om)}, q_{\sg^{p+n}(\om)}\rt)
	=
	\Ta_{\sg^{p+n}(\om),+}\lt(\~\cL_{\sg^{p}(\om)}^nf_{\sg^{p}(\om)}, \~\cL_{\sg^{p}(\om)}^nq_{\sg^{p}(\om)}\rt)
	\leq \Dl\vta^n
\end{align*}
for all $n\geq N_3(\om)$, which was defined in \eqref{eq: def of N_3} in the proof of Lemma~\ref{lem: exp conv in C+ cone}.
Applying Lemma~\ref{lem: birkhoff cone contraction} with $\vrho=\nu_{\sg^{p+n}(\om)}$ and $\norm{\spot}=\norm{\spot}_\infty$, together with Lemmas~\ref{lem: exp conv in C+ cone} and \ref{lem: BV norm q om growth bounds} then gives 
\begin{align}
	\norm{\~\cL_{\sg^{p}(\om)}^n f_{\sg^{p}(\om)} - q_{\sg^{p+n}(\om)}}_\infty
	&\leq 
	\norm{q_{\sg^{p+n}(\om)}}_\infty 
	\lt(\exp\lt(\Ta_{\sg^{p+n}(\om),+}\lt(\~\cL_{\sg^{p}(\om)}^nf_{\sg^{p}(\om)}, q_{\sg^{p+n}(\om)}\rt)\rt)-1\rt)
	\nonumber\\
	&\leq 
	\norm{q_{\sg^{p+n}(\om)}}_\infty\lt(e^{\Dl\vta^n}-1\rt)
	\leq \norm{q_{\sg^{p+n}(\om)}}_\infty\~\kp^n
	\label{eq: Cor 8.6 exp ineq for large n}
%	\\
%	&\leq C(\om,\dl)e^{|p+n|\dl}\~\kp^n
	%\leq C(\om,\dl)e^{2n\dl}\~\kp^n
	%\leq A(\om)\kp^n
\end{align}
for some $\~\kp\in(0,1)$. Combining \eqref{eq: Cor 8.6 ineq for all n} with \eqref{eq: Cor 8.6 exp ineq for large n} we see that 
\begin{align}
	\norm{\~\cL_{\sg^{p}(\om)}^n f_{\sg^{p}(\om)} - q_{\sg^{p+n}(\om)}}_\infty
	&\leq
	2\max_{0\leq j\leq N_3(\om)}\set{L_{\sg^{p}(\om)}^{j} \~\kp^{-j}}
	\norm{f_{\sg^p(\om)}}_\BV \norm{q_{\sg^{p+n}(\om)}}_\infty \~\kp^n
	\nonumber\\
	&=2L_{\sg^{p}(\om)}^{N_3(\om)} \~\kp^{-N_3(\om)}
	\norm{f_{\sg^p(\om)}}_\BV \norm{q_{\sg^{p+n}(\om)}}_\infty \~\kp^n
	\nonumber\\
	&\leq 
	2L_{\sg^{-N_3(\om)}(\om)}^{3N_3(\om)} \~\kp^{-N_3(\om)}
	\norm{f_{\sg^p(\om)}}_\BV \norm{q_{\sg^{p+n}(\om)}}_\infty \~\kp^n
	\label{eq: Cor 8.6 exp dec ineq with sup q}
\end{align}
for all $n\in\NN$. Now in light of Lemma~\ref{lem: BV norm q om growth bounds} we may rewrite \eqref{eq: Cor 8.6 exp dec ineq with sup q} so that we have 
\begin{align*}
\norm{\~\cL_{\sg^{p}(\om)}^n f_{\sg^{p}(\om)} - q_{\sg^{p+n}(\om)}}_\infty
&\leq 
A_f(\om) \norm{f_{\sg^p(\om)}}_\BV \kp^n
\end{align*}
for all $n\in\NN$, where 
\begin{align*}
	A_f(\om):= 	2C(\om,\dl)L_{\sg^{-N_3(\om)}(\om)}^{3N_3(\om)} \~\kp^{-N_3(\om)}
\end{align*} 
and $\kp:=\~\kp e^{2\dl}$ for some $\dl>0$ sufficiently small.
\end{proof}

The following proposition serves two purposes: we show that the invariant density $q_\om$ is strictly positive and finite for $m$-a.e. $\om\in\Om$, and we provide measurable upper and lower bounds for $q_\om$ despite the fact that (at this point) we do not know whether the maps $\om\mapsto \inf q_\om$ or $\om\mapsto \norm{q_\om}_\infty$ are measurable as we also do not currently know that the maps $\om\mapsto \nu_\om$ and $\om\mapsto\lm_\om$ are measurable. 
\begin{proposition}\label{prop: upper and lower bound for density}
	There exist measurable functions $u,U:\Om\to(0,\infty)$ such that for $m$-a.e. $\om\in\Om$ we have
	\begin{align*}
		 u(\om)\leq q_\om \leq U(\om). 
	\end{align*} 
\end{proposition}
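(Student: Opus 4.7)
The upper bound is immediate from the estimates already obtained on the invariant density. From \eqref{eq: var q leq C_ep}, we have $\var(q_\om)\leq C_\ep(\om)$, and since $q_\om\geq 0$ with $\nu_\om(q_\om)=1$, this forces $\inf_{X_\om}q_\om\leq 1$ and hence
\begin{equation*}
\norm{q_\om}_\infty \leq \inf_{X_\om}q_\om + \var(q_\om) \leq 1 + C_\ep(\om).
\end{equation*}
Setting $U(\om):=1+C_\ep(\om)$ therefore suffices, and this function is measurable by Proposition~\ref{lem: LY ineq 2}.

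For the lower bound the strategy is to combine Lemma~\ref{lem: LSV lemma 4.6} (which locates an interval on which $q_\om$ is bounded below by $\tfrac{1}{2}$) with the random covering condition \eqref{cond RC} applied along a backward orbit. Fix $a_0:=6B_*$ and let $\Om^*:=\set{\om\in\Om : C_\ep(\om)\leq a_0}$; by the construction of $B_*$ in Lemma~\ref{lem: constr of Om_G} we have $m(\Om^*)\geq 1-\ep/8>0$. For $\om'\in\Om^*$, the bound $\var(q_{\om'})\leq a_0=a_0\nu_{\om'}(q_{\om'})$ places $q_{\om'}\in\sC_{\om',a_0}$, so Lemma~\ref{lem: LSV lemma 4.6} applied with $a=a_0$ yields a finite partition $\cU_{\om',a_0}$ of $X_{\om'}$ and an atom $U^*_{\om'}\in\cU_{\om',a_0}$ with $\inf q_{\om'}\rvert_{U^*_{\om'}}\geq\tfrac{1}{2}$. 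Combining this with the invariance $q_\om=\~\cL_{\om'}^n q_{\om'}$ for $\om'=\sg^{-n}(\om)$, we obtain on $X_\om$
\begin{equation*}
q_\om \geq \tfrac{1}{2}\~\cL_{\om'}^n\ind_{U^*_{\om'}} = \frac{\cL_{\om'}^n\ind_{U^*_{\om'}}}{2\lm_{\om'}^n} \geq \frac{C_{\om',n}(U^*_{\om'})}{2\lm_{\om'}^n} \geq \frac{\min_{U\in\cU_{\om',a_0}}C_{\om',n}(U)}{2\norm{\cL_{\om'}^n\ind_{\om'}}_\infty},
\end{equation*}
provided $n\geq M_*(\om'):=\max_{U\in\cU_{\om',a_0}}M_{\om'}(U)$; we used \eqref{cond RC} to pass to $C_{\om',n}(U^*_{\om'})$, the trivial inequality $C_{\om',n}(U^*_{\om'})\geq\min_U C_{\om',n}(U)$, and the bound $\lm_{\om'}^n=\nu_\om(\cL_{\om'}^n\ind_{\om'})\leq \norm{\cL_{\om'}^n\ind_{\om'}}_\infty$. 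The crucial feature is that the final right-hand side no longer depends on the $\om$-sensitive atom $U^*_{\om'}$ and is manifestly measurable in $\om'$ and $n$.

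To promote this to a bound valid for every $\om$, define
\begin{equation*}
n(\om) := \min\set{n\geq 1 : \sg^{-n}(\om)\in\Om^* \text{ and } n\geq M_*(\sg^{-n}(\om))},
\end{equation*}
which is a measurable function of $\om$. Since $m(\Om^*)>0$ and $M_*$ is a.s.\ finite on $\Om^*$ (being a maximum over a finite partition of a.s.\ finite covering times), the set $\set{M_*\leq K}\cap\Om^*$ has positive measure for some $K$, and ergodicity of $\sg$ together with Poincar\'e recurrence then yields $n(\om)<\infty$ for $m$-a.e.\ $\om$. Writing $\widehat\om:=\sg^{-n(\om)}(\om)$, we conclude by setting
\begin{equation*}
u(\om) := \frac{\min_{U\in\cU_{\widehat\om,a_0}}C_{\widehat\om,n(\om)}(U)}{2\norm{\cL_{\widehat\om}^{n(\om)}\ind_{\widehat\om}}_\infty},
\end{equation*}
which is measurable and strictly positive for $m$-a.e.\ $\om$, and satisfies $u(\om)\leq q_\om$ by the chain of inequalities above.

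The main subtlety is measurability: naively, the atom $U^*_{\om'}$ guaranteed by Lemma~\ref{lem: LSV lemma 4.6} depends on $q_{\om'}$, and $\om'\mapsto q_{\om'}$ is not yet known to be measurable (this is precisely the content of Section~\ref{sec:randommeasures}, which in turn relies on the present proposition). We sidestep this obstacle by replacing the existential $U^*_{\om'}$ with the worst-case $\min_U$, yielding a weaker but fully measurable lower bound. The only other nontrivial point is the a.s.\ finiteness of $n(\om)$, which is secured by the random covering assumption \eqref{cond RC} together with ergodicity of the base dynamics.
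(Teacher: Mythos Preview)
Your argument is correct in structure and follows the same overall strategy as the paper: locate a positive-measure set on which $q$ is known to lie in a fixed cone, use Lemma~\ref{lem: LSV lemma 4.6} to find an interval where $q$ is bounded below by $\tfrac12$, propagate this lower bound forward via the transfer operator, and then pull back to a general fiber using ergodicity. The upper bound is also the same as the paper's (cf.\ \eqref{eq: measurable up bdd for q}).

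There is, however, a genuine measurability gap in your lower bound. The constants $C_{\om,n}(J)$ in Definition~\ref{def: covering} are purely existential---no measurable dependence on $\om$ is assumed---so your formula for $u(\om)$ is not manifestly measurable. Replacing $C_{\widehat\om,n(\om)}(U)$ by $\inf\cL_{\widehat\om}^{n(\om)}\ind_U$ does not fully resolve this either, since the minimum is taken over the partition $\cU_{\widehat\om,a_0}$, which itself varies with $\widehat\om$ in a way whose measurability has not been established. Your stated resolution (passing from $U^*_{\om'}$ to $\min_U$) addresses the dependence on $q$, but not this second layer.

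The paper sidesteps this precisely by leveraging the good-fibers machinery: for $\om\in\Om_G$ one has the \emph{fixed} constants $\al_*,C_*$ from \eqref{G4}--\eqref{G5}, yielding via \eqref{inf L_om^R bound for finite diam} a \emph{uniform} lower bound $\inf q_{\sg^{R_*}(\om)}\geq t_*:=\al_*/(2C_*)$ on $\Om_+:=\sg^{R_*}(\Om_G)$. Once one has a constant lower bound on a set of positive measure, the return-time argument gives
\[
u(\om)=t_*\cdot\frac{\inf\cL_{\sg^{-\~n_\om}(\om)}^{\~n_\om}\ind}{\norm{\cL_{\sg^{-\~n_\om}(\om)}^{\~n_\om}\ind}_\infty},
\]
which is manifestly measurable by \eqref{cond M2}. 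Your route can be repaired in exactly this way: rather than taking $\min_U$, restrict to a smaller positive-measure set on which both the number of partition elements in $\cU_{\om',a_0}$ and the quantities $\min_U\inf\cL_{\om'}^{M_*(\om')}\ind_U$ are controlled by a fixed constant, yielding a uniform $t_*$.
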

\begin{proof}
	For the upper bound we simply set $U(\om)=C(\om,\dl)$ as in Lemma~\ref{lem: BV norm q om growth bounds}. 
	Now, for the lower bound we begin by noting that \eqref{eq: var q leq C_ep}, our choice of $B_*$ in the proof of Lemma~\ref{lem: constr of Om_G}, and the fact that we have $a_*>6B_*$ gives that $q_\om\in\sC_{\om,a_*}$ for each $\om\in\Om_G$.
	Thus, for $\om\in\Om_G$ \eqref{inf L_om^R bound for finite diam} implies that 
	\begin{align}\label{eq: q_om > 0 for good om}
	\inf q_{\sg^{R_*}(\om)}
	=
	\inf \~\cL_\om^{R_*} q_\om 
	\geq 
	\frac{1}{2}\inf\~\cL_\om^{R_*}\ind_{U_{q_\om}}
	\geq 
	\frac{\al_*}{2\lm_\om^{R_*}}
	\geq 
	\frac{\al_*}{2C_*}
	=:t_*
	>0, 
	\end{align}
	where $U_{q_\om}\in\cU_\om$ is coming from Lemma~\ref{lem: LSV lemma 4.6}, and thus 
	\begin{align}\label{eq: def of Om_+}
	\Om_+:=\sg^{R_*}(\Om_G)\sub\cH:=\set{\om\in\Om: \inf q_\om>0}.
	\end{align} 
	In light of \eqref{eq: q_om > 0 for good om}, for $\om\in\Om_+$ we have 
	\begin{align*}
		\inf q_\om\geq t_*,
	\end{align*}
	and \eqref{eq: def of Om_+} together with Lemma~\ref{lem: constr of Om_G} imply that 
	$$
	m(\Om_+)\geq m(\Om_G)\geq 1-\frac{\ep}{4}.
	$$
	Note that for any $\om\in\cH$ (up to a set of measure zero) and any $n\in\NN$ we have 
	\begin{align}
		\inf q_\om
		&=
		\inf\~\cL_{\sg^{-n}(\om)}^n q_{\sg^{-n}(\om)}
		\nonumber\\
		&\geq
		\lt(\lm_{\sg^{-n}(\om)}^n\rt)^{-1}\inf q_{\sg^{-n}(\om)} \inf\cL_{\sg^{-n}(\om)}^n\ind_{\sg^{-n}(\om)}
		\nonumber\\
		&\geq 
		\inf q_{\sg^{-n}(\om)} \frac{\inf\cL_{\sg^{-n}(\om)}^n\ind_{\sg^{-n}(\om)}}{\norm{\cL_{\sg^{-n}(\om)}^n\ind_{\sg^{-n}(\om)}}_\infty}
		>0
		\label{eq: lower bound for inf q_om}
	\end{align}
	since Lemma~\ref{lem: LY setup} implies that the fraction on the right-hand side is positive for $m$-a.e. $\om\in\Om$. 
	Now since $\Om_+$ has positive measure, the ergodic theorem ensures that we may define the measurable number 
	\begin{align}
		\~n_\om:=\min\set{n\in\NN: \sg^{-n}(\om)\in\Om_+}.
	\end{align}
	Then, using \eqref{eq: lower bound for inf q_om}, we have 
	\begin{align*}
	\inf q_{\om}
	&=
	\inf \~\cL_{\sg^{-\~n_\om}(\om)}^{\~n_\om} q_{\sg^{-\~n_\om}(\om)} 
	\geq
	\lt(\lm_{\sg^{-\~n_\om}(\om)}^{\~n_\om}\rt)^{-1}\inf q_{\sg^{-\~n_\om}(\om)} \inf\cL_{\sg^{-\~n_\om}(\om)}^{\~n_\om}\ind_{\sg^{-\~n_\om}(\om)} 
	\\
	&\geq
	t_*\frac{\inf\cL_{\sg^{-\~n_\om}(\om)}^{\~n_\om}\ind_{\sg^{-\~n_\om}(\om)}}{\norm{\cL_{\sg^{-\~n_\om}(\om)}^{\~n_\om}\ind_{\sg^{-\~n_\om}(\om)}}_\infty}	
	>0.
	\end{align*}
	Taking 
	$$
		u(\om)=t_*\cdot\frac{\inf\cL_{\sg^{-\~n_\om}(\om)}^{\~n_\om}\ind_{\sg^{-\~n_\om}(\om)}}{\norm{\cL_{\sg^{-\~n_\om}(\om)}^{\~n_\om}\ind_{\sg^{-\~n_\om}(\om)}}_\infty}
	$$ 
	finishes the proof.	
\end{proof}
For each $\om\in\Om$ we may now define the measure $\mu_\om\in\cP(I)$ by 
\begin{align}\label{eq: def of mu_om}
	\mu_\om(f):=\int_{X_\om} fq_\om \,d\nu_\om,  \qquad f\in L^1_{\nu_\om}(I).
\end{align}
Proposition~\ref{prop: existence of conformal family} and Proposition~\ref{prop: upper and lower bound for density} together show that, for $m$-a.e. $\om\in\Om$, $\mu_\om$ is non-atomic, positive on non-degenerate intervals, and absolutely continuous with respect to $\nu_\om$. Furthermore, in view of Proposition~\ref{prop: upper and lower bound for density}, for $m$-a.e. $\om\in\Om$, we may now define the fully normalized transfer operator $\hcL_\om:\BV(X_\om)\to\BV(X_{\sg(\om)})$ by 
\begin{align}\label{eq: def fully norm tr op}
	\hcL_\om f:= \frac{1}{q_{\sg(\om)}}\~\cL_\om(fq_\om) = \frac{1}{\lm_\om q_{\sg(\om)}}\cL_\om (fq_\om), 
	\qquad f\in \BV(X_\om).
\end{align}
Alternatively, we may think of the fully normalized transfer operator $\hcL_\om$ as the usual transfer operator taken with the potential $\hat\phi_\om$ which is given by
	\begin{align}\label{eq: def of hat phi_om}
		\hat\phi_\om:=\phi_\om+\log q_\om-\log q_{\sg(\om)}\circ T_\om - \log \lm_\om,
	\end{align}
	i.e. $\hcL_\om=\cL_{\hat\phi, \om}$. This gives rise to the weight functions $\hat g_\om$ given by 
	\begin{align}\label{eq: def of hat g_om}
		\hat g_\om:=e^{\hat\phi_\om}=\frac{e^{\phi_\om}\cdot q_\om}{\lm_\om\cdot q_{\sg(\om)}\circ T_\om},
	\end{align}
	and the alternate formulation of $\hcL_\om$:
	\begin{align}\label{eq: alt defhat L}
		\hcL_\om(f)(x)=\sum_{y\in T_\om^{-1}(x)}\hat g_\om(y)f(y)
		\qquad f\in \BV(X_\om), \, x\in X_{\sg(\om)}.
	\end{align}
As an immediate consequence of \eqref{eq: q_om fixed by norm tr op} and \eqref{eq: def fully norm tr op}, we get that 
\begin{align}\label{eq: fully norm op fix ind}
	\hcL_\om\ind_\om =\ind_{\sg(\om)}.
\end{align}
Furthermore, the operator $\hcL_\om$ satisfies the identity
\begin{align}\label{eq: tr op identity}
	 \hcL_\om\lt(h\cdot(f\circ T_\om^n)\rt)
	=
	 f\cdot \hcL_\om^n h
\end{align}
for all $h\in\BV(X_\om)$, $f\in\BV(X_{\sg^n(\om)})$, and all $n\in\NN$.
We end this section with the following proposition which shows that the family $(\mu_\om)_{\om\in\Om}$ of measures is $T$-invariant.
\begin{proposition}\label{prop: mu_om T invar}
	The family $(\mu_\om)_{\om\in\Om}$ defined by \eqref{eq: def of mu_om} is $T$-invariant in the sense that 
	\begin{align}\label{eq: mu_om T invar}
		\int_{X_\om} f\circ T_\om \, d\mu_\om 
		=
		\int_{X_{\sg(\om)}} f \, d\mu_{\sg(\om)}
	\end{align}
	for $f\in L^1_{\mu_{\sg(\om)}}(I)=L^1_{\nu_{\sg(\om)}}(I)$.	
\end{proposition}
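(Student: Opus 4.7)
The plan is a short and direct computation that combines three ingredients already established: the definition $\mu_\om = q_\om\,\nu_\om$, the conformality relation $\nu_{\sg(\om)}(\cL_\om h)=\lm_\om\nu_\om(h)$ from Proposition~\ref{prop: existence of conformal family}, the eigenequation $\cL_\om q_\om=\lm_\om q_{\sg(\om)}$ from Corollary~\ref{cor: exist of unique dens q}, together with the transfer-operator identity
\begin{align*}
\cL_\om\bigl((f\circ T_\om)\,h\bigr) = f\cdot \cL_\om h, \qquad h\in L^1_{\nu_\om}(I),
\end{align*}
which is immediate from the definition of $\cL_\om$ as a sum over preimages.

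First I would observe that it suffices to prove \eqref{eq: mu_om T invar} for nonnegative, bounded, Borel measurable $f$. Indeed, once we have it for bounded nonnegative $f$, the general case follows by splitting $f=f^+-f^-$ and invoking monotone convergence on $f^\pm$, with the $L^1_{\mu_{\sg(\om)}}$ hypothesis ensuring finiteness. Note also that $L^1_{\mu_{\sg(\om)}}(I)=L^1_{\nu_{\sg(\om)}}(I)$ because by Proposition~\ref{prop: upper and lower bound for density} we have $0<u(\sg(\om))\leq q_{\sg(\om)}\leq U(\sg(\om))<\infty$ for $m$-a.e.\ $\om$.

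Next, for bounded $f\geq 0$ I would write, using the definition of $\mu_\om$, the identity \eqref{eq: tr op identity}, the conformality of $\nu$ (equation \eqref{eq: equivariance prop of nu}), and the eigenequation for $q$:
\begin{align*}
\int_{X_\om} (f\circ T_\om)\, d\mu_\om
&= \int_{X_\om} (f\circ T_\om)\, q_\om\, d\nu_\om
= \lm_\om^{-1}\int_{X_{\sg(\om)}} \cL_\om\bigl((f\circ T_\om)\,q_\om\bigr)\, d\nu_{\sg(\om)}\\
&= \lm_\om^{-1}\int_{X_{\sg(\om)}} f\cdot \cL_\om q_\om\, d\nu_{\sg(\om)}
= \int_{X_{\sg(\om)}} f\cdot q_{\sg(\om)}\, d\nu_{\sg(\om)}
= \int_{X_{\sg(\om)}} f\, d\mu_{\sg(\om)}.
\end{align*}
The justification of the second equality requires $(f\circ T_\om)q_\om\in L^1_{\nu_\om}$, which is clear because $f$ is bounded and $q_\om\leq U(\om)<\infty$.

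There is no real obstacle: the statement is essentially a formal consequence of conformality of $\nu$ together with the eigenequation for $q$, packaged via the transfer operator identity. The only thing needing even a mild comment is the passage from BV to $L^1$ in the conformality relation, but this is handled by density of BV in $L^1_{\nu_\om}$ combined with the dominated convergence theorem, since bounded BV functions are dense in $L^1_{\nu_\om}$ and the relation $\nu_{\sg(\om)}\circ\cL_\om=\lm_\om\nu_\om$ is already asserted for all $f\in L^1_{\nu_\om}(I)$ in Proposition~\ref{prop: existence of conformal family}.
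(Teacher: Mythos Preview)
Your proof is correct and takes essentially the same approach as the paper: both combine the conformality of $\nu$, the eigenequation $\cL_\om q_\om=\lm_\om q_{\sg(\om)}$, and the transfer-operator identity $\cL_\om((f\circ T_\om)h)=f\cdot\cL_\om h$. The only cosmetic difference is that the paper routes the computation through the fully normalized operator $\hcL_\om$ (establishing $\mu_{\sg(\om)}(\hcL_\om f)=\mu_\om(f)$ first and then taking $h=\ind$ in the identity $\hcL_\om(h\cdot(f\circ T_\om))=f\cdot\hcL_\om h$), whereas you work directly with $\cL_\om$; the underlying argument is identical.
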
 
\begin{proof}
In light of \eqref{eq: equivariance prop of nu norm op}, we note that for $f\in L^1_{\mu_{\sg(\om)}}(I)$ we have 
\begin{align}\label{eq: equivariance prop of full norm op}
	\int_{X_{\sg(\om)}} \hcL_\om f \,d\mu_{\sg(\om)}
	=
	\int_{X_{\sg(\om)}} \~\cL_\om(fq_\om)\,d\nu_{\sg(\om)}
	=
	\int_{X_\om} fq_\om\,d\nu_\om
	=
	\int_{X_\om} f \,d\mu_\om.
\end{align}
Now, if $h\cdot(f\circ T_\om)\in L^1_{\mu_\om}(I)$, \eqref{eq: equivariance prop of full norm op} and \eqref{eq: tr op identity} give that
\begin{align*}
	\int_{X_\om} h\cdot(f\circ T_\om) \, d\mu_\om
	=
	\int_{X_{\sg(\om)}} \hcL_\om\lt(h\cdot(f\circ T_\om)\rt) \, d\mu_{\sg(\om)}
	=
	\int_{X_{\sg(\om)}} f\cdot \hcL_\om h \, d\mu_{\sg(\om)}.
\end{align*}
In view of \eqref{eq: fully norm op fix ind}, taking $h=\ind$ finishes the proof.
\end{proof}

\section{Random Measures}\label{sec:randommeasures}
In this section we show that the families of measures $(\nu_\om)_{\om,\in\Om}$ and $(\mu_\om)_{\om,\in\Om}$ are in fact random probability measures as defined in Definition~\ref{def: random prob measures} as well as show that the invariant density $q$ is unique. 
First we establish the existence of a measurable choice of an increasing sequence $(n_k(\om))_{k\geq 1}$ of positive integers along which we obtain the exponential convergence of $\hat\cL_\om^{n_k} f_\om$. 
\begin{lemma}\label{lem: exp conv to mu along subseq}
	For $m$-a.e. $\om\in\Om$ there exists a measurable choice of an increasing sequence $(n_k(\om))_{k\geq 1}$ of positive integers such that if $f\in\prod_{\om\in\Om}\BV(I)$ with \eqref{cond cD1} and \eqref{cond cD2} holding for $(f,\om,n_k(\om))$ for each $k\in\NN$ and $\om\in\Om$, then there exists 
	$\hat A_f:\Om\to(0,\infty)$ such that
	\begin{align*}
	\norm{\hat \cL_\om^{n_k} f_\om -\mu_\om(f_\om)\ind_{\sg^{n_k}(\om)}}_\infty
	\leq 
	\hat A_f(\om)\norm{f_\om}_\BV\kp^{n_k}.
	\end{align*}
\end{lemma}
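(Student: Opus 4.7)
The plan is to convert the claim about $\hcL_\om^{n_k}$ into one about $\~\cL_\om^{n_k}$ via the identity
\begin{equation*}
\hcL_\om^n f_\om - \mu_\om(f_\om)\ind_{\sg^n(\om)} \;=\; q_{\sg^n(\om)}^{-1}\Bigl(\~\cL_\om^n(f_\om q_\om) - \nu_\om(f_\om q_\om)\, q_{\sg^n(\om)}\Bigr),
\end{equation*}
and then invoke the cone contraction of Lemma~\ref{lem: exp conv in C+ cone}. The factor $q_{\sg^n(\om)}^{-1}$ is precisely the reason we must pass to a subsequence, since without a lower bound on $q_{\sg^n(\om)}$ it could destroy the exponential decay produced inside the parentheses.

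For the subsequence, Proposition~\ref{prop: upper and lower bound for density} supplies a measurable $u:\Om\to(0,\infty)$ with $u(\om)\leq q_\om$. I would fix $\dl_0>0$ so that $m(\{u\geq\dl_0\})>0$ and define $n_k(\om)$ to be the $k$-th positive return of $\om$ to $\{u\geq\dl_0\}$ under $\sg$. By Birkhoff's ergodic theorem and ergodicity of $\sg$, this selection is measurable in $\om$ with $n_k(\om)\to\infty$ for $m$-a.e.\ $\om$; by construction $q_{\sg^{n_k(\om)}(\om)}\geq\dl_0$, while Lemma~\ref{lem: BV norm q om growth bounds} supplies the complementary subexponential bound $\|q_{\sg^{n_k(\om)}(\om)}\|_\infty\leq C(\om,\dl)e^{\dl n_k(\om)}$ for any $\dl>0$.

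To apply the cone contraction on a positive function I use a shift trick: setting $M_\om:=\|f_\om\|_\infty+1$ and $\tilde f_\om:=f_\om+M_\om\ind_\om\geq 1$, the identity $\hcL_\om\ind_\om=\ind_{\sg(\om)}$ from \eqref{eq: fully norm op fix ind} yields $\hcL_\om^n\tilde f_\om - \mu_\om(\tilde f_\om)\ind_{\sg^n(\om)} = \hcL_\om^n f_\om - \mu_\om(f_\om)\ind_{\sg^n(\om)}$, so it suffices to treat $\tilde f$. Defining $\psi_\om := \tilde f_\om q_\om/\nu_\om(\tilde f_\om q_\om)\in\sC_{\om,+}$ gives $\nu_\om(\psi_\om)=\nu_\om(q_\om)=1$, and choosing any $V(\om)\geq\max\{\var(\psi_\om),\var(q_\om)\}$ Lemma~\ref{lem: exp conv in C+ cone} with $p=l=0$ delivers $\Ta_{\sg^n(\om),+}(\~\cL_\om^n\psi_\om,\~\cL_\om^n q_\om)\leq\Dl\vta^n$ for all $n\geq N_2(\om)$. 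Since $\~\cL_\om^n\psi_\om$ and $q_{\sg^n(\om)}$ both have $\nu_{\sg^n(\om)}$-mass one, Lemma~\ref{lem: birkhoff cone contraction} with $\vrho=\nu_{\sg^n(\om)}$ and $\|\cdot\|_\infty$ upgrades this to $\|\~\cL_\om^n\psi_\om-q_{\sg^n(\om)}\|_\infty\leq\|q_{\sg^n(\om)}\|_\infty(e^{\Dl\vta^n}-1)$. Multiplying by $\mu_\om(\tilde f_\om)$ (which is bounded above by $3\|f_\om\|_\BV+3$), dividing by $q_{\sg^{n_k(\om)}(\om)}\geq\dl_0$, and choosing $\dl>0$ so small that $\kp:=\vta e^{\dl}<1$ produces the bound $\|\hcL_\om^{n_k}f_\om-\mu_\om(f_\om)\ind_{\sg^{n_k}(\om)}\|_\infty\leq \hat A_f(\om)\|f_\om\|_\BV\kp^{n_k}$ at every $n_k\geq N_2(\om)$; the finitely many indices with $n_k<N_2(\om)$ are absorbed into $\hat A_f(\om)$ using the crude BV bound of Lemma~\ref{lem: buzzi LY1}.

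The main technical obstacle is arranging the measurable selection of $n_k(\om)$ so that the $q_{\sg^n(\om)}^{-1}$ factor in the reduction to $\~\cL$ does not destroy the cone contraction; without the lower bound $q\geq\dl_0$ along the subsequence, the inverse density could oscillate badly and the desired exponential rate would fail at infinitely many times. The hypotheses \eqref{cond cD1} and \eqref{cond cD2} on $f$ at $(\om,n_k(\om))$ do not strictly enter the shift-trick argument above; they are the natural substitute if one prefers to work directly with the decomposition $f_\om q_\om = (f_\om q_\om)^+-(f_\om q_\om)^-$ and apply Corollary~\ref{cor: exp conv to dens in sup norm} to each normalized positive piece, since \eqref{cond cD2} combined with $q\geq u$ along $n_k$ is then exactly what delivers the uniform non-degeneracy of $\nu_{\sg^{n_k}(\om)}\bigl(f_{\sg^{n_k}(\om)}^\pm\, q_{\sg^{n_k}(\om)}\bigr)$ needed to verify \eqref{cond cD1} for the renormalized positive parts at each $k$.
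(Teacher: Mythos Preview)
Your argument is correct and follows the same architecture as the paper's proof: both select $(n_k(\om))$ as the successive hitting times of a positive-measure set $\{\om:u(\om)\geq u_*\}$ so that $q_{\sg^{n_k}(\om)}\geq u_*$, reduce to $\~\cL$ via the identity $\hcL_\om^n f_\om-\mu_\om(f_\om)\ind_{\sg^n(\om)}=q_{\sg^n(\om)}^{-1}\bigl(\~\cL_\om^n(f_\om q_\om)-\mu_\om(f_\om)q_{\sg^n(\om)}\bigr)$, invoke the cone contraction of Lemma~\ref{lem: exp conv in C+ cone} together with Lemma~\ref{lem: birkhoff cone contraction}, and finally divide by the uniform lower bound on $q_{\sg^{n_k}(\om)}$ while controlling $\|q_{\sg^{n_k}(\om)}\|_\infty$ via Lemma~\ref{lem: BV norm q om growth bounds}. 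The one genuine difference is how signed $f$ is brought into the positive cone: the paper splits $f=f^+-f^-$ and applies Corollary~\ref{cor: exp conv to dens in sup norm} separately to the normalized pieces $\overline{f^\pm q}$, whereas you shift by $M_\om=\|f_\om\|_\infty+1$ and exploit $\hcL_\om\ind_\om=\ind_{\sg(\om)}$ to cancel the added constant, reducing to a single application of Lemma~\ref{lem: exp conv in C+ cone}. Your route is slightly more economical. Your observation that \eqref{cond cD1}--\eqref{cond cD2} at $(\om,n_k)$ do not enter the shift-trick argument is correct; however, your closing diagnosis that they would be needed to control $\nu_{\sg^{n_k}(\om)}(f_{\sg^{n_k}(\om)}^\pm q_{\sg^{n_k}(\om)})$ in the paper's route is off, since the paper's application is also at $p=0$ and so only $\var\bigl(\overline{f^\pm q}_\om\bigr)$, a quantity depending on $f_\om$ alone, enters.
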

\begin{proof}
	Suppose first that $f\in\cD^+$. In light of Lemma~\ref{lem: props of cD} item \eqref{lem: props of cD item c}, we consider Corollary~\ref{cor: exp conv to dens in sup norm} (with $p=0$) for the function $\ol f=(f_\om/\nu_\om(f_\om))_{\om\in\Om}\in\cD^+$, and thus, for each $n\geq N_3(\om)$, we have that 
	\begin{align}
		 \norm{\~\cL_\om^{n} \lt(\frac{f_\om}{\nu_\om(f_\om)}\rt) -q_{\sg^{n}(\om)}}_\infty
		 &\leq 
		 A_{\ol f}(\om)\norm{\frac{f_\om}{\nu_\om(f_\om)}}_\BV\kp^{n}.
		 \label{eq: subseq conv to 1 est 0*}		 
	\end{align}
	Multiplying both sides of \eqref{eq: subseq conv to 1 est 0*} by $\nu_\om(f_\om)$ yields
	\begin{align}\label{eq: subseq conv to 1 est 0}
		\norm{\~\cL_\om^{n} (f_\om) -\nu_\om(f_\om)q_{\sg^{n}(\om)}}_\infty
		\leq 
		A_{\ol f}(\om)\norm{f_\om}_\BV\kp^{n}.
	\end{align}
	Now let $u(\om)$ be the measurable lower bound for $q_\om$ coming from Proposition~\ref{prop: upper and lower bound for density}, and choose $u_*>0$ sufficiently small such that 
	\begin{align*}
		m\lt(\Om_*:=\set{\om\in\Om: u_*\leq u(\om)}\rt)>0.
	\end{align*}
	The ergodic theorem then gives that for $m$-a.e. $\om\in\Om$ there exists a measurable choice of an increasing sequence of positive integers $(n_k)_{k\geq 1}=(n_k(\om))_{k\geq 1}$ such that $\sg^{n_k}(\om)\in\Om_*$ for each $k\geq 1$. %\flag{bi-infinite sequence $n_k$? }
	It follows from Proposition~\ref{prop: upper and lower bound for density} that
	\begin{align}\label{eq: fixed u_* and U^* bds for q}
		0<u_*\leq u(\sg^{n_k}(\om))\leq q_{\sg^{n_k}(\om)}
	\end{align}
	for each $k\geq 1$. Then in light of Lemma~\ref{lem: BV norm q om growth bounds} and \eqref{eq: fixed u_* and U^* bds for q}, Lemma~\ref{lem: props of cD} item \eqref{lem: props of cD item a} implies that, for $m$-a.e. $\om\in\Om$, \eqref{cond cD1} and \eqref{cond cD2} hold for $(fq,\om, n_k)$  for  each  $k\geq 1$. 
	Thus, applying \eqref{eq: subseq conv to 1 est 0}, with $n=n_k$ and the function $f_\om q_\om$ in place of $f_\om$, gives 
	\begin{align}\label{eq: subseq conv to 1 est 1}
		\norm{\~\cL_\om^{n_k} (f_\om q_\om) -\mu_\om(f_\om)q_{\sg^{n_k}(\om)}}_\infty
		&\leq A_{\ol{fq}}(\om)\norm{f_\om}_\BV\norm{q_\om}_\BV \kp^{n_k}.		
	\end{align}
	It then follows from \eqref{eq: subseq conv to 1 est 1} and \eqref{eq: fixed u_* and U^* bds for q} that we have
	\begin{align}
		\norm{\hat \cL_\om^{n_k} f_\om -\mu_\om(f_\om)\ind_{\sg^{n_k}(\om)}}_\infty
		&
		\leq
		\frac{1}{\inf q_{\sg^{n_k}(\om)}}\cdot 
		\norm{\~\cL_\om^{n_k} (f_\om q_\om) -\mu_\om(f_\om)q_{\sg^{n_k}(\om)}}_\infty 
		\nonumber\\
		&
		\leq 
		A_{\ol{fq}}(\om)u_*^{-1} \norm{f_\om}_\BV\norm{q_\om}_\BV \kp^{n_k}.\label{eq: subseq conv to 1 est 2}
	\end{align}
	Now, for a function $f\in\cD$, we write $f=f^+- f^-$, with $f^+,f^-\geq 0$, and $f_\om=f_\om^+-f_\om^-$ for each $\om\in\Om$. Rerunning the previous argument with $f=f^+-f^-$ to \eqref{eq: subseq conv to 1 est 2} and applying the triangle inequality we get
	\begin{align*}
		\norm{\hat \cL_\om^{n_k} f_\om -\mu_\om(f_\om)\ind_{\sg^{n_k}(\om)}}_\infty
		&
		\leq 
		\hat A_f(\om)\norm{f_\om}_\BV\kp^{n_k}
	\end{align*}
	for each $k\in\NN$, where 
	\begin{align}\label{eq: def of hat A_f}
		\hat A_f(\om):=(A_{\ol{f^+q}}(\om)+A_{\ol{f^-q}}(\om))\norm{q_\om}_\BV u_*^{-1}.
	\end{align}
\end{proof}

In the following lemma we give a characterization of the measure $\nu_\om$ as a limit of measurable functions along the measurable sequence $n_k$ produced in the previous lemma. 
\begin{lemma}\label{lem: limit equality for nu}
	For each $f\in\cD$ and $m$-a.e.	$\om\in\Om$ we have
	\begin{align}\label{eq: subseq limit char of nu}
	\nu_\om(f_\om)
	=
	\lim_{k\to \infty} \frac{\norm{\cL_\om^{n_k} f_\om}_\infty}{\norm{\cL_\om^{n_k} \ind_\om}_\infty},
	\end{align}	
	where $(n_k(\om))_{k\geq 1}$ is the measurable sequence of positive integers coming from Lemma~\ref{lem: exp conv to mu along subseq}.
\end{lemma}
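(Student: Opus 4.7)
The plan is to express both $\cL_\om^{n_k}f_\om$ and $\cL_\om^{n_k}\ind_\om$ in terms of the fully normalized operator $\hcL_\om$, so that the diverging factor $\lm_\om^{n_k}$ cancels from the ratio and we can invoke Lemma~\ref{lem: exp conv to mu along subseq}. Iterating the identity \eqref{eq: def fully norm tr op} and writing $h_\om = f_\om / q_\om$ (which makes sense because $q_\om>0$ by Proposition~\ref{prop: upper and lower bound for density}) gives
\begin{align*}
\cL_\om^n f_\om \;=\; \lm_\om^n \, q_{\sg^n(\om)}\cdot \hcL_\om^n(h_\om), \qquad \cL_\om^n \ind_\om \;=\; \lm_\om^n\, q_{\sg^n(\om)}\cdot \hcL_\om^n(\ind_\om/q_\om),
\end{align*}
so after taking sup norms and canceling $\lm_\om^n$,
\begin{align*}
\frac{\norm{\cL_\om^{n_k} f_\om}_\infty}{\norm{\cL_\om^{n_k}\ind_\om}_\infty}
\;=\;
\frac{\norm{q_{\sg^{n_k}(\om)}\cdot \hcL_\om^{n_k}(f_\om/q_\om)}_\infty}{\norm{q_{\sg^{n_k}(\om)}\cdot \hcL_\om^{n_k}(\ind_\om/q_\om)}_\infty}.
\end{align*}

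Next, I would verify that both $f/q$ and $\ind/q$ satisfy the hypotheses of Lemma~\ref{lem: exp conv to mu along subseq} along the subsequence $(n_k)$. By Lemma~\ref{lem: BV norm q om growth bounds}, the family $q$ satisfies \eqref{cond cD1}; by construction of $n_k$ we also have $q_{\sg^{n_k}(\om)}\ge u_*$, so Lemma~\ref{lem: props of cD}\eqref{lem: props of cD item b} applied with $h=q$ yields that $f/q$ and $\ind/q$ satisfy both \eqref{cond cD1} and \eqref{cond cD2} for each $n_k$. Therefore Lemma~\ref{lem: exp conv to mu along subseq} gives, in sup norm,
\begin{align*}
\hcL_\om^{n_k}(f_\om/q_\om) \;=\; \mu_\om(f_\om/q_\om)\,\ind_{\sg^{n_k}(\om)} + R_k, \qquad \hcL_\om^{n_k}(\ind_\om/q_\om) \;=\; 1\cdot \ind_{\sg^{n_k}(\om)} + R_k',
\end{align*}
with $\norm{R_k}_\infty,\norm{R_k'}_\infty \le C(\om)\,\kp^{n_k}$; here I use $\mu_\om(f_\om/q_\om)=\nu_\om(f_\om)$ and $\mu_\om(\ind/q)=\nu_\om(\ind)=1$, which follow from $\mu_\om=q_\om\nu_\om$.

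Substituting back, the numerator equals $\norm{q_{\sg^{n_k}(\om)}\bigl(\nu_\om(f_\om)+R_k\bigr)}_\infty$ and the denominator equals $\norm{q_{\sg^{n_k}(\om)}\bigl(1+R_k'\bigr)}_\infty$. Since $\norm{R_k}_\infty,\norm{R_k'}_\infty\to 0$ and $u_*\le q_{\sg^{n_k}(\om)}\le U(\sg^{n_k}(\om))$ with $U\ge u_*>0$ measurable, dividing numerator and denominator by $\norm{q_{\sg^{n_k}(\om)}}_\infty$ and using the trivial inequality $\bigl|\norm{q\,(c+R)}_\infty - |c|\norm{q}_\infty\bigr| \le \norm{q}_\infty\norm{R}_\infty$ gives
\begin{align*}
\frac{\norm{\cL_\om^{n_k} f_\om}_\infty}{\norm{\cL_\om^{n_k}\ind_\om}_\infty}
\;=\;
\frac{|\nu_\om(f_\om)|\,\norm{q_{\sg^{n_k}(\om)}}_\infty + O(\kp^{n_k})}{\norm{q_{\sg^{n_k}(\om)}}_\infty + O(\kp^{n_k})}\;\longrightarrow\; |\nu_\om(f_\om)|.
\end{align*}
Under the implicit sign convention that $\nu_\om(f_\om)\ge 0$ (the interesting case, since the sup norm quotient is nonnegative) this gives \eqref{eq: subseq limit char of nu}; the general case follows by decomposing $f=f^+-f^-$ as in the proof of Lemma~\ref{lem: exp conv to mu along subseq}, applying the argument to $f^\pm$ separately, and using the absolute homogeneity to pick up $\nu_\om(f_\om)$ rather than its absolute value. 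The main subtlety is not in the convergence itself but in ensuring that the subsequence $(n_k)$ chosen in Lemma~\ref{lem: exp conv to mu along subseq} also forces $q_{\sg^{n_k}(\om)}$ to stay bounded below, which is built in via the set $\Om_*$; this is what prevents the ratio from degenerating even though $\lm_\om^{n_k}$ and $\norm{q_{\sg^{n_k}(\om)}}_\infty$ may fluctuate wildly in $k$.
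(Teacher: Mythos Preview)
Your approach is essentially the same as the paper's: rewrite the ratio via $\hcL_\om$ applied to $f_\om/q_\om$ and $\ind_\om/q_\om$, check these satisfy the hypotheses of Lemma~\ref{lem: exp conv to mu along subseq} along $(n_k)$ using Lemma~\ref{lem: props of cD}\eqref{lem: props of cD item b} and the uniform lower bound $q_{\sg^{n_k}(\om)}\ge u_*$, and conclude from the sup-norm convergence to the constants $\nu_\om(f_\om)$ and $1$. You are in fact more careful than the paper on one point: the paper writes the first displayed equality of limits as if the factor $q_{\sg^{n_k}(\om)}$ simply cancels from the ratio of sup norms, which is not literally true for each fixed $k$; you correctly retain this factor and argue that the ratio still converges because $\hcL_\om^{n_k}(f_\om/q_\om)$ and $\hcL_\om^{n_k}(\ind_\om/q_\om)$ tend uniformly to constants while $q_{\sg^{n_k}(\om)}\ge u_*>0$.

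You are also right to notice the absolute-value issue: for signed $f$ the ratio of sup norms is nonnegative, so the limit is $|\nu_\om(f_\om)|$, not $\nu_\om(f_\om)$. Your proposed repair via $f=f^+-f^-$ does not actually fix this, since $\norm{\cL_\om^{n_k}f_\om}_\infty$ does not split as a difference of $\norm{\cL_\om^{n_k}f_\om^\pm}_\infty$. This is really a minor imprecision in the lemma's statement rather than a gap in your argument; in the paper the lemma is only invoked for nonnegative $f$ (characteristic functions of intervals in Proposition~\ref{prop: measurability}), where the issue does not arise.
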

\begin{proof}
	Let $(n_k)_{k\geq 1}=(n_k(\om))_{k\geq 1}$ denote the measurable sequence of positive integers coming from Lemma~\ref{lem: exp conv to mu along subseq}. In light of Lemma~\ref{lem: BV norm q om growth bounds},  Lemma~\ref{lem: props of cD} item \eqref{lem: props of cD item b} implies that, for $m$-a.e. $\om\in\Om$, \eqref{cond cD1} holds for $(f/q,\om, n_k)$  for each $k\geq 1$. Furthermore, the uniform lower bound for $q_\om$ given in \eqref{eq: fixed u_* and U^* bds for q} together with Lemma~\ref{lem: props of cD} item \eqref{lem: props of cD item b} implies that, for $m$-a.e. $\om\in\Om$, \eqref{cond cD2} holds for $(f/q,\om, n_k)$  for each $k\geq 1$. Thus  Lemma~\ref{lem: exp conv to mu along subseq} gives that  
	\begin{align}\label{eq: nu limit nu estimate}
		\absval{\norm{\hcL_\om^{n_k} \lt(\frac{f_\om}{q_\om}\rt)}_\infty - \nu_\om(f_\om)}
		&\leq  
		\hat A_{\frac{f}{q}}(\om)\norm{\frac{f_\om}{q_\om}}_\BV\kp^{n_k}
	\end{align}
	for $m$-a.e. $\om\in\Om$ and each $k\in\NN$. Hence, taking limits, we get		
	\begin{align*}
		\lim_{k\to \infty} \frac{\norm{\cL_\om^{n_k} f_\om}_\infty}{\norm{\cL_\om^{n_k} \ind_\om}_\infty}
		=
		\lim_{k\to \infty} \frac{\norm{\hcL_\om^{n_k} \lt(\frac{f_\om}{q_\om}\rt)}_\infty}{\norm{\hcL_\om^{n_k} \lt(\frac{\ind_\om}{q_\om}\rt)}_\infty}
		=
		\frac{\nu_\om(f_\om)}{\nu_\om(\ind_\om)}
		=
		\nu_\om(f_\om).
	\end{align*}
\end{proof}
We now arrive at the main result of this section.

\begin{proposition}\label{prop: measurability}
	The map $\om\mapsto \lm_\om$ is  measurable with $\log\lm_\om\in L^1_m(\Om)$ and the families
	$(\nu_\om)_{\om\in\Om},(\mu_\om)_{\om\in\Om}$ are random probability measures giving rise to $\nu,\mu\in\cP_m(\Om\times I)$ defined by 
	\begin{align}\label{def: global conformal and inv random measures}
	\nu(f)=\int_\Om\int_I f_\om \, d\nu_\om\, dm(\om)
	\quad \text{ and } \quad
	\mu(f)=\int_\Om\int_I f_\om \, d\mu_\om\, dm(\om)
	\end{align}
	for $f\in L^1_\nu(\Om\times I)$.  Furthermore, the maps $\om \mapsto \|q_\om\|_\infty$ and  $\om \mapsto \inf q_\om$ are measurable.
	%and $\om \mapsto \var(q_\om)$ are measurable.
\end{proposition}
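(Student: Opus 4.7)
The proof hinges on the measurable representation of $\nu_\om$ given by Lemma~\ref{lem: limit equality for nu}:
\[
\nu_\om(f_\om) = \lim_{k\to\infty} \frac{\|\cL_\om^{n_k(\om)} f_\om\|_\infty}{\|\cL_\om^{n_k(\om)} \ind_\om\|_\infty}, \qquad f \in \cD.
\]
The subsequence $n_k : \Om \to \NN$ is measurable by inspection of its construction in the proof of Lemma~\ref{lem: exp conv to mu along subseq} as a sequence of return times to a measurable subset of $\Om$; and \eqref{cond M1}--\eqref{cond M2} with induction give that $\om \mapsto \|\cL_\om^n h_\om\|_\infty$ is measurable whenever $h$ is random BV. The ratio is therefore measurable in $\om$ for each $k$, and so is its limit, establishing measurability of $\om \mapsto \nu_\om(f_\om)$ for every $f \in \cD$.

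I then bootstrap this to the measurability of $\|q_\om\|_\infty$, $\inf q_\om$, $q_\om(x)$, and $\lm_\om$. Corollary~\ref{cor: exp conv to dens in sup norm} (with $p=0$ and $f=\ind$) gives uniform convergence
\[
\varphi_n(\om, x) := \frac{\cL_{\sg^{-n}(\om)}^n \ind_{\sg^{-n}(\om)}(x)}{\|\cL_{\sg^{-n}(\om)}^n \ind_{\sg^{-n}(\om)}\|_\infty} \longrightarrow \frac{q_\om(x)}{\|q_\om\|_\infty},
\]
with each $\varphi_n(\cdot, x)$ measurable in $\om$, so $r_\om(x) := q_\om(x)/\|q_\om\|_\infty$ is measurable in $\om$ for every $x$. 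The subexponential growth bounds of Lemma~\ref{lem: BV norm q om growth bounds} verify \eqref{cond cD1} for $r$ via $\var(r_{\sg^n(\om)}) \le \var(q_{\sg^n(\om)})$ and \eqref{cond cD2} via $\nu_{\sg^n(\om)}(r_{\sg^n(\om)}) = 1/\|q_{\sg^n(\om)}\|_\infty$, placing $r \in \cD$. Applying the first paragraph to $r$ gives measurability of $\nu_\om(r_\om) = 1/\|q_\om\|_\infty$, hence of $\|q_\om\|_\infty$; consequently $q_\om(x) = \|q_\om\|_\infty\cdot r_\om(x)$ is measurable in $\om$ for each $x$, and $\inf q_\om$ is measurable (as the infimum over any countable subset of $X_\om$ dense in $I$, by the $\BV$ regularity of $q_\om$). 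The identity $\cL_\om q_\om = \lm_\om q_{\sg(\om)}$ forces $\lm_\om = \|\cL_\om q_\om\|_\infty / \|q_{\sg(\om)}\|_\infty$, a ratio of measurable functions; log-integrability of $\lm_\om$ follows from $\inf \cL_\om\ind_\om \le \lm_\om \le \|\cL_\om\ind_\om\|_\infty$ combined with \eqref{cond M3}--\eqref{cond M4}.

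Finally I extend measurability of $\om \mapsto \nu_\om(f_\om)$ from $f \in \cD$ to Borel indicators. For each open interval $J \subset I$ containing a sub-interval $J' \Subset J$, Claim~\ref{clm: nu(J) positive} provides the now-measurable lower bound $\nu_{\sg^n(\om)}(J') \ge C_{\sg^n(\om), M_{\sg^n(\om)}(J')}(J') / \lm_{\sg^n(\om)}^{M_{\sg^n(\om)}(J')}$; Birkhoff applied to the log-integrable factors makes this bound subexponential in $|n|$, so any continuous BV function $f_\ep$ with $0 \le f_\ep \le \ind_J$ and $f_\ep \equiv 1$ on $J'$ lies in $\cD$. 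Hence $\om \mapsto \nu_\om(f_\ep)$ is measurable, and $\ep \to 0$ yields measurability of $\om \mapsto \nu_\om(J)$ for every interval; the monotone class theorem extends this to all Borel $B \in \sB$, so $(\nu_\om) \in \cP_\Om(I)$. The measurability of $q_\om(x)$ for each $x$ together with Proposition~\ref{prop: random measure equiv} identifies $\mu_\om = q_\om\nu_\om$ as an element of $\cP_\Om(I)$. The main obstacle throughout is the verification that the auxiliary functions $r$ and $f_\ep$ genuinely lie in $\cD$; this combines the uniform measurable bounds on $q$ from Proposition~\ref{prop: upper and lower bound for density}, the subexponential growth of Lemma~\ref{lem: BV norm q om growth bounds}, and the positivity of $\nu_\om$ on non-degenerate intervals established in Proposition~\ref{prop: existence of conformal family}.
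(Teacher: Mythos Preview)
Your overall strategy is sound, and the detour through $r_\om = q_\om/\|q_\om\|_\infty$ is a nice self-contained device for recovering $\|q_\om\|_\infty$ and $\lm_\om$ measurably before you know that $\om\mapsto\nu_\om(J)$ is measurable; the verification that $r \in \cD \cap \BV_\Om(I)$ via Lemma~\ref{lem: BV norm q om growth bounds} is correct. However, the paper's route is much shorter: it applies \eqref{eq: subseq limit char of nu} directly to $f_\om = \ind_J$ in Step~(i), reads off $\om \mapsto \nu_\om(J)$ as a pointwise limit of measurable ratios, and then obtains $\lm_\om = \nu_{\sg(\om)}(\cL_\om\ind_\om)$, $q_\om = \lim_n (\lm_{\sg^{-n}(\om)}^n)^{-1}\cL_{\sg^{-n}(\om)}^n\ind$, and $\mu_\om$ in three one-line steps via Proposition~\ref{prop: random measure equiv} and \eqref{cond M2}. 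Your bootstrap through $r$ is valid but redundant once one sees that $\ind_J$ can be handled first.

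Your third paragraph, however, contains a genuine gap. You invoke Claim~\ref{clm: nu(J) positive} to write
\[
\nu_{\sg^n(\om)}(J') \;\ge\; \frac{C_{\sg^n(\om),\,M_{\sg^n(\om)}(J')}(J')}{\lm_{\sg^n(\om)}^{\,M_{\sg^n(\om)}(J')}}
\]
and then assert that ``Birkhoff applied to the log-integrable factors makes this bound subexponential in $|n|$''. But neither $\om\mapsto M_\om(J')$ nor $\om\mapsto\log C_{\om, M_\om(J')}(J')$ is assumed to lie in $L^1_m(\Om)$: the only covering-related integrability hypotheses are \eqref{cond M5}--\eqref{cond M6}, and those concern the special intervals $J(P)$ attached to the partitions $\cP_{\om,n}$, not an arbitrary fixed interval $J'$. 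So Birkhoff does not apply, and you have not established that $f_\ep \in \cD$. The clean fix --- and what underlies the paper's terse application of \eqref{eq: subseq limit char of nu} to $\ind_J$ --- is to observe that the limit formula holds for each \emph{fixed} $\om$ and any $f_\om \in \BV^+(X_\om)$ with $\nu_\om(f_\om) > 0$: Lemma~\ref{lem: exp conv in C+ cone} applies at that single fiber with any (possibly non-measurable) $V(\om) \ge \var(f_\om)/\nu_\om(f_\om)$, and since $\Dl$ and $\vta$ are independent of $V$, fiberwise convergence follows. Measurability of the limit only requires fiberwise existence of the limit together with measurability of the pre-limit terms, both of which you already have.
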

\begin{proof}
We prove the proposition in four steps:
	\begin{itemize}
		\item[\mylabel{i}{it:nu-mble}]
		We first show that $(\nu_\om)_{\om\in\Om}$ is a random probability measure as in Definition~\ref{def: random prob measures}. Indeed, for every interval $J\subset I$,
		the function $\om \mapsto \nu_\om(J)$ is measurable, as it is the limit of measurable functions by \eqref{eq: subseq limit char of nu} applied to $f_\om=\ind_J$. 
		Since $\sB$ is generated by intervals, $\om \mapsto \nu_\om(B)$ is measurable for every $B\in \sB$. Furthermore, $\nu_\om$ is a Borel probability measure for $m$-a.e. $\om \in \Om$ from Proposition~\ref{prop: existence of conformal family}.
		
		\
		
		\item[\mylabel{ii}{it:lm-mble}] Given that $\lm_\om:=\nu_{\sg(\om)}(\cL_\om\ind_\om)$, \eqref{it:nu-mble} and Proposition~\ref{prop: random measure equiv}
		immediately imply that $\om\mapsto\lm_\om$ is measurable.
		The log-integrability claim then follows from  Lemma~\ref{lem: log integrability}, and the fact that 
		$$
		\inf\cL_\om\ind_\om \leq \lm_\om\leq \norm{\cL_\om\ind_\om}_\infty.
		$$
		
		\
		
		\item[\mylabel{iii}{it:q-mble}]
		The measurability of $\lm_\om$ together with \eqref{cond M2} and
		the fact that 
		$$
			q_\om= \lim_{n\to\infty} (\lm_\om^n)^{-1}\cL_{\sg^{-n}(\om)}^n \ind_{\sg^{-n}(\om)}
		$$
		(see \eqref{eq: q_om fixed by norm tr op}) 
		yield the measurability conditions on $q$. 
		%\flag{If we want to show  log-integrability of $\|q_\om\|_{BV}$, we'd need e.g. domination of $\frac{1}{n} \log L^{(n)}_\om$ from Lemma~\ref{lem: buzzi LY1} -- perhaps this is related w/one of the unfinished lemmas (eg was $L^{(n)}_\om$ submult?).}
		%The log-integrability conditions for the sequence follow from the fact that $\om\mapsto \lm_\om$ is log-integrable, Lemma \ref{lem: log integrability} (iii) and Lemma~\ref{lem: buzzi LY1}.
		
		\
		
		\item[\mylabel{iv}{it:mu-mble}]
		The Borel probability measures $(\mu_\om)_{\om\in\Om}$ also define a random probability measure.  Indeed, because intervals generate $\sB$, it is enough to check that for every interval $J\subset I$,
		the function $\om \mapsto \mu_\om(J)$ is measurable. This follows from \eqref{it:q-mble} and the fact that $\mu_\om(J)$ is the limit of measurable functions, coming from \eqref{eq: subseq limit char of nu} applied to $f_\om=\ind_J q_\om$.  
		%he measurability of $\om\mapsto\mu_\om$ follows from the measurability of $\nu_\om$ and $q_\om$. 
	\end{itemize}
	%	The measurability of $\om\mapsto\nu_\om$ follows from Definition~\ref{def: random bounded and BV}, our assumption \eqref{cond M2}, and \eqref{eq: limit char of nu}. Given that $\lm_\om:=\nu_{\sg(\om)}(\cL_\om\ind_\om)$, this immediately implies that $\om\mapsto\lm_\om$ is also measurable. The measurability of $\lm_\om$ together with \eqref{cond M2} and \eqref{eq: q_om fixed by norm tr op} give the measurability of $\om\mapsto q_\om$. Finally, the measurability of $\om\mapsto\mu_\om$ follows from the measurability of $\nu_\om$ and $q_\om$. 
\end{proof}
%\begin{remark}
%	Note that $\BV_\Om^l(I)\sub L^1_\nu(\Om\times I)$. 
%\end{remark}
To end this section we now prove the uniqueness of the invariant density $q$. 
\begin{proposition}\label{prop: uniqueness of q}
	The global invariant density $q\in\BV_\Om(I)$ produced in Corollary~\ref{cor: exist of unique dens q} is the unique element of $L^1_\nu(\Om\times I)$ (modulo $\nu$) such that 
	\begin{align*}
	\~\cL_\om q_\om=q_{\sg(\om)}.
	\end{align*} 
\end{proposition}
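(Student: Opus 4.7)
My plan is to combine Corollary~\ref{cor: exp conv to dens in sup norm} with the $L^1_\nu$-density of BV functions. First I reduce to the nonnegative, normalized case. Positivity of $\cL_\om$ gives $|\psi_{\sg(\om)}|=|\~\cL_\om\psi_\om|\leq \~\cL_\om|\psi_\om|$, and the equivariance \eqref{eq: equivariance prop of nu norm op} combined with ergodicity of $\sg$ forces the $\sg$-invariant function $\om\mapsto\nu_\om(|\psi_\om|)$ to be constant, so $\~\cL_\om|\psi_\om|=|\psi_{\sg(\om)}|$ $\nu_{\sg(\om)}$-a.e. Linearity of $\~\cL_\om$ then yields $\~\cL_\om\psi^\pm_\om=\psi^\pm_{\sg(\om)}$, so it suffices to treat $\psi\ge 0$. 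A further application of equivariance makes $\om\mapsto\nu_\om(\psi_\om)$ $\sg$-invariant and hence constant; if it vanishes then $\psi=0$ $\nu$-a.e., and otherwise I normalize so $\nu_\om(\psi_\om)=1$ for $m$-a.e.\ $\om$.

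Given $\ep>0$, $L^1_\nu$-density of $\BV_\Om^1(I)$ lets me choose a nonnegative $\tilde\psi\in\BV_\Om^1(I)$ with $\|\psi-\tilde\psi\|_{L^1_\nu}<\ep/2$. Replacing $\tilde\psi_\om$ by its fiberwise normalization $\tilde\psi_\om/\nu_\om(\tilde\psi_\om)$ (well-defined on a full measure set by Claim~\ref{clm: nu(J) positive}) and using
\[
\|\tilde\psi_\om-\tilde\psi_\om/\nu_\om(\tilde\psi_\om)\|_{L^1_{\nu_\om}} = |\nu_\om(\tilde\psi_\om)-1| \leq \nu_\om(|\tilde\psi_\om-\psi_\om|),
\]
I may arrange $\nu_\om(\tilde\psi_\om)\equiv 1$ while keeping $\|\psi-\tilde\psi\|_{L^1_\nu}<\ep$. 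Since $\tilde\psi\in\BV_\Om^1$, Remark~\ref{rem: remark on def of cD} gives $\tilde\psi\in\cD^+$, so Corollary~\ref{cor: exp conv to dens in sup norm} applied with $p=-n$ yields, for $m$-a.e.\ $\om$,
\[
\|\~\cL_{\sg^{-n}(\om)}^n\tilde\psi_{\sg^{-n}(\om)}-q_\om\|_\infty \le A_{\tilde\psi}(\om)\|\tilde\psi_{\sg^{-n}(\om)}\|_\BV\kp^n\longrightarrow 0,
\]
where the decay follows since $\|\tilde\psi_{\sg^{-n}(\om)}\|_\BV$ grows subexponentially, as $\log\|\tilde\psi_\om\|_\BV\in L^1_m$ and Birkhoff's ergodic theorem applies.

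Using invariance $\psi_\om=\~\cL_{\sg^{-n}(\om)}^n\psi_{\sg^{-n}(\om)}$ together with the $\nu$-contractivity of $\~\cL$ from \eqref{eq: equivariance prop of nu norm op}, the triangle inequality gives
\[
\|\psi_\om-q_\om\|_{L^1_{\nu_\om}} \leq \|\psi_{\sg^{-n}(\om)}-\tilde\psi_{\sg^{-n}(\om)}\|_{L^1_{\nu_{\sg^{-n}(\om)}}} + \|\~\cL_{\sg^{-n}(\om)}^n\tilde\psi_{\sg^{-n}(\om)}-q_\om\|_{L^1_{\nu_\om}}.
\]
Integrating against $m$, the first summand integrates to $\|\psi-\tilde\psi\|_{L^1_\nu}<\ep$ by $\sg$-invariance of $m$, while the second tends to $0$ by dominated convergence (the integrand is bounded by $\nu_{\sg^{-n}(\om)}(\tilde\psi_{\sg^{-n}(\om)})+\nu_\om(q_\om)=2$). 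Hence $\|\psi-q\|_{L^1_\nu}\leq\ep$ for every $\ep>0$, so $\psi=q$ $\nu$-a.e.

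The main technical obstacle is the approximation step: producing an element of $\BV_\Om^1(I)$ close to $\psi$ in $L^1_\nu$ that also satisfies both $\nu_\om(\tilde\psi_\om)\equiv 1$ and the $\cD^+$ conditions. This is resolved by the fiberwise renormalization above, exploiting that $\nu_\om(\tilde\psi_\om)$ is strictly positive on every fiber (Claim~\ref{clm: nu(J) positive}) and $L^1_m$-close to $1$ whenever $\tilde\psi$ is $L^1_\nu$-close to $\psi$.
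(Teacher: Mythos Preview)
Your overall strategy---approximate in $L^1_\nu$ by a BV function, apply Corollary~\ref{cor: exp conv to dens in sup norm} along the backward orbit, and pass to the limit---matches the paper's, and your reduction to the nonnegative, fiberwise-normalized case via $\~\cL_\om\psi_\om^\pm=\psi_{\sg(\om)}^\pm$ is correct and in fact cleaner than what the paper writes explicitly.

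However, the normalization step contains a genuine gap. A global bound $\|\psi-\tilde\psi\|_{L^1_\nu}<\ep/2$ gives no fiberwise control: for $\om$ in a set of small but possibly positive $m$-measure one may have $\nu_\om(\tilde\psi_\om)=0$, and Claim~\ref{clm: nu(J) positive} does not help here since it concerns $\nu_\om$-measure of intervals, not integrals of arbitrary nonnegative BV functions. Even where $\nu_\om(\tilde\psi_\om)>0$ there is no uniform lower bound, so after dividing by $\nu_\om(\tilde\psi_\om)$ the fiberwise BV norm need not remain in $L^1_m(\Om)$; consequently the renormalized function need not lie in $\BV_\Om^1(I)$ or satisfy \eqref{cond cD1}, and Corollary~\ref{cor: exp conv to dens in sup norm} cannot be invoked as written. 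A simple repair is to replace $\tilde\psi_\om$ by $\tilde\psi_\om+\eta$ for a small constant $\eta>0$ before normalizing (so that $\nu_\om(\tilde\psi_\om)\geq\eta$ uniformly), or to set $\tilde\psi_\om=\ind_\om$ on the bad fibers.

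The paper sidesteps this differently: it approximates $\psi_\om$ \emph{fiberwise} by $f_\om\in\BV(X_\om)$ with $\|f_\om\|_{L^1_{\nu_\om}}=1$ and uniform error $\dl$, then replaces $f_\om$ by $\ind_\om$ off a positive-measure set $\Om_\psi$ on which $\var(f_\om)$ is \emph{uniformly} bounded. The resulting $\tilde\psi$ has uniformly bounded variation (so $\tilde\psi\in\cD$ trivially), Corollary~\ref{cor: exp conv to dens in sup norm} applies directly, and ergodicity supplies infinitely many $n$ with $\sg^{-n}(\om)\in\Om_\psi$, along which the fiberwise approximation error is exactly $\dl$. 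Your integrated dominated-convergence step is a reasonable alternative to the paper's pointwise subsequence argument, but it only works once the input to Corollary~\ref{cor: exp conv to dens in sup norm} is legitimate.
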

\begin{proof}
	Towards a contradiction, suppose that there exists $\psi\in L^1_\nu(\Om\times I)$ with $\norm{\psi}_{L^1_\nu}=1$ such that 
	\begin{align}\label{eq: psi invariant}
		\~\cL_\om \psi_\om=\psi_{\sg(\om)}.
	\end{align} 
	By ergodicity we must have that $\norm{\psi_\om}_{L^1_{\nu_\om}}=1$ for $m$-a.e. $\om\in\Om$. Since $\BV(X_\om)$ is dense in $L^1_{\nu_\om}(X_\om)$ for each $\om\in\Om$, 
	and in particular the set 
	$\{f\in\BV(I): \norm{f}_{L^1_{\nu_\om}}=1 \}$ is dense in the set $\{f\in L^1_{\nu_\om}(I): \norm{f}_{L^1_{\nu_\om}}=1\}$, 
	for each $\dl>0$ we can find $f\in\BV_\Om(I)$ with $\norm{f_\om}_{L^1_{\nu_\om}}=1$ such that we have 
	\begin{align}\label{eq: f is L1 close to psi}
		\norm{\psi_\om-f_\om}_{L^1_{\nu_\om}}\leq \dl
	\end{align}
	for $m$-a.e. $\om\in\Om$.
	Now let $V_\psi>0$ be sufficiently large such that 
	\begin{align}\label{eq: Om_psi pos measure}
		m\lt(\Om_\psi:=\set{\om\in\Om: \var(f_\om)\leq V_\psi}\rt)>0,
	\end{align}
	and for each $\om\in\Om$ we define the function $\~\psi\in\BV_\Om(I)$ by 
	\begin{equation}\label{eq: def of tilde psi}
		\~\psi_\om:=\begin{cases}
			f_\om    & \text{ if } \om\in\Om_\psi,\\
			\ind_\om & \text{ if } \om\not\in\Om_\psi.
		\end{cases}
	\end{equation}
	Thus, using \eqref{eq: Om_psi pos measure} and \eqref{eq: def of tilde psi} we have that
	\begin{flalign}
	&\var(\~\psi_\om)\leq V_\psi<\infty, 
	\label{eq: unique dens unif bdd var} 
	&\\
	&\norm{\~\psi_\om}_{L^1_{\nu_\om}}=1 
	\label{eq: unique dens norm 1}
	\end{flalign}
	for each $\om\in\Om$ and using \eqref{eq: f is L1 close to psi} we see that
	\begin{flalign}
	&\norm{\psi_\om-\~\psi_\om}_{L^1_{\nu_\om}}\leq \dl& 
	\label{eq: unique dens L1 dense}
	\end{flalign}
	for each $\om\in\Om_\psi$.
	Now in light of \eqref{eq: unique dens unif bdd var} and \eqref{eq: unique dens norm 1}
	we see that $\~\psi\in\cD$, and thus we apply Corollary~\ref{cor: exp conv to dens in sup norm}, and more specifically equation \eqref{eq: Cor 8.6 exp ineq for large n}, to see that 
	\begin{align}
		&\norm{q_\om-\~\cL_{\sg^{-n}(\om)}^{n} \~\psi_{\sg^{-n}(\om)}}_\infty
		\leq 
		\norm{q_\om}_\infty\~\kp^n
		\label{eq: tilde psi conv to q in sup}
	\end{align}
	for all $n\in\NN$ sufficiently large, where $\~\kp\in(0,1)$.	
	Next we note that using \eqref{eq: equivariance prop of nu norm op} and \eqref{eq: psi invariant} gives that 
	\begin{align}
	\norm{\psi_\om-\~\cL_{\sg^{-n}(\om)}^n\~\psi_{\sg^{-n}(\om)}}_{L^1_{\nu_{\om}}}
	&=
	\norm{\~\cL_{\sg^{-n}(\om)}^n\psi_{\sg^{-n}(\om)}-\~\cL_{\sg^{-n}(\om)}^n\~\psi_{\sg^{-n}(\om)}}_{L^1_{\nu_\om}}
	\nonumber\\
	&\leq 
	\int_I \~\cL_{\sg^{-n}(\om)}^n\lt(\absval{\psi_{\sg^{-n}(\om)}-\~\psi_{\sg^{-n}(\om)}}\rt)\, d\nu_\om
	\nonumber\\
	&= 
	\norm{\psi_{\sg^{-n}(\om)}-\~\psi_{\sg^{-n}(\om)}}_{L^1_{\nu_{\sg^{-n}(\om)}}}
	\label{eq: L psi close to tilde L psi*}
	\end{align}
	for each $n\in\NN$. It then follows from \eqref{eq: Om_psi pos measure}, the ergodic theorem, \eqref{eq: unique dens L1 dense}, and \eqref{eq: L psi close to tilde L psi*} that for $m$-a.e. $\om\in\Om$ there are infinitely many $n\in\NN$ such that 
	\begin{align}\label{eq: L psi close to tilde L psi}
		\norm{\psi_\om-\~\cL_{\sg^{-n}(\om)}^n\~\psi_{\sg^{-n}(\om)}}_{L^1_{\nu_{\om}}}
		\leq 
		\norm{\psi_{\sg^{-n}(\om)}-\~\psi_{\sg^{-n}(\om)}}_{L^1_{\nu_{\sg^{-n}(\om)}}}
		\leq \dl.
	\end{align}
	Thus, in light of \eqref{eq: tilde psi conv to q in sup} and \eqref{eq: L psi close to tilde L psi}, for $m$-a.e. $\om\in\Om$ we have 
	\begin{align*}
	\norm{\psi_\om-q_\om}_{L^1_{\nu_\om}}
	\leq 
	\norm{q_\om-\~\cL_{\sg^{-n}(\om)}\~\psi_{\sg^{-n}(\om)}}_{L^1_{\nu_\om}} 
	+ 
	\norm{\psi_\om-\~\cL_{\sg^{-n}(\om)}^n\~\psi_{\sg^{-n}(\om)}}_{L^1_{\nu_\om}}
	\leq 2\dl
	\end{align*} 
	for infinitely many $n\in\NN$ sufficiently large.
	As this holds for each $\dl>0$, we must in fact have that $\psi_\om=q_\om$ modulo $\nu_\om$ for $m$-a.e. $\om\in\Om$, which implies that $\psi=q$ modulo $\nu$ as desired.
\end{proof}

\section{Expected Pressure}\label{sec:exp pres}
In this section we exploit the measurability produced in Proposition~\ref{prop: measurability}. 
We begin by defining the expected pressure, as in \cite{simmons_relative_2013}, and then prove an alternate characterization of the expected pressure in terms of limits. An important consequence of the proof of this characterization is that we obtain the temperedness of the quantities $\inf q_\om$ and $\norm{q_\om}_\infty$, which allows us to prove improved versions of Lemmas~\ref{lem: exp conv to mu along subseq} and \ref{lem: limit equality for nu}, thus completing the proof of Proof of Theorem~\ref{main thm: summary quasicompactness}.

Towards that end, given that $\log\lm_\om\in L^1_m(\Om)$ by Proposition~\ref{prop: measurability}, we define the expected pressure by 
\begin{align}\label{eq: expected press equalities}
\cE P(\phi):=\int_\Om \log\lm_\om \, dm(\om)=\lim_{n\to \infty}\frac{1}{n}\log\lm_\om^n=\lim_{n\to\infty}\frac{1}{n}\log\lm_{\sg^{-n}(\om)}^n,
\end{align}
where the second and third equalities hold $m$-a.e. and follow from Birkhoff's Ergodic Theorem. The following lemma provides alternate characterizations of the expected pressure.
\begin{lemma}\label{lem: conv of pressure limits}
	For $m$-a.e. $\om\in\Om$ we have that 
	\begin{align}\label{eq: backward pressure limit}
	\lim_{n\to\infty}\norm{\frac{1}{n}\log\cL_{\sg^{-n}(\om)}^n\ind_{\sg^{-n}(\om)} -\frac{1}{n}\log\lm_{\sg^{-n}(\om)}^n}_\infty=0
	\end{align}
	and 
	\begin{align}\label{eq: forward pressure limit}
	\lim_{n\to\infty}\norm{\frac{1}{n}\log\cL_{\om}^n\ind_{\om} -\frac{1}{n}\log\lm_{\om}^n}_\infty=0.
	\end{align}
	Furthermore, we have that 
	\begin{align*}
		\lim_{n\to \infty}\frac{1}{n}\log \inf q_{\sg^n(\om)}
		=
		\lim_{n\to \infty}\frac{1}{n}\log \norm{q_{\sg^n(\om)}}_\infty
		=0.
	\end{align*}
\end{lemma}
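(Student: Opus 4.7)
I would prove the three assertions in the order (1), (2), (3).

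For (1), apply Corollary~\ref{cor: exp conv to dens in sup norm} with $p=-n$ and $f=\ind$ to obtain $\norm{\~\cL_{\sg^{-n}(\om)}^n\ind_{\sg^{-n}(\om)}-q_\om}_\infty\leq A_\ind(\om)\kp^n$. By Proposition~\ref{prop: upper and lower bound for density} there exist measurable $u,U:\Om\to(0,\infty)$ with $u(\om)\leq q_\om\leq U(\om)$ $m$-a.e., so for $n$ larger than an $\om$-dependent threshold, $\~\cL_{\sg^{-n}(\om)}^n\ind_{\sg^{-n}(\om)}(x)\in[u(\om)/2,\,2U(\om)]$ uniformly in $x$. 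Using $\log\~\cL_{\sg^{-n}(\om)}^n\ind_{\sg^{-n}(\om)}=\log\cL_{\sg^{-n}(\om)}^n\ind_{\sg^{-n}(\om)}-\log\lm_{\sg^{-n}(\om)}^n$ and taking logarithms then dividing by $n$ gives the sup-norm bound $\max(|\log(u(\om)/2)|,|\log 2U(\om)|)/n\to 0$, which is \eqref{eq: backward pressure limit}.

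For (2), apply Kingman's subadditive ergodic theorem to the subadditive cocycle $g_n(\om):=\log\norm{\cL_\om^n\ind_\om}_\infty$, whose generator lies in $L^1_m(\Om)$ by Lemma~\ref{lem: log integrability}, to obtain $g_n/n\to\Lambda^+$ $m$-a.e.\ for a constant $\Lambda^+$ (ergodicity of $\sg$). Since $g_n\circ\sg^{-n}$ has the same $m$-distribution as $g_n$ and, by (1) together with $\tfrac{1}{n}\log\lm_{\sg^{-n}(\om)}^n\to\cE P(\phi)$ (Birkhoff applied to $\log\lm_\om\in L^1_m(\Om)$), it converges $m$-a.e.\ to the constant $\cE P(\phi)$, I conclude $\Lambda^+=\cE P(\phi)$. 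The mirror argument with the superadditive cocycle $\log\inf\cL_\om^n\ind_\om$ gives the matching lower limit, and sandwiching $\inf\cL_\om^n\ind_\om\leq\cL_\om^n\ind_\om(x)\leq \norm{\cL_\om^n\ind_\om}_\infty$ yields \eqref{eq: forward pressure limit}.

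For (3), I would exploit the cone geometry. On the large-measure intersection $\Om_G\cap\Om'$ (Lemma~\ref{lem: constr of Om_G}), \eqref{eq: var q leq C_ep} gives $\var(q_\om)\leq B_*$ so that $q_\om\in\sC_{\om,a_*}$, and Lemma~\ref{lem: cone contraction for good om} together with the common normalisation $\nu_{\sg^{R_*}(\om)}(\~\cL_\om^{R_*}\ind_\om)=\nu_{\sg^{R_*}(\om)}(q_{\sg^{R_*}(\om)})=1$ delivers $e^{-\Dl_{a_*}}q_{\sg^{R_*}(\om)}\leq \~\cL_\om^{R_*}\ind_\om\leq e^{\Dl_{a_*}}q_{\sg^{R_*}(\om)}$. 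Consequently $\log\norm{q_{\sg^{R_*}(\om)}}_\infty$ and $\log\inf q_{\sg^{R_*}(\om)}$ differ from $\log\norm{\cL_\om^{R_*}\ind_\om}_\infty-\log\lm_\om^{R_*}$ and $\log\inf\cL_\om^{R_*}\ind_\om-\log\lm_\om^{R_*}$ respectively by at most $\Dl_{a_*}$, and the latter lie in $L^1_m(\Om)$ by Lemma~\ref{lem: log integrability} and Proposition~\ref{prop: measurability}. To extend this $L^1$-control off the good set, I would iterate the cocycle inequalities $\log\norm{q_{\sg(\om)}}_\infty\leq\log\norm{q_\om}_\infty+\Psi(\om)$ and $\log\inf q_{\sg(\om)}\geq\log\inf q_\om-\Phi(\om)$, with $\Psi,\Phi\geq 0$ in $L^1_m(\Om)$ obtained from $\cL_\om q_\om=\lm_\om q_{\sg(\om)}$ and Lemma~\ref{lem: log integrability}, along first-hitting orbits to the good set. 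This last step is the main obstacle: the cone bound is clean on the good set, but controlling the Birkhoff cost of $\Psi,\Phi$ along hitting orbits requires more than bare ergodicity, so the argument must exploit the specific cocycle structure of $\Psi,\Phi$ to obtain the $L^1$-integrability directly rather than through a generic integrability-of-hitting-times estimate.
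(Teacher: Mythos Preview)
Your argument for \eqref{eq: backward pressure limit} matches the paper's exactly.

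For \eqref{eq: forward pressure limit} you take a genuinely different route. The paper proves the integrability claim \emph{first} and then uses it: once $\log\inf q_\om,\log\norm{q_\om}_\infty\in L^1_m(\Om)$, Birkhoff gives $\tfrac1n\log\inf q_{\sg^n(\om)}\to 0$ and $\tfrac1n\log\norm{q_{\sg^n(\om)}}_\infty\to 0$, and then the sandwich
\[
\inf q_{\sg^n(\om)} - A_\ind(\om)\kp^n \le \~\cL_\om^n\ind_\om \le \norm{q_{\sg^n(\om)}}_\infty + A_\ind(\om)\kp^n
\]
from Corollary~\ref{cor: exp conv to dens in sup norm} (with $p=0$) yields the forward limit. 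Your Kingman-plus-equidistribution argument is neat and self-contained: since $\tfrac1n g_n$ and $\tfrac1n g_n\circ\sg^{-n}$ have the same law and converge a.e.\ to the constants $\Lambda^+$ and $\cE P(\phi)$ respectively, the constants agree. This avoids any dependence on the integrability claim, so you may keep the order (1)--(2)--(3) whereas the paper must do (1)--(3)--(2).

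For the integrability claim your approach is unnecessarily elaborate, and the hitting-time obstacle you flag is real for that route. The paper bypasses it entirely. From Corollary~\ref{cor: exp conv to dens in sup norm} with $p=-n$ one has the pointwise identifications
\[
\inf q_\om=\lim_{n\to\infty}\frac{\inf\cL_{\sg^{-n}(\om)}^n\ind_{\sg^{-n}(\om)}}{\lm_{\sg^{-n}(\om)}^n},\qquad
\norm{q_\om}_\infty=\lim_{n\to\infty}\frac{\norm{\cL_{\sg^{-n}(\om)}^n\ind_{\sg^{-n}(\om)}}_\infty}{\lm_{\sg^{-n}(\om)}^n},
\]
and the paper deduces $L^1$-membership directly from the fact that, for each fixed $n$, $\log\inf\cL_\om^n\ind_\om$, $\log\norm{\cL_\om^n\ind_\om}_\infty$ (Lemma~\ref{lem: log integrability}(i),(iii)) and $\log\lm_\om^n$ (Proposition~\ref{prop: measurability}) are all in $L^1_m(\Om)$. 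No cone geometry on $\Om_G\cap\Om'$ and no hitting-time control are needed; the limit representation of $q_\om$ already encodes the relevant bounds. So drop the cocycle-inequality-along-orbits plan and argue from the limit identities instead.
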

\begin{proof}
	Towards proving \eqref{eq: backward pressure limit}, Corollary~\ref{cor: exp conv to dens in sup norm} (with $p=-n$ and $f_{\sg^{-n}(\om)}=\ind_{\sg^{-n}(\om)}$) allows us to write
	\begin{align*}
	\inf q_\om - A_\ind(\om)\kp^n
	\leq 
	\frac{\cL_{\sg^{-n}(\om)}^n\ind_{\sg^{-n}(\om)}}{\lm_{\sg^{-n}(\om)}^n}
	\leq 
	\norm{q_\om}_\infty +A_\ind(\om)\kp^n
	\end{align*}	
	for each $n\in\NN$.
	Thus, for $n$ sufficiently large, we have 
	\begin{align*}
	\frac{1}{n}\log\lt(\inf q_\om - A_\ind(\om)\kp^n\rt)
	\leq 
	\frac{1}{n}\log\frac{\cL_{\sg^{-n}(\om)}^n\ind_{\sg^{-n}(\om)}}{\lm_{\sg^{-n}(\om)}^n}
	\leq 
	\frac{1}{n}\log\lt(\norm{q_\om}_\infty + A_\ind(\om)\kp^n\rt).
	\end{align*}
	Since Proposition~\ref{prop: upper and lower bound for density} implies that $\log\inf q_\om>-\infty$ and $\log\norm{q_\om}_\infty<\infty$, 
	letting $n\to\infty$ finishes the first claim.
	
	Before continuing on to prove the second claim, we first note that 
	\begin{comment}
	Corollary~\ref{cor: exp conv to dens in sup norm} (with $p=-n$) implies
	\begin{align*}
	\lim_{n\to \infty}\frac{\inf\cL_{\sg^{-n}(\om)}^n\ind_{\sg^{-n}(\om)}}{\lm_{\sg^{-n}(\om)}^n}
	=
	\inf q_\om
	\leq 
	\norm{q_\om}_\infty
	=
	\lim_{n\to \infty}\frac{\norm{\cL_{\sg^{-n}(\om)}^n\ind_{\sg^{-n}(\om)}}_\infty}{\lm_{\sg^{-n}(\om)}^n}.
	\end{align*}
	
	content...
	\end{comment}
	Corollary~\ref{cor: exp conv to dens in sup norm} implies that 
	\begin{align*}
		\inf q_{\sg^n(\om)}
		\geq\inf\~\cL_\om^n\ind_\om - A_\ind(\om)\kp^n,
	\end{align*}
	for all $n\in\NN$ sufficiently large, and thus, 
	\begin{align}\label{eq: q temp 1}
		\frac{1}{n}\log\inf q_{\sg^n(\om)}\geq \frac{1}{n}\log\lt(\inf\~\cL_\om^n\ind_\om-A_\ind(\om)\kp^n\rt).
	\end{align}
	It follows from \eqref{eq: backward pressure limit} that $\inf\~\cL_\om\ind_\om$ is tempered, and thus for each $0<\dl<\kp$ there exists a $m$-a.e. finite function $K_\dl:\Om\to(0,\infty)$ such that 
	\begin{align}\label{eq: q temp 2}
		\inf \~\cL_\om^n\ind_\om\geq K_\dl(\om) e^{-\dl n}
		> A_\ind(\om)\kp^n
	\end{align}
	for each $n\in\NN$. 
	Inserting \eqref{eq: q temp 2} into \eqref{eq: q temp 1} and taking a limit as $n\to\infty$ gives 
	\begin{align*}
		\lim_{n\to\infty}\frac{1}{n}\log\inf q_{\sg^n(\om)}
		&\geq
		\lim_{n\to\infty}\frac{1}{n}\log\lt(\inf\~\cL_\om^n\ind_\om-A_\ind(\om)\kp^n\rt)
		\\
		&\geq
		\lim_{n\to\infty}\frac{1}{n}\log\lt(K_\dl(\om)e^{-\dl n}-A_\ind(\om)\kp^n\rt)
		\\
		&=
		\lim_{n\to\infty}\frac{1}{n}\log\lt(K_\dl(\om)e^{-\dl n}\rt)
		=
		-\dl.
	\end{align*}
	As this holds for every $\dl>0$ we must in fact have that 
	\begin{comment}
	Taken in conjunction with Lemma~\ref{lem: log integrability} (items \eqref{lem: log integrability item i} and \eqref{lem: log integrability item iii}) and Proposition~\ref{prop: measurability} together this implies that 
	\begin{align*}%\label{eq: inf and sup q in L1}
	\log\inf q_\om, \log\norm{q_\om}_\infty\in L^1_m(\Om),\flag{This does not seem to be true} 
	\end{align*} 
	which proves the third claim.
	The Birkhoff Ergodic Theorem then implies that 
	\end{comment}
	\begin{align}\label{eq: inf q and sup q in L1 plus BET}
	\lim_{n\to \infty}\frac{1}{n}\log \inf q_{\sg^n(\om)}
	=
	\lim_{n\to \infty}\frac{1}{n}\log \norm{q_{\sg^n(\om)}}_\infty
	=0,
	\end{align}
	where we recall that the second equality follows from Lemma~\ref{lem: BV norm q om growth bounds}.
	Now to prove the second claim, we again note that using Corollary~\ref{cor: exp conv to dens in sup norm} gives that for each $n\in\NN$ we have 
	\begin{align*}
	\inf q_{\sg^n(\om)} - A_\ind(\om)\kp^n
	\leq 
	\frac{\cL_\om^n\ind_\om}{\lm_\om^n}
	\leq 
	\norm{q_{\sg^n(\om)}}_\infty +A_\ind(\om)\kp^n,
	\end{align*}	
	and hence, for $n$ sufficiently large, we have  
	\begin{align}\label{eq: forward pressure limit eq 1}
	\frac{1}{n}\log\lt(\inf q_{\sg^n(\om)} - A_\ind(\om)\kp^n\rt)
	\leq 
	\frac{1}{n}\log\frac{\cL_\om^n\ind_\om}{\lm_\om^n}
	\leq 
	\frac{1}{n}\log\lt(\norm{q_{\sg^n(\om)}}_\infty + A_\ind(\om)\kp^n\rt).
	\end{align}	
	In view of Proposition~\ref{prop: upper and lower bound for density} and \eqref{eq: inf q and sup q in L1 plus BET},
	we see that letting $n\to\infty$ in \eqref{eq: forward pressure limit eq 1} finishes the proof of the second claim and thus the lemma. 
\end{proof}
The following corollary ensures the measurability of the coefficients $A_f$ and $\hat A_{f}$ which appeared respectively in Corollary~\ref{cor: exp conv to dens in sup norm} and Lemma~\ref{lem: exp conv to mu along subseq}.
\begin{corollary}\label{rem: B_f measurable}
	For each $f\in\BV_\Om^l(I)$ the maps $\om\mapsto A_f(\om)$ and $\om\mapsto \hat A_f(\om)$ are measurable and $f/\nu(f),\, fq,\, f/q\in\cD$.
\end{corollary}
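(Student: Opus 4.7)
The plan is to first verify the three memberships in $\cD$, since the measurability of $A_f$ and $\hat{A}_f$ then reduces to tracing the explicit formulas from the proofs of Corollary~\ref{cor: exp conv to dens in sup norm} and Lemma~\ref{lem: exp conv to mu along subseq} and checking that every ingredient is measurable. Writing $f=f^+-f^-$ reduces everything to the non-negative case, which I assume throughout.

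First I would record that $q\in\cD$ and $1/q\in\cD$. Indeed, Lemma~\ref{lem: conv of pressure limits} provides $\log\inf q_\om,\,\log\norm{q_\om}_\infty\in L^1_m(\Om)$, so Birkhoff yields
\begin{align*}
\lim_{n\to\pm\infty}|n|^{-1}\log\inf q_{\sg^n(\om)}=\lim_{n\to\pm\infty}|n|^{-1}\log\norm{q_{\sg^n(\om)}}_\infty=0
\end{align*}
for $m$-a.e.\ $\om$. Together with $\var(q_{\sg^n(\om)})\le 2\norm{q_{\sg^n(\om)}}_\infty$ and $\nu_{\sg^n(\om)}(q_{\sg^n(\om)})=1$, this delivers \eqref{cond cD1} and \eqref{cond cD2} for $q$ with a measurable envelope $V_{q,\ep}(\om)$ formed by taking the finite sup over the initial window in which subexponential control has not yet kicked in. The symmetric argument applied to $\log(\inf q_\om)^{-1}$ and $\log\norm{q_\om}^{-1}_\infty$ gives $1/q\in\cD$. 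Analogously, $f\in\BV_\Om^1(I)$ yields $\log^+\norm{f_\om}_\BV\in L^1_m(\Om)$, and Birkhoff produces a measurable $V_{f,\ep}$ witnessing \eqref{cond cD1} for $f$.

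Next I would invoke Lemma~\ref{lem: props of cD}: parts (b) and (c), applied with $h=q$ and $h=1/q$ along the sequence $n_k=k$ ranging over $\ZZ$, deliver $fq\in\cD$ and $f/q\in\cD$; the hypothesis $|n_k|^{-1}\log\inf h_{\sg^{n_k}(\om)}\to 0$ is exactly what was established above for $q$ and $1/q$. For $f/\nu(f)\in\cD$, I would first verify $f\in\cD$ directly: $\nu_\om(f_\om)\le\norm{f_\om}_\BV$ plus $L^1$-integrability give subexponential upper tempering via Birkhoff, and ergodicity together with the fact that $\{\om:\nu_\om(f_\om)\ge\dl\}$ has positive $m$-measure for some $\dl>0$ whenever $f\not\equiv 0$ supplies the lower tempering. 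Lemma~\ref{lem: props of cD}(a) then gives $f/\nu(f)\in\cD$.

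The measurability of $A_f$ and $\hat{A}_f$ is bookkeeping. From the proof of Corollary~\ref{cor: exp conv to dens in sup norm},
\begin{align*}
A_f(\om)=2C(\om,\dl)\,L^{(3N_2(\om))}_{\sg^{-N_2(\om)}(\om)}\,\~\kp^{-N_2(\om)},
\end{align*}
with $N_2(\om)=\max\{N_0(\om),N_1(\om),R_*/\ep,\ep^{-1}\log V(\om)\}$ and $V(\om)=\max\{C(\om,\ep),V_{f,\ep}(\om)\}$. Each piece is measurable: $V_{f,\ep}$ by the construction above, $C_\ep$ from Proposition~\ref{lem: LY ineq 2} via Remark~\ref{rem: N_1 measurable}, $C(\om,\dl)$ from the measurable data used in the proof of Lemma~\ref{lem: BV norm q om growth bounds}, the $L$-constants from Lemma~\ref{lem: buzzi LY1} and Proposition~\ref{prop: measurability}, and $N_0,N_1$ from the proofs of Lemmas~\ref{lem: proportion of bad blocks} and \ref{lem: exp conv in C+ cone} (measurability requiring that $V$ be measurable, which it is). For $\hat{A}_f(\om)=(A_{\ol{f^+q}}(\om)+A_{\ol{f^-q}}(\om))\norm{q_\om}_\BV u_*^{-1}$, measurability of $\norm{q_\om}_\BV$ follows from Proposition~\ref{prop: measurability}, and $u_*$ is deterministic. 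The principal obstacle is precisely this bookkeeping: the ingredients are distributed across Sections~\ref{sec:LY}--\ref{sec:randommeasures} and no single verification is analytically subtle, but one has to certify that no implicitly non-measurable selection was made during the earlier constructions.
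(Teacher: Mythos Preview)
Your approach mirrors the paper's: obtain tempering for $q$ from the log--integrability in Lemma~\ref{lem: conv of pressure limits} (and more directly from Lemma~\ref{lem: BV norm q om growth bounds}), feed this into Lemma~\ref{lem: props of cD} to handle $fq$ and $f/q$, use part~(a) for $f/\nu(f)$, and then trace measurability of $A_f$ and $\hat A_f$ through the explicit formulas in the proof of Corollary~\ref{cor: exp conv to dens in sup norm} and \eqref{eq: def of hat A_f}, using Remark~\ref{rem: N_1 measurable}. The bookkeeping for measurability is correct and matches the paper.

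There are two local gaps. First, the inequality $\var(q_{\sg^n(\om)})\le 2\norm{q_{\sg^n(\om)}}_\infty$ is false for general BV functions (a step function with many jumps has arbitrarily large variation but bounded sup norm); you should invoke Lemma~\ref{lem: BV norm q om growth bounds} directly, which already gives the subexponential control on $\norm{q_{\sg^n(\om)}}_\BV$ and hence \eqref{cond cD1} for $q$. Second, your argument for \eqref{cond cD2} for $f$ via recurrence to $\{\om:\nu_\om(f_\om)\ge\dl\}$ only produces the lower bound along a positive--density subsequence of $n$; between visits $\nu_{\sg^n(\om)}(f_{\sg^n(\om)})$ could vanish, so the inequality $\nu_{\sg^n(\om)}(|f_{\sg^n(\om)}|)\ge V^{-1}e^{-\ep|n|}$ is not established for all $n$. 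The paper's own justification here is also terse---deducing \eqref{cond cD2} from $\nu_\om(f_\om)\in L^1_m(\Om)$ and the resulting limit $\tfrac{1}{|n|}\nu_{\sg^n(\om)}(f_{\sg^n(\om)})\to 0$ gives an upper rather than a lower bound---so this step is handled informally in both treatments; the role it plays downstream is precisely to supply \eqref{cond cD1} for $\ol f=f/\nu(f)$.
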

\begin{proof}	
	\begin{comment}
	As mentioned earlier in Remark~\ref{rem: remark on def of cD}, \eqref{cond cD1} is satisfied for all $f\in\BV_\Om^l(I)$ since $\norm{f_\om}_{\BV}\in L^1_m(\Om)$ implies that 
	$$
	\lim_{n\to\pm\infty} \frac{1}{|n|} \norm{f_{\sg^n(\om)}}_\BV=0. 
	$$
	Furthermore, Proposition~\ref{prop: measurability} ensures that each $f\in\BV_\Om^1(I)$ satisfies \eqref{cond cD2} since $\nu_\om(f_\om)\in L^1_m(\Om)$ implies that
	\begin{align*}
	\lim_{n\to\pm\infty} \frac{1}{|n|} \nu_{\sg^n(\om)}(f_{\sg^n(\om)})=0. 		
	\end{align*} 
	
	content...
	\end{comment}
	From Remark~\ref{rem: remark on def of cD}, we see that $\BV_\Om^l(I)\sub\cD$ and using \eqref{eq: inf q and sup q in L1 plus BET}, Proposition~\ref{prop: upper and lower bound for density}, and Lemma~\ref{lem: props of cD}, we have that $f/\nu(f),\, fq,\, f/q\in\cD$ for each $f\in\BV_\Om^l(I)$. 

	Now to see the claim that the random coefficients $A_f$ and $\hat A_f$ are measurable we first note that the map $\om\mapsto V_{f,\ep}(\om)$ from Definition~\ref{defn: definition of cD}, which depends on $\norm{f_\om}_\BV$ and $\norm{f_\om}_{L^1_{\nu_{\om}}(I)}$, can be chosen measurably for each $f\in\BV_\Om^l(I)$. Thus the map $\om\mapsto V(\om)$, defined in \eqref{eq: defn of V(om)}, must also be measurable. In light of Remark~\ref{rem: N_1 measurable}, we see that the map $\om\mapsto N_3(\om)$, defined by \eqref{eq: def of N_3}, is measurable, which then implies that the map $\om\mapsto A_f(\om)$ is measurable for each $f\in\BV_\Om^l(I)$. 
	Furthermore, since $N_3(\om)$ depends on $\norm{f_\om}_\BV$ and $\norm{f_\om}_{L^1_{\nu_\om}(I)}$ (via $V_{f,\ep}(\om)$), so does $A_f(\om)$. 
	The measurability of $\hat A_f$ then follows from \eqref{eq: def of hat A_f}.
\end{proof}
\begin{comment}%Edit: This remark is no longer necessary as it is redundantly stating something obvious
\begin{remark}
	Using the discussion at the beginning of the proof of Corollary~\ref{rem: B_f measurable}, we note that for $f\in\BV_\Om(I)$ it suffices that $\log\norm{f_\om}_\BV, \log\nu_{\om}(f_\om)\in L^1_m(\Om)$  to ensure that conditions \eqref{cond cD1} and \eqref{cond cD2} hold.
\end{remark}

content...
\end{comment}
Now in light of the proof of Lemma~\ref{lem: conv of pressure limits}, in particular \eqref{eq: inf q and sup q in L1 plus BET}, we can give improved versions of Lemmas~\ref{lem: exp conv to mu along subseq} and \ref{lem: limit equality for nu} which no longer depend upon a particular sequence of integers $n_k$.
\begin{theorem}\label{thm: exp convergence of tr op}
For each $f\in\BV_\Om^l(I)$ and each $\vkp\in(\kp,1)$ there exist measurable functions $B_f, C_f:\Om\to(0,\infty)$ such that for $m$-a.e. $\om\in\Om$, all $n\in\NN$, and all $|p|\leq n$ we have 
	\begin{align*}
		\norm{\~\cL_{\sg^{p}(\om)}^n f_{\sg^{p}(\om)} -\nu_{\sg^{p}(\om)}(f_{\sg^{p}(\om)})q_{\sg^{p+n}(\om)}}_\infty 
		\leq 
		B_f(\om)\norm{f_{\sg^{p}(\om)}}_\BV \kp^n,
	\end{align*}
	and
	\begin{align*}
		\norm{\hat \cL_{\sg^{p}(\om)}^n f_{\sg^{p}(\om)} -\mu_{\sg^{p}(\om)}(f_{\sg^{p}(\om)})\ind_{\sg^{p+n}(\om)}}_\infty 
		\leq 
		C_f(\om)\norm{f_{\sg^{p}(\om)}}_\BV \vkp^n,
	\end{align*}
	where $\kp<1$ is as in Corollary~\ref{cor: exp conv to dens in sup norm}.
\end{theorem}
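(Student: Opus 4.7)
The plan is to deduce both estimates from Corollary~\ref{cor: exp conv to dens in sup norm}, leveraging the new ingredient that $\log\inf q_\om$ and $\log\|q_\om\|_\infty$ are $L^1_m$-integrable (Lemma~\ref{lem: conv of pressure limits}). This integrability replaces the role played by the subsequence $(n_k)$ in Lemma~\ref{lem: exp conv to mu along subseq}: by Birkhoff's ergodic theorem, $\frac{1}{|k|}\log\inf q_{\sg^k(\om)}\to 0$, so $(\inf q_{\sg^k(\om)})^{\pm 1}$ grows at most subexponentially along $\sg$-orbits, and $\|q_{\sg^k(\om)}\|_\BV$ does by Lemma~\ref{lem: BV norm q om growth bounds}. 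This removes the subsequence restriction at the cost of slightly weakening the rate from $\kp$ to any $\vkp\in(\kp,1)$ in the second estimate.

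For the first estimate, I split $f=f^+-f^-$ (noting $\|f^\pm_\om\|_\BV\leq\|f_\om\|_\BV$, so $f^\pm\in\BV^1_\Om(I)$) and set $\bar f^\pm_\om:=f^\pm_\om/\nu_\om(f^\pm_\om)$ (the measurable set where $\nu_\om(f^\pm_\om)=0$ is handled trivially, as both sides of the target inequality vanish there). By Corollary~\ref{rem: B_f measurable} together with Lemma~\ref{lem: props of cD}\eqref{lem: props of cD item c}, $\bar f^\pm\in\cD$, so condition \eqref{cond cD1} holds for $(\bar f^\pm,\sg^p(\om),n)$ for $m$-a.e.\ $\om$ and all $n\in\NN$. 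Corollary~\ref{cor: exp conv to dens in sup norm} then yields
\begin{align*}
\bigl\|\tilde\cL^n_{\sg^p(\om)}\bar f^\pm_{\sg^p(\om)} - q_{\sg^{p+n}(\om)}\bigr\|_\infty \leq A_{\bar f^\pm}(\om)\,\|\bar f^\pm_{\sg^p(\om)}\|_\BV\,\kp^n.
\end{align*}
Multiplying through by $\nu_{\sg^p(\om)}(f^\pm_{\sg^p(\om)})$, combining the two bounds via the triangle inequality, and setting $B_f(\om):=A_{\bar f^+}(\om)+A_{\bar f^-}(\om)$---which is measurable by Corollary~\ref{rem: B_f measurable}---delivers the first inequality.

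For the second estimate, I use the identity $\hat\cL^n_{\sg^p(\om)}f_{\sg^p(\om)}=q_{\sg^{p+n}(\om)}^{-1}\,\tilde\cL^n_{\sg^p(\om)}(f_{\sg^p(\om)}q_{\sg^p(\om)})$ together with $\mu_\om(h_\om)=\nu_\om(h_\om q_\om)$ to rewrite the error as
\begin{align*}
\hat\cL^n_{\sg^p(\om)}f_{\sg^p(\om)} - \mu_{\sg^p(\om)}(f_{\sg^p(\om)})\ind_{\sg^{p+n}(\om)}
= \frac{1}{q_{\sg^{p+n}(\om)}}\Bigl[\tilde\cL^n_{\sg^p(\om)}(f_{\sg^p(\om)}q_{\sg^p(\om)}) - \nu_{\sg^p(\om)}(f_{\sg^p(\om)}q_{\sg^p(\om)})\,q_{\sg^{p+n}(\om)}\Bigr].
\end{align*}
Applying the same argument as in the first estimate to each of $f^\pm q\geq 0$---valid because $f^\pm q\in\cD$ by Corollary~\ref{rem: B_f measurable}---the bracketed quantity is controlled by a measurable multiple of $\|f_{\sg^p(\om)}q_{\sg^p(\om)}\|_\BV\,\kp^n\leq 2\,\|f_{\sg^p(\om)}\|_\BV\,\|q_{\sg^p(\om)}\|_\BV\,\kp^n$.

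The decisive step is then to absorb the factors $\|q_{\sg^p(\om)}\|_\BV$ and $(\inf q_{\sg^{p+n}(\om)})^{-1}$ into the exponential rate. For any $\delta>0$, Lemma~\ref{lem: BV norm q om growth bounds} furnishes a measurable $C(\om,\delta)$ with $\|q_{\sg^p(\om)}\|_\BV\leq C(\om,\delta)e^{\delta|p|}\leq C(\om,\delta)e^{\delta n}$; and for any $\ep>0$, Lemma~\ref{lem: conv of pressure limits} combined with the Birkhoff ergodic theorem produces a measurable $D(\om,\ep)>0$ with $\inf q_{\sg^k(\om)}\geq D(\om,\ep)e^{-\ep|k|}$ for every $k\in\ZZ$, so that $(\inf q_{\sg^{p+n}(\om)})^{-1}\leq D(\om,\ep)^{-1}e^{2\ep n}$ since $|p|\leq n$. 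Choosing $\delta,\ep>0$ so small that $\kp\,e^{\delta+2\ep}\leq\vkp$ and collecting the remaining measurable constants into $C_f(\om)$ finishes the proof. The main technical point is precisely this subexponential-absorption step: it is what forces the weaker rate $\vkp>\kp$ rather than $\kp$ itself.
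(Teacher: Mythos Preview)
Your proposal is correct and follows essentially the same route as the paper: normalize $f^\pm$ and apply Corollary~\ref{cor: exp conv to dens in sup norm} to get the first estimate with $B_f(\om)=A_{\overline{f^+}}(\om)+A_{\overline{f^-}}(\om)$, then apply the same to $f^\pm q$ and divide by $q_{\sg^{p+n}(\om)}$ for the second, absorbing the subexponential factors $\|q_{\sg^p(\om)}\|_\BV$ and $(\inf q_{\sg^{p+n}(\om)})^{-1}$ into the rate via the $L^1_m$-integrability of $\log\inf q_\om$ and $\log\|q_\om\|_\infty$. The only cosmetic difference is that the paper bundles the two subexponential factors into the single ratio $\|q_{\sg^n(\om)}\|_\BV/\inf q_{\sg^n(\om)}$ and bounds it by a single $D(\om,\dl)e^{\dl n}$, whereas you bound the numerator and denominator separately; both arrive at $C_f(\om)=D(\om,\dl)\bigl(A_{\overline{f^+q}}(\om)+A_{\overline{f^-q}}(\om)\bigr)$ up to constants.
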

\begin{proof}
	Suppose first that $f\in\BV_\Om^l(I)$ with $f\geq 0$. Since Lemma~\ref{lem: props of cD} item \eqref{lem: props of cD item c} ensures that $\ol f=(f_\om/\nu_\om(f_\om))_{\om\in\Om}\in\cD^+$, we apply Corollary~\ref{cor: exp conv to dens in sup norm} to obtain
	\begin{align}
	&\norm{\~\cL_{\sg^{p}(\om)}^n \lt(\frac{f_{\sg^{p}(\om)}}{\nu_{\sg^{p}(\om)}(f_{\sg^{p}(\om))})}\rt) -q_{\sg^{n+p}(\om)}}_\infty
	\leq 
	A_{\ol f}(\om)\norm{\frac{f_{\sg^{p}(\om)}}{\nu_{\sg^{p}(\om)}(f_{\sg^{p}(\om)})}}_\BV\kp^{n}
	\label{eq: est0 in cor exp conv to 1*}		 
	\end{align}
	for each $n\in\NN$, where $A_{\ol f}(\om)$ is as in Corollary~\ref{cor: exp conv to dens in sup norm}.
	Multiplying both sides of \eqref{eq: est0 in cor exp conv to 1*} by $\nu_{\sg^{p}(\om)}(f_{\sg^{p}(\om)})$ yields 
	\begin{align}
		&\norm{\~\cL_{\sg^{p}(\om)}^n \lt(f_{\sg^{p}(\om)}\rt) -\nu_{\sg^{p}(\om)}(f_{\sg^{p}(\om)})q_{\sg^{n+p}(\om)}}_\infty
		\leq 
		A_{\ol f}(\om)\norm{f_{\sg^{p}(\om)}}_\BV\kp^{n}.
		\label{eq: est0 in cor exp conv to 1}
	\end{align}
	In view of Corollary~\ref{rem: B_f measurable}, applying \eqref{eq: est0 in cor exp conv to 1} with the function $fq\in\cD^+$ gives 
	\begin{align}\label{eq: est1 in cor exp conv to 1}
	\norm{\~\cL_{\sg^{p}(\om)}^n (f_{\sg^{p}(\om)}q_{\sg^p(\om)}) -\mu_{\sg^{p}(\om)}(f_{\sg^{p}(\om)})q_{\sg^{n+p}(\om)}}_\infty
	\leq A_{\ol{fq}}(\om)\norm{f_{\sg^{p}(\om)}}_\BV\norm{q_{\sg^{p}(\om)}}_\BV\kp^{n}		
	\end{align}
	for each $n\in\NN$.
	It follows from \eqref{eq: est1 in cor exp conv to 1} that
	\begin{align}
	&\norm{\hat \cL_{\sg^{p}(\om)}^n f_{\sg^{p}(\om)} -\mu_{\sg^{p}(\om)}(f_{\sg^{p}(\om)})\ind_{\sg^{p+n}(\om)}}_\infty
	\nonumber\\
	&\qquad\qquad
	\leq
	\frac{1}{\inf q_{\sg^{p+n}(\om)}}\cdot 
	\norm{\~\cL_{\sg^{p}(\om)}^n (f_{\sg^{p}(\om)}q_{\sg^p(\om)}) -\mu_{\sg^{p}(\om)}(f_{\sg^{p}(\om)})q_{\sg^{p+n}(\om)}}_\infty 
	\nonumber\\
	&\qquad\qquad
	\leq 
	A_{\ol{fq}}(\om)\norm{f_{\sg^{p}(\om)}}_\BV\norm{q_{\sg^{p}(\om)}}_\BV\frac{1}{\inf q_{\sg^{p+n}(\om)}} \kp^n
	\label{eq: est2 in cor exp conv to 1*}
	\end{align}
	for each $n\in\NN$. Now in light of \eqref{eq: inf q and sup q in L1 plus BET}, we have that 
	\begin{align*}
		\lim_{n\to\infty}\frac{1}{n}\log\frac{\norm{q_{\sg^n(\om)}}_\BV}{\inf q_{\sg^n(\om)}}=0,
	\end{align*}
	and thus, for each $\dl>0$, there exists a measurable constant $D(\om,\dl)>0$ such that
	\begin{align}\label{eq: est2 in cor exp conv to 1**}
		\frac{\norm{q_{\sg^n(\om)}}_\BV}{\inf q_{\sg^n(\om)}}\leq D(\om,\dl)e^{\dl n}
	\end{align} 
	for each $n\in\NN$. Inserting \eqref{eq: est2 in cor exp conv to 1**} into \eqref{eq: est2 in cor exp conv to 1*}, for $\dl>0$ sufficiently small and each $n\in\NN$ we have 
	\begin{align}
	&\norm{\hat \cL_{\sg^{p}(\om)}^n f_{\sg^{p}(\om)} -\mu_{\sg^{p}(\om)}(f_{\sg^{p}(\om)})\ind_{\sg^{p+n}(\om)}}_\infty
	\leq 
	A_{\ol{fq}}(\om)D(\om,\dl)\norm{f_{\sg^{p}(\om)}}_\BV\vkp^n
	\label{eq: est2 in cor exp conv to 1}
	\end{align}
	where $0<\kp<\vkp:=\kp e^\dl<1$.

	Now, we write a function $f\in\BV_\Om^l(I)$ as $f=f^+- f^-$ with $f^+,f^-\geq 0$, and apply the triangle inequality with \eqref{eq: est0 in cor exp conv to 1} to obtain
	\begin{align*}
	\norm{\~\cL_{\sg^{p}(\om)}^n f_{\sg^{p}(\om)} -\nu_{\sg^{p}(\om)}(f_{\sg^{p}(\om)})q_{\sg^{p+n}(\om)}}_\infty 
	\leq 
	B_f(\om)\norm{f_{\sg^{p}(\om)}}_\BV \kp^n,
	\end{align*}
	where 
	\begin{align}\label{eq: defn of B_f}
		B_f(\om):=(A_{\ol{f^+}}(\om)+A_{\ol{f^-}}(\om)).
	\end{align}
	Similarly, using the triangle inequality with \eqref{eq: est2 in cor exp conv to 1} yields
	\begin{align*}
	\norm{\hat \cL_{\sg^{p}(\om)}^n f_{\sg^{p}(\om)} -\mu_{\sg^{p}(\om)}(f_{\sg^{p}(\om)})\ind_{\sg^{p+n}(\om)}}_\infty
	&
	\leq 
	C_f(\om)\norm{f_{\sg^{p}(\om)}}_\BV\vkp^n,
	\end{align*} 
	where 
	\begin{align}\label{eq: defn of C_f}
		C_f(\om):=D(\om,\dl)(A_{\ol{f^+q}}(\om)+A_{\ol{f^-q}}(\om)).
	\end{align}
	To finish the proof we note that claim that $B_f$ and $C_f$ are measurable for $f\in\BV_\Om^l(I)$ follows from Corollary~\ref{rem: B_f measurable} and equations \eqref{eq: defn of B_f} and \eqref{eq: defn of C_f}.
\end{proof}
The following Proposition is an improvement on Lemma~\ref{lem: limit equality for nu} in that we prove a general limit for the measures $\nu_\om$, rather than a limit along a subsequence. Furthermore, we now prove that the random measure $\nu$ is unique. 
\begin{proposition}\label{prop: uniqueness of nu and mu}
	The family $(\nu_\om)_{\om\in\Om}$ of probability measures is uniquely determined by \eqref{eq: conformal measure property}. 
	In particular, we have that for each $f\in \BV_\Om^l(I)$
	\begin{align}\label{eq: limit char of nu}
	\nu_\om(f_\om)
	=
	\lim_{n\to \infty} \frac{\norm{\cL_\om^n f_\om}_\infty}{\norm{\cL_\om^n \ind_\om}_\infty}.
	\end{align}	
	Furthermore, we have that the family $(\mu_\om)_{\om\in\Om}$ of $T$-invariant measures is also unique.
\end{proposition}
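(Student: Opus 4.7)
The plan is to prove the three claims of the proposition in sequence: the limit characterisation \eqref{eq: limit char of nu}, the uniqueness of $(\nu_\om)_{\om\in\Om}$, and the uniqueness of the $T$-invariant family $(\mu_\om)_{\om\in\Om}$. All three rest on the quasi-compactness estimate Theorem~\ref{thm: exp convergence of tr op} together with the log-integrability of $\inf q_\om$ and $\norm{q_\om}_\infty$ established in Lemma~\ref{lem: conv of pressure limits}.

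For the limit characterisation, the eigenvalue factors $\lm_\om^n$ cancel, so
\begin{equation*}
\frac{\norm{\cL_\om^n f_\om}_\infty}{\norm{\cL_\om^n \ind_\om}_\infty} = \frac{\norm{\~\cL_\om^n f_\om}_\infty}{\norm{\~\cL_\om^n \ind_\om}_\infty}.
\end{equation*}
Applying Theorem~\ref{thm: exp convergence of tr op} with $p=0$ to both $f$ and $\ind$, and using that $q_{\sg^n(\om)}\geq 0$, the reverse triangle inequality gives $\absval{\norm{\~\cL_\om^n f_\om}_\infty - \absval{\nu_\om(f_\om)}\norm{q_{\sg^n(\om)}}_\infty}\leq B_f(\om)\norm{f_\om}_\BV\kp^n$ together with the analogous denominator bound (using $\nu_\om(\ind_\om)=1$). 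It then suffices to show $\kp^n/\norm{q_{\sg^n(\om)}}_\infty\to 0$: Lemma~\ref{lem: conv of pressure limits} combined with Birkhoff's theorem yields $n^{-1}\log\norm{q_{\sg^n(\om)}}_\infty\to 0$, so for any $\dl\in(0,-\log\kp)$ we have $\norm{q_{\sg^n(\om)}}_\infty\geq e^{-\dl n}$ for all sufficiently large $n$, whence $\kp^n/\norm{q_{\sg^n(\om)}}_\infty\leq(\kp e^\dl)^n\to 0$.

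For the uniqueness of $\nu$, let $(\tilde\nu_\om)_{\om\in\Om}$ be another random probability measure family satisfying \eqref{eq: conformal measure property} with associated eigenvalue $\tilde\lm_\om:=\tilde\nu_{\sg(\om)}(\cL_\om\ind_\om)$, and set $r_n(\om):=\tilde\lm_\om^n/\lm_\om^n$. Integrating the identity $\~\cL_\om^n\ind_\om=q_{\sg^n(\om)}+E_n$ from Theorem~\ref{thm: exp convergence of tr op} against $\tilde\nu_{\sg^n(\om)}$, and noting that $\tilde\nu_{\sg^n(\om)}(\~\cL_\om^n\ind_\om)=r_n(\om)$ by iterating the conformal relation for $\tilde\nu$, yields $r_n(\om)=\tilde\nu_{\sg^n(\om)}(q_{\sg^n(\om)})+O(\kp^n)$. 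Independently, combining $\cL_\om^n q_\om=\lm_\om^n q_{\sg^n(\om)}$ with the conformal relation for $\tilde\nu$ applied to $q_\om\in\BV(X_\om)$ gives $\tilde\nu_{\sg^n(\om)}(q_{\sg^n(\om)})=r_n(\om)\tilde\nu_\om(q_\om)$, so that
\begin{equation*}
r_n(\om)\bigl(1-\tilde\nu_\om(q_\om)\bigr)=O(\kp^n).
\end{equation*}
The hard part is to conclude $\tilde\nu_\om(q_\om)=1$: this uses that $r_n(\om)$ cannot decay as fast as $\kp^n$. Choosing the BV representative of $q_\om$ so that $q_\om\geq\inf q_\om$ everywhere on $I$ (possible since $\cS_\om$ is countable and $\nu_\om$-null), we have $\tilde\nu_{\sg^n(\om)}(q_{\sg^n(\om)})\geq\inf q_{\sg^n(\om)}$; by Lemma~\ref{lem: conv of pressure limits} and Birkhoff, $\inf q_{\sg^n(\om)}\geq e^{-\dl n}$ for any $\dl\in(0,-\log\kp)$ and all large $n$, whence $r_n(\om)\geq\frac{1}{2}e^{-\dl n}$ for large $n$. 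This forces $\tilde\nu_\om(q_\om)=1$ and consequently $r_n(\om)\to 1$. Substituting into the analogous identity obtained from Theorem~\ref{thm: exp convergence of tr op} applied to an arbitrary $f\in\BV(I)$, namely
\begin{equation*}
r_n(\om)\tilde\nu_\om(f_\om)=\nu_\om(f_\om)\tilde\nu_{\sg^n(\om)}(q_{\sg^n(\om)})+O(\kp^n)=\nu_\om(f_\om)+O(\kp^n),
\end{equation*}
and letting $n\to\infty$ yields $\tilde\nu_\om(f_\om)=\nu_\om(f_\om)$ on $\BV(I)$; density of $\BV(I)$ in $\cC(I)$ in the sup norm extends this equality to Borel probability measures on $I$.

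Finally, for the uniqueness of $(\mu_\om)_{\om\in\Om}$: since $\mu_\om=q_\om\nu_\om$, and $\nu$ has just been shown to be the unique conformal random probability measure while $q$ was shown in Proposition~\ref{prop: uniqueness of q} to be the unique (modulo $\nu$) normalised eigenfunction of the normalised transfer operator cocycle, the product $q\nu$ is uniquely determined.
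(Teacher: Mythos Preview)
Your proof is correct. For the limit characterisation and the uniqueness of $\mu$ your argument matches the paper's in spirit: you work with $\~\cL$ and the temperedness of $\norm{q_{\sg^n(\om)}}_\infty$, whereas the paper passes to $\hcL$ and applies Theorem~\ref{thm: exp convergence of tr op} to $f/q$; both routes lead to the same limiting ratio.

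Where you genuinely diverge is in the uniqueness of $(\nu_\om)$. The paper's argument invokes ``Theorem~\ref{thm: exp convergence of tr op} applied to the measure $\mu_\om'$'', which in effect asks the reader to re-run the entire machinery of Sections~\ref{sec:conformal}--\ref{sec:exp pres} with the alternative conformal family $\nu'$ in place of $\nu$, thereby obtaining the limit formula \eqref{eq: limit char of nu} also for $\nu'$ and concluding $\nu'=\nu$ since the left-hand limit is intrinsic. Your approach avoids this re-run entirely: you use only the already-proved Theorem~\ref{thm: exp convergence of tr op} for the specific $\nu$, integrate the error term against $\tilde\nu_{\sg^n(\om)}$, and combine the conformal relation for $\tilde\nu$ with the temperedness of $\inf q_{\sg^n(\om)}$ (from Lemma~\ref{lem: conv of pressure limits}) to force first $\tilde\nu_\om(q_\om)=1$, then $r_n(\om)\to 1$, and finally $\tilde\nu_\om=\nu_\om$ on $\BV(I)$. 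This is more self-contained and sidesteps the question of whether every step of the cone-contraction argument transfers verbatim to an arbitrary conformal $\tilde\nu$; on the other hand, the paper's route, once one accepts that transfer, gives uniqueness and the limit formula in one stroke.
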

\begin{proof}
	Suppose $(\nu_\om')_{\om\in\Om}$ is another family of probability measures which satisfies \eqref{eq: conformal measure property}, i.e. 
	\begin{align*}
	\nu_{\sg(\om)}'(\cL_\om f_\om) = \lm_\om'\nu_\om'(f_\om), 
	\quad \text{ with } 
	\lm_\om'=\nu_{\sg(\om)}'(\cL_\om\ind_\om), 
	\quad f\in \BV_\Om^l(I).
	\end{align*}
	Further define $\mu_\om'=q_\om\nu_\om'$ for each $\om\in\Om$.
	For each $h\in \BV_\Om^l(I)$ Theorem~\ref{thm: exp convergence of tr op} (taken with $p=0$) applied to the measure $\mu_\om'$ gives that  
	\begin{align*}
	\absval{\norm{\hcL_\om^n h_\om}_\infty - \mu_\om'(h_\om)}
	\leq
	C_h(\om)\norm{h_\om}_\BV\vkp^n
	\end{align*}
	for each $n\in\NN$.
	Inserting the function $f/q\in\cD$, for $h\in \BV_\Om^l(I)$, into the previous estimate gives 
	\begin{align*}
	\absval{\norm{\hcL_\om^n \lt(\frac{f_\om}{q_\om}\rt)}_\infty - \nu_\om'(f_\om)}
	\leq 
	C_{\frac{f}{q}}(\om)\norm{\frac{f_\om}{q_\om}}_\BV\vkp^n
	\end{align*}
	for each $n\in\NN$.
	Thus we have	
	\begin{align*}
	\lim_{n\to \infty} \frac{\norm{\cL_\om^n f_\om}_\infty}{\norm{\cL_\om^n \ind_\om}_\infty}
	=
	\lim_{n\to \infty} \frac{\norm{\hcL_\om^n \lt(\frac{f_\om}{q_\om}\rt)}_\infty}{\norm{\hcL_\om^n \lt(\frac{\ind_\om}{q_\om}\rt)}_\infty}
	=
	\frac{\nu_\om'(f_\om)}{\nu_\om'(\ind_\om)}
	=
	\nu_\om'(f_\om).
	\end{align*}
	As the limit on the left-hand side has no dependence on the family $\nu_\om'$, the uniqueness of the limit, and of the density $q_\om$, implies the uniqueness of the families $(\nu_\om)_{\om\in\Om}$ and $(\mu_\om)_{\om\in\Om}$.
\end{proof}

We are now ready to prove Theorem~\ref{main thm: summary quasicompactness}.
\begin{proof}[Proof of Theorem~\ref{main thm: summary quasicompactness}]
	Theorem~\ref{main thm: summary quasicompactness} follows from Theorem~\ref{thm: exp convergence of tr op} with $p=-n$ and $p=0$.
\end{proof}

\section{Decay of Correlations}\label{sec:dec cor}
In this short section we prove Theorem~\ref{main thm: decay of corr}, i.e. the forward and backward exponential decay of correlations. As a consequence of this, we obtain that the invariant measure $\mu$ is ergodic, thus completing the proof of Theorem~\ref{main thm: summary existence of measures and density}.
\begin{theorem}\label{thm: dec of cor}
	For $m$-a.e. every $\om\in\Om$, every $n\in\NN$, every $|p|\leq n$, every $f\in L^1_{\mu}(\Om\times I)$, and every $h\in \BV_\Om^l(I)$ we have 
	\begin{align}\label{eq: exp dec of cor thm ineq}
	\absval{
		\mu_{\tau}
		\lt(\lt(f_{\sg^{n}(\tau)}\circ T_{\tau}^n\rt)h_{\tau} \rt)
		-
		\mu_{\sg^{n}(\tau)}(f_{\sg^{n}(\tau)})\mu_{\tau}(h_{\tau})
	}
	\leq 
	C_h(\om)\norm{f_{\sg^n(\tau)}}_{L^1_{\mu_{\sg^n(\tau)}}}\norm{h_\tau}_\BV \vkp^n,
	\end{align} 
	where $\tau=\sg^p(\om)$, and where $C_h(\om)$ and $\vkp$ are as in Theorem~\ref{thm: exp convergence of tr op}.
\end{theorem}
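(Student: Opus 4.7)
The plan is to combine the fiberwise $T$-invariance of $\mu$ (Proposition~\ref{prop: mu_om T invar}) with the transfer operator identity \eqref{eq: tr op identity} and the quasi-compact convergence provided by Theorem~\ref{thm: exp convergence of tr op}. The key observation is that for the fully normalized operator $\hat\cL_\om$ one has $\mu_{\sg(\om)}(\hat\cL_\om g) = \mu_\om(g)$ (from \eqref{eq: equivariance prop of full norm op} and \eqref{eq: fully norm op fix ind}), and iteration gives
\begin{align*}
\mu_\tau(g) = \mu_{\sg^n(\tau)}(\hat\cL_\tau^n g)
\end{align*}
for every $g\in L^1_{\mu_\tau}(I)$. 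Applying this with $g = (f_{\sg^n(\tau)}\circ T_\tau^n)\cdot h_\tau$ (which is in $L^1_{\mu_\tau}$ because $h_\tau$ is bounded and by $T$-invariance $\norm{f_{\sg^n(\tau)}\circ T_\tau^n}_{L^1_{\mu_\tau}}=\norm{f_{\sg^n(\tau)}}_{L^1_{\mu_{\sg^n(\tau)}}}$), and then using the iterated form of \eqref{eq: tr op identity}, namely $\hat\cL_\tau^n\lt(h_\tau\cdot(f_{\sg^n(\tau)}\circ T_\tau^n)\rt) = f_{\sg^n(\tau)}\cdot\hat\cL_\tau^n h_\tau$, I would rewrite
\begin{align*}
\mu_\tau\lt((f_{\sg^n(\tau)}\circ T_\tau^n)h_\tau\rt)
= \mu_{\sg^n(\tau)}\lt(f_{\sg^n(\tau)}\cdot \hat\cL_\tau^n h_\tau\rt).
\end{align*}

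Next, I would add and subtract $\mu_\tau(h_\tau)\mu_{\sg^n(\tau)}(f_{\sg^n(\tau)})$. The difference then becomes
\begin{align*}
\mu_{\sg^n(\tau)}\lt(f_{\sg^n(\tau)}\cdot\lt(\hat\cL_\tau^n h_\tau - \mu_\tau(h_\tau)\ind_{\sg^n(\tau)}\rt)\rt),
\end{align*}
and bounding this by pulling out $\norm{\hat\cL_\tau^n h_\tau - \mu_\tau(h_\tau)\ind_{\sg^n(\tau)}}_\infty$ and integrating $|f_{\sg^n(\tau)}|$ against $\mu_{\sg^n(\tau)}$ yields
\begin{align*}
\absval{\mu_\tau((f_{\sg^n(\tau)}\circ T_\tau^n)h_\tau)-\mu_{\sg^n(\tau)}(f_{\sg^n(\tau)})\mu_\tau(h_\tau)}
\leq \norm{\hat\cL_\tau^n h_\tau - \mu_\tau(h_\tau)\ind_{\sg^n(\tau)}}_\infty\cdot\norm{f_{\sg^n(\tau)}}_{L^1_{\mu_{\sg^n(\tau)}}}.
\end{align*}

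The desired inequality now follows immediately by invoking Theorem~\ref{thm: exp convergence of tr op} with the function $h\in\BV_\Om^1(I)$ at base point $\tau=\sg^p(\om)$ (recalling $|p|\leq n$), which delivers precisely the bound
\begin{align*}
\norm{\hat\cL_\tau^n h_\tau - \mu_\tau(h_\tau)\ind_{\sg^n(\tau)}}_\infty\leq C_h(\om)\norm{h_\tau}_\BV\vkp^n,
\end{align*}
with $C_h(\om)$ measurable and $\vkp\in(\kp,1)$ as specified there.

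I do not anticipate a serious obstacle here: the decomposition is essentially formal once the correct $\omega$-fiber at which to apply quasi-compactness is identified. The only mild care needed is that the $L^1$-integrability of $f$ against the dynamical measure $\mu_\tau$ (needed to justify the change of variables in the duality step) is automatic from $f\in L^1_\mu(\Om\times I)$ by Fubini and the fact that $\mu$ decomposes fiberwise into the $\mu_\om$. The two stated forms of the inequality (forward, with $\tau=\om$ and $p=0$; backward, with $\tau=\sg^{-n}(\om)$ and $p=-n$) are then two specializations of the same statement covered by $|p|\leq n$, completing the proof. Finally, ergodicity of $\mu$ in Theorem~\ref{main thm: summary existence of measures and density} follows as a standard consequence of the mixing estimate just proved by testing against indicator functions of $T$-invariant sets.
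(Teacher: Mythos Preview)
Your proposal is correct and follows essentially the same approach as the paper: both rewrite the correlation via the duality $\mu_\tau(g)=\mu_{\sg^n(\tau)}(\hat\cL_\tau^n g)$ together with the identity $\hat\cL_\tau^n(h_\tau\cdot(f_{\sg^n(\tau)}\circ T_\tau^n))=f_{\sg^n(\tau)}\cdot\hat\cL_\tau^n h_\tau$, reduce the difference to $\mu_{\sg^n(\tau)}\bigl(f_{\sg^n(\tau)}(\hat\cL_\tau^n h_\tau-\mu_\tau(h_\tau)\ind_{\sg^n(\tau)})\bigr)$, and then bound the sup norm factor by Theorem~\ref{thm: exp convergence of tr op}. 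The paper's only cosmetic difference is that it packages the subtraction by defining $\hat h_\om=h_\om-\mu_\om(h_\om)$ up front.
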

\begin{proof}
	First let $\hat h_\om= h_\om-\mu_\om(h_\om)$ for each $\om\in\Om$. Now, we note that 
	\begin{align}
	&\mu_{\tau}
	\lt(\lt(f_{\sg^{n}(\tau)}\circ T_{\tau}^n\rt)h_{\tau} \rt)
	-
	\mu_{\sg^{n}(\tau)}(f_{\sg^{n}(\tau)})\mu_{\tau}(h_{\tau})
	\nonumber\\
	&\qquad\qquad
	=
	\mu_{\sg^n(\tau)}
	\lt(f_{\sg^{n}(\tau)}\hcL_\tau^n h_{\tau} \rt)
	-
	\mu_{\sg^{n}(\tau)}(f_{\sg^{n}(\tau)})\mu_{\tau}(h_{\tau})
	\nonumber\\
	&\qquad\qquad
	=
	\mu_{\sg^n(\tau)}\lt(f_{\sg^n(\tau)}\hcL_\tau^n \hat h_\tau\rt).\label{eq: dec of corr est 1}				
	\end{align}
	Theorem~\ref{thm: exp convergence of tr op} then gives that 
	\begin{align}
	\norm{\hcL_\tau^n \hat h_\tau}_\infty
	=
	\norm{\hcL_\tau^n h_\tau - \mu_\tau(h_\tau)\ind_{\sg^n(\tau)}}_\infty
	\leq 
	C_h(\om)\norm{h_\tau}_\BV \vkp^n
	\label{eq: dec of corr est 2}
	\end{align} 
	for each $n\in\NN$.
	So by \eqref{eq: dec of corr est 1} and \eqref{eq: dec of corr est 2}, we have 
	\begin{align*}
	&
	\absval{
		\mu_{\tau}
		\lt(\lt(f_{\sg^{n}(\tau)}\circ T_{\tau}^n\rt)h_{\tau} \rt)
		-
		\mu_{\sg^{n}(\tau)}(f_{\sg^{n}(\tau)})\mu_{\tau}(h_{\tau})
	}
	\\
	&\qquad\qquad\qquad
	\leq
	\mu_{\sg^n(\tau)}\lt(\absval{f_{\sg^n(\tau)}\hcL_\tau^n \hat h_\tau}\rt)
	\leq
	C_h(\om)\mu_{\sg^n(\tau)}\lt(\absval{f_{\sg^n(\tau)}}\rt)\norm{h_\tau}_\BV \vkp^n
	\end{align*}
	for each $n\in\NN$,
	which finishes the proof.
\end{proof}

We are now able to prove Theorem~\ref{main thm: decay of corr}.
\begin{proof}[Proof of Theorem~\ref{main thm: decay of corr}]
	Theorem~\ref{main thm: decay of corr} now follows from Theorem~\ref{thm: dec of cor} taking $p=-n$ and $p=0$.
\end{proof}
\begin{remark}
We now show that the right-hand side of \eqref{eq: exp dec of cor thm ineq} does in fact converge to zero exponentially fast.
First, note that for $f\in L^1_\mu(\Om\times I)$ and $h\in\BV_\Om^l(I)$ the Birkhoff Ergodic Theorem implies that for each $\dl>0$, there exists some measurable function $W_{f,h}:\Om\to (0,\infty)$ such that 
\begin{align*}
	\norm{f_{\sg^n(\om)}}_{L^1_{\mu_{\sg^n(\om)}}}\leq W_{f,h}(\om)n\dl \leq W_{f,h}(\om)e^{n\dl}
	% Edit: replaced e^{n\dl} with n\dl since we have integrability not log integrability 
	\quad\text{ and }\quad
	\norm{h_{\sg^{-n}(\om)}}_\infty\leq W_{f,h}(\om)n\dl\leq W_{f,h}(\om)e^{n\dl}.
\end{align*}
Thus we may rewrite the conclusions of Theorem~\ref{thm: dec of cor} (with $p=0$ and $p=-n$) to be  
\begin{align*}
	\absval{
		\mu_{\om}
		\lt(\lt(f_{\sg^{n}(\om)}\circ T_{\om}^n\rt)h \rt)
		-
		\mu_{\sg^{n}(\om)}(f_{\sg^{n}(\om)})\mu_{\om}(h)
	}
	\leq C_{h}(\om)W_{f,h}(\om)\kp_0^n
\end{align*} 
and 
\begin{align*}
\absval{
	\mu_{\sg^{-n}(\om)}
	\lt(\lt(f_\om\circ T_{\sg^{-n}(\om)}^n\rt)h_{\sg^{-n}(\om)} \rt)
	-
	\mu_\om(f_\om)\mu_{\sg^{-n}(\om)}(h_{\sg^{-n}(\om)})
}
\leq C_{h}(\om)W_{f,h}(\om)\kp_0^n
\end{align*}
for each $n\in\NN$,
where $0<\vkp<\kp_0=\vkp\cdot e^\dl<1$ for some $\dl>0$ sufficiently small. Thus we see that the right-hand side of \eqref{eq: exp dec of cor thm ineq} does in fact go to zero exponentially fast.
\begin{comment}
Furthermore, to prove exponential decay it suffices to have $f:\Om\times I\to\RR$ measurable such that $f_\om\in L^1_{\mu_\om}(I)$ for each $\om\in\Om$ and 
$\log \norm{f}_{L^1_{\mu_\om}}\in L^1_m(\Om)$ and $h\in\prod_{\om\in\Om}\BV(I)$ such that 
$\log \norm{h_\om}_\infty\in L^1_m(\Om)$.
\end{comment}
\end{remark}
\begin{remark}
	If one were able to show that the measurable function $\Om\ni\om\mapsto C_f(\om)\in\RR^+$ were in fact $m$-integrable, then, following the proof of \cite[Theorem 6.2]{mayer_random_2018}, which is written in the setting where the coefficient $C_f(\om)$ is independent of $f$ and $\om$ and uses the method of Gordin and Liverani, we immediately obtain the Central Limit Theorem as a consequence of Theorem~\ref{thm: dec of cor}. 
	For a more thorough treatment of stronger limit laws in the setting in which the coefficient $C_f(\om)$ is independent of $f$ and $\om$ see \cite{dragicevic_spectral_2018,dragicevic_almost_2018}.
\end{remark}
We are finally ready to prove the following proposition, which hinges on Theorem~\ref{thm: dec of cor}, and will complete the proof of Theorem~\ref{main thm: summary existence of measures and density}.
\begin{proposition}\label{prop: mu is ergodic}
	The measure $\mu$ defined in Proposition~\ref{prop: measurability} is ergodic. 
\end{proposition}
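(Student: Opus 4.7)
The plan is to use Theorem~\ref{thm: dec of cor} directly, testing against $\ind_A$ for a $T$-invariant set $A$ and BV indicator functions of intervals. Suppose $A\in\sF\otimes\sB$ satisfies $T^{-1}A=A$ modulo $\mu$, so that $T_\om^{-1}(A_{\sg(\om)})=A_\om$ modulo $\mu_\om$ for $m$-a.e.\ $\om$, where $A_\om$ denotes the $\om$-section of $A$. First I will define $\phi(\om):=\mu_\om(A_\om)$ and observe, using the $T$-invariance of $\mu$ coming from Proposition~\ref{prop: mu_om T invar}, that
\[
\phi(\om)=\mu_\om\bigl(T_\om^{-1}(A_{\sg(\om)})\bigr)=\mu_{\sg(\om)}(A_{\sg(\om)})=\phi(\sg(\om))
\]
for $m$-a.e.\ $\om$. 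Since $\sg$ is ergodic, $\phi$ is $m$-a.e.\ equal to a constant $c=\mu(A)$. The task reduces to showing $c\in\{0,1\}$.

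Next I will fix an interval $J\subseteq I$ and define the (deterministic) test function $h:\Om\times I\to\RR$ by $h_\om:=\ind_J$ for every $\om\in\Om$. Then $\|h_\om\|_\BV=\|\ind_J\|_\BV$ is a finite constant in $\om$, so trivially $h\in\BV_\Om^1(I)$. Taking $f=\ind_A\in L^1_\mu(\Om\times I)$ and applying Theorem~\ref{thm: dec of cor} (with $p=0$, so $\tau=\om$) yields
\[
\bigl|\mu_\om\bigl(\ind_{A_{\sg^n(\om)}}\circ T_\om^n\cdot \ind_J\bigr)-\mu_{\sg^n(\om)}(A_{\sg^n(\om)})\,\mu_\om(J)\bigr|
\leq C_h(\om)\,\mu_{\sg^n(\om)}(A_{\sg^n(\om)})\,\|\ind_J\|_\BV\,\vkp^n
\]
for $m$-a.e.\ $\om$ and every $n\in\NN$. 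Using $T$-invariance, $\ind_{A_{\sg^n(\om)}}\circ T_\om^n=\ind_{T_\om^{-n}(A_{\sg^n(\om)})}=\ind_{A_\om}$ modulo $\mu_\om$, so the first term inside the absolute value becomes $\mu_\om(A_\om\cap J)$. Since $\mu_{\sg^n(\om)}(A_{\sg^n(\om)})=c$, the right-hand side is $C_h(\om)\, c\,\|\ind_J\|_\BV\vkp^n$, which tends to $0$ as $n\to\infty$, while the left-hand side is independent of $n$. Therefore
\[
\mu_\om(A_\om\cap J)=c\,\mu_\om(J)
\]
for every interval $J\subseteq I$, for $m$-a.e.\ $\om$.

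Finally, since intervals generate the Borel $\sg$-algebra of $I$, a standard monotone class (or $\pi$–$\lambda$) argument extends this identity to arbitrary Borel $B\subseteq I$, so $\mu_\om(A_\om\cap B)=c\,\mu_\om(B)$ for every $B\in\sB$. Choosing $B=A_\om$ gives $\mu_\om(A_\om)=c\,\mu_\om(A_\om)$, i.e.\ $c=c^2$, forcing $c\in\{0,1\}$ and completing the proof. The only mildly delicate point is ensuring the test function $h$ is genuinely random bounded variation; this is handled effortlessly by choosing $h$ constant in $\om$, which is why the argument goes through without any extra measurability work. No further obstacles are anticipated, as the decay of correlations in Theorem~\ref{thm: dec of cor} is tailored precisely for pairings between $L^1_\mu$ functions and $\BV_\Om^1$ observables.
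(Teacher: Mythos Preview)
Your proof is correct and follows the standard route: use the fiberwise decay of correlations against BV test functions to show that the section measures of an invariant set are either full or null. The paper does not write out its own argument but instead defers to \cite[Proposition~4.7]{mayer_distance_2011}, whose proof proceeds in exactly this way, so your approach is essentially the same as the intended one. One small point worth making explicit in your write-up: the exceptional $m$-null set in the conclusion $\mu_\om(A_\om\cap J)=c\,\mu_\om(J)$ a priori depends on $J$, so before invoking the monotone class argument you should restrict to a countable generating family of intervals (e.g.\ those with rational endpoints) and take the union of the corresponding null sets.
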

The proof of Proposition~\ref{prop: mu is ergodic} is the same as the proof of Proposition 4.7 in \cite{mayer_distance_2011}, and is therefore left to the reader.

We now prove Theorem~\ref{main thm: summary existence of measures and density}.
\begin{proof}[Proof of Theorem~\ref{main thm: summary existence of measures and density}]
	\
	
	Part (1) of Theorem~\ref{main thm: summary existence of measures and density} follows from Proposition~\ref{prop: existence of conformal family},  Proposition~\ref{prop: measurability}, and Proposition~\ref{prop: uniqueness of nu and mu}. 
	
	Part (2) follows from Corollary~\ref{cor: exist of unique dens q}, Proposition~\ref{prop: measurability},  Proposition~\ref{prop: uniqueness of q}, and Lemma~\ref{lem: conv of pressure limits}.
	
	Part (3) follows from Proposition~\ref{prop: mu_om T invar}, Proposition~\ref{prop: measurability}, Proposition~\ref{prop: uniqueness of nu and mu}, and Proposition~\ref{prop: mu is ergodic}.

\end{proof}
\section{Relative Equilibrium States}\label{sec:equil}
In this section we show that the random $T$-invariant probability measure $\mu$ defined in \eqref{def: global conformal and inv random measures} is in fact the unique relative equilibrium state for the system, thus completing the proof of Theorem~\ref{main thm: eq states}. 
\begin{comment}\begin{lemma}
	$$
	\cE P(\phi)=\lim_{n\to\infty}\frac{1}{n}\log\lm_\om^n=\lim_{n\to\infty}\frac{1}{n}\log\cL_\om^n\ind_\om (x_n)
	$$
\end{lemma}
\begin{proof}

\end{proof}

content...
\end{comment}
Recall that $\cP_T(\Om\times I)\sub\cP_\Om(I)$ denotes the set of all $T$-invariant random probability measures on $I$, and for $\eta\in\cP_T(\Om\times I)$ the conditional information of the partition $\cZ_\om^*$ given $T_\om^{-1}\sB$, with respect to $\eta_\om$, is given by 
\begin{align*}
	I_{\eta_\om}=I_{\eta_\om}[\cZ_{\om}^*|T_\om^{-1}\sB]:=-\log g_{\eta,\om},
\end{align*} 
where 
\begin{align*}
	g_{\eta,\om}:=\sum_{Z\in\cZ_\om^*}\ind_Z E_{\eta_\om}\lt(\ind_Z \rvert T_\om^{-1}\sB \rt).
\end{align*}
	\begin{remark}\label{rem: g_mu = g hat}
		Note that $g_{\mu,\om}=\hat g_\om$, where $\hat g_\om$ is the weight function defined in \eqref{eq: def of hat g_om}. To see this we note that 
		\begin{align*}
			E_{\mu_\om}\lt(\ind_Z\rvert T_\om^{-1}\sB\rt)
			=
			\hat g_\om\rvert_Z 
		\end{align*}
		for each $Z\in\cZ_\om^*$ since we have
		\begin{align*}
			\hcL_\om\ind_Z = \hat g_\om\rvert_Z.
		\end{align*}
	\end{remark}
We now show that the $T$-invariant random measure $\mu$ defined in Proposition~\ref{prop: measurability} is a relative equilibrium state, where we have followed the approach of \cite{liverani_conformal_1998} using the conditional information $I_{\mu_\om}$ rather than the entropy $h_\mu(T)$, which for countable partitions may be infinite.
\begin{lemma}\label{lem: mu is an eq state}
The $T$-invariant random measure $\mu$ defined by \eqref{def: global conformal and inv random measures} is a relative equilibrium state, i.e. 
	\begin{align*}
		\cE P(\phi)=\int_\Om
		\lt(
		\int_{I}I_{\mu_\om}+\phi_\om \, d\mu_\om
		\rt)
		 \, dm(\om).
	\end{align*}
\end{lemma}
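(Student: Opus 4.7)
The plan is to exploit the identity $g_{\mu,\om} = \hat g_\om$ noted in Remark~\ref{rem: g_mu = g hat}, together with the explicit formula \eqref{eq: def of hat g_om} for $\hat g_\om$, to express $I_{\mu_\om} + \phi_\om$ as a sum of $\log \lm_\om$ and a coboundary that integrates to zero. Concretely, from $\hat g_\om = e^{\phi_\om} q_\om / (\lm_\om \cdot q_{\sg(\om)}\circ T_\om)$ I would write
\begin{equation*}
I_{\mu_\om} + \phi_\om = -\log \hat g_\om + \phi_\om = \log \lm_\om - \log q_\om + \log q_{\sg(\om)}\circ T_\om.
\end{equation*}
Integrating this identity against $\mu_\om$ and then against $m$ is then essentially a direct computation.

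The key step is the cancellation of the coboundary term. Using $T$-invariance of $\mu$ (Proposition~\ref{prop: mu_om T invar}),
\begin{equation*}
\int_\Om \int_I \log q_{\sg(\om)} \circ T_\om \, d\mu_\om \, dm(\om) = \int_\Om \int_I \log q_{\sg(\om)} \, d\mu_{\sg(\om)} \, dm(\om),
\end{equation*}
and $\sigma$-invariance of $m$ rewrites the right-hand side as $\int_\Om \int_I \log q_\om \, d\mu_\om \, dm(\om)$, cancelling the $-\log q_\om$ term. What remains is $\int_\Om \log \lm_\om \, dm(\om) = \cE P(\phi)$, which is the claim.

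The main thing to verify carefully — which I expect is the only non-routine point — is that both sides of the cancellation above are genuinely finite (so that the subtraction is legitimate and not of the form $\infty - \infty$). For this I would use Proposition~\ref{prop: upper and lower bound for density} to bound $u(\om) \leq q_\om \leq U(\om)$ and then apply Lemma~\ref{lem: conv of pressure limits}, which gives $\log \inf q_\om, \, \log \|q_\om\|_\infty \in L^1_m(\Om)$. Since $\mu_\om$ is a probability measure, this yields $\log \inf q_\om \leq \int_I \log q_\om \, d\mu_\om \leq \log \|q_\om\|_\infty$, so $\om \mapsto \int_I \log q_\om \, d\mu_\om$ lies in $L^1_m(\Om)$; the same bound applied to $\sg(\om)$ handles the coboundary term via $T$-invariance. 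The integrability of $\log \lm_\om$ is Proposition~\ref{prop: measurability}.

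Finally, uniqueness (the statement that $\eta_\om = \mu_\om$ $m$-a.e.\ is the only measure achieving equality) is not part of this particular lemma as stated, but would be addressed separately via a standard argument as in \cite{liverani_conformal_1998}: any other relative equilibrium state $\eta$ satisfies the same identity, forcing the conditional expectations $g_{\eta,\om}$ to agree with $\hat g_\om = g_{\mu,\om}$ $\eta_\om$-a.e., which together with the random covering hypothesis pins down $\eta = \mu$.
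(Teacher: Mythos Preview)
Your proposal is correct and follows essentially the same route as the paper: write $I_{\mu_\om}+\phi_\om=\log\lm_\om-\log q_\om+\log q_{\sg(\om)}\circ T_\om$ via Remark~\ref{rem: g_mu = g hat} and \eqref{eq: def of hat g_om}, then integrate and use $T$-invariance of $\mu$ together with $\sg$-invariance of $m$ to kill the coboundary. Your extra care in justifying the finiteness of the coboundary terms (via Lemma~\ref{lem: conv of pressure limits}) is a point the paper leaves implicit.
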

\begin{proof}
	In light of Remark~\ref{rem: g_mu = g hat} and \eqref{eq: def of hat g_om}, the conditional information of $\cZ_\om^*$ given $T_\om^{-1}\sB$, with respect to $\mu_\om$, is given by   
	\begin{align*}
		I_{\mu_\om}
		=
		-\log g_{\mu,\om}
		=
		-\log\hat g_\om
		=
		-\hat\phi_\om.
	\end{align*}
	Thus, we can write 
	\begin{align*}
		\int_{I} I_{\mu_\om} + \phi_\om \, d\mu_\om
		&
		=
		\int_{I} -\hat\phi_\om + \phi_\om \, d\mu_\om
		\\
		&=\log \lm_\om +  \int_{I}q_{\sg(\om)}\circ T_\om-q_\om \,d\mu_\om
		\\
		&=\log \lm_\om + \int_{I} q_{\sg(\om)}\, d\mu_{\sg(\om)} -\int_{I}q_\om \,d\mu_\om.		
	\end{align*}
	As $m$ is $\sg$-invariant, integrating over $\Om$ completes the proof. 
\end{proof}
In the following lemma we prove a variational principle for $T$-invariant random measures on $\cJ$. 
\begin{lemma}\label{lem: VP on fibers}
For all $\eta\in\cP_T(\cJ)$, the collection of all $T$-invariant random probability measures supported on $\cJ$, we have 
	\begin{align}\label{eq: vp on fibers clm 1}
		\log \lm_\om\geq \int_{X_\om}I_{\eta, \om}+\phi_\om \, d\eta_\om
	\end{align}
	for each $\om\in\Om$. Consequently, we have
	\begin{align}\label{eq: vp on fibers clm 2}
		\cE P(\phi)\geq \int_\Om
		\lt(
		\int_{X_\om}I_{\eta_\om} +\phi_\om \, d\eta_\om
		\rt)
		 \, dm(\om),
	\end{align}
	where equality holds if and only if $\eta_\om=\mu_\om$ for $m$-a.e. $\om\in\Om$. 
\end{lemma}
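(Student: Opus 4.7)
The plan is to exploit the fully normalized transfer operator $\hcL_\om$, which satisfies $\hcL_\om\ind_\om = \ind_{\sg(\om)}$ and whose associated weight $\hat g_\om$ coincides with $g_{\mu,\om}$ by Remark~\ref{rem: g_mu = g hat}. The key identity I will first establish is
\begin{equation*}
\int_{X_\om}\frac{\hat g_\om}{g_{\eta,\om}}\, d\eta_\om = 1
\end{equation*}
valid for every $\eta\in\cP_T(\cJ)$. This will be derived by splitting the integral according to the pieces $Z\in\cZ_\om^*$, writing $g_{\eta,\om}|_Z = F_Z\circ T_\om|_Z$ where $F_Z$ is the function on $X_{\sg(\om)}$ determined by $E_{\eta_\om}(\ind_Z|T_\om^{-1}\sB)$, and using the defining property of conditional expectation together with the $T$-invariance $(T_\om)_*\eta_\om = \eta_{\sg(\om)}$. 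Each summand reduces to $\int_{T_\om(Z)}\hat g_\om\circ T_{\om,Z}^{-1}\, d\eta_{\sg(\om)}$, and the sum over $Z$ produces $\int\hcL_\om\ind_\om\, d\eta_{\sg(\om)} = 1$.

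Applying Jensen's inequality (concavity of $\log$) then yields
\begin{equation*}
\int_{X_\om}(I_{\eta,\om}+\hat\phi_\om)\, d\eta_\om = \int_{X_\om}\log\frac{\hat g_\om}{g_{\eta,\om}}\, d\eta_\om \leq 0.
\end{equation*}
Expanding $\hat\phi_\om = \phi_\om + \log q_\om - \log q_{\sg(\om)}\circ T_\om - \log\lm_\om$ gives the fiberwise bound
\begin{equation*}
\int_{X_\om}(I_{\eta,\om}+\phi_\om)\, d\eta_\om \leq \log\lm_\om + \int_{X_\om}\log q_{\sg(\om)}\circ T_\om\, d\eta_\om - \int_{X_\om}\log q_\om\, d\eta_\om,
\end{equation*}
which is \eqref{eq: vp on fibers clm 1} modulo the cohomological correction on the right. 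Integrating over $\Om$ against $m$, the $T$-invariance $(T_\om)_*\eta_\om=\eta_{\sg(\om)}$ converts $\int\log q_{\sg(\om)}\circ T_\om\, d\eta_\om$ into $\int\log q_{\sg(\om)}\, d\eta_{\sg(\om)}$, and then $\sg$-invariance of $m$ forces the two $\log q$ contributions to cancel, yielding \eqref{eq: vp on fibers clm 2}. The log-integrability $\log q_\om\in L^1_m(\Om)$ from Lemma~\ref{lem: conv of pressure limits} ensures that Fubini applies.

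For the equality case, Jensen becomes an equality precisely when $\hat g_\om/g_{\eta,\om}$ is $\eta_\om$-a.e.\ constant; since $\int\hat g_\om/g_{\eta,\om}\, d\eta_\om = 1$, this constant must be $1$, and so $g_{\eta,\om} = \hat g_\om = g_{\mu,\om}$ $\eta_\om$-a.e. The main obstacle will be deducing $\eta_\om = \mu_\om$ from matching conditional distributions alone. My plan is to exploit the generating property \eqref{cond GP}: the relation $g_{\eta,\om} = g_{\mu,\om}$ forces the identity $\eta_\om(Z\cap T_\om^{-1}(B)) = \int_B F_Z\, d\eta_{\sg(\om)}$ with the same kernel $F_Z$ that governs $\mu$. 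Iterating this relation along $T_\om^{-n}$ with the refined partitions $\cZ_\om^{(n)}$ determines $\eta_\om$ on the generating $\sg$-algebra $\bigvee_{n\geq 1}\cZ_\om^{(n)}=\sB$, so $\eta_\om$ must coincide with the unique $T$-invariant probability measure with this disintegration, namely $\mu_\om$ from Proposition~\ref{prop: uniqueness of nu and mu}.
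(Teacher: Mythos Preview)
Your reduction to showing $\int_{X_\om}\log(\hat g_\om/g_{\eta,\om})\,d\eta_\om\le 0$ matches the paper's, and your route via the identity $\int_{X_\om}\hat g_\om/g_{\eta,\om}\,d\eta_\om=1$ followed by Jensen is correct and somewhat cleaner than the paper's argument, which instead uses $\log x\le x-1$ together with truncations supported on finitely many branches to manage the countable partition and potential non-integrability. Your change-of-variables computation is right; you should record explicitly that $g_{\eta,\om}>0$ $\eta_\om$-a.e.\ (immediate from the conditional-expectation definition) before dividing. You are also more honest than the paper about the cohomological $\log q$ terms: the paper asserts $\eta_\om(\Phi_\om)=\log\lm_\om$ fiberwise, which silently relies on the cancellation you correctly identify as requiring integration over $\Om$.

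The gap is in your equality case. From $g_{\eta,\om}=\hat g_\om$ $\eta_\om$-a.e.\ you propose to iterate the kernel relation $\eta_\om(Z\cap T_\om^{-1}(B))=\int_B F_Z\,d\eta_{\sg(\om)}$ and invoke \eqref{cond GP} to pin down $\eta_\om$ on all of $\sB$. But iteration only expresses $\eta_\om(Z)$ for $Z\in\cZ_\om^{(n)}$ in terms of $\eta_{\sg^n(\om)}$, which you do not know either; the generating property alone cannot break this circularity, and Proposition~\ref{prop: uniqueness of nu and mu} does not assert uniqueness of a $T$-invariant measure with prescribed one-step conditionals. The paper closes this with a dynamical argument: once $g_{\eta,\om}=\hat g_\om$, the transfer operator built from $g_{\eta,\om}$ agrees with $\hcL_\om$, so $\eta_\om(f)=\eta_{\sg^n(\om)}(\hcL_\om^n f)$ for every $n$, and Theorem~\ref{thm: exp convergence of tr op} forces the right side to converge to $\mu_\om(f)$ for every $f\in\BV(X_\om)$. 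That convergence input is what actually identifies $\eta_\om$ with $\mu_\om$; the static disintegration data is not enough.
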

\begin{proof}
	Let $\eta\in\cP_T(\cJ)$. Then 
	\begin{align*}
		I_{\eta_\om}+\phi_\om
		&=
		-\log g_{\eta,\om}+\phi_\om
		\\
		&= 
		-\log g_{\eta,\om}+ \log \hat g_\om+\log\lm_\om-\log q_\om+\log q_{\sg(\om)}\circ T_\om
		\\
		&=
		\log\frac{\hat g_\om}{g_{\eta,\om}}+\Phi_\om,
	\end{align*}
	where 
	\begin{align*}
		\Phi_\om:=\log\lm_\om-\log q_\om+\log q_{\sg(\om)}\circ T_\om.
	\end{align*}
	Noting that $\eta_\om(\Phi_\om)=\log \lm_\om$, we see that to prove \eqref{eq: vp on fibers clm 1} it suffices to show that 
	\begin{align*}
		\int_{X_\om} \log\frac{\hat g_\om}{g_{\eta,\om}} \, d\eta_\om \leq 0.
	\end{align*} 
	Now, let $\cF_\om$ denote the set of bounded functions $f:X_\om\to\RR$ which are supported in a finite number of intervals of $\cZ_\om^{(1)}$.
	Let $\cL_{\eta,\om}:\cF_\om\to\cF_{\sg(\om)}$ be given by
	\begin{align}\label{eq: def of eta tr op}
	\cL_{\eta,\om} f(x) =\sum_{y\in T_\om^{-1}(x)}g_{\eta,\om}(y)f(y),
	\quad f\in\cF_\om,\,x\in X_{\sg(\om)}.
	\end{align}
	To complete the proof of \eqref{eq: vp on fibers clm 1} we prove the following claim. 
	\begin{claim}\label{clm: vp clm}
		\begin{flalign*}
			& \text{For all } f\in \cF_\om \text{ we have } \eta_{\sg(\om)}\lt(\cL_{\eta,\om}(f)\rt)=\eta_\om (f),
			\tag{i}\label{eq: eta tr op conf prop}
			&\\
			& \log^+\frac{\hat g_\om}{g_{\eta,\om}} \in L^1_{\eta_\om}(X_\om) \text{ for each } \om\in\Om, 
			\tag{ii} \label{item: i vp clm} 
			\\
			& \int_{X_\om}\log\frac{\hat g_\om}{g_{\eta,\om}}\leq 0. 	
			\tag{iii} \label{item: ii vp clm} 
		\end{flalign*}
	\end{claim}
	\begin{subproof}
		We begin by noting that we may write 
		\begin{align}\label{eq: alternate tr op form}
			\cL_{\eta,\om}(f)=\sum_{Z\in\cZ_\om^{(1)}}(f\cdot \ind_Z\cdot g_{\eta,\om})\circ T_{\om,Z}^{-1}.
		\end{align}
		To prove \eqref{eq: eta tr op conf prop} it suffices to prove the claim holds for the function $\ind_A$ for some set $A$ which is contained in a finite number of partition elements. To that end, using \eqref{eq: alternate tr op form} and the $T$-invariance of $\eta$, we calculate 
		\begin{align*}
			\eta_{\sg(\om)}(\cL_{\eta,\om}\ind_A)
			&=
			\eta_{\sg(\om)}\lt(\sum_{Z\in\cZ_\om^{(1)}}\lt(\ind_A\ind_Zg_{\eta,\om}\rt)\circ T_{\om,Z}^{-1}\rt)
			\\
			%&=
			%\sum_{Z\in\cZ_\om^{(1)}}\eta_{\sg(\om)}\lt(\lt(\ind_A\ind_Zg_{\eta,\om}\rt)\circ T_{\om,Z}^{-1}\rt)
			%\\
			&=
			\eta_\om\lt(\sum_{Z\in\cZ_\om^{(1)}}\ind_A\ind_Zg_{\eta,\om}\rt) 
			\\
			&=
			\sum_{Z\in\cZ_\om^{(1)}}\eta_\om\lt(\ind_A\ind_Z \sum_{Y\in\cZ_\om^{(1)}}\lt(\ind_Y E_{\eta_\om}\lt(\ind_Y|T_\om^{-1}\sB\rt)\rt)\rt)
			\\
			&=
			\sum_{Z\in\cZ_\om^{(1)}}\eta_\om\lt(\ind_A\ind_ZE_{\eta_\om}\lt(\ind_Z|T_\om^{-1}\sB\rt)\rt)
			\\
			&=
			\sum_{Z\in\cZ_\om^{(1)}}\eta_\om(\ind_A\ind_Z) = \eta_\om(\ind_A).
			\end{align*}
		
		To prove \eqref{item: i vp clm} we first note that $g_{\eta,\om}>0$ $\eta_\om$-a.e. since, if $g_{\eta,\om}\rvert_A=0$ for some subset $A\sub X_\om$, then $\cL_{\eta,\om}\ind_A=0$, and in light of \eqref{eq: eta tr op conf prop} we would have $\eta_\om(A)=0$.
		
		Enumerate the intervals of $\cZ_\om^{(1)}$ by $Z_j$ for $j\in\NN$. Since $\log g_{\eta,\om}>-\infty$ $\eta_\om$-a.e. and $\sup\hat g_\om<\infty$ (by assumption \eqref{cond SP3}, \eqref{eq: lm>0}, and Proposition~\ref{prop: upper and lower bound for density}), we set 
		\begin{equation}\label{eq: def of F}
			F_\om(n)(x)
			:=
			\begin{cases}
				1 & \text{if } x\in Z_1\cup\dots\cup Z_n \text{ and } \absval{\log\frac{\hat g_\om}{g_{\eta,\om}}(x)}<n\\
				0 & \text{otherwise,}
			\end{cases}
		\end{equation}
		and 
		\begin{align*}
			\chi_\om^+:=\set{x\in X_\om:\log\frac{\hat g_\om}{g_{\eta,\om}}(x)\geq 0 }.
		\end{align*}
		Since $\hat g_\om\geq g_{\eta,\om}$ on $\chi_\om^+$, we have 
		\begin{align}\label{eq: L_eta leq L hat}
			0
			\leq
			\cL_{\eta,\om}\lt(F_\om(n)\ind_{\chi_\om^+}\rt)
			\leq
			\hcL_\om\lt(F_\om(n)\ind_{\chi_\om^+}\rt)
			\leq 
			\hcL_\om\ind_\om
			=
			\ind_{\sg(\om)}.
		\end{align}
		In light of the definitions of $\hcL_\om$, \eqref{eq: alt defhat L}, and $\cL_{\eta,\om}$, \eqref{eq: def of eta tr op}, we see that  
		\begin{align}\label{eq: eta tr op eq to hat L}
			\cL_{\eta,\om}\lt(f\frac{\hat g_\om}{g_{\eta,\om}}\rt)(x)
			=
			\hcL_\om f(x).
		\end{align} 
		Now, using \eqref{eq: def of F}, \eqref{eq: eta tr op conf prop}, the fact that $\log x\leq x-1$ for $x>0$, \eqref{eq: eta tr op eq to hat L}, and \eqref{eq: L_eta leq L hat} we thus get that 
		\begin{align}
			\int_{X_\om}\ind_{\chi_\om^+}\log\frac{\hat g_\om}{g_{\eta,\om}} \, d\eta_\om
			&=
			\lim_{n\to \infty}\int_{X_\om}F_\om(n) \ind_{\chi_\om^+}\log\frac{\hat g_\om}{g_{\eta,\om}} \, d\eta_\om
			\nonumber\\
			&=
			\lim_{n\to \infty}\int_{X_{\sg(\om)}}
			\cL_{\eta,\om}
			\lt(
			F_\om(n) \ind_{\chi_\om^+}\log\frac{\hat g_\om}{g_{\eta,\om}}
			\rt)
			\, d\eta_{\sg(\om)}
			\nonumber\\
			&\leq
			\lim_{n\to \infty}\int_{X_{\sg(\om)}}
			\cL_{\eta,\om}
			\lt(
			F_\om(n) \ind_{\chi_\om^+}\lt(\frac{\hat g_\om}{g_{\eta,\om}}-1
			\rt)\rt)
			\, d\eta_{\sg(\om)}
			\nonumber\\
			&=
			\lim_{n\to \infty}\int_{X_{\sg(\om)}}
			\lt(
			\cL_{\eta,\om}
			\lt(
			F_\om(n) \ind_{\chi_\om^+}\frac{\hat g_\om}{g_{\eta,\om}}\rt)
			-
			\cL_{\eta,\om}\lt(F_\om(n) \ind_{\chi_\om^+}\rt)
			\rt)
			\, d\eta_{\sg(\om)}	
			\nonumber\\
			&=
			\lim_{n\to \infty}\int_{X_{\sg(\om)}}
			\lt(
			\hcL_{\om}
			\lt(F_\om(n) \ind_{\chi_\om^+}\rt)
			-
			\cL_{\eta,\om}\lt(F_\om(n) \ind_{\chi_\om^+}\rt)
			\rt)
			\, d\eta_{\sg(\om)}
			\label{eq: long calc in clm 1 of vp}
			\\
			&\leq 1.
			\nonumber		
		\end{align}
		Hence, we have that $\chi_\om^+\log(\hat g_\om / g_{\eta,\om})\in L^1_{\eta_\om}(X_\om)$, which proves \eqref{item: i vp clm}. 
		
		Now to prove claim \eqref{item: ii vp clm}, we note that if 
		$$
			\int_{X_\om}\log\frac{\hat g_\om}{g_{\eta,\om}} \, d\eta_\om=-\infty
		$$
		then we are done. Otherwise, by \eqref{item: i vp clm}, we have $\log(\hat g_\om / g_{\eta,\om})\in L^1_{\eta_\om}(X_\om)$. Repeating the calculation from the beginning of the previous display block of equations to  \eqref{eq: long calc in clm 1 of vp} without the characteristic function $\ind_{\chi_\om^+}$ and then using \eqref{eq: fully norm op fix ind} and \eqref{eq: eta tr op conf prop} gives 
		\begin{align*}
			\int_{X_\om}\log\frac{\hat g_\om}{g_{\eta,\om}} \, d\eta_\om
			&\leq
			\lim_{n\to \infty}\int_{X_{\sg(\om)}}
			\hcL_{\om}
			\lt(F_\om(n)\rt)
			\, d\eta_{\sg(\om)} 
			-
			\lim_{n\to \infty}\int_{X_{\sg(\om)}}
			\cL_{\eta,\om}\lt(F_\om(n) \rt)
			\, d\eta_{\sg(\om)} 
			\\
			&\leq
			\int_{X_{\sg(\om)}}
			\hcL_{\om}
			\lt(\ind_\om\rt)
			\, d\eta_{\sg(\om)} 
			-
			\lim_{n\to \infty}\int_{X_{\sg(\om)}}
			\cL_{\eta,\om}\lt(F_\om(n) \rt)
			\, d\eta_{\sg(\om)} 
			\\
			&\leq
			1-\lim_{n\to\infty}\int_{X_\om} F_\om(n) \, d\eta_\om
			=0.
		\end{align*}
\end{subproof}
Thus, we have that
\begin{align*}
\int_{X_\om}I_{\eta_\om}+\phi_\om \, d\eta_\om 
= 
\int_{X_\om}\Phi_\om \, d\eta_\om
+
\int_{X_\om}\log\frac{\hat g_\om}{g_{\eta,\om}} \, d\eta_\om
\leq
\log \lm_\om,
\end{align*}
which finishes the proof of \eqref{eq: vp on fibers clm 1}. Thus, \eqref{eq: vp on fibers clm 2} follows by integrating over $\Om$ . 

Finally, we note that equality holds if and only if $\hat g_\om=g_{\eta,\om}$ $\eta_\om$-almost everywhere on $X_\om$, which implies that $\hcL_\om f(x)=\cL_{\eta,\om} f(x)$ for $\eta_\om$-a.e. $x\in X_\om$ and all $f\in\BV(X_\om)$. Thus, applying Theorem~\ref{thm: exp convergence of tr op} with $p=0$, we have 
		\begin{align*}
			\absval{\eta_\om(f)-\mu_\om(f)}
			&=
			\absval{\eta_{\sg^n(\om)}(\cL_{\eta,\om}^n f)-\mu_\om(f)}
			\\
			&=
			\absval{\eta_{\sg^n(\om)}(\hcL_{\om}^n f)-\mu_\om(f)}
			\\
			&=
			\absval{\eta_{\sg^n(\om)}\lt(\hcL_{\om}^n f-\mu_\om(f)\ind_{\sg^n(\om)}\rt)}
			\\
			&\leq 
			\absval{\eta_{\sg^n(\om)}\lt(\norm{\hcL_{\om}^n f-\mu_\om(f)\ind_{\sg^n(\om)}}_\infty\rt)}
			\\
			&\leq 
			C_f(\om)\norm{f}_\BV \vkp^n
		\end{align*}
		for each $n\in\NN$ and every $f\in\BV(X_\om)$. Thus we must in fact have that $\eta_\om(f)=\mu_\om(f)$ for each $f\in\BV(X_\om)$, and thus $\eta_\om=\mu_\om$ on a dense set of $\cC(X_\om)$. We conclude finally that $\mu_\om=\eta_\om$ for $m$-a.e. $\om\in\Om$, which completes the proof.
\end{proof}
To finish the proof of Theorem~\ref{main thm: eq states}, we now extend the variational principle on $\cJ$, proved in Lemma~\ref{lem: VP on fibers}, to $\Om\times I$.
\begin{lemma}\label{lem: vp on Om x I}
	For any $\eta\in \cP_T(\Om\times I)$ we have 
	\begin{align}\label{eq: vp on whole int statement}
		\cE P(\phi)\geq \int_\Om
		\lt(
		\int_I I_{\eta_\om} +\phi_\om \, d\eta_\om
		\rt)
		\, dm(\om). 
	\end{align}
	Moreover, equality holds if and only if $\eta_\om=\mu_\om$ for $m$-a.e. $\om\in\Om$. 
\end{lemma}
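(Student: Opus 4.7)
My plan is to deduce the statement from Lemma~\ref{lem: VP on fibers} by decomposing $\eta$ into a part supported on $\cJ$ and a part supported on the singular set. First, using $T_\om(X_\om)\subseteq X_{\sg(\om)}$ and $T$-invariance of $\eta$, I would show that $\om\mapsto\eta_\om(X_\om)$ is $\sg$-invariant modulo $m$-null sets: indeed, $\eta_{\sg(\om)}(X_{\sg(\om)})\geq \eta_\om(T_\om^{-1}X_{\sg(\om)})\geq \eta_\om(X_\om)$, and since $\sg$ preserves $m$ the integrals must agree, so ergodicity forces the map to be constant equal to some $\alpha=\eta(\cJ)\in[0,1]$. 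If $\alpha=1$ then $\eta\in\cP_T(\cJ)$ and Lemma~\ref{lem: VP on fibers} finishes everything. Otherwise decompose $\eta_\om=\alpha\eta'_\om+(1-\alpha)\eta''_\om$ where $\eta'_\om=\alpha^{-1}\eta_\om|_{X_\om}$ and $\eta''_\om=(1-\alpha)^{-1}\eta_\om|_{\cS_\om}$. The same invariance calculation shows $\eta_\om$-a.e.\ point of $\cS_\om$ maps into $\cS_{\sg(\om)}$, so both $\eta'$ and $\eta''$ are themselves $T$-invariant random probability measures, with $\eta'\in\cP_T(\cJ)$.

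Next I would verify that the conditional information splits cleanly along this decomposition. Because $\eta'_\om\perp\eta''_\om$ and the supports $X_\om,\cS_\om$ are disjoint Borel sets that, by the first step, are each essentially $T_\om^{-1}$-invariant, for each $Z\in\cZ_\om^*$ the conditional expectation $E_{\eta_\om}(\ind_Z|T_\om^{-1}\sB)$ restricted to $X_\om$ agrees $\eta_\om$-a.e.\ with $E_{\eta'_\om}(\ind_Z|T_\om^{-1}\sB)$, and likewise on $\cS_\om$ with $E_{\eta''_\om}(\ind_Z|T_\om^{-1}\sB)$. This gives the linear decomposition
\[
\int_\Om\!\!\int_I I_{\eta_\om}+\phi_\om\,d\eta_\om\,dm
=\alpha\!\int_\Om\!\!\int_I I_{\eta'_\om}+\phi_\om\,d\eta'_\om\,dm
+(1-\alpha)\!\int_\Om\!\!\int_I I_{\eta''_\om}+\phi_\om\,d\eta''_\om\,dm.
\]
Lemma~\ref{lem: VP on fibers} bounds the first summand by $\cE P(\phi)$. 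For the $\eta''$ summand I would rerun the proof of Lemma~\ref{lem: VP on fibers} verbatim: after fixing a BV representative of $q_\om$ on all of $I$ (hence of $\hat g_\om$), the formal identity $\phi_\om=\log\hat g_\om-\log q_\om+\log q_{\sg(\om)}\circ T_\om+\log\lm_\om$ holds pointwise, the $T$-invariance identity $\eta''_{\sg(\om)}(\cL_{\eta'',\om}f)=\eta''_\om(f)$ is purely measure-theoretic, and the key estimate $\hcL_\om\ind_\om\leq \ind_{\sg(\om)}$ holds pointwise; nowhere does the argument of Claim~\ref{clm: vp clm} require non-atomicity of the measure. Combining the two bounds and noting $\alpha+(1-\alpha)=1$ gives the stated inequality.

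For the equality case, suppose equality holds in \eqref{eq: vp on whole int statement}. Then equality must hold in each of the two summands above, so Lemma~\ref{lem: VP on fibers} forces $\eta'_\om=\mu_\om$ for $m$-a.e.\ $\om$. But $\mu_\om$ is non-atomic (by Propositions~\ref{prop: existence of conformal family} and \ref{prop: upper and lower bound for density}) while $\eta''_\om$ is atomic on the countable set $\cS_\om$; since $\mu_\om$ cannot decompose into an absolutely continuous plus atomic part along the singular support, this forces $1-\alpha=0$, so $\eta=\eta'=\mu$.

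The main obstacle is the second step, in particular checking that the proof of Claim~\ref{clm: vp clm} survives when $\eta''_\om$ is purely atomic on $\cS_\om$: one must be careful to choose BV representatives of $q_\om$ (hence of $\hcL_\om\ind_\om$) so that the pointwise inequality $\hcL_\om\ind_\om\leq\ind_{\sg(\om)}$ holds on the support of $\eta''_{\sg(\om)}$, and to verify that the truncation functions $F_\om(n)$ of \eqref{eq: def of F} together with the summation defining $\cL_{\eta'',\om}$ still give the dominated-convergence step in \eqref{eq: long calc in clm 1 of vp}. Both are manageable because the only delicate point is the relation between the push-forward of $\eta''_\om$ and the conditional expectation, which is guaranteed by $T$-invariance alone.
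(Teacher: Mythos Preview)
Your decomposition along $X_\om$ versus $\cS_\om$ is different from the paper's, which splits $\eta_\om=c_\om\eta_\om^c+a_\om\eta_\om^a$ into its non-atomic and purely atomic parts. Since $\cS_\om$ is countable, $\eta_\om^c$ is automatically supported on $X_\om$, so Lemma~\ref{lem: VP on fibers} applies to $\eta^c$ directly. For the atomic part the paper does \emph{not} rerun the entropy argument; instead it uses that $I_{\eta_\om^a}=0$ $\eta_\om^a$-a.e., so only $\int\phi_\om\,d\eta_\om^a$ remains, and then invokes the contracting potential condition \eqref{cond CP1} together with Birkhoff and Lemma~\ref{lem: conv of pressure limits} to obtain the \emph{strict} inequality
\[
\int_\Om a_\om\int_I\phi_\om\,d\eta_\om^a\,dm(\om)<\int_\Om a_\om\log\lm_\om\,dm(\om)
\]
whenever $\int_\Om a_\om\,dm>0$. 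This strictness is what drives the equality case: any nontrivial atomic part forces strict inequality in \eqref{eq: vp on whole int statement}, hence equality implies $a_\om=0$ $m$-a.e., and then Lemma~\ref{lem: VP on fibers} gives $\eta_\om=\eta_\om^c=\mu_\om$.

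Your equality argument has a genuine gap. Granting that both summands are bounded by $\cE P(\phi)$, equality in the combination forces equality in each summand (when $\alpha,1-\alpha>0$), and the $\eta'$ summand yields $\eta'=\mu$ via Lemma~\ref{lem: VP on fibers}. But your final step---``since $\mu_\om$ cannot decompose into an absolutely continuous plus atomic part along the singular support, this forces $1-\alpha=0$''---does not follow. Nothing prevents $\eta_\om=\alpha\mu_\om+(1-\alpha)\eta''_\om$ with $\alpha<1$ unless you can rule out equality in the $\eta''$ summand. To do that by rerunning the equality case of Lemma~\ref{lem: VP on fibers} you would need to conclude $\eta''=\mu$, but that step uses Theorem~\ref{thm: exp convergence of tr op}, whose convergence $\hcL_\om^n f\to\mu_\om(f)$ is established only on $X_{\sg^n(\om)}$, not on the singular set where $\eta''_{\sg^n(\om)}$ lives. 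The same difficulty undermines the inequality step for $\eta''$: the identity $\hcL_\om\ind_\om=\ind_{\sg(\om)}$ in \eqref{eq: fully norm op fix ind}, on which \eqref{eq: L_eta leq L hat} and the final estimate of Claim~\ref{clm: vp clm} rest, is only proved on $X_{\sg(\om)}$; extending $q_\om$ by a BV representative does not guarantee it holds pointwise on $\cS_{\sg(\om)}$. So both the inequality and the equality case for $\eta''$ remain unjustified. The paper sidesteps all of this by never running the entropy argument on the bad part and instead exploiting \eqref{cond CP1} to get strictness directly.
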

\begin{proof}
	Suppose $\eta\in\cP_T(\Om\times I)$ is a $T$-invariant random probability measure on $I$, and suppose that for each $\om\in\Om$ we have that 
	\begin{align*}
		\eta_\om=c_\om \eta_\om^c + a_\om \eta_\om^a
	\end{align*}
	where $\eta_\om^c$ is a non-atomic probability measure on $I$, $\eta_\om^a$ is a purely atomic probability measure on $I$, and $c_\om+a_\om=1$. 
	As $\eta_\om^c$ is non-atomic, we must have that $\eta_\om^c(\cS_\om)=0$ as the singular set $\cS_\om$, defined by \eqref{eq: def of sing set}, is countable. 
	Without loss of generality, we suppose that 
	$$
		\int_\Om a_\om \, dm(\om)>0,
	$$ 
	as otherwise the lemma would follow from Lemma~\ref{lem: VP on fibers}.

	We easily see that $I_{\eta_\om^a, \om} =0$ $\eta_\om^a$-a.e, and since $\eta_\om^a$ and $\eta_\om^c$ are singular, we can choose $I_{\eta_\om^a, \om} =0$ $\eta_\om^c$-a.e and $I_{\eta_\om^c, \om} =0$ $\eta_\om^a$-a.e. Therefore, we have that 
	\begin{align*}
		I_{\eta, \om} 
		&= 
		I_{\eta^c, \om} 
		+
		I_{\eta^a, \om} 
		=
		I_{\eta^c, \om} , 
	\end{align*}
	and hence, applying Lemma~\ref{lem: VP on fibers} gives
	\begin{align}
		\int_\Om
		\lt(
		\int_I I_{\eta_\om} +\phi_\om \, d\eta_\om 
		\rt)
		\, dm(\om)
		&=
		\int_\Om c_\om 
		\int_I I_{\eta^c,\om} +\phi_\om \, d\eta_\om^c
		\, dm(\om)
		+
		\int_\Om a_\om
		\int_I \phi_\om d\eta_\om^a
		\, dm(\om)
		\nonumber\\
		&\leq 
		\int_\Om c_\om \log\lm_\om\, dm(\om)
		+
		\int_\Om a_\om
		\int_I \phi_\om d\eta_\om^a
		\, dm(\om).	
		\label{eq: int of info}
	\end{align}
	Since $\eta$ is a random probability measure we have that $\eta_\om^a(\phi_\om)\in L^1_m(\Om)$. 
	Thus, applying the Birkhoff Ergodic Theorem, \eqref{cond CP1}, and Lemma~\ref{lem: conv of pressure limits}
	for each $\ep>0$ we can find $n\in\NN$ so large that
	\begin{align*}
		\int_\Om\int_I \phi_\om\, d\eta_\om^a\, dm(\om) -\ep
		&\leq 
		\frac{1}{n}S_{n,\sg}(\eta_\om^a(\phi_\om))
		\\
		&=
		\frac{1}{n}\lt(\eta_\om^a(\phi_\om)+\dots +\eta_\om^a(\phi_{\sg^{n-1}(\om)}\circ T_\om^{n-1})\rt)
		\\
		&=
		\frac{1}{n}\eta_\om^a(\log g_\om^{(n)})
		\leq 
		\frac{1}{n}\log \norm{g_\om^{(n)}}_\infty
		\\
		&<
		\frac{1}{n}\log\inf\cL_\om^n\ind_\om
		\leq 
		\cE P(\phi)+\ep.
	\end{align*}
	As this holds for every $\ep>0$ we must in fact have 
	\begin{align}\label{eq: atom int with a_om}
		\int_\Om a_\om \int_I \phi_\om \, d\eta_\om^a\, dm(\om)
			&<
		\int_\Om a_\om\log \lm_\om\, dm(\om).
	\end{align}	
	Inserting \eqref{eq: atom int with a_om} into \eqref{eq: int of info} we see that then 
	\begin{align*}
			\int_\Om\int_I I_{\eta_\om} +\phi_\om \, d\eta_\om\, dm(\om)
			&<
			\int_\Om c_\om\log\lm_\om\, dm(\om)
			+
			\int_\Om a_\om\log\lm_\om\, dm(\om)
			\\
			&=
			\int_\Om \log\lm_\om\, dm(\om).
	\end{align*}
	Thus, integrating with respect to $m$, we see that if $\int_\Om a_\om\, dm(\om)>0$, then 
	\begin{align*}
	\cE P(\phi)> 
	\int_\Om
	\lt(
	\int_I I_{\eta_\om} +\phi_\om \, d\eta_\om
	\rt)
	\, dm(\om). 
	\end{align*}
	Thus for equality to hold in \eqref{eq: vp on whole int statement} for $\eta\in\cP_T(\Om\times I)$ with $\eta_\om=c_\om\eta_\om^c+a_\om\eta_\om^a$, we see that we must have 
	$$
		\int_\Om a_\om\, dm(\om)=0,
	$$ 
	and thus, in light of Lemma~\ref{lem: VP on fibers}, we must have that $\eta_\om=\eta_\om^c=\mu_\om$ for $m$-a.e. $\om\in\Om$. 
\end{proof}
\begin{proof}[Proof of Theorem~\ref{main thm: eq states}]
	Theorem~\ref{main thm: eq states} now follows from Lemmas~\ref{lem: mu is an eq state}, \ref{lem: VP on fibers}, and \ref{lem: vp on Om x I}.
\end{proof}
\section{Examples}\label{sec:examples}
In this section we present several examples for which our general theory applies. In particular, we treat random  $\bt$-transformations, randomly translated random $\bt$-transformations, random infinitely branched Gauss-Renyi maps, random maps with non-uniform expansion (specifically random systems with Pomeau-Manneville maps and random maps with contracting branches), and a broad class of random Lasota-Yorke maps. 

Concerning examples, we would like to point out that $\beta$-transformations are an important class of systems, among the most studied in ergodic theory since the pioneering  work of Parry \cite{parry_-expansions_1960}; they serve also as a prototype  to test theories of random perturbations in the 
non-stationary setting. For instance limit theorems for sequential expanding dynamical systems were rigorously proved in \cite{ConzeRaugi} for $\beta$-transformations using quite advanced technical tools.  Similar care is necessary to treat the quenched case we deal with and we will show that the methods we use can also  be generalized to wider classes of map like the Lasota-Yorke ones.

\begin{comment}
Non-stationary processes arising from  composition of intermittent maps have been extensively studied in the last few years. The first result of this kind appeared in \cite{aimino_polynomial_2015} for  sequential concatenations of Pomeau-Manneville maps with different order of tangencies at the neutral fixed point. It was proved there a polynomial decay of (centered) correlations, better known as loss of memory.  For very close maps that result together with limit theorems were extended to the  quenched setting in \cite{nicol_central_2018, nicol_large_2019}.  Quenched decay of correlations using suitable random towers was successively obtained in \cite{bahsoun_quenched_2019}.  A very recent paper \cite{korepanov_loss_2020} improves the result in \cite{aimino_polynomial_2015} by getting sharp bounds on memory loss. In our paper we compose intermittent maps with expanding maps which appear with higher probability in order to guarantee fast mixing decay; we stress that we will  use a more general potential and we will not have limitations on the order of tangencies at the neutral fixed point (see Remark~\ref{rem: betq geq 5}).

content...
\end{comment}
As stated in Remark~\ref{rem: cond M5' and M6' and tilde N*}, it suffices to check conditions \eqref{cond M5'} and \eqref{cond M6'} rather than \eqref{cond M5} and \eqref{cond M6}. Thus, in the general strategy of checking that the conditions are satisfied for a particular example, one must find the number $N_*$ as defined in \eqref{eq: def of N}. For computational ease we have selected some of our examples (Sections \ref{sec:betadelta}, \ref{sec:infiniteBranches}, and \ref{sec:non-unifexp}) so that we will have $N_*=1$. However, we would like to point out that one could deal with the case of $N_*>1$ using similar arguments as the ones we present here. %Furthermore, we stress that we have chosen parameters in our examples to ensure that we will get $N_*=1$, and therefore, if one wishes is happy to deal with $N_*>1$, then one can apply our theory to examples with a larger range of parameters.

For the rest of this section, we assume $\Om$ is a topological space, $m$ is a complete probability measure on its Borel sigma-algebra and $\sigma: (\Omega, m) \to (\Omega, m)$ is  a continuous, ergodic, invertible, probability-preserving map.

\subsection{Random $\bt$-Transformations (I)}\label{sec:beta}
Let
$\beta: \Omega \to \RR_+$, given by $\om \mapsto \beta_\om$, be an $m$-continuous function. That is, there is a partition of $\Om \pmod{m}$  into at most countably many Borel sets $\Om_1, \Om_2, \dots$ such that $\beta$ is constant on each $\Om_j$, say $\beta|_{\Om_j}=\beta_j$. 
Consider a random beta transformation
$T_\om : [0,1] \to [0,1]$ given by
\[
T_\om(x) = \beta_\omega x \pmod{1}.
\]
\begin{lemma}\label{lem:randomcovering} 
	Assume $\beta$ satisfies the \emph{expanding on average} condition $E:=\int_\Om \log \beta_\om \,dm>0$.
	Then, the system has the random covering property \eqref{cond RC} (Definition~\ref{def: covering}). 
\end{lemma}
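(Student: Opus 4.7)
My plan is to reduce \eqref{cond RC} to the topological covering statement: for $m$-a.e.\ $\om\in\Om$ and every interval $J\subseteq[0,1]$ there exists $M=M_\om(J)\in\NN$ with $T_\om^M(J)=[0,1]$. Granted this, for any $n\geq M$ and $x\in X_{\sg^n(\om)}$ some $y_x\in J$ satisfies $T_\om^n(y_x)=x$, and thus by \eqref{cond SP1}
\[
\cL_\om^n\ind_J(x) \;\geq\; g_\om^{(n)}(y_x) \;\geq\; \min_{\substack{Z\in\cZ_\om^{(n)}\\ Z\cap J\neq\emptyset}}\inf_Z g_\om^{(n)} \;>\;0,
\]
which supplies a positive constant $C_{\om,n}(J)$. (The minimum is finite because $T_\om^n(J)$ is covered by finitely many full branches once $M$ is attained.)

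The covering statement is driven by the piecewise-affine structure of $T_\om$, which has slope $\beta_\om$ on each monotonicity branch and discontinuities at the points $k/\beta_\om$. Writing $J_n$ for the longest connected component of $T_\om^n(J)$, I would run the following dichotomy at each step: (a) if $|J_n|\geq 1/\beta_{\sg^n(\om)}$ then $\beta_{\sg^n(\om)}J_n$ is an interval of length at least $1$ and wraps around, giving $T_{\sg^n(\om)}(J_n)=[0,1]$, so $M_\om(J)=n+1$ works; (b) otherwise $J_n$ meets at most one discontinuity of $T_{\sg^n(\om)}$, so $T_{\sg^n(\om)}(J_n)$ is a union of at most two intervals of total length $\beta_{\sg^n(\om)}|J_n|$, yielding $|J_{n+1}|\geq \beta_{\sg^n(\om)}|J_n|/2$. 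Iterating (b) gives
\[
\log|J_n| \;\geq\; \log|J| \;+\; \sum_{k=0}^{n-1}\log\beta_{\sg^k(\om)} \;-\; n\log 2,
\]
and since $m$-continuity of $\beta$ makes $\log\beta$ measurable, Birkhoff's ergodic theorem turns the right-hand side into $n(E-\log 2)+o(n)$ for $m$-a.e.\ $\om$; together with the temperedness $\frac{1}{n}\log\beta_{\sg^n(\om)}\to 0$ this forces $|J_n|\beta_{\sg^n(\om)}\geq 1$ at some finite $n$, as soon as $E>\log 2$.

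The main obstacle is the regime $0<E\leq\log 2$, in which losing a factor of $\tfrac12$ at every iterate is too wasteful. The plan here is to refine the bookkeeping by observing that $J_n$ is actually split only when it straddles one of the discontinuities $k/\beta_{\sg^n(\om)}$, and that each fiber map $T_{\sg^k(\om)}$ has exactly one non-full branch of length $<1/\beta_{\sg^k(\om)}$. I would use this to show that the set of ``bad'' (splitting) times along the orbit has asymptotic upper density strictly less than $E/\log 2$, after which the displayed growth inequality forces $|J_n|$ to reach the covering threshold. An equivalent, and perhaps cleaner, formulation is to build a full cylinder $Z\in\cZ_\om^{(n)}$ inside $J$: full cylinders have length $\prod_{k<n}\beta_{\sg^k(\om)}^{-1}=e^{-nE+o(n)}$, while an inductive count using the single non-full branch at each level bounds the total Lebesgue mass of non-full cylinders at level $n$ by a sequence decaying at the same exponential rate, so that $J$ must contain a full cylinder for some $n$ and then $T_\om^n(J)\supseteq T_\om^n(Z)=[0,1]$. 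Measurability of $M_\om(J)$ and $C_{\om,n}(J)$ in $\om$ is immediate from the $m$-continuity of $\beta$ and the explicit form of the construction.
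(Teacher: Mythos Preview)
Your reduction from \eqref{cond RC} to the topological covering statement is correct (this is exactly Remark~\ref{rmk:coveringWFinitePart}, valid here since each $T_\om$ has finitely many branches). The part of your argument that works is the case $E>\log 2$: tracking the longest connected component and paying a factor $\tfrac12$ at each split is enough there.

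The genuine gap is the regime $0<E\le\log 2$. Neither proposed fix is convincing. For the first, no mechanism is given that would force the density of splitting times below $E/\log 2$; the location of $J_n$ relative to the grid $\{k/\beta_{\sg^n(\om)}\}$ is not controlled, and one can easily imagine orbits where splitting occurs at almost every step. For the second, full $n$-cylinders have length $e^{-nE+o(n)}$, but you claim the \emph{total} Lebesgue mass of non-full $n$-cylinders decays at the \emph{same} rate $e^{-nE}$; if both decay at the same exponential rate there is no separation, so this comparison cannot by itself force $J$ to contain a full cylinder. (In fact for a single $\beta$-map the decay of the non-full mass is governed by delicate properties of the $\beta$-expansion of $1$, and in the random setting with only $E>0$ assumed this combinatorics is not readily available.)

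The paper's argument bypasses the whole issue by exploiting a structural feature of $\beta$-transformations that you did not use: the fixed point at $0$. As soon as the iterated image of $J$ contains a discontinuity $k/\beta_{\sg^{\ell-1}(\om)}$ in its interior (which must happen in finite time, since otherwise the image stays a single interval of length $\beta_\om^{(n)}|J|\to\infty$), the next image contains an interval of the form $[0,a)$. From then on there is \emph{no loss at all}: $T_{\sg^\ell(\om)}([0,a])\supseteq[0,\min(1,\beta_{\sg^\ell(\om)}a)]$, so after $\tilde n=\min\{n:\beta_{\sg^\ell(\om)}^{(n)}a\ge 1\}$ further iterates (finite a.e.\ by Birkhoff) the image is all of $[0,1]$. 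In your language: when a split occurs, instead of keeping the longer piece, keep the piece that starts at $0$; you may lose more than half once, but you never split again.
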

\begin{proof}
	Let $J\subset I$ be an interval.
	The expanding on average condition, combined with the Birkhoff ergodic theorem, ensures that for \maeom, in finite time, say $\ell-1 \in \NN$, the image of $J$  contains a discontinuity in its interior. Hence, at the following iterate, it contains an interval of the form $[0,a]$ for some $a>0$. %and then one uses forward images of [0,a] to get covering of [0,1].
	% we note that $T_\om^{(\ell)} (J)$ will be of length $\beta_\om^{(\ell)}|J|$, unless at some previous time $0\leq \ell'<\ell$, the set $T_\om^{(\ell')} (J)$ has a discontinuity of $T_{\sigma^{\ell'}\om}$ in its interior. The expansion on average condition ensures that this happens in finite time for \maeom.
	%Once this happens, the sub-interval $J'\subset J$ mapped to the right of this discontinuity has the property that $T_\om^{(\ell'+1)} (J') \supseteq [0,a]$, for some $a>0$,
	After this, it takes at most 
	$$
	\tilde n (\om)= \min \{ n\geq 1 : \beta_{\sigma^{\ell}\om}^{(n)} a \geq 1 \}
	$$ 
	more iterates for the images of $J$ %under the maps $\{T_\om^{(n)}\}_{n\in\NN}$ 
	to cover $I$,  where $\beta_\om^{(n)} = \beta_\om \beta_{\sigma\om} \dots \beta_{\sigma^{n-1}\om}$. Once again, this time is finite for \maeom\ because of the expanding on average condition and Birkhoff ergodic theorem.
\end{proof}

%This observation allows us to show the following.

\begin{lemma}\label{lem:excontPot}
	Let $\Phi:\Om \to BV(I)$ be an $m$-continuous function\footnote{Without loss of generality, we assume the partitions on which $\beta$ and $\Phi$ are constant coincide.}.
	Let
	$\phi:\Om \times I \to \RR$ be given by $\phi_\om:=\phi(\om, \cdot)=\Phi(\om)$. Then
	%be a potential such that 
	$g_\om = e^{\phi_\om}\in BV(I)$ for \maeom.
	Assume there exist  $\phi_-, \phi_+\in\RR$ such that $\phi_-\leq \phi(\om,x)<\phi_+$, and that 
	$$
	\phi_+ - \phi_-< \int_\Om \log \floor{\beta_\om} \,dm=:E_-.
	$$ 
	Furthermore, assume the \emph{integrability condition} $\int_\Om \log \beta_\om\, dm<\infty$
	\footnote{Note that this condition is equivalent with the assumption that $\int_\Om \log \ceil{\beta_\om}\, dm<\infty$}.
	Finally, assume that either 
	\begin{enumerate}[(i)]
		\item $g_\om=|T'_\om|^{-t}$ for some $t\geq 0$,
		\item condition \eqref{cond P1} holds for the dynamical partitions $\cP_{\om,n}(0,1)=\cZ_\om^{(n)}$.
	\end{enumerate}
	%, defined by endpoints $0, \tfrac{1}{\beta_\om}, \tfrac{2}{\beta_\om}, \dots, \tfrac{\floor{\beta_\om}}{\beta_\om}, 1$.
	Then 
	$\phi$ is a summable contracting potential for $(\Om,\sg,m,(T_\om)_{\om\in\Om})$, and  
	%the measurability and integrability 
	conditions \eqref{cond GP} and  \eqref{cond M1}--\eqref{cond M4} are satisfied.
\end{lemma}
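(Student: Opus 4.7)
The plan is to verify each required condition in turn, exploiting the uniform bounds $\phi_-\leq \phi_\om\leq \phi_+$ together with the integrability hypothesis on $\log \beta_\om$.

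\textbf{Summability and the basic structural hypotheses.} Since $T_\om$ is the standard $\beta$-transformation, $\cZ_\om^*$ consists of at most $\lceil \beta_\om\rceil$ intervals, all but at most one of which are ``full'' of length $\beta_\om^{-1}$. Condition \eqref{cond SP1} is immediate from $\inf g_\om^{(1)}\rvert_Z\geq e^{\phi_-}>0$; \eqref{cond SP2} is part of the assumption that $\Phi(\om)\in\BV(I)$; and \eqref{cond SP3} follows from
\[
S_\om^{(1)}=\sum_{Z\in\cZ_\om^*}\sup_Z g_\om^{(1)}\leq \lceil \beta_\om\rceil e^{\phi_+}<\infty.
\]
The generating property \eqref{cond GP} is a standard fact for $\beta$-transformations, since $\diam(Z)\leq \beta_\om^{-1}$ for any $Z\in\cZ_\om^{(1)}$ and the expanding-on-average assumption forces $\operatorname{diam}(Z)\to 0$ along $m$-a.e.\ $\sigma$-orbit.

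\textbf{Verification of the contracting potential inequalities.} Because $\phi_\om<\phi_+$ everywhere, the trivial sub-multiplicative bound $\|g_\om^{(n)}\|_\infty\leq e^{n\phi_+}$ gives
\[
\lim_{n\to\infty}\frac{1}{n}\log\|g_\om^{(n)}\|_\infty\leq \phi_+.
\]
For the lower bound on $\inf\cL_\om^n\ind_\om$, I will observe that every point $x\in I$ has at least $\prod_{j=0}^{n-1}\lfloor\beta_{\sigma^j(\om)}\rfloor$ preimages under $T_\om^n$ (corresponding to full branches), which gives
\[
\inf\cL_\om^n\ind_\om\geq e^{n\phi_-}\prod_{j=0}^{n-1}\lfloor\beta_{\sigma^j(\om)}\rfloor.
\]
Taking $\frac{1}{n}\log$ and applying Birkhoff's ergodic theorem yields
\[
\lim_{n\to\infty}\frac{1}{n}\log\inf\cL_\om^n\ind_\om\geq \phi_-+E_-,
\]
and the hypothesis $\phi_+-\phi_-<E_-$ gives \eqref{cond CP1}. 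In view of Remark~\ref{rmk:checkCP}, since $\phi_-\leq \log\|g_\om^{(n)}\|_\infty/n\leq \phi_+$ is uniformly bounded, we may apply the subadditive/superadditive ergodic theorem with $L^1$ convergence to deduce that for all sufficiently large $N$,
\[
\frac{1}{N}\int_\Om \log\frac{\|g_\om^{(N)}\|_\infty}{\inf\cL_\om^N\ind_\om}\,dm(\om)\leq \phi_+-(\phi_-+E_-)+o(1)<0,
\]
and the integrand is clearly bounded below by $-\infty$ thanks to the uniform bound $\phi_-\leq \phi_\om$ and the lower bound on preimages, giving \eqref{cond CP2}.

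\textbf{Measurability and integrability (\eqref{cond M1}--\eqref{cond M4}).} Condition \eqref{cond M1} is immediate because $(\om,x)\mapsto \beta_\om x\bmod 1$ is measurable, using that $\beta$ is $m$-continuous and hence measurable. For \eqref{cond M2}, given $f\in\BV_\Om(I)$, the formula $\cL_\om f_\om(x)=\sum_{k=0}^{\lceil\beta_\om\rceil-1}g_\om^{(1)}(y_k)f_\om(y_k)\ind_{T_\om([y_k/\beta_\om,(y_k+1)/\beta_\om])}(x)$ shows that $\om\mapsto \cL_\om f_\om(x)$ is measurable in $\om$ for each $x$, and a similar argument handles $\om\mapsto \|\cL_\om f_\om\|_\infty$ by partitioning $\Om$ according to the $m$-continuous function $\beta$. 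Conditions \eqref{cond M3} and \eqref{cond M4} reduce to checking that
\[
\log S_\om^{(1)}\leq \log\lceil\beta_\om\rceil+\phi_+, \qquad \log\inf\cL_\om\ind_\om\geq \log\lfloor\beta_\om\rfloor+\phi_-,
\]
both of which lie in $L^1_m(\Om)$ by the integrability assumption $\int_\Om\log\beta_\om\,dm<\infty$.

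\textbf{Main obstacle.} No single step is particularly hard, but the most delicate one is \eqref{cond CP1}--\eqref{cond CP2}: the only place we genuinely use the gap $E_->\phi_+-\phi_-$ is in the preimage-counting lower bound for $\inf\cL_\om^n\ind_\om$, and this requires the ``full branch'' geometry of the $\beta$-map. If the potential were not uniformly bounded, or if fewer branches were guaranteed to be full, the argument would need to be refined to account for partial branches and the large-deviation behavior of $\lfloor\beta_\om\rfloor$.
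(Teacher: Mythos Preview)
Your argument is correct and follows essentially the same route as the paper. Two minor points are worth tightening. First, for \eqref{cond M3}--\eqref{cond M4} you give only one-sided bounds; the complementary inequalities $\log S_\om^{(1)}\geq \phi_-$ and $\log\inf\cL_\om\ind_\om\leq \log S_\om^{(1)}$ are trivial but should be mentioned, and for the lower bound $\log\lfloor\beta_\om\rfloor+\phi_-$ to be integrable you implicitly need $\lfloor\beta_\om\rfloor\geq 1$ a.e., which follows (as the paper notes explicitly) from $E_->0$. Second, your treatment of \eqref{cond CP1}--\eqref{cond CP2} via limits and the subadditive ergodic theorem is correct but unnecessarily heavy: the paper simply invokes Remark~\ref{rmk:checkCP} with $N_1=N_2=1$, checking directly that
\[
\int_\Om\log\|g_\om^{(1)}\|_\infty\,dm\leq\phi_+<\phi_-+E_-\leq\int_\Om\log\inf\cL_\om\ind_\om\,dm,
\]
which dispatches both conditions in one line.
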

\begin{proof}
	Condition \eqref{cond GP} holds because the system is expanding on average.
	The summability conditions  (Definition~\ref{def: summable potential}) are straightforward to check. Indeed, \eqref{cond SP2} is a hypothesis. \eqref{cond SP1} and \eqref{cond SP3} hold because $\phi$ is bounded and the partitions are finite. 
	When $g_\om=|T'_\om|^{-t}$ for some $t\geq 0$,
	\eqref{cond P1} holds with $\hal=0$ because $g_\om^{(n)}$ is constant a.e. In general,
	\eqref{cond P2} holds with $\hgm=1$ because $\cP_{\om,n}(0,1)$ coincides with the dynamical partition $\cZ_\om^{(n)}$.
	\eqref{cond M1} and \eqref{cond M2} follow from $m$-continuity; see e.g. \cite{FLQ2}.
	\eqref{cond M3} holds because 
	$e^{\phi_-}\leq S_{\om}^{(1)} \leq \ceil{\beta_\om} e^{\phi_+}$, where $\ceil{\beta_\om}$  is the least integer greater than or equal to $\beta$ and also the cardinality of $\mathcal Z_\om^*$. Thus,
	$$ 
	\int_\Om | \log S_{\om}^{(1)} | \,dm \leq \max \set{|\phi_-|, \int_\Om \log \ceil{\beta_\om}\, dm + |\phi_+| }<\infty.
	$$ 
	Notice that $\inf \cL_\om \ind_\om \geq n(T_\om) e^{\phi_-}$,
	where $n(T_\om) =  \min  \{ \#  T_\om^{-1}(x) :  x\in I \}$ is the minimum number of preimages of a point $x\in I$ under $T_\om$. Since $T_\om$ is a $\bt$-transformation, 
	$$
	n(T_\om) = \#  (T_\om)^{-1}(1)= \floor{\beta_{\om}},
	$$ 
	the largest integer less than or equal to $\beta_\om$. 
	Thus, in view of Remark~\ref{rmk:checkCP}, with $N_1=N_3=1$, in order to show the potential is contracting (Definition~\ref{def: contracting potential}), it is sufficient to have that 
	\begin{equation*}\label{eq:excontPot}
	\phi_+<  \phi_- +\int_\Om  \log \floor{\beta_{\om}}\, dm,
	\end{equation*}
	as assumed.
	
	Furthermore, the condition $E_->\phi_+-\phi_-$ ensures that $\floor{\beta_\om}\geq1$ for \maeom. This implies that $\beta_\om\geq 1$ for \maeom, and so $\log \inf \cL_\om \ind \geq \phi_-$. This yields \eqref{cond M4}.
	%Indeed, $\tfrac1M \log \cL_\om^{M}1(x) \geq \log \frac{ \essinf_{\om \in \Om} n(T_\om^{(M)})}{M} + \phi_-$ for every $x\in I$ and $\tfrac1M \log\|g_\om^{(M)}\|_\infty \leq \phi_+$.
	%
	%Note that $\lim_{n\to\infty}\tfrac1n \log b_F(T_\om^{(n)}) \geq \int_\Om \log \floor{\beta_\om} dm$, where $\floor{\beta_\om}$ is the largest integer less than or equal to $\beta_\om$. Thus, if $\phi_+ - \phi_-< E_-$, then \eqref{eq:excontPot} is satisfied for sufficiently large $M.$
	%Let $\frac{\phi_+}{\phi_-}< k \in \NN$, and consider the disjoint intervals $J_j=[\tfrac {j-1}{k}, \frac{j}{k}]$, $1\leq j \leq k$. The argument in the proof of Lemma~\ref{lem:randomcovering}  shows that for every $J_j$, there exists $m_j \in \NN$ such that $J_j$ contains a full branch for $T_\om^{(m_j)}$. 
	%In particular, letting $M= \max\{m_1, \dots, n_k\}$, we have that
	%$n(T_\om^{(M)})=b(T_\om^{(M)})>k$. 
	%Thus, \eqref{eq:excontPot} is satisfied.
\end{proof}
\begin{remark}
	Note that the hypotheses of Lemma~\ref{lem:excontPot} imply those of Lemma~\ref{lem:randomcovering}.
\end{remark}
\begin{lemma}\label{lem:condLem=>condThm}
	Under the hypotheses of Lemma~\ref{lem:excontPot}, Theorems~\ref{main thm: summary existence of measures and density} -- \ref{main thm: eq states} apply provided
	\begin{equation}\label{eq:IntCond2}
	M_{\om,N_*} \in L^1_m(\Om)\quad \text{and} \quad  \log b_\om^{(M_{\om,N_*})} \in L^1_m(\Om),
	\end{equation}
	where $N_*$ is defined in \eqref{eq: def of N}, $M_{\om,N_*} = \max_{ Z\in \mathcal Z_\om^{(N_*)}} M_\om(Z)$ is the maximum covering time for branches of $T_\om^{N_*}$  and
	$b_\om:=\ceil{\beta_\om}$ is the least integer greater than or equal to $\beta_\om$.
\end{lemma}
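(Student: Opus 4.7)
Lemma~\ref{lem:excontPot} has already established \eqref{cond GP}, \eqref{cond SP1}--\eqref{cond SP3}, \eqref{cond M1}--\eqref{cond M4}, and the contracting property \eqref{cond CP1}--\eqref{cond CP2}, while Lemma~\ref{lem:randomcovering} supplies the random covering \eqref{cond RC}. Since the weights satisfy hypothesis (ii) of Lemma~\ref{lem:excontPot}, the finite dynamical partitions $\cP_{\om,n}(0,1)=\cZ_\om^{(n)}$ satisfy \eqref{cond P1} and \eqref{cond P2}. Invoking Remark~\ref{rem: cond M5' and M6' and tilde N*}, it therefore suffices to verify \eqref{cond M5'} and \eqref{cond M6'}, i.e.\ the $L^1_m$--integrability of $\min_P \log\inf_{J(P)} g_\om^{(M_\om(J(P)))}$ and $\max_P \log\|\cL_\om^{M_\om(J(P))}\ind_\om\|_\infty$, where the minima and maxima are over $P\in\cP_{\om,N_*}=\cZ_\om^{(N_*)}$.

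The plan is to reduce both quantities to the two integrability hypotheses in \eqref{eq:IntCond2} via elementary cocycle estimates. For each $P\in\cZ_\om^{(N_*)}$, the minimality of $N_{\om,N_*}(P)$ in \eqref{eq: def of J(P)} forces $J(P)=P$, so that $M_\om(J(P))\le M_{\om,N_*}$. From the uniform pointwise bounds $\phi_-\le \phi_\om\le \phi_+$ one gets, for every $k\in\NN$,
\begin{align*}
e^{k\phi_-}\le g_\om^{(k)}\le e^{k\phi_+}\quad\text{on }I,
\end{align*}
while the $\bt$-transformation structure yields $1\le \#T_\om^{-k}(x)\le b_\om^{(k)}:=\prod_{j=0}^{k-1}b_{\sg^j(\om)}$ for every $x\in X_{\sg^k(\om)}$; note that $b_\om\ge 1$ $m$-a.e.\ because $E_->\phi_+-\phi_-\ge0$ forces $\floor{\bt_\om}\ge 1$. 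Combining these gives the cocycle estimate
\begin{align*}
k\phi_-\le \log \inf \cL_\om^k\ind_\om\le \log\|\cL_\om^k\ind_\om\|_\infty \le \log b_\om^{(k)}+k\phi_+.
\end{align*}

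Applying the $g_\om^{(k)}$ bound with $k=M_\om(J(P))\le M_{\om,N_*}$ produces
\begin{align*}
|\min_{P} \log\inf_{J(P)} g_\om^{(M_\om(J(P)))}|\le M_{\om,N_*}\max\{|\phi_-|,|\phi_+|\},
\end{align*}
which is integrable by the hypothesis $M_{\om,N_*}\in L^1_m(\Om)$, yielding \eqref{cond M5'}. Similarly, using the monotonicity of $k\mapsto b_\om^{(k)}$ (since $b_\om\ge 1$),
\begin{align*}
|\max_{P}\log\|\cL_\om^{M_\om(J(P))}\ind_\om\|_\infty|\le \log b_\om^{(M_{\om,N_*})}+M_{\om,N_*}\max\{|\phi_-|,|\phi_+|\},
\end{align*}
which is integrable by both hypotheses in \eqref{eq:IntCond2}, yielding \eqref{cond M6'}. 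With every standing assumption thereby verified, $(\Om,\sg,m,I,T,\phi)$ is a random weighted covering system and Theorems~\ref{main thm: summary existence of measures and density}--\ref{main thm: eq states} apply.

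There is no serious obstacle here: the proof is essentially a checklist, and the only subtlety is bookkeeping, namely confirming that for the choice $\cP_{\om,N_*}=\cZ_\om^{(N_*)}$ one may always take $J(P)=P$ (so that $M_\om(J(P))\le M_{\om,N_*}$), and that the $b_\om^{(k)}$ factor in the upper bound for $\|\cL_\om^k\ind_\om\|_\infty$ is monotone in $k$, which is precisely what allows the uniform-in-$P$ bound needed for the integrability of the $\max_P$.
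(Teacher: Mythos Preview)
Your proof is correct and follows essentially the same route as the paper: reduce to verifying \eqref{cond M5'} and \eqref{cond M6'} for the choice $\cP_{\om,N_*}=\cZ_\om^{(N_*)}$, and bound both quantities using $M_{\om,N_*}\phi_\pm$ and $\log b_\om^{(M_{\om,N_*})}$. If anything, you are slightly more careful than the paper in making explicit that $J(P)=P$ (so $M_\om(J(P))\le M_{\om,N_*}$) and in invoking the monotonicity of $k\mapsto b_\om^{(k)}$ to pass from the $P$-dependent covering times to the uniform one; the paper simply rewrites \eqref{cond M5'}--\eqref{cond M6'} directly with $M_{\om,N_*}$ in place of $M_\om(J(P))$.
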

\begin{proof}
	In view of Lemmas~\ref{lem:randomcovering} and \ref{lem:excontPot}, in order to satisfy the hypotheses of Theorems~\ref{main thm: summary existence of measures and density} -- \ref{main thm: eq states}, it is enough to verify the conditions 
	\eqref{cond M5'} and \eqref{cond M6'}, namely
	\begin{equation}\label{eq:IntCondPot}
	\log \| \cL_\om^{M_{\om,N_*}}\ind \|_\infty \in L^1_m(\Om),\quad \log \essinf g_\om^{(M_{\om,N_*})} \in L^1_m(\Om).
	\end{equation}
	
	Let us assume $\phi, \phi_-, \phi_+\in\RR$ are as in Lemma~\ref{lem:excontPot}. Then
	$$
	M_{\om,N_*} \phi_-\leq \log \essinf g_\om^{(M_{\om,N_*})}\leq \log \esssup g_\om^{(M_{\om,N_*})} \leq M_{\om,N_*} \phi_+.
	$$ 
	Also, recalling that the map $x\mapsto \beta_\om x \pmod{1}$ has $b_\om=\ceil{\beta_\om}$ branches, we have that
	\[
	\log \essinf g_\om^{(M_{\om,N_*})} \leq    
	\log \| \cL_\om^{M_{\om,N_*}}\ind \|_\infty \leq  \log \esssup  (b_\om^{(M_{\om,N_*})}g_\om^{(M_{\om,N_*})}), 
	\]
	where $b_\om^{(n)}=\prod_{j=0}^{n-1} b_{\sigma^j\om}$.
	Therefore,
	\[
	M_{\om,N_*} \phi_- \leq    
	\log \| \cL_\om^{M_{\om,N_*}}\ind \|_\infty \leq  \log ( b_\om^{(M_{\om,N_*})}) + {M_{\om,N_*}} \phi_+. 
	\]
	Thus, under the hypotheses of the lemma, conditions \eqref{eq:IntCondPot} hold.
\end{proof}

In what follows, we discuss explicit conditions which will hold if \eqref{eq:IntCond2} holds.
Notice that once the first condition of \eqref{eq:IntCond2} holds, the second one also holds provided there exists $\beta_+\in \RR$ such that $\beta_\om\leq \beta_+$. Let us investigate the first condition of \eqref{eq:IntCond2} in more detail.
% \[
% \log b_\om^{(n)} \in L^1_m(\Om)
% \]

%We note that the covering time for $Z\in \mathcal Z_\om^{(N)}$ is at most \flag{?}
%$\max_{0\leq \ell < N} \min \{ n\geq 1 : \beta_{\sigma^\ell \om}^{(n)} s_{\ell, \om} \geq 1 \}$, where
%$s_{\ell, \om}=\tfrac{\floor{\beta^{(\ell)}_\om}}{\beta^{\ell}_\om}$.

Let us first discuss the case $N_*=1$. That is, assume \eqref{eq: def of N} holds with $N_*=1$.
%For each $\om$,  the set $\mathcal Z_\om^*$ consists of  $\ceil{\beta_\om}$ elements. 
Let
\begin{equation}\label{eq:som}
s_\om =\tfrac{\{\beta_\om\}_+ }{\beta_\om}
%\begin{cases} \tfrac{\{\beta_\om\} }{\beta_\om} \quad & \text{if }  \beta_\om\not\in \NN\\
%\tfrac{1}{\beta_\om}\quad & \text{if }  \beta_\om\in \NN,\end{cases}
\end{equation}
where $\{\beta\}_+$ denotes the fractional part of $\beta$ if $\beta\notin \NN$ and $\{\beta\}_+=1$ if $\beta\in\NN$.
Then, $s_\om$ is the length of the shortest branch of the monotonicity partition of $T_\om$.
Note that all but at most one of the elements $Z\in \mathcal Z_\om^*$ have the property that $T_\om (Z)=I$, and the remaining one is of the form $Z_\om=[1-s_\om, 1]$.
%, where $\ceil{\beta}$ is the least integer greater than or equal to $\beta$.
%if $\beta_\om \notin \ZZ$, and of  $\floor{\beta_\om}$ elements 
% When  $\beta_\om\in \ZZ$ the map $T_\om$ has only full branches. 
%Otherwise, all but one of the elements $Z\in \mathcal Z_\om^*$ have the property that $T_\om (Z)=I$, and the remaining one is of the form $Z_\om=[1-s_\om, 1]$, where $s_\om=1-\tfrac{\floor{\beta_\om}}{\beta_\om} = \tfrac{\{\beta_\om\} }{\beta_\om}$, 
%Furthermore, $T_\om (Z_\om) = [0, \beta_\om s_\om]$.
Thus, it takes at most $\tau_\om(s_\om)$ iterates for each $Z\in \mathcal Z_\om^*$ to cover $I$, where
\begin{equation}\label{eq:covTime}
\tau_\om(s):=\min \{ n\geq 1 : \beta_\om^{(n)} s \geq 1 \}.
\end{equation} 
Then, the covering time satisfies $M_{\om,1}= \tau_\om(s_\om)$. 
%applies to every $\beta_\om>0$ provided the definition of $s_\om$ is extended to integer values of $\beta_\om$ as follows:
%\begin{equation}\label{eq:som}
%s_\om =\begin{cases} \tfrac{\{\beta_\om\} }{\beta_\om} \quad & \text{if }  \beta_\om\not\in \NN\\
%\tfrac{1}{\beta_\om}\quad & \text{if }  \beta_\om\in \NN.\end{cases}
%\end{equation}
%That is, $s_\om$ is the length of the shortest branch of the monotonicity partition of $T_\om$.
The fact that $\tau_\om(s)<\infty$ for every $s>0$ and \maeom \ follows from the expansion on average property.
Thus, in this case, \eqref{eq:IntCond2} holds provided
\[
\int_\Om  \tau_\om(s_\om)\,dm <\infty.
\]
One may construct examples (as done in the following subsection, where the upper bound on $\beta$ is also relaxed) when this occurs, for example by  controlling the occurrence of small pieces $s_\om$ and small values of $\beta_\om$.

When \eqref{eq: def of N} holds with $N_*>1$, 
%one needs to consider the smaller branches introduced in the partition $\mathcal Z_\om^{(N)}$. In this case, 
\eqref{eq:IntCond2} holds provided
\begin{equation}\label{eq:intCovTimeN}
\int_\Om  \tau_\om(s_{\om,N_*}) \,dm <\infty,
\end{equation}
where $s_{\om,N_*}$ is length of the shortest branch of $\mathcal Z_\om^{(N_*)}$.
This length may be characterized inductively as follows: $s_{\om,1}=s_\om$ is given by \eqref{eq:som}.
Assuming $s_{\om,k}$ has been computed, 
\[
s_{\om,k+1} = \min \Big\{ \tfrac{s_{\sigma^k(\om),1}}{\beta_\om^{(k)}}, \tfrac{ \{\beta_{\om}^{(k+1)}s_{\om,k}\}_+ }{\beta_{\sigma^k(\om)}}  \Big \}.
\]

\subsection{Random $\bt$-Transformations (II)}\label{sec:betadelta}

In this subsection we assume that there is some $\dl>0$ such that for $m$-a.e. $\om\in\Om$ we have 
$$	
	\beta_\omega\in \bigcup_{k\in\NN} [k+\delta,k+1].
$$ 
We consider the potential $\varphi_\omega=-t\log|T'_\omega|$, $t\ge 0$.
Note that in this setting we have relaxed the uniform lower bound on $\phi_\om$ and the uniform upper bound on the number of branches of $T_\om$ from the previous subsection.
%Nevertheless, we obtain a uniform covering time, from which we hope to obtain a CLT.

\begin{lemma}
	\label{uniformcoveringtime}
	Under the above conditions, the time for an interval of monotonicity of $T_\omega$ to cover is no more than $1+\left\lceil \frac{-\log\delta}{\log(1+\delta)}\right\rceil$.
\end{lemma}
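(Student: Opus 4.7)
The plan is to analyze the forward images of a monotonicity interval of $T_\omega$ and show that their lengths grow by a factor of at least $1+\delta$ per iterate until they exceed $1$, at which point the interval must cover $I$. Under the hypothesis $\beta_\omega\in [k+\delta,k+1]$ for some $k\ge 1$, each element $Z\in\mathcal Z_\omega^*$ is either a \emph{full} branch with $T_\omega(Z)=I$, in which case covering is achieved in a single iterate, or the \emph{short} rightmost branch $Z_\omega=[1-s_\omega,1]$ with $T_\omega(Z_\omega)=[0,s_\omega]$, where $s_\omega=\{\beta_\omega\}_+$ as in \eqref{eq:som}. The key observation is that the hypothesis forces $s_\omega\ge \delta$ whenever this short branch is present (if $\beta_\omega=k+1\in\mathbb N$ then all branches are full).

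The main step is then to follow the forward iterates of the interval $[0,s_\omega]$ under the cocycle. Because $T_{\sigma^i\omega}$ restricted to $[0,1/\beta_{\sigma^i\omega}]$ is the leftmost full branch and $[0,s]\subset [0,1/\beta_{\sigma^i\omega}]$ whenever $\beta_{\sigma^i\omega}s<1$, one has the simple dichotomy: either $\beta_{\sigma^i\omega} s<1$, in which case $T_{\sigma^i\omega}([0,s])=[0,\beta_{\sigma^i\omega}s]$, or $\beta_{\sigma^i\omega}s\ge 1$, in which case $[0,1/\beta_{\sigma^i\omega}]\subset [0,s]$ and the image is all of $I$. Iterating this dichotomy starting from $s_0:=s_\omega\ge \delta$, and using $\beta_{\sigma^i\omega}\ge 1+\delta$ for every $i$, one obtains by induction that as long as covering has not yet occurred after $j$ further iterates, the current image has length $s_j\ge \delta(1+\delta)^j$.

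Finally, covering is certain once $s_j\ge 1$, which is guaranteed as soon as $\delta(1+\delta)^j\ge 1$, i.e.\ $j\ge \frac{-\log\delta}{\log(1+\delta)}$. Choosing $j=\left\lceil \frac{-\log\delta}{\log(1+\delta)}\right\rceil$ and adding the single initial iterate from $Z_\omega$ to $[0,s_\omega]$ yields the announced bound $1+\left\lceil \frac{-\log\delta}{\log(1+\delta)}\right\rceil$. There is no real obstacle here; the only point requiring mild care is the easy verification of the dichotomy above (which relies on the fact that leftmost branches of $\beta$-transformations are always full), and handling the trivial boundary case $\beta_\omega\in\mathbb N$ separately.
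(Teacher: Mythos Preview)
Your proof is correct and follows essentially the same route as the paper's: handle full branches in one step, note that the short branch maps into an interval $[0,s_\omega]$ with $s_\omega\ge\delta$, and then use the dichotomy that at each subsequent step the interval either covers (because it contains the leftmost full branch) or is stretched by a factor at least $1+\delta$. Your presentation of the dichotomy is slightly more explicit than the paper's, but the argument is the same.
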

\begin{proof}
	If the interval of monotonicity is a full branch it takes one iteration.
	Otherwise, we must be at the final (non-full) branch and therefore after one iteration we have an interval $[0,\delta)$.
	At each subsequent iteration this interval grows by a factor at least $1+\delta$.
	If the interval is cut at some future point, it means the interval has expanded across a full branch and we have covered.
	In the worst case, we repeatedly apply the map with least slope, namely $1+\delta$ and it takes $n$ further iterations beyond the first, where $\delta(1+\delta)^n\ge 1$.
	Solving for $n$ yields the claim.
\end{proof}

\begin{lemma}
	\label{lem:conds1}
	Under the $m$-continuity conditions of Lemma \ref{lem:excontPot} and the conditions on the maps in this subsection, additionally assume that
	\begin{equation}
	\label{log9}
	\int_\Om \log\lfloor\beta_\omega\rfloor\ dm(\omega)>\log 3
	\end{equation}
	and
	\begin{equation}
	\label{blb}
	\int_\Om\log\lceil\beta_\omega\rceil\ dm(\omega)<\infty.
	\end{equation}
	%, defined by endpoints $0, \tfrac{1}{\beta_\om}, \tfrac{2}{\beta_\om}, \dots, \tfrac{\floor{\beta_\om}}{\beta_\om}, 1$.
	Then $\phi$ is a summable contracting potential, and the system satisfies %the measurability and integrability
	conditions \eqref{cond M1}--\eqref{cond M4} and \eqref{cond M5'}--\eqref{cond M6'}. Thus Theorems~\ref{main thm: summary existence of measures and density} -- \ref{main thm: eq states} apply.
\end{lemma}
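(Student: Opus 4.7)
My plan is to verify each condition defining a random weighted covering system, with $N_*=1$, taking advantage of two structural simplifications: the weight $g_\om=\beta_\om^{-t}$ is constant on each element of $\cZ_\om^{(n)}$, and Lemma~\ref{uniformcoveringtime} uniformly bounds the covering time of any monotonicity interval by $N_0:=1+\lceil -\log\delta/\log(1+\delta)\rceil$. Constancy of $g_\om^{(n)}$ handles \eqref{cond SP1}--\eqref{cond SP3} immediately via $S_\om^{(1)}=\lceil\beta_\om\rceil\beta_\om^{-t}<\infty$, and in view of Remark~\ref{rem: when partitions Z_om^n=P_om,n} I would set $\cP_{\om,n}(0,1):=\cZ_\om^{(n)}$, obtaining \eqref{cond P1} and \eqref{cond P2} with $\hal=0$, $\hgm=1$ and $J(P)=P$ for every $P$.

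The crucial step is \eqref{cond CP1}--\eqref{cond CP2}, and this is where the threshold $\log 3$ enters. Every point of $I$ has at least $\lfloor\beta_\om\rfloor$ preimages under $T_\om$, so $\inf\cL_\om\ind_\om\geq\lfloor\beta_\om\rfloor\beta_\om^{-t}$; hence
\begin{align*}
Q_\om^{(1)}=\frac{(\hal+2\hgm+1)\|g_\om^{(1)}\|_\infty}{\inf\cL_\om\ind_\om}\leq\frac{3}{\lfloor\beta_\om\rfloor}.
\end{align*}
Integrating and invoking hypothesis \eqref{log9}, I obtain $\int_\Om\log Q_\om^{(1)}\,dm\leq \log 3-\int_\Om\log\lfloor\beta_\om\rfloor\,dm<0$, so Remark~\ref{rmk:checkCP} (applied with $N_1=N_2=1$) delivers both \eqref{cond CP1} and \eqref{cond CP2} at $N_*=1$.

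For the remaining conditions I would reuse the arguments of Lemma~\ref{lem:excontPot}: $m$-continuity supplies \eqref{cond GP}, \eqref{cond M1}, and \eqref{cond M2}; the identities $\log S_\om^{(1)}=\log\lceil\beta_\om\rceil-t\log\beta_\om$ and $\log\inf\cL_\om\ind_\om=\log\lfloor\beta_\om\rfloor-t\log\beta_\om$, combined with $\log\lfloor\beta_\om\rfloor\leq\log\beta_\om\leq\log\lceil\beta_\om\rceil$ and \eqref{blb}, yield \eqref{cond M3}--\eqref{cond M4}. For \eqref{cond M5'}--\eqref{cond M6'}, the uniform bound $M_\om(J(P))\leq N_0$ combined with the already-verified \eqref{cond M3} controls the relevant extrema by finite sums of the form $\sum_{j=0}^{N_0-1}|\log S_{\sg^j(\om)}^{(1)}|$ and $t\sum_{j=0}^{N_0-1}\log\beta_{\sg^j(\om)}$, each in $L^1_m(\Om)$. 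Finally, \eqref{cond RC} follows from Lemma~\ref{uniformcoveringtime} via Remark~\ref{rmk:coveringWFinitePart}, since $\cZ_\om^*$ is finite for each $\om$.

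The main obstacle to watch is the sharpness of \eqref{log9}: the constant $3$ comes from $(\hal+2\hgm+1)$ evaluated at the optimal choice $\hal=0$, $\hgm=1$, which is available only because $g_\om^{(n)}$ is constant on the branches of $T_\om^n$. Relaxing that constancy would force $\hal>1$ by Lemma~\ref{lem: P1 and P2 hold} and weaken the threshold. Similarly, the preimage lower bound $\inf\cL_\om\ind_\om\geq\lfloor\beta_\om\rfloor\beta_\om^{-t}$ is specific to $\beta$-transformations and is why the denominator of $Q_\om^{(1)}$ is $\lfloor\beta_\om\rfloor$ rather than $\lceil\beta_\om\rceil$; this cancellation of $\beta_\om^{-t}$ between numerator and denominator is precisely what allows integration against \eqref{log9} to produce a negative quantity.
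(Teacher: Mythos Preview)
Your proposal is correct and follows essentially the same route as the paper: take $\cP_{\om,n}(0,1)=\cZ_\om^{(n)}$ with $\hal=0$, $\hgm=1$ (so the Lasota--Yorke constant is $3$), use $\inf\cL_\om\ind_\om=\lfloor\beta_\om\rfloor\beta_\om^{-t}$ to bound $Q_\om^{(1)}\in[3/\lceil\beta_\om\rceil,\,3/\lfloor\beta_\om\rfloor]$, and invoke \eqref{log9}--\eqref{blb} to get $-\infty<\int\log Q_\om^{(1)}\,dm<0$, hence $N_*=1$; then Lemma~\ref{uniformcoveringtime} gives the uniform bound on $M_\om(J(P))$ needed for \eqref{cond M5'}--\eqref{cond M6'}. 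One small point: for \eqref{cond RC} you should cite Lemma~\ref{lem:randomcovering} (which handles arbitrary intervals $J$) rather than Lemma~\ref{uniformcoveringtime} alone, since the latter only treats monotonicity intervals.
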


\begin{proof}
	We first note that \eqref{cond SP1}, \eqref{cond SP2}, and hold since $g_\omega^{(1)}$ is constant for $m$-a.e.\ $\omega\in\Om$ and that
	\eqref{cond SP3} holds since $S^{(1)}_\omega=\frac{\lceil\beta_\omega\rceil}{\beta_\omega^t}<\infty$.
	Now we note that we may take the finite partition $\cP_{\om,n}(0,1)=\cZ_\om^{(n)}$. Indeed, condition \eqref{cond P1} holds with $\hal=0$ because $g_\omega^{(n)}$ is constant for $m$-a.e.\  $\omega\in\Om$, and Since $\mathcal{Z}_\omega^{(n)}$ is finite for $m$-a.e.\ $\omega\in\Om$, by Remark~\ref{rem: when partitions Z_om^n=P_om,n}, \eqref{cond P2} holds with $\hgm=1$.
	Conditions \eqref{cond M1} and \eqref{cond M2} hold by $m$-continuity, see \cite{FLQ2}, and
	\eqref{cond M3} holds since $\log S^{(1)}_\omega=\log\lceil\beta_\omega\rceil-t\log\beta_\omega$, using (\ref{blb}) and noting $\beta_\omega\ge 1+\delta$.
	Similarly, \eqref{cond M4} holds since
	$$
	\log\lfloor\beta_\omega\rfloor-t\log\beta_\omega\le \log \inf \mathcal{L}_\omega 1\le  S^{(1)}_\omega,
	$$
	%and $\log \inf \mathcal{L}_\omega 1\ge \log\lfloor\beta_\omega\rfloor-t\log\beta_\omega$
	again using (\ref{blb}) and noting $\beta_\omega\ge 1+\delta$.
	To show $\varphi$ is contracting we apply (\ref{eq: on avg CPN1N2}) with $N_1=N_3=1$.
	One has $$\int \log|g_\omega|_\infty\ dm=-t\int\log\beta_\omega\ dm<\int \log\lfloor\beta_\omega\rfloor\ dm-t\int\log\beta_\omega\ dm\le \int \log\inf\mathcal{L}_\omega\ind_\omega\ dm,$$
	with the central inequality holding by (\ref{log9}).
	
	For \eqref{cond M5'} and \eqref{cond M6'} we first show that $N_*=1$.
	The quantity $Q_\omega^{(1)}$ defined in (\ref{eq: def of Q and K}) is bounded between $3/\lceil\beta_\omega\rceil$ and  $3/\lfloor\beta_\omega\rfloor$.
	Recalling that $\beta_\omega\ge 1+\delta$ for $m$-a.e.\ $\omega\in\Om$ and using \eqref{log9}--\eqref{blb}, we obtain
	$$
	-\infty <\int_\Om\log Q^{(1)}_\omega \, dm<0,
	$$
	and therefore it follows from (\ref{eq:  def of N}) that we may take $N_*=1$.
	By Remark \ref{rem: when partitions Z_om^n=P_om,n} and the fact that $N_*=1$ we may check \eqref{cond M5'} and \eqref{cond M6'} for $\mathcal{P}_{\omega,1}=\mathcal{Z}_{\omega}^{(1)}$.
	Thus, to verify \eqref{cond M5'} and \eqref{cond M6'} we need to check that
	\begin{equation}
	\label{M5'check}
	\min_{P\in \mathcal{Z}_{\omega}^{(1)}}\log\inf_P g_\omega^{M_\omega(P)}=\min_{P\in \mathcal{Z}_{\omega}^{(1)}}\inf_P -t\log |(T_\omega^{(M_\omega(P))})'| \in L^1_m(\Omega)
	\end{equation}
	and
	\begin{equation}
	\label{M6'check}
	\max_{P\in \mathcal{Z}_{\omega}^{(1)}}\log\| \mathcal{L}_\omega^{M_\omega(P)}\ind_\omega\|_\infty\in L^1_m(\Omega),
	\end{equation}
	respectively.
	%The minimum length of a monotonicity interval in $\mathcal{Z}^{(1)}_\omega$ is $\delta/\beta_\omega$.
	By Lemma \ref{uniformcoveringtime}, $$M_\omega(P)\le 1+\left\lceil \frac{-\log\delta}{\log(1+\delta)}\right\rceil.$$
	Thus (\ref{M5'check}) will hold if
	$$\log\beta_\omega^{\left(1+\left\lceil \frac{-\log\delta}{\log(1+\delta)}\right\rceil\right)}\in L^1_m(\Omega),$$
	or equivalently, if $\int\log\beta_\omega\ dm(\omega)<\infty$, which is the case by (\ref{blb}).
	
	Concerning (\ref{M6'check}),
	\begin{eqnarray}
	\nonumber|\log \mathcal{L}_\omega^{M_\omega(P)}\ind_\omega|&\le& \left|\log\left(\frac{\lceil\beta_\omega\rceil}{\beta_\omega^t}\right)^{(M_\omega(P))}\right|\\
	\nonumber&\le&\left|\log\left(\frac{\lceil\beta_\omega\rceil}{\beta_\omega^t}\right)^{\left(1+\left\lceil \frac{-\log\delta}{\log(1+\delta)}\right\rceil\right)}\right|\\
	\nonumber&=& \left|\log\lceil\beta_\omega\rceil^{\left(1+\left\lceil \frac{-\log\delta}{\log(1+\delta)}\right\rceil\right)}- t\log\beta_\omega^{\left(1+\left\lceil \frac{-\log\delta}{\log(1+\delta)}\right\rceil\right)}\right|\\
	\label{M6eqn}&\le&(1+t)\left|\log\lceil\beta_\omega\rceil^{\left(1+\left\lceil \frac{-\log\delta}{\log(1+\delta)}\right\rceil\right)}\right|.
	\end{eqnarray}
	As in the case of \eqref{cond M5'} above, integrability of (\ref{M6eqn}) is equivalent to $\int_\Om\log\lceil\beta_\omega\rceil\ dm(\omega)<\infty$, which follows from (\ref{blb}).
	
	%If $t\ge 1$ then recalling that $\beta_\omega\ge 2$ we have the upper bound for the RHS of (\ref{M6eqn}) of $$\left\lceil\frac{\beta_\omega}{\delta(1+\delta/2)}\right\rceil\log(3/2).$$
	%\textbf{Last bound is very rough.  Forces the probability of large $\beta_\omega$ to tail off quickly, which may not be %necessary.}
	%Thus $\int_\Om\beta_\omega\ dm(\omega)<\infty$ will suffice.
	%If $t<1$, then I think we simply have to assume the RHS of (\ref{M6eqn}) is integrable.
	
\end{proof}

\begin{remark}
	Note that we could replace the requirement that $\int_\Om \log\lfloor\beta_\omega\rfloor\ dm(\omega)>\log 3$ in the previous lemma
	with the requirement that $\bt_\om>1+\dl$ for $m$-a.e. $\om\in\Om$ at the expense of calculating the number $N_*>1$ to satisfy \eqref{cond CP2}, and checking \eqref{cond M5'} and \eqref{cond M6'}. 
	Furthermore, note that \eqref{cond M5'} and \eqref{cond M6'}  are automatically satisfied in the setting of finitely many maps $T_\om$.
\end{remark}

%
%At the next iterate it covers $[0,1]$ and we are done...FINAL BRANCH),
%
%(in which case, after applying $T_\omega$ it is broken into two pieces, one of which has length no less than $|J|(1+\delta/2)$.
%
%(b) completely  (that is not the final interval of monotonicity if $\beta_\omega$ is not integer).
%
%The proof is identical to that of Lemma \ref{lem:randomcovering}.
%Note that $\inf_\omega \beta_\omega\ge 1+\delta$.
%The interval $J$ will contain a discontinuity after $\lceil \frac{1}{|J|(1+\delta)}\rceil$ iterates.
%At the next iterate

\subsection{Randomly Translated Random $\bt$-Transformations}\label{sec:betaTrans}
%Let $\sigma: (\Omega, m) \to (\Omega, m)$ be an ergodic, invertible, probability preserving map, and
Let $\alpha, \beta: \Omega \to \RR_+$, be $m$-continuous functions such that $\beta_\om$ is essentially bounded and satisfies the expanding condition $\essinf_{\om \in \Om} \beta_\om = 2 + \delta$, for some $\delta>0$.  
Consider the random beta transformation
$T_\om : [0,1] \to [0,1]$ given by
\[
T_\om(x) = \beta_\omega x + \alpha_\om \pmod{1}.
\]
\begin{lemma}\label{lem:exBetaPlusTrans}
	Let $\phi:\Om \times I \to \RR$ be a potential satisfying the conditions of Lemma~\ref{lem:excontPot}.
	Then, Theorems~\ref{main thm: summary existence of measures and density} -- \ref{main thm: eq states} hold for the random beta transformation $(T_\om)_{\om \in \Om}$, provided 
	\begin{equation}\label{eq:IntCond3}
	\log s_{\om,N_*} \in  L^1_m(\Om),
	\end{equation}
	where $N_*$ is defined in \eqref{eq: def of N} and $s_{\om,N_*}$ is length of the shortest branch of $\mathcal Z_\om^{(N_*)}$.
\end{lemma}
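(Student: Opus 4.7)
The plan is to verify the six groups of hypotheses required by the main theorems: the summability/contracting properties of $\phi$ (Definitions~\ref{def: summable potential} and \ref{def: contracting potential}), the generating property \eqref{cond GP}, the random covering condition \eqref{cond RC}, the measurability/integrability conditions \eqref{cond M1}--\eqref{cond M4}, and the refined log-integrability \eqref{cond M5'}--\eqref{cond M6'} (which, by Remark~\ref{rem: cond M5' and M6' and tilde N*}, suffice in place of \eqref{cond M5}--\eqref{cond M6}). The first four groups will piggyback almost verbatim on Lemmas~\ref{lem:randomcovering} and \ref{lem:excontPot}, exploiting that $T_\om(x)=\beta_\om x+\alpha_\om\pmod 1$ is still piecewise linear with slope $\beta_\om$ and only changes from the pure $\beta$-transformation by a translation that shifts the discontinuities. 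The only genuinely new ingredient is the control of the shortest branch through the integrability hypothesis \eqref{eq:IntCond3}.

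First I would invoke Lemma~\ref{lem:excontPot} directly. The $m$-continuity of $\alpha$ and $\beta$ yields \eqref{cond M1} and \eqref{cond M2}; the bound $\beta_\om\in[2+\delta,\beta_+]$ together with $\phi_-\leq\phi_\om\leq\phi_+$ gives \eqref{cond SP1}--\eqref{cond SP3} and \eqref{cond M3}, \eqref{cond M4}; and the minimum number $n(T_\om)$ of preimages of any point under the translated map still satisfies $n(T_\om)\geq\lfloor\beta_\om\rfloor\geq 2$, so the bound $\inf\cL_\om\ind\geq\lfloor\beta_\om\rfloor e^{\phi_-}$ persists. Combined with the hypothesis $\phi_+-\phi_-<E_-=\int\log\lfloor\beta_\om\rfloor\,dm$ this implies \eqref{cond CP1}--\eqref{cond CP2} via Remark~\ref{rmk:checkCP}, and the generating condition \eqref{cond GP} follows from the uniform expansion $\beta_\om\geq 2+\delta$. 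For \eqref{cond RC}: any interval $J\sub I$ is expanded by a factor $\beta_\om$ at each step until it is cut; after a cut at least one piece retains at least half of its expanded length, so after $O(-\log|J|/\log(2+\delta))$ steps it contains a full branch and one further iterate covers $I$. Since the expansion rate is uniform, this covering time is uniformly bounded in terms of $|J|$, and a standard lower bound on $\cL_\om^n\ind_J$ via $\inf g_\om^{(n)}$ gives \eqref{cond RC}.

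The real work is in \eqref{cond M5'} and \eqref{cond M6'}. Fix $N_*$ from \eqref{eq: def of N}; the partition $\cZ_\om^{(N_*)}$ is finite (by boundedness of $\beta$), so Remark~\ref{rem: when partitions Z_om^n=P_om,n} lets us take $\cP_{\om,N_*}(0,1)=\cZ_\om^{(N_*)}$ since $g_\om^{(N_*)}$ has bounded logarithmic variation. For each $Z\in\cZ_\om^{(N_*)}$ the image $T_\om^{N_*}(Z)$ is an interval of length at least $s_{\om,N_*}$ multiplied by $(\beta_\om^{(N_*)})$, but more importantly $T_\om^{N_*}(J(Z))$ is at least of length $s_{\sigma^{N_*}(\om),N_*}$ after one additional iterate picking out a monotonicity piece, so the covering time of $J(Z)$ under $T_{\sigma^{N_*}\om}$ is bounded by the Lemma~\ref{lem:randomcovering}-style estimate
\[
M_{\om,N_*}\leq \frac{-\log s_{\sigma^{N_*}(\om),N_*}}{\log(2+\delta)}+C,
\]
for some constant $C$ depending only on $\delta$. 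This is the pivot step: condition \eqref{eq:IntCond3} makes $M_{\om,N_*}$ $m$-integrable, and then
\[
\bigl|\log\inf_{J(P)}g_\om^{(M_{\om,N_*})}\bigr|\leq M_{\om,N_*}\max\{|\phi_-|,|\phi_+|\},\qquad \log\|\cL_\om^{M_{\om,N_*}}\ind\|_\infty\leq M_{\om,N_*}\bigl(\log\lceil\beta_+\rceil+|\phi_+|\bigr),
\]
yielding \eqref{cond M5'} and \eqref{cond M6'} at once.

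The main obstacle is the second display: one must argue that the ``shortest branch of $\cZ_\om^{(N_*)}$'' is not itself the object whose covering time we need, but rather that after applying $T_\om^{N_*}$ one lands on an interval whose covering time is controlled by $s_{\sigma^{N_*}(\om),N_*}$, thus making the $\sigma$-invariance of $m$ available to absorb the shift. Once that is set up, invoking Theorems~\ref{main thm: summary existence of measures and density}--\ref{main thm: eq states} finishes the lemma.
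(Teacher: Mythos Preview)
Your overall architecture matches the paper's: re-run the verification of \eqref{cond GP}, \eqref{cond SP1}--\eqref{cond SP3}, \eqref{cond M1}--\eqref{cond M4}, \eqref{cond CP1}--\eqref{cond CP2} essentially as in Lemma~\ref{lem:excontPot}, then use a covering-time bound together with \eqref{eq:IntCond3} to obtain \eqref{cond M5'}--\eqref{cond M6'}. The paper does exactly this, invoking the auxiliary Lemma~\ref{lem:covTimeBetaPlusTrans} (a consequence of Conze--Raugi \cite[Lemma 3.5]{ConzeRaugi}) for the covering-time bound.

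However, your pivotal estimate for $M_{\om,N_*}$ is miswired. You claim
\[
M_{\om,N_*}\le \frac{-\log s_{\sigma^{N_*}(\om),N_*}}{\log(2+\delta)}+C,
\]
with the justification that ``$T_\om^{N_*}(J(Z))$ is at least of length $s_{\sigma^{N_*}(\om),N_*}$ after one additional iterate picking out a monotonicity piece.'' This sentence does not parse into a true statement: for $Z\in\cZ_\om^{(N_*)}$ the image $T_\om^{N_*}(Z)$ is an interval of length $\beta_\om^{(N_*)}|Z|$, which bears no a priori relation to the shortest branch of $\cZ_{\sigma^{N_*}(\om)}^{(N_*)}$. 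The shift to $\sigma^{N_*}(\om)$ and the appeal to $\sigma$-invariance of $m$ are unnecessary detours. The correct estimate is directly in terms of $s_{\om,N_*}$: Lemma~\ref{lem:covTimeBetaPlusTrans} gives
\[
M_{\om,N_*}\le \Big\lceil \frac{-\log(s_{\om,N_*}/C)}{\log(1+\delta/2)}\Big\rceil + N_*,
\]
and then \eqref{eq:IntCond3} immediately yields $M_{\om,N_*}\in L^1_m(\Om)$. Your own longest-piece heuristic for \eqref{cond RC} already contains the right idea---a small interval crosses at most one breakpoint, so one image piece has length $\ge \tfrac{\beta_\om}{2}|J|\ge(1+\tfrac{\delta}{2})|J|$---but note the growth factor is $1+\tfrac{\delta}{2}$, not $2+\delta$; this is precisely the rate in the Conze--Raugi bound.

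Two minor corrections: the translated map $x\mapsto\beta_\om x+\alpha_\om\pmod 1$ can have up to $\tilde b_\om=\lceil\beta_\om\rceil+1$ branches (not $\lceil\beta_\om\rceil$), which the paper uses in its estimate for $\log\|\cL_\om^{M_{\om,N_*}}\ind\|_\infty$; and the justification for taking $\cP_{\om,N_*}(0,1)=\cZ_\om^{(N_*)}$ is not ``bounded logarithmic variation'' but rather hypothesis (i) or (ii) of Lemma~\ref{lem:excontPot} (constancy of $g_\om^{(n)}$ in case (i), or (P1) assumed in case (ii)).
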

The proof of this lemma relies on the following  consequence of  a result of Conze and Raugi \cite[Lemma 3.5]{ConzeRaugi}. 
\begin{lemma}\label{lem:covTimeBetaPlusTrans}
	Let $s_{\om,N}$ be the length of the shortest branch of $\mathcal Z_\om^{(N)}$, 
	$M_{\om,N}$ be the maximum of the covering time for the intervals of  $\mathcal Z_\om^{(N)}$
	under the sequence of maps $\{T_\om^{n}\}_{n\in\NN}$, and $$C=1+\tfrac{2}{\delta}=\sum_{r=0}^\infty \lt(\frac{2}{2+\delta} \rt)^r.$$ 
	Then we have that 
	\[
	M_{\om,N} \leq  \ceil{\frac{-\log \tfrac {s_{\om,N}} {C}}{\log (1+\tfrac{\delta}{2})}} + N.
	\]
\end{lemma}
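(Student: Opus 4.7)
The plan is to reduce the bound on $M_{\om,N}$ to a Conze--Raugi-style covering estimate by first mapping each $Z\in\cZ_\om^{(N)}$ forward under $T_\om^N$ to an honest subinterval of $[0,1]$ whose length dominates $s_{\om,N}$, and then invoking \cite[Lemma 3.5]{ConzeRaugi} together with monotonicity of the covering-time bound in the starting length.

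First I would fix an arbitrary $Z\in\cZ_\om^{(N)}$. By construction $|Z|\geq s_{\om,N}$, and since $T_\om^N$ is continuous and strictly monotone on $Z$, the image $J_Z:=T_\om^N(Z)$ is an interval contained in $[0,1]$ of length $\beta_\om^{(N)}|Z|$. Since $\beta_\om\geq 2+\delta\geq 1$ for $m$-a.e.\ $\om$, we obtain $|J_Z|\geq \beta_\om^{(N)}s_{\om,N}\geq s_{\om,N}$, so the problem of covering $[0,1]$ starting from $Z$ is reduced, after $N$ iterates, to covering from a subinterval of length at least $s_{\om,N}$ under the shifted cocycle.

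Next I would invoke \cite[Lemma 3.5]{ConzeRaugi} applied to the $\beta$-cocycle starting from the fiber $\sigma^N(\om)$ with initial interval $J_Z\subset[0,1]$. Their lemma states that under the uniform expansion $\beta_\om\geq 2+\delta$, any subinterval of $[0,1]$ of length $\ell>0$ is mapped onto $[0,1]$ by the cocycle in at most $K(\ell):=\lceil -\log(\ell/C)/\log(1+\delta/2)\rceil$ iterates, where $C=1+2/\delta$. The constants $(1+\delta/2)$ and $C$ depend only on $\delta$, hence are uniform in $\om$; the translations $\alpha_\om$ play no role, since the argument only tracks the length of the longest component of the image at each step. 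Because $K$ is nonincreasing in $\ell$ and $|J_Z|\geq s_{\om,N}$, we have $K(|J_Z|)\leq K(s_{\om,N})$. Adding the $N$ initial iterates required to reach $J_Z$ and taking the supremum over $Z\in\cZ_\om^{(N)}$ delivers the asserted inequality for $M_{\om,N}$.

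The main obstacle is essentially cosmetic: verifying that the Conze--Raugi argument transfers verbatim from a single deterministic $\beta$-transformation to the randomly translated cocycle. This reduces to the length-expansion observation that if one step sends a component of length $\ell$ to an arc of total length $\beta\ell<1$, then the mod-$1$ operation cuts it into at most two pieces whose lengths sum to $\beta\ell$, so the longest survives with length at least $\beta\ell/2\geq(1+\delta/2)\ell$; whereas if $\beta\ell\geq 1$ the image already covers $[0,1]$. Iterating and summing the geometric series $\sum_{r\geq 0}(2/(2+\delta))^r=C$ to absorb the cumulative cutting loss then produces the stated bound $K(\ell)$, uniformly in the cocycle.
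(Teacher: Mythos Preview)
Your argument is correct, but it follows a different route from the paper's. The paper does \emph{not} push $Z$ forward by $T_\om^N$ and then track an interval under the shifted cocycle. Instead, it reads off from the proof of \cite[Lemma 3.5]{ConzeRaugi} that for $r$ with $C(2/(2+\delta))^r\le \ep$, the interval $[0,1]$ is covered, up to a set of measure at most $\ep$, by \emph{full branches} of the maps $T_\om^N,\dots,T_\om^{N+r-1}$. Taking any $\ep<s_{\om,N}$, every $Y\in\cZ_\om^{(N)}$ has measure exceeding $\ep$, so $Y$ (being a union of elements of $\cZ_\om^{(R)}$ for $R=N+r$) must contain a full branch for $T_\om^R$; hence $T_\om^R(Y)=I$. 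Letting $\ep\uparrow s_{\om,N}$ gives the bound. So the paper's proof is a static, partition-based argument: it locates a full branch inside $Y$, rather than following the forward orbit of $Y$.

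Your approach is the complementary dynamic one: map $Z$ forward to an interval $J_Z$ of length at least $s_{\om,N}$, then run the elementary ``longest-component'' expansion argument on $J_Z$. This is self-contained and arguably more transparent, since it bypasses the full-branch bookkeeping entirely and makes it clear that the translations $\alpha_\om$ are irrelevant. Two small points worth tightening: (i) your statement of \cite[Lemma 3.5]{ConzeRaugi} does not match what that lemma actually asserts (it is the full-branch covering statement above, not an interval covering-time bound), so you should present your last paragraph as the actual proof rather than as a verification that Conze--Raugi transfers; (ii) your longest-piece argument yields a bound of the form $1+\lceil \log(1/((2+\delta)\ell))/\log(1+\delta/2)\rceil$, which is in fact \emph{tighter} than $K(\ell)$ since $C=(2+\delta)/\delta\ge 1$---so your claimed inequality holds, but the appeal to ``summing the geometric series to get $C$'' belongs to the paper's full-branch argument, not to yours, and should be dropped.
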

\begin{proof}
	Let  $0<\ep<s_{\om,N}$.
	The proof of \cite[Lemma 3.5]{ConzeRaugi} shows that
	whenever $C\big(\frac{2}{2+\delta}\big)^r\leq \ep$, or equivalently 
	$$
		r\geq\ceil{\frac{-\log \tfrac \ep C}{\log (1+\tfrac{\delta}{2})}},
	$$
	then $[0,1]$ is covered, up to a set of measure $\ep$, by full branches for the maps $T^N_\om$, $T_\om^{N+1}$, $\dots$, $T_\om^{N+r-1}$. 
	Note that this implies that if $Y\in \mathcal Z_\om^{(N)}$ (and so $Y$ has measure greater than $\ep$), and 
	$$
	R\geq N+\ceil{\frac{-\log \tfrac \ep C}{\log (1+\tfrac{\delta}{2})}},
	$$
	then  $Y$ is a union of branches of 
	$\mathcal Z_\om^{(R)}$, and so it must contain a full branch for $T_\om^{R}$.
	Thus, $M_{\om,N} \leq R$.
The conclusion follows from letting $\ep \uparrow s_{\om,N}$.
%, we see that the covering time $M_{\om,N}$ satisfies $M_{\om,N} \leq R(s_{\om,N})$.
\end{proof}
\begin{proof}[Proof of Lemma~\ref{lem:exBetaPlusTrans}]
	The proof goes as in Section~\ref{sec:beta}.
	Condition \eqref{cond GP} holds because the system is expanding on average. Hence, \eqref{cond RC} follows from Lemma~\ref{lem:covTimeBetaPlusTrans} because the partitions are finite (see Remark~\ref{rmk:coveringWFinitePart}).
	
	Let us assume $\phi, \phi_-, \phi_+\in\RR$ are as in Lemma~\ref{lem:excontPot}. 
	The summability conditions  (Definition~\ref{def: summable potential}) are straightforward to check. Indeed, \eqref{cond SP2} is a hypothesis. \eqref{cond SP1} and \eqref{cond SP3} hold because $\phi$ is bounded and the partitions are finite. 
	When $g_\om=|T'_\om|^{-t}$ for some $t\geq 0$,
	\eqref{cond P1} holds with $\hal=0$ because $g_\om^{(n)}$ is constant for each $\om \in \Om$. In general,
	\eqref{cond P2} holds with $\hgm=1$ because $\cP_{\om,n}(0,1)$ coincides with the dynamical partition $\cZ_\om^{(n)}$.
	
	Conditions
	\eqref{cond M1} and \eqref{cond M2} follow from $m$-continuity; see e.g. \cite{FLQ2}.
	\eqref{cond M3} holds because 
	the map $x\mapsto \beta_\om x + \al_\om \pmod{1}$ has at most $\tilde b_\om:=\ceil{\beta_\om}+1$ branches. Then,
	$e^{\phi_-}\leq S_{\om}^{(1)} \leq \tilde b_\om e^{\phi_+}$. 
	Since $\beta_\om$ is essentially bounded, say by $B$,
	$$ 
	\int_\Om | \log S_{\om}^{(1)} |\, dm \leq \max \set{|\phi_-|, \log (B+2) + |\phi_+| }<\infty.
	$$ 
	Notice that $\inf \cL_\om \ind_\om \geq n(T_\om) e^{\phi_-}$,
	where $n(T_\om) =  \min  \{ \#  T_\om^{-1}(x) :  x\in I \}$ is the minimum number of preimages of a point $x\in I$ under $T_\om$. Since $T_\om$ is a shifted $\bt$-transformation, 
	$
	n(T_\om) =  \floor{\beta_{\om}}.
	$
	Thus, in view of Remark~\ref{rmk:checkCP}, with $N_1=N_3=1$, in order to show the potential is contracting (Definition~\ref{def: contracting potential}), it is sufficient to have that 
	\begin{equation*}\label{eq:excontPot2}
	\phi_+<  \phi_- +\int_\Om  \log \floor{\beta_{\om}}\, dm,
	\end{equation*}
	as assumed.
	Note that this condition ensures that $\floor{\beta_\om}\geq1$ for \maeom, so $\beta_\om\geq 1$ for \maeom, and so $\log \inf \cL_\om \ind \geq \phi_-$. This yields \eqref{cond M4}.
	Finally, we show 
	%the hypotheses of Theorems~\ref{main thm: summary existence of measures and density} -- \ref{main thm: eq states}, it is enough to verify the 
	conditions \eqref{cond M5'} and \eqref{cond M6'}, namely
	\begin{equation}\label{eq:IntCondPot2}
	\log \| \cL_\om^{M_{\om,N_*}}\ind \|_\infty \in L^1_m(\Om),\quad \log \essinf g_\om^{(M_{\om,N_*})} \in L^1_m(\Om).
	\end{equation}
	Note that
	$$
	M_{\om,N_*} \phi_-\leq \log \essinf g_\om^{(M_{\om,N_*})}\leq \log \esssup g_\om^{(M_{\om,N_*})} \leq M_{\om,N_*} \phi_+.
	$$ 
	Recalling that the map $x\mapsto \beta_\om x + \al_\om \pmod{1}$ has at most $\tilde b_\om:=\ceil{\beta_\om}+1$ branches, we get 
	\[
	\log \essinf g_\om^{(M_{\om,N_*})} \leq    
	\log \| \cL_\om^{M_{\om,N_*}}\ind \|_\infty \leq  \log \esssup  (\tilde b_\om^{(M_{\om,N_*})}g_\om^{(M_{\om,N_*})}), 
	\]
	where $\tilde b_\om^{(n)}=\prod_{j=0}^{n-1} \tilde b_{\sigma^j\om}$.
	Therefore,
	\[
	M_{\om,N_*} \phi_- \leq    
	\log \| \cL_\om^{M_{\om,N_*}}\ind \|_\infty \leq  \log (\tilde b_\om^{(M_{\om,N_*})}) + {M_{\om,N_*}} \phi_+. 
	\]
	Lemma~\ref{lem:covTimeBetaPlusTrans} shows that 
	$\int_\Om M_{\om,N_*} \,dm <\infty$ whenever
	\[
	\int_\Om | \log s_{\om,N_*} |\,dm <\infty,
	\]
	which is a hypothesis of the lemma.
	Recalling that $\beta_\om\leq B$, we get
	\[
	\int  \log \tilde b_\om^{(M_{\om,N_*})} \,dm\leq \int M_{\om,N_*} \log (B+2) \,dm <\infty.
	\]
	Hence, \eqref{eq:IntCondPot2} is satisfied.
\end{proof}

\begin{remark}
	We could use similar techniques to show that Theorems~\ref{main thm: summary existence of measures and density}--\ref{main thm: eq states} apply for maps with nonlinear branches. For example,  let $S_\om: I \to \RR$ be a $C^2$ expanding map, with 
	$$
		2+\delta < \essinf |S_\om'| \leq \esssup |S'_\om| < B
	$$  
	for \maeom, 
	for some $0<\delta,B$,	and let
	$\tilde T_\om(x)= S_\om(x) \pmod{1}$. Each map  $\tilde T_\om(x)$ has at most two non-full branches (the leftmost and rightmost ones). For simplicity, assume $N_*=1$.
	Lemma~\ref{lem:covTimeBetaPlusTrans} (with $N=1$) relies on \cite[Lemma 3.5]{ConzeRaugi} for the case $n=1$ only, and an inspection of the proof 
	%of \cite[Lemma 3.5]{ConzeRaugi} 
	shows that its conclusions remain valid for $\tilde T_\om$ instead of $T_\om$. 
	Then, the proof of Lemma~\ref{lem:exBetaPlusTrans} remains applicable  for $\tilde T_\om$ with minor modifications, after replacing the hypothesis of Lemma~\ref{lem:excontPot} involving $E_-$ with
	$$
		\phi_+ - \phi_-< \int_\Om \log  \essinf |S_\om'| \, dm,
	$$ 
	and ensuring hypothesis $(ii)$ of Lemma~\ref{lem:excontPot} is satisfied.
	These modifications essentially amount to changing $\floor{\beta_\om}$ and $\ceil{\beta_\om}$ to $\essinf |S_\om'|$ and  $\esssup |S'_\om|$.
\end{remark}

\subsection{Random Maps With Infinitely Many Branches}\label{sec:infiniteBranches}

In this section we consider compositions of the Gauss map $T_0$ and the Renyi map $T_1$, defined by 
\[
T_0(x) = \begin{cases}
0 & x=0,\\
\tfrac1x \pmod{1}& x\neq 0,
\end{cases}
\qquad \text{and} \qquad
T_1(x) = \begin{cases}
0 & x=1,\\
\tfrac{1}{1-x} \pmod{1}  & x\neq 1.
\end{cases}
\]
This class of random maps was investigated in \cite{KalleEtAl} in connection with continued fraction expansions.
Let $T:\Om \to \{0,1\}$ be a measurable function. Let $T_\om=T_0$ if $T(\om)=0$ and $T_\om=T_1$ otherwise.
We denote by $\cL_i$ the transfer operator associated to $T_i$, $i=0,1$.
Since both $T_0$ and $T_1$ have only full branches, the system is covering, with $M_{\om,n}=1$ for every $\om\in \Om, n\in \NN$.

\begin{lemma}
	Consider a potential of the form $\phi(\om,x)= - t \log |T_\om'(x)|$, for $0.5<t \leq 0.613$. Then,  
	%\eqref{CP}, \eqref{RC}, \eqref{cond SP1}--\eqref{cond M3}, Conditions (1), (2) \flag{from p.14}   and \eqref{cond M1}--\eqref{cond M6}
	the hypotheses of Theorems~\ref{main thm: summary existence of measures and density} -- \ref{main thm: eq states} 
	hold.
\end{lemma}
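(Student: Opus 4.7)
The strategy is to verify, in order, the summability conditions (SP1)--(SP3), the random covering (RC), the measurability conditions (M1)--(M6), and finally the contracting potential conditions (CP1)--(CP2). Since both $T_0$ and $T_1$ consist entirely of full branches and the map $\omega\mapsto T_\omega$ takes values in a finite set $\{T_0,T_1\}$, most conditions reduce to uniform (deterministic) estimates; only (CP1)--(CP2) require genuine work, and these will pin down the admissible range of $t$.

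The elementary computation that drives everything is the following. For the Gauss map, the monotonicity partition is $\mathcal Z_0^{*}=\{Z_n\}_{n\ge 1}$ with $Z_n=(1/(n+1),1/n)$ and inverse branches $T_0^{-1}(x)\cap Z_n=\{1/(n+x)\}$; consequently
\[
g_0^{(1)}(x)=x^{2t}, \qquad \mathcal L_0\ind(x)=\sum_{n=1}^{\infty}\frac{1}{(n+x)^{2t}}.
\]
The same formulas hold for $T_1$ after the change of variable $x\mapsto 1-x$. Thus $\|g_i^{(1)}\|_\infty=1$, $\inf\mathcal L_i\ind=\sum_{n\ge 2}n^{-2t}=\zeta(2t)-1$, and $S_i^{(1)}=\sum_{n\ge 1}n^{-2t}=\zeta(2t)$. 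Hence (SP1)--(SP3) hold precisely when $t>1/2$, which delivers the lower bound. The generating property (GP) follows from expansivity. Because every branch is full, (RC) holds with $M_\omega(J)$ equal to the first iterate at which $T_\omega^{n}(J)$ covers an entire branch of $T_{\sigma^n\omega}$; this is finite and in fact \emph{uniform} over $\omega$ since $\omega\mapsto T_\omega$ is $\{T_0,T_1\}$-valued. The measurability conditions (M1),(M2) are immediate (two maps, both BV-preserving), and (M3),(M4),(M5'),(M6') all hold because the relevant random quantities are bounded above and below by deterministic constants depending only on $t$ and on the (finite) covering time $M_{\omega,N_*}$, which itself is deterministic.

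For the contracting potential condition I will show $N_*=1$ suffices, by constructing, for each large integer $N$, a finite partition $\mathcal P_{\omega,1}(\hat\alpha_N,\hat\gamma_N)$ as follows: take the singletons $Z_1,\ldots,Z_{N-1}$ as the first $N-1$ cells and the tail $(0,1/N)=\bigcup_{n\ge N}Z_n$ as the last cell (and symmetrically for $T_1$). Using that $g(x)=x^{2t}$ is monotone on $[0,1]$ with $\|g\|_\infty=1$, one checks that on each cell $\operatorname{var}_P(g)\le 1-(1/2)^{2t}$, so (P1) holds with $\hat\alpha_N=1-(1/2)^{2t}$ independent of $N$; while (P2) gives $\hat\gamma_N=\max\{1,\sum_{n\ge N}n^{-2t}\}=1$ for $N$ large enough. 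Consequently
\[
Q_\omega^{(1)}=\frac{(\hat\alpha_N+2\hat\gamma_N+1)\|g^{(1)}\|_\infty}{\inf\mathcal L_\omega\ind}=\frac{4-(1/2)^{2t}}{\zeta(2t)-1},
\]
and since this is a deterministic number, Remark~\ref{rmk:checkCP} reduces (CP1), (CP2) (with $N_1=N_2=N_*=1$) to the single inequality $4-(1/2)^{2t}<\zeta(2t)-1$, i.e.
\[
\zeta(2t)+(1/2)^{2t}>5.
\]
The left-hand side is strictly decreasing in $t$ on $(1/2,\infty)$, equals $+\infty$ at $t=1/2^+$, and at $t=0.613$ satisfies $\zeta(1.226)+(1/2)^{1.226}>5$ by direct numerical estimate of the series; hence the inequality holds throughout the stated range $0.5<t\le 0.613$.

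I expect the main obstacle to be the optimisation of the partition that produces the upper bound on $t$: (P1)--(P2) balance variation against the tail sum of the weights, so obtaining the sharp exponent $0.613$ (as opposed to a larger permissible range) requires tuning $N$ and keeping careful track of $\hat\alpha_N$. Once the inequality $\zeta(2t)+(1/2)^{2t}>5$ is in hand, all remaining hypotheses of Theorems~\ref{main thm: summary existence of measures and density}--\ref{main thm: eq states} follow automatically from the above deterministic bounds, and the conclusion is immediate.
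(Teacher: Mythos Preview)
Your approach is correct and matches the paper's: both construct the finite partition by lumping tail branches into a single cell, exploit the full-branch structure to get covering time $M_\omega(J(P))=1$, and verify $N_*=1$ via the key numerical inequality on $\zeta(2t)$. One minor correction: the inequality $4-(1/2)^{2t}<\zeta(2t)-1$ you display is the condition for $N_*=1$ from \eqref{eq: def of N}, not what Remark~\ref{rmk:checkCP} yields for (CP1)--(CP2) (the latter, with $N_1=N_2=1$, is the weaker $\zeta(2t)>2$); since the former implies the latter the argument stands, and your sharper choice $\hat\alpha=1-(1/2)^{2t}$ in place of the paper's $\hat\alpha=1$ is a harmless refinement.
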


\begin{proof}
	%Next we consider a potential of the form $\phi(\om,x)= - t \log |T_\om'(x)|$, for $t>\tfrac12$. 
	We will use the shorthand notation $\phi_i(x)=\phi(\om,x)$ whenever $T_\om=T_i$, and similarly for $g_i(x)$.
	Note that since at the differentiability points, $|T_0'(x)|=\frac{1}{x^2}$, then  
	$$
	g_0(x)=x^{2t},
	\qquad 
	|\log g_0(x)|= |\phi_0(x)|= 2 t  \log  \frac{1}{x},
	$$
	and for each partition element $J_j=[\tfrac{1}{j+1},\tfrac{1}{j}]$, 
	$$
	| \log \essinf_{J_j} g_\om| =  2t\log (j+1).
	$$ 
	The analysis for $T_1$ is entirely analogous, since $T_1(x)=T_0(1-x)$.
	
	Then,
	\begin{enumerate}
		%\item
		%Let us consider first the case  $T_\omega=T_0$.
		
		\item The summability conditions (Definition~\ref{def: summable potential}) are satisfied. \eqref{cond SP1} and \eqref{cond SP2} follow directly from the definition of $g_\om$.  \eqref{cond SP3} follows from the fact that 
		$$
		\sum_{j=1}^\infty \sup_{J_j} g_\om = \sum_{j=1}^\infty \frac{1}{j^{2t}}<\infty.
		$$
		The first part of \eqref{cond M3} holds because $\sup \phi_\om=0$, the second part follows from \eqref{cond SP3}, because $S_\om^{(1)}$ can only take two different values. 
		Also, \eqref{cond M1} and  \eqref{cond M2} hold because $T$ is measurable and of finite rank.
		
		\,
		
		\item
		Conditions \eqref{cond P1} and \eqref{cond P2}, with $\hat\al=\hgm=1$, are satisfied for $n=1$, with 
		%the one-element partition $Z=[0,1]$.
		partitions $\mathcal P_{0,1}(1,1)=\mathcal P_{1,1}(1,1)$ given by $\mathcal P_i=\{ Z_1^i, Z_2^i, \dots, Z_k^i, Z^i_\infty \}$, for $i=0,1$, where $k=k(t)$ is chosen such that  $ \sum_{j=k+1}^\infty \frac{1}{j^{2t}}\leq1$,  $Z^0_j=[\tfrac{1}{j+1}, \tfrac1j]$ for $1\leq j \leq k$, $Z^0_\infty=[0,\frac{1}{k+1}]$, and $Z^1_j$ obtained from $Z^0_j$ by the transformation $x\mapsto 1-x$.
		Indeed $g_\om$ is monotonic on $[0,1]$ and $\var_{[0,1]} g_\om =1$, so \eqref{cond P1} holds with $\hal=1$ regardless of the partition.
		The choice of $k$ precisely ensures condition \eqref{cond P2} holds with $\hgm=1$ and $n=1$. 
		
		\,
		
		\item %\flag{check summable contracting}
		\eqref{cond CP1} holds, i.e. $\phi$ is a contracting potential. In fact, we check that condition \eqref{eq: def of N} holds with $N_*=1$, i.e. we have that
		$$ \int_\Om \log \frac{4\|g_\om\|_{\infty}}{\inf \cL_\om\ind}\,dm <0.$$ 
		Indeed,  $\| g_0(x)\|_\infty =1$ and $\cL_0\ind(x)= \sum_{j=1}^\infty \frac{1}{(j+x)^{2t}}$,
		so  
		\[
		\frac{4\|g_0\|_{\infty}}{\inf \cL_0 \ind} = \frac{4}{\sum_{j=2}^\infty \frac{1}{j^{2t}}} = \frac{4}{\zeta(2t)-1}<1,
		\]
		because  $\zeta(1.226)> 5$, where $\zeta(x)$ is the Riemann zeta function.
		For $T_\omega=T_1$, $g_1(x)=g_0(1-x)$ so the bound is the same.
		
		\,

		\item 		
		In light of Remark~\ref{rem: cond M5' and M6' and tilde N*} we check that conditions \eqref{cond M5'}, \eqref{cond M6'} hold for $N_*=1$.
		For each $\om\in \Om$, the elements of  $\mathcal P_\om$ consist of (unions of) full branches for $T_\om$. Hence, $N_{\omega, 1}(P)=1$ (coming from \eqref{eq: def of J(P)}) and $M_\om(J(P))=1$ (coming from \eqref{cond RC}) for every  $P\in \mathcal P_\om$. 
		In particular, for every $\om\in \Omega$, 
		$$
		\zeta(2t)-1\leq \inf \cL_\om\ind \leq \| \cL_\om\ind\|_\infty \leq \zeta(2t)<\infty
		$$ 
		for $t>\tfrac12$, so the  integrability condition \eqref{cond M6'} is satisfied.
		Also, by choosing $J(P)$ to be the largest full branch in $P$, we get that
		$$
		\log \essinf_{J(P)} g_\om \geq - 2t \log (k+2)
		$$ 
		for each $\om\in \Om$.
		Hence the  integrability condition \eqref{cond M5'} is satisfied.
		
		%$\log \essinf_J g_\om \in L^1_m(\Om)$ \{where $J$ is st ***, cond. from p12}. \\
		%\item $\log \| \cL_\om\ind \|_\infty \in L^1_m(\Om)$.\\
		% Indeed, we note that
		%$\log \| \cL_0\ind \|_\infty = \sup_{x\in I} \log ( \sum_{j=1}^\infty \frac{1}{|T'(\xi_j(x))|^t}) = 
		%\log ( \sum_{j=1}^\infty \frac{1}{j^{2t}})<\infty$,
		%where $\xi_j$ are the inverse branches of $T_0$.
		%The bound for $\log \| \cL_1\ind \|_\infty$ is the same, so $\log \| \cL_\om\ind \|_\infty$ is bounded.
	\end{enumerate}
\end{proof}
\begin{remark}
	Note that we have only chosen $t\in(0.5,0.613]$ to ensure that we have $N_*=1$. One could treat larger values of $t$, in particular $t\geq1$, by considering $N_*>1$. 
\end{remark}

\subsection{Random Non-Uniformly Expanding Maps}\label{sec:non-unifexp}
In this section we consider random compositions of a  $\bt$-transformation $T_0$ and the Pomeau-Manneville intermittent map $T_1$, defined by 
\begin{equation*}%\label{eq: def of T0 T1 PM map}
T_0(x)=\bt x \mod 1 
\quad\text{ and }\quad
T_1(x)=
\begin{cases}
x+2^{a}x^{1+a} &  \text{ for } x\in[0,1/2]\\
2x-1 & \text{ for } x\in(1/2,1]
\end{cases}
\end{equation*}
where $a>0$ and $\bt\geq 5$. 
The map $T_1$, in this form, was first introduced in  \cite{liverani_probabilistic_1999}, but dates back to the works of \cite{manneville_intermittency_1980,pomeau_intermittent_1980}.
Sequential and random compositions of $T_1$, and maps similar to $T_1$, taken with varying values for $a$, have been  investigated previously, see e.g. \cite{aimino_polynomial_2015,nicol_central_2018}.

Let $T:\Om \to \{0,1\}$ be a measurable function. Let $T_\om=T_0$ if $T(\om)=0$ and $T_\om=T_1$ otherwise. 
We denote by $\cL_i$ the transfer operator associated to $T_i$, $i=0,1$.
\begin{lemma}\label{lem: ex random beta pm map}
	Consider a potential of the form $\phi(\om,x)= - t \log |T_\om'(x)|$, for $t\geq 0$. If
	\begin{align*}
	0<p=m(T^{-1}(1))< \frac{\log\frac{\floor\bt}{4}}{\log\frac{\floor{\bt}(2a+4)^t}{2^t+(a+2)^t}},
	\end{align*}
	then the hypotheses of Theorems~\ref{main thm: summary existence of measures and density} -- \ref{main thm: eq states} 
	hold.
\end{lemma}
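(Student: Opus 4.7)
The plan is to verify the hypotheses of a random weighted covering system, and in particular to show that $N_* = 1$, so that the contracting condition \eqref{cond CP2} collapses to a single inequality matching the bound on $p$ in the statement. I would take the partition parameters $\hal = 1$ and $\hgm = 1$, with $\cP_{\om,1} = \cZ_\om^{(1)}$ when $T_\om = T_0$ (valid since $g_0 = \bt^{-t}$ is constant on each branch) and $\cP_{\om,1} = \{[0, 1/2], (1/2, 1]\}$ when $T_\om = T_1$. On $[0, 1/2]$, $g_1 = |T_1'|^{-t}$ is monotone decreasing from $1$ to $(a+2)^{-t}$ with variation $1 - (a+2)^{-t} \leq 1 = \|g_1\|_\infty$, while $g_1 \equiv 2^{-t}$ on $(1/2, 1]$; since each partition element meets only its own monotone branch, both \eqref{cond P1} and \eqref{cond P2} hold with these values. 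Conditions \eqref{cond GP}, \eqref{cond SP1}--\eqref{cond SP3}, and \eqref{cond M1}--\eqref{cond M4} are then routine since $T$ is of finite rank, the partitions are finite, and each of $g_\om$, $S_\om^{(1)}$, and $\inf \cL_\om \ind_\om$ takes only two bounded positive values.

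For random covering \eqref{cond RC}, note that $|T_0'| \equiv \bt \geq 5$ and $|T_1'| \geq 1$, so no interval ever contracts. Since the hypothesized bound on $p$ forces $p < 1$, the ergodic theorem ensures $T_0$ is applied along $m$-a.e.\ orbit with positive frequency $1 - p$. Mimicking the argument of Lemma~\ref{lem:randomcovering}, after finitely many iterates any interval $J$ spans a discontinuity of some $T_0$ and produces an image of the form $[0, c]$; further applications of $T_0$ then blow this up to $I$ in at most $\lceil\log(1/c)/\log\bt\rceil$ further steps, with intermediate applications of $T_1$ only helping (since $|T_1'|\geq 1$). Because the partitions $\cZ_\om^{(1)}$ are finite, Remark~\ref{rmk:coveringWFinitePart} then yields \eqref{cond RC}.

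For the contracting condition I would compute directly: $\|g_0\|_\infty = \bt^{-t}$ and $\inf \cL_0 \ind_\om = \floor\bt\,\bt^{-t}$, so the ratio is $1/\floor\bt$; for $T_1$, $\|g_1\|_\infty = 1$ and $\inf \cL_1\ind_\om = (a+2)^{-t} + 2^{-t}$ (attained near $y = 1$, where the two preimages lie at $x = 1/2$ and $x = 1$ with $|T_1'|$ equal to $a+2$ and $2$ respectively), so the ratio equals $(2a+4)^t/(2^t + (a+2)^t)$ using $(2(a+2))^t = (2a+4)^t$. With the constant $\hal + 2\hgm + 1 = 4$, one obtains
\begin{align*}
\int_\Om \log Q_\om^{(1)}\,dm
= \log\frac{4}{\floor\bt} + p\log\frac{\floor\bt\,(2a + 4)^t}{2^t + (a+2)^t},
\end{align*}
which is strictly negative precisely when $p$ satisfies the hypothesized bound (using $\floor\bt \geq 5 > 4$ to see that the first term is negative and the logarithm on the right is positive). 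This gives $N_* = 1$ and verifies both \eqref{cond CP1} and \eqref{cond CP2}.

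Finally, with $N_* = 1$ one may take $J(P) = P$ (since $\cP_{\om,1} = \cZ_\om^{(1)}$ in both cases), and the remaining conditions \eqref{cond M5'}--\eqref{cond M6'} amount to log-integrability of $\inf_P g_\om^{(M_\om(P))}$ and $\|\cL_\om^{M_\om(P)}\ind_\om\|_\infty$. All full branches of $T_0$ and both branches of $T_1$ cover in one step; the only delicate case is the leftover branch of $T_0$ when $\bt\notin\NN$, whose covering time along $\om$ is dominated by a geometric-type random variable of finite mean driven by the positive density $1-p$ of $T_0$ occurrences. Combined with the uniform two-sided bounds $\min\{(a+2)^{-t},\bt^{-t}\} \leq g_\om \leq 1$, this gives the required $L^1_m$ integrability. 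The main obstacle in the whole argument is this control of the covering time for intervals iterating into a neighborhood of the neutral fixed point of $T_1$, where expansion of $T_1$ degenerates; this is precisely where the positive frequency of $T_0$, guaranteed by $p < 1$, is indispensable.
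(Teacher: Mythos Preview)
Your proposal is correct and follows essentially the same approach as the paper: take $\cP_{\om,1}=\cZ_\om^{(1)}$ with $\hal=\hgm=1$, compute $\int_\Om\log Q_\om^{(1)}\,dm$ directly to obtain $N_*=1$ under the stated bound on $p$, and verify the remaining conditions from the finiteness and boundedness of the branch data. Your handling of \eqref{cond M5'}--\eqref{cond M6'} for the non-full branch of $T_0$ (when $\bt\notin\NN$) is in fact more careful than the paper's, which tacitly treats the covering times as~$1$; your variation computation $\var_{[0,1/2]}(g_1)=1-(a+2)^{-t}$ is likewise sharper than the paper's stated value of~$1$.
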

\begin{proof}
	We begin by noting that our random covering assumption \eqref{cond RC} follows from the Birkhoff Ergodic Theorem as for $n$ sufficiently large we will see the map $T_0$ with frequency $(1-p)n$, and thus any small interval will eventually cover.
	As each map has only finitely many branches \eqref{cond SP1}--\eqref{cond SP3} are satisfied.  
	Now, we note that  
	\begin{align*}
	\inf g_0=\norm{g_0}_{\infty}=\frac{1}{\bt^t},
	\qquad
	\inf g_1=\frac{1}{\lt(a+2\rt)^t},% > \frac{1}{3^t}, 
	\quad\text{ and }\quad
	\norm{g_1}_\infty=1.
	\end{align*}
	It then follows that 
	\begin{align}
	\frac{\floor{\bt}}{\bt^t}\leq \inf\cL_0\ind\leq \norm{\cL_0\ind}_\infty\leq \frac{\ceil{\bt}}{\bt^t}\leq \ceil{\bt},
	\label{eq: ex check M6' T0}
	\end{align}
	and 
	\begin{align}
	\frac{2^t+(a+2)^t}{(2a+4)^t}
	=
	%=\frac{1}{2^2}+\frac{1}{3^2}
	%\frac{1}{3^t}+\frac{1}{2^t}< 
	\frac{1}{\lt(a+2\rt)^t}+\frac{1}{2^t}
	\leq
	\inf \cL_1\ind 
	\leq 
	\norm{\cL_1\ind}_\infty 
	\leq 
	1+\frac{1}{2^t}
	\leq 2.
	\label{eq: ex check M6' T1}
	\end{align}
	Thus conditions \eqref{cond M1}--\eqref{cond M4} are satisfied.
	Concerning the variation, we note that $\var_Z(g_0)=0$ for each $Z\in\cZ_0^{(1)}$, $\var_{[0,1/2]}(g_1)= 1$, $\var_{[1/2,1]}(g_1)=0$.
	Thus, in view of Remark~\ref{rem: when partitions Z_om^n=P_om,n}, the partition $\cP_{i,1}(1,1)=\cZ_{i}^{(1)}$ satisfies \eqref{cond P1} and \eqref{cond P2} with $\hat\al=\hgm=1$ for each $i\in\set{0,1}$.  
	
	We claim now that we can take the number $N_*$, coming from \eqref{eq: def of N}, to be 1.
	Indeed, since 
	\begin{align}
	\int_\Om\log\frac{4\norm{g_\om}_\infty}{\inf\cL_\om\ind_\om}\, dm(\om)
	&=
	(1-p)\log\frac{4\norm{g_0}_\infty}{\inf\cL_0\ind}
	+
	p\log\frac{4\norm{g_1}_\infty}{\inf\cL_1\ind}
	\nonumber\\
	&\leq 
	(1-p)\log\frac{4/\bt^t}{\floor{\bt}/\bt^t}
	+
	p\log\frac{4}{(2^t+(a+2)^t)/(2a+4)^t}
	\nonumber\\
	&=
	(1-p)\log\frac{4}{\floor{\bt}}
	+
	p\log\frac{4(2a+4)^t}{2^t+(a+2)^t},\label{eq: ex p inequal}
	\end{align}
	\eqref{eq: def of N} holds whenever
	\begin{align}\label{eq: ex p value}
	p<\frac{\log\frac{\floor\bt}{4}}{\log\frac{\floor{\bt} (2a+4)^t}{2^t+(a+2)^t}}.
	\end{align}
	Thus, as we have chosen $p>0$ satisfying \eqref{eq: ex p value}, we see that 
	\begin{align}
	\int_\Om\log\frac{4\norm{g_\om}_\infty}{\inf\cL_\om\ind_\om}\, dm(\om)<0.
	\label{eq: ex check CP1 CP2}
	\end{align}
	In light of Remark~\ref{rem: cond M5' and M6' and tilde N*}, we now check conditions \eqref{cond M5'} and \eqref{cond M6'}. To 
	that end, we note that \eqref{cond M5'} holds since $\inf_Z g_0=1/\bt^t$ for each $Z\in\cZ_0^{(1)}$, $\inf_{[0,1/2]}g_1=1/(a+2)^t$, and $\inf_{[1/2,1]}g_1=1/2^t$, and \eqref{cond M6'} follows from \eqref{eq: ex check M6' T0} and \eqref{eq: ex check M6' T1}. Finally, we see that \eqref{cond CP1} and \eqref{cond CP2} hold in light of \eqref{eq: ex check CP1 CP2} and Remark~\ref{rmk:checkCP}.
\end{proof}
\begin{remark}\label{rem: betq geq 5}
	Notice that our choice of $\bt\geq 5$ was instrumental in obtaining $N_*=1$. Using similar arguments one could yield the conclusions of Lemma~\ref{lem: ex random beta pm map} with any $\bt\geq 2$ and $N_*=3$ for $p>0$ sufficiently small. 
	
	Furthermore, it is interesting to point out that our results hold for any positive value of the parameter $a$ defining the intermittent map $T_1$. This is in contrast to the deterministic setting (of the map $T_1$ together with potential $\phi=-\log |T_1'|$), where the density with respect to the conformal measure, which is the Lebesgue measure, for the map $T_1$ is Lebesgue summable only for $a<1$. 
		
	Also note that the upper bound on $p$ tends to zero as $t$ tends to infinity. 
\end{remark}
\begin{remark}
	Here we have chosen, for the ease of computation and exposition, to present Example~\ref{sec:non-unifexp} as the random iteration of two maps each with positive probability, however, assuming the $m$-continuity conditions of Lemma \ref{lem:excontPot}, one could easily apply similar arguments to the setting where one considers maps of the form 
	\begin{equation*}
	T_\om(x)=
	\begin{cases}
	\bt_\om x \mod 1 & \text{ if } \om\in\Om_0\\
	\, & \\
	\begin{cases}
	x+2^{a_\om}x^{1+a_\om} &  \text{ for } x\in[0,1/2]\\
	2x-1 & \text{ for } x\in(1/2,1]
	\end{cases}
	& \text{ if } \om\in\Om_1	
	\end{cases}
	\end{equation*}
	where $\Om=\Om_0\cup\Om_1$ is infinite with $\Om_0\cap\Om_1=\emptyset$ such that $m(\Om_0),m(\Om_1)>0$, and where the maps $\om\mapsto a_\om\in(0,a^*]$, for some $a^*>0$, $\om\mapsto\bt_\om\in [5,\infty)$. 
\end{remark}

Note that the ``intermittency'' of the map $T_1$ in the previous lemma is a bit misleading. We chose $p$ so that $T_1$ was applied sufficiently infrequently to achieve a map cocycle that was expanding on average. This same technique allows us to deal with maps with contracting branches\footnote{The maps are still required to be onto, however some branches may be contracting.}. Indeed, if we consider the maps 
\begin{equation*}
T_0(x)=\bt x \mod 1 
\quad\text{ and }\quad
T_1(x)=
\begin{cases}
ax &  \text{ for } x\in[0,1/2]\\
(2-a)x-(1-a) & \text{ for } x\in(1/2,1],
\end{cases}
\end{equation*}
with $0<a<1$ and $\bt\geq 5$\footnote{The same argument given in Remark~\ref{rem: betq geq 5} allows us to replace $\bt\geq 5$ with $\bt\geq 2$ at the expense of taking $N_*=3$.}, in the same setup as the previous example, then using similar reasoning we arrive at the following lemma.
\begin{lemma}
	Consider a potential of the form $\phi(\om,x)= - t \log |T_\om'(x)|$, for $t\geq 0$. If 
	\begin{align*}
	0<p=m(T^{-1}(1))<\frac{\log\frac{\floor\bt}{4}}{\log\frac{\floor\bt(2-a)^t}{a^t}},
	\end{align*}
	then the hypotheses of Theorems~\ref{main thm: summary existence of measures and density} -- \ref{main thm: eq states} 
	hold.
\end{lemma}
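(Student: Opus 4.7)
The plan is to mirror the strategy used in the proof of Lemma~\ref{lem: ex random beta pm map} (the random $\beta$--Pomeau--Manneville case), since the only structural change is that $T_1$ is now a piecewise linear onto homeomorphism of $[0,1]$ (with one contracting branch of slope $a<1$ and one expanding branch of slope $2-a>1$) rather than a PM map. The strategy is therefore to recompute the four basic quantities $\|g_i\|_\infty,\ \inf g_i,\ \|\cL_i\ind\|_\infty,\ \inf\cL_i\ind$ for $i=0,1$, verify that the stated bound on $p$ makes the potential contracting with $N_*=1$, and then check that every other hypothesis of the main theorems follows from the same elementary considerations as before.

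Concretely, I would first compute that $g_0\equiv \bt^{-t}$ and that $g_1$ is the two-valued step function with $g_1|_{[0,1/2]}=a^{-t}$ and $g_1|_{(1/2,1]}=(2-a)^{-t}$, yielding $\|g_1\|_\infty=a^{-t}$ and $\inf g_1=(2-a)^{-t}$ (this is where the contracting branch reverses the usual ordering). A direct preimage computation using the fact that $T_1([0,1/2])=[0,a/2]$ and $T_1((1/2,1])=(a/2,1]$ gives $\cL_1\ind(x)=a^{-t}$ on $[0,a/2]$ and $(2-a)^{-t}$ on $(a/2,1]$, hence $\inf\cL_1\ind=(2-a)^{-t}$ and $\|\cL_1\ind\|_\infty=a^{-t}$. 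For the $\bt$-map one has $\lfloor\bt\rfloor/\bt^t\le\inf\cL_0\ind\le\|\cL_0\ind\|_\infty\le\lceil\bt\rceil/\bt^t$. The random covering assumption \eqref{cond RC} follows exactly as in Lemma~\ref{lem: ex random beta pm map}: once $m$-a.e.\ orbit encounters sufficiently many iterates of $T_0$ (which happens by the Birkhoff ergodic theorem since $1-p>0$), a small interval eventually meets a discontinuity of $T_0$ in its interior and then grows to cover $I$, and once full coverage is achieved, surjectivity of both $T_0$ and $T_1$ preserves it. Conditions \eqref{cond SP1}--\eqref{cond SP3}, \eqref{cond M1}--\eqref{cond M4}, and the finite-partition versions of \eqref{cond M5'}, \eqref{cond M6'} follow immediately from the explicit bounds above together with the fact that both maps have only finitely many branches; and since each $g_i$ is constant on each element of $\cZ_i^{(1)}$, we may take $\cP_{i,1}(1,1)=\cZ_i^{(1)}$ and verify \eqref{cond P1}, \eqref{cond P2} with $\hal=\hgm=1$ (in fact $\hal=0$ would suffice).

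The main step, and the one that determines the hypothesis on $p$, is checking that $N_*=1$ works in \eqref{eq: def of N}, i.e.\ that
\begin{align*}
\int_\Om\log\frac{4\|g_\om\|_\infty}{\inf\cL_\om\ind_\om}\, dm(\om)
&=(1-p)\log\frac{4\|g_0\|_\infty}{\inf\cL_0\ind}+p\log\frac{4\|g_1\|_\infty}{\inf\cL_1\ind}\\
&\le (1-p)\log\frac{4}{\lfloor\bt\rfloor}+p\log\frac{4(2-a)^t}{a^t}
\end{align*}
is strictly negative. Solving this linear inequality in $p$ yields precisely the bound
$$p<\frac{\log(\lfloor\bt\rfloor/4)}{\log\bigl(\lfloor\bt\rfloor(2-a)^t/a^t\bigr)},$$
which is the hypothesis of the lemma (and is well-posed since $\lfloor\bt\rfloor\ge 5>4$). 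Combining this with Remark~\ref{rmk:checkCP} yields both \eqref{cond CP1} and \eqref{cond CP2}.

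The main (mild) obstacle is simply bookkeeping the ratio $\|g_1\|_\infty/\inf\cL_1\ind$: because of the contracting branch, $\|g_1\|_\infty$ is attained on the branch with slope $a<1$ while $\inf\cL_1\ind$ is attained on the image of the expanding branch, which inflates the ratio from $1$ (in the uniformly expanding case) to $(2-a)^t/a^t>1$. This is exactly why the admissible range of $p$ shrinks as $a\to 0$. No new analytic difficulty appears beyond this computation; with $N_*=1$ verified, \eqref{cond M5'} and \eqref{cond M6'} hold trivially from the uniform bounds $\inf g_0=\bt^{-t}$, $\inf g_1\ge (a+2)^{-t}$ and $\|\cL_\om\ind\|_\infty\le\max\{\lceil\bt\rceil,a^{-t}\}$, and the conclusion follows by invoking Theorems~\ref{main thm: summary existence of measures and density}--\ref{main thm: eq states}.
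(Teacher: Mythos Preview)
Your proposal is correct and follows exactly the approach the paper indicates (``using similar reasoning'' as in Lemma~\ref{lem: ex random beta pm map}): recompute $\|g_i\|_\infty$, $\inf\cL_i\ind$ for the new $T_1$, observe that the piecewise-constant $g_\om$ lets you take $\cP_{\om,1}=\cZ_\om^{(1)}$ with $\hgm=1$, and reduce the $N_*=1$ condition to the stated linear inequality in $p$. The only slip is the bound $\inf g_1\ge (a+2)^{-t}$ at the end, which is a carry-over from the PM formula; the correct (and sharper) value is $\inf g_1=(2-a)^{-t}$, though your stated inequality is still true and suffices for \eqref{cond M5'}.
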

\begin{remark}
	As with Lemma~\ref{lem: ex random beta pm map}, we note again that $p$ must go to zero as $t$ tends toward infinity.
\end{remark}

\subsection{Random Lasota-Yorke maps}

In this final example we discuss the very broad class of random piecewise-monotonic maps, where each fiber map $T_\omega$ is a finite-branched map which takes the interval onto itself.
We consider a general potential $\varphi\in\BV_\Om(I)$.
Recall that $\#\cZ_\om^{(1)}$ denotes the number of branches of monotonicity of $T_\omega$, and for each $n\in\NN$ define $\cI_\om^{(n)}:=\min_{y\in [0,1]}\#\{T^{-n}_\omega (y)\}$.
\begin{lemma}
	\label{lem:RLY1}
	Under the $m$-continuity conditions of Lemma \ref{lem:excontPot}, we assume the integrability conditions $\log \cI_\om^{(1)}$, $\log\#\cZ_\om^{(1)}$, $\inf \phi_\om$, $\sup \phi_\om\in L^1_m(\Omega)$, and that
	\begin{equation}
	\label{RLYcontract}
	%-t\int\inf\log|T'_\omega|\ dm<\int\log \cI_\om\ dm-t\int\sup\log|T'_\omega|\ dm.
	\int_\Om (\sup \phi_\om - \inf \phi_\om) \, dm(\om) < \int_\Om \log\cI_\om^{(1)} \, dm(\om).
	\end{equation}
	Then conditions \eqref{cond M1}--\eqref{cond M4} hold and the potential $\phi$ is summable and contracting, i.e. conditions \eqref{cond SP1}--\eqref{cond SP3} and \eqref{cond CP1}--\eqref{cond CP2} hold.
\end{lemma}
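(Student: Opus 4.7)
The plan is to verify each of the listed conditions in turn, with the main work concentrated in \eqref{cond M4} and the contracting-potential conditions \eqref{cond CP1}--\eqref{cond CP2}, which together capture the content of the hypothesis \eqref{RLYcontract}. The summability conditions \eqref{cond SP1}--\eqref{cond SP3} and the measurability conditions \eqref{cond M1}--\eqref{cond M2} are immediate: \eqref{cond SP1} holds because $\phi_\om$ is bounded on each branch and hence $g_\om^{(1)}=e^{\phi_\om}$ has strictly positive infimum there; \eqref{cond SP2} holds because $\phi_\om\in\BV(I)$ is bounded, so $e^{\phi_\om}\in\BV(I)$; and \eqref{cond SP3} holds because there are finitely many branches and $\sup g_\om^{(1)}\leq e^{\sup\phi_\om}<\infty$. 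The $m$-continuity assumption yields \eqref{cond M1}--\eqref{cond M2} exactly as in the proof of Lemma~\ref{lem:excontPot}. For \eqref{cond M3}, I would estimate
\[
e^{\inf\phi_\om}\leq S_\om^{(1)}\leq \#\cZ_\om^{(1)}\cdot e^{\sup\phi_\om},
\]
so $|\log S_\om^{(1)}|\leq \log\#\cZ_\om^{(1)}+\max\{|\inf\phi_\om|,|\sup\phi_\om|\}\in L^1_m(\Om)$ by hypothesis.

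The key step, used both for \eqref{cond M4} and for \eqref{cond CP1}, is the multiplicative lower bound
\[
\cI_\om^{(n)}\geq \prod_{j=0}^{n-1}\cI_{\sg^j(\om)}^{(1)},
\]
which I would prove by induction on $n$ using the recursion
\[
\#T_\om^{-n}(y)=\sum_{z\in T_{\sg^{n-1}(\om)}^{-1}(y)}\#T_\om^{-(n-1)}(z)
\geq \cI_{\sg^{n-1}(\om)}^{(1)}\cdot \cI_\om^{(n-1)}.
\]
Combined with the pointwise bound $\inf g_\om^{(n)}\geq \exp(\sum_{j=0}^{n-1}\inf\phi_{\sg^j(\om)})$, this gives
\[
\inf\cL_\om^n\ind_\om\;\geq\;\cI_\om^{(n)}\inf g_\om^{(n)}\;\geq\;\prod_{j=0}^{n-1}\cI_{\sg^j(\om)}^{(1)}\cdot\exp\Bigl(\sum_{j=0}^{n-1}\inf\phi_{\sg^j(\om)}\Bigr).
\]
Taking $n=1$ and logarithms yields $\log\inf\cL_\om\ind_\om\geq \log\cI_\om^{(1)}+\inf\phi_\om\in L^1_m(\Om)$, and combining with the trivial upper bound $\log\inf\cL_\om\ind_\om\leq \log S_\om^{(1)}$ (which is $L^1_m$ by \eqref{cond M3}) gives \eqref{cond M4}.

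For \eqref{cond CP1}, applying the Birkhoff ergodic theorem to the right-hand side of the displayed inequality yields
\[
\liminf_{n\to\infty}\tfrac{1}{n}\log\inf\cL_\om^n\ind_\om\;\geq\;\int_\Om\log\cI_\om^{(1)}\,dm+\int_\Om\inf\phi_\om\,dm,
\]
while the submultiplicativity $\|g_\om^{(n)}\|_\infty\leq\prod_{j=0}^{n-1}\|g_{\sg^j(\om)}^{(1)}\|_\infty$ together with Birkhoff gives
\[
\limsup_{n\to\infty}\tfrac{1}{n}\log\|g_\om^{(n)}\|_\infty\;\leq\;\int_\Om\sup\phi_\om\,dm.
\]
The strict inequality \eqref{cond CP1} is then exactly the assumption \eqref{RLYcontract}, after rearranging. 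Finally, for \eqref{cond CP2} I would invoke Remark~\ref{rmk:checkCP}: the submultiplicativity above together with $\sup\phi_\om\in L^1_m(\Om)$ yields $\log\|g_\om^{(n)}\|_\infty\in L^1_m(\Om)$ for each $n$, so \eqref{cond CP1} and \eqref{cond CP2} are equivalent, and we are done. The only conceptual subtlety is the preimage-counting bound $\cI_\om^{(n)}\geq\prod\cI_{\sg^j(\om)}^{(1)}$; once that is in hand, everything else reduces to an application of Birkhoff's theorem together with routine estimates.
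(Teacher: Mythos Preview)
Your proof is correct and follows essentially the same approach as the paper: bound $\log\inf\cL_\om\ind_\om$ from below by $\log\cI_\om^{(1)}+\inf\phi_\om$ and $\log S_\om^{(1)}$ from above by $\log\#\cZ_\om^{(1)}+\sup\phi_\om$, then read off \eqref{cond M3}--\eqref{cond M4} and the contracting condition from the integrability hypotheses and \eqref{RLYcontract}. The paper does this at $n=1$ and invokes Remark~\ref{rmk:checkCP} with $N_1=N_2=1$ directly; your detour through the supermultiplicativity bound $\cI_\om^{(n)}\geq\prod_j\cI_{\sg^j(\om)}^{(1)}$ and the limiting form of \eqref{cond CP1} is correct but unnecessary, since the integral inequality $\int\sup\phi_\om\,dm<\int(\log\cI_\om^{(1)}+\inf\phi_\om)\,dm\leq\int\log\inf\cL_\om\ind_\om\,dm$ already matches \eqref{eq: on avg CPN1N2} at level one. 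One small omission: to conclude $\log\|g_\om^{(n)}\|_\infty\in L^1_m(\Om)$ you also need the lower bound $\log\|g_\om^{(n)}\|_\infty\geq\sum_j\inf\phi_{\sg^j(\om)}$, which uses the hypothesis $\inf\phi_\om\in L^1_m(\Om)$, not just $\sup\phi_\om\in L^1_m(\Om)$.
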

\begin{proof}
	In light of Remark~\ref{rem: SP1 - SP3 hold when finite} we see that (\ref{cond SP1})--(\ref{cond SP3}) hold since $\#\cZ_\om^{(1)}<\infty$ and $\phi\in\BV_\Om(I)$.
	Conditions (\ref{cond M1}) and (\ref{cond M2}) hold by $m$-continuity, see \cite{FLQ2}.
	For conditions (\ref{cond M3})--(\ref{cond M4}), we note that because
	$$
		\log \cI_\om^{(1)}+\inf \phi_\om\le \log\inf\mathcal{L}_\omega\ind\le \log S^{(1)}_\omega\le \log\#\cZ_\om^{(1)}+\sup \phi_\om,
	$$ 
	these conditions hold by the integrability hypotheses on the terms in the above inequalities.
	%$-t\log\sup|T'_\omega|\le S^{(1)}_\omega\le \log\#(T_\omega)-t\log\inf|T'_\omega|$.
	%Condition (\ref{cond M4}) holds using the same bounds as above for $\log\inf\mathcal{L}_\omega\ind$.
	Using \eqref{RLYcontract}, we have
	\begin{align*}
		\int_\Om\log\norm{g_\omega}_\infty\, dm(\om)
		&= 
		\int_\Om\sup\phi_\om\, dm(\om)
		\\
		&<
		\int_\Om\log \cI_\om^{(1)}\, dm(\om)+\int_\Om\inf\phi_\om\, dm(\om)
		\le
		\int_\Om\log\inf\mathcal{L}_\omega\ind\, dm(\om),
	\end{align*}
	and thus, applying \eqref{eq: on avg CPN1N2} with $N_1=N_2=1$, we see that $\varphi$ is a contracting potential.

\end{proof}

\begin{remark}\label{rem: LY example remark N1N2}
Using similar arguments, we could replace \eqref{RLYcontract} in Lemma~\ref{lem:RLY1} with the following: there exists $N\in\NN$ such that $\log\cI_\om^{(N)}\in L^1_m(\Om)$ and 
\begin{align*}%Edit: Changed sentence above so that N_2=N_1 and swapped N_1 with N_2 from original 
	%\frac{1}{N}
	\int_\Om \sup S_{N, T}(\phi_\om)-\inf S_{N, T}(\phi_\om)\, dm(\om) 
	<
	%\frac{1}{N}
	\int_\Om \log \cI_\om^{(N)}\, dm(\om).
\end{align*}
%\flag{$N_2$ on the inf may be wrong. Is ok if $N_2\geq N_1$.}	
\end{remark}

\begin{corollary}
	Assuming the hypotheses of Lemma~\ref{lem:RLY1}, if, in addition, we assume the random covering condition \eqref{cond RC} and the general integrable covering conditions (\ref{cond M5}), (\ref{cond M6}), then the hypotheses of Theorems~\ref{main thm: summary existence of measures and density} -- \ref{main thm: eq states}	hold.
\end{corollary}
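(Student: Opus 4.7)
The plan is to verify each of the assumptions defining a random weighted covering system, after which Theorems~\ref{main thm: summary existence of measures and density}--\ref{main thm: eq states} apply directly.

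Most of the required conditions are already in hand. Lemma~\ref{lem:RLY1} supplies the summability conditions \eqref{cond SP1}--\eqref{cond SP3}, the measurability and integrability conditions \eqref{cond M1}--\eqref{cond M4}, and the contracting potential conditions \eqref{cond CP1}--\eqref{cond CP2}. The additional hypotheses of the corollary directly supply \eqref{cond RC}, \eqref{cond M5}, and \eqref{cond M6}. Thus, the only condition still requiring verification is the generating partition condition \eqref{cond GP}.

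For \eqref{cond GP}, the plan is to exploit the finite-branch structure together with random covering. Since each $T_\om$ has only finitely many branches of strict monotonicity, each $\cZ_\om^{(n)}$ is a finite partition of $I$ into intervals and the singular set $\cS_\om$ is countable. The random covering assumption ensures (cf.\ Remark~\ref{rmk:coveringWFinitePart}) that for every non-degenerate interval $J\sub I$ and $m$-a.e.\ $\om$, $T_\om^n(J) \supseteq X_{\sigma^n(\om)}$ for all $n$ sufficiently large. Via the standard duality between forward covering and backward contraction for piecewise-monotone maps with finitely many branches, this forces the diameter of the element of $\cZ_\om^{(n)}$ containing any fixed $x$ to shrink to zero $m$-a.e., giving $\bigvee_{n=1}^\infty \cZ_\om^{(n)} = \sB$.

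The only genuinely dynamical step is the verification of \eqref{cond GP}, and I expect it to be the (mild) main obstacle; everything else is a bookkeeping exercise. Once \eqref{cond GP} is established, $(\Om,\sg,m,I,T,\phi)$ is a random weighted covering system, and the conclusions of Theorems~\ref{main thm: summary existence of measures and density}--\ref{main thm: eq states} apply verbatim, yielding the conformal and invariant families, quasi-compactness, exponential decay of correlations, and the variational principle.
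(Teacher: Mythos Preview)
The paper offers no proof for this corollary; it is simply stated as an immediate consequence of Lemma~\ref{lem:RLY1} together with the added hypotheses \eqref{cond RC}, \eqref{cond M5}, \eqref{cond M6}. Your proposal is therefore \emph{more} careful than the paper's treatment: you correctly identify \eqref{cond GP} as the one condition that is neither established in Lemma~\ref{lem:RLY1} nor explicitly listed among the corollary's extra hypotheses, whereas the paper appears to take \eqref{cond GP} for granted in this subsection.

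Your sketch for deducing \eqref{cond GP} from \eqref{cond RC} is correct in spirit, and can be made fully precise without invoking ergodicity or \eqref{RLYcontract}. If \eqref{cond GP} failed at some $\om$, there would be a nondegenerate interval $J$ lying in a single element of $\cZ_\om^{(n)}$ for every $n$, so that $T_\om^n\rvert_J$ is injective for all $n$. By \eqref{cond RC} there is $M$ with $T_\om^M(J)\supseteq X_{\sg^M(\om)}$; but since $J$ also lies in a single element of $\cZ_\om^{(M+k)}$ for every $k\ge 1$, the image $T_\om^M(J)$ must sit inside a single element of $\cZ_{\sg^M(\om)}^{(k)}$ for every $k$, forcing each $T_{\sg^{M+j}(\om)}$ ($j\ge 0$) to have a single branch and hence to be a homeomorphism of $I$. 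Then $T_{\sg^M(\om)}^k$ is a homeomorphism for all $k$ and maps proper subintervals to proper subintervals, contradicting \eqref{cond RC} at $\sg^M(\om)$. Thus \eqref{cond RC} alone (together with finitely many branches) already yields \eqref{cond GP}, and your phrase ``standard duality'' can be replaced by this short direct argument.
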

\begin{comment}
\begin{remark}
	We could pose (\ref{RLYcontract}) as an ``$N$'' version:
	$$(-t/N)\int\inf\left(\sum_{i=0}^{N-1}\log|T'_{\sigma^i\omega}|\right)\ dm<\int\log I^{(N)}_\omega\ dm-(t/N)\int\sup\left(\sum_{i=0}^{N-1}\log|T'_{\sigma^i\omega}|\right)\ dm,$$
	where $I^{(N)}_\omega$ is the minimum number of preimages of $T^{(N)}_\omega$, but while this is probably easier to satisfy, it doesn't seem easier to check.
	
	Our integrability conditions seem stronger than the (A0)--(A3) of Buzzi, e.g.\ more conditions and he sometimes can use integrability of $\log^+$, but I don't see how we can weaken ours.
	%\textbf{Do we need to assume $\log\sup |T'_\omega|\in L^1$.  Buzzi seems to avoid this. Maybe there are also some weakenings with $\log^+$ possible.}
	
	\textbf{What about general potentials?}
	We'd have to assume (SP1)--(SP2) as they are;  (SP3) would follow by piecewise $C^2$-ness.
	We'd have to assume (P1)--(P2) as they are.
	For (M3)--(M4) we would need integrability of $\log\inf |g_\omega|$ and $\log\sup|g_\omega|$ (in addition to the integrability of $\log \#(T_\omega)$ and $\log \cI_\om$).
	For contracting potentials, we would use
	$$\int\sup\log|g_\omega|\ dm<\int\log \cI_\om\ dm+\int\inf\log|g_\omega|\ dm.$$
	We'd again have to assume (M5')--(M6') as they are.
\end{remark}

content...
\end{comment}
We now present a final example of the class of random Lasota-Yorke maps focusing on geometric potentials, i.e. potentials of the form $\phi_\om=-t\log\absval{T_\om'}$ for $t\geq 0$. %We will need the following distortion properties on each of the maps $T_\om$. 
\begin{comment}
\begin{definition}
	We say that the map $T:\Om\times I\to \Om\times I$ is of uniform bounded distortion if 
	for each $\om\in\Om$, $Z\in\cZ_\om^*$, and $x\in Z$ the following hold:
	\begin{enumerate}
		\item $T_\om\rvert_Z\in C^2$, 
		\item there exists $K\geq 1$ such that
	\begin{align*}
		\frac{|T_\om''(x)|}{|T_\om'(x)|^2}\leq K.
	\end{align*}
\end{enumerate}
\end{definition}

content...
\end{comment}
We will require the following strengthening of our random covering assumption \eqref{cond RC} which has appeared before in \cite{aimino_concentration_2016} and \cite{haydn_almost_2017}.
\begin{definition}
	We say that the family $(T_\om)_{\om\in\Om}$ satisfies \textit{strong random covering} if there exists $M(n)\in\NN$ such that for any $\om\in\Om$ and any $Z\in\cZ_\om^{(n)}$ we have that $T_\om^{M(n)}(Z)=I$.
\end{definition}
\begin{remark}
	Note that this strong random covering condition is satisfied by the maps considered in Section~\ref{sec:betadelta}. 
\end{remark}
\begin{lemma}\label{lem: LY example}
	Let $\phi_\om=-t\log|T_\om'|$ for $t\geq 0$, and assume the $m$-continuity conditions of Lemma \ref{lem:excontPot}. We further suppose the system satisfies strong random covering as well as the following: 
	\begin{enumerate} 
		%\item[\mylabel{1}{hyp 1}] $T:\Om\times I\to \Om\times I$ is of uniform bounded distortion with constant $K\geq 1$,	
		\item[\mylabel{1}{hyp 1}] for each $\om\in\Om$, $Z\in\cZ_\om^*$, and $x\in Z$ 
		\begin{enumerate}
			\item $T_\om\rvert_Z\in C^2$, 
			\item there exists $K\geq 1$ such that
			\begin{align*}
			\frac{|T_\om''(x)|}{|T_\om'(x)|}\leq K,
			\end{align*}
		\end{enumerate}	
		\item[\mylabel{2}{hyp 2}] there exists $n_0\in\NN$ and $1< \lm\leq \Lm<\infty$ such that 
		\begin{enumerate}
			\item[\mylabel{a}{hyp 2a}] $|T_\om'|\leq \Lm$ for $m$-a.e. $\om\in\Om$,
			\item[\mylabel{b}{hyp 2b}] $|(T_\om^{n_0})'|\geq \lm^{n_0}$ for $m$-a.e. $\om\in\Om$,
			\item[\mylabel{c}{hyp 2c}] $\frac{1}{n_0}\int_\Om \log \cI_\om^{(n_0)} \,dm(\om)>t\log\frac{\Lm}{\lm}$,
		\end{enumerate}
		%\flag{Need $n_1\leq n_2$} Edit: Reordered assumptions and then changed n_2 so that we assume n_2 geq n_1 and swapped n_1 with n_2 from original
		\item[\mylabel{3}{hyp 3}] for each $n\in\NN$ there exists 
		$$
		\dl_n:=\inf_{\om\in\Om}\min_{Z\in\cZ_\om^{(n)}}\diam(Z) >0.
		$$
	\end{enumerate}
	Then the hypotheses of Theorems~\ref{main thm: summary existence of measures and density} -- \ref{main thm: eq states} hold.
\end{lemma}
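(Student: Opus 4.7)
The strategy is to check all the conditions defining a random weighted covering system, namely \eqref{cond GP}, \eqref{cond SP1}--\eqref{cond SP3}, \eqref{cond RC}, \eqref{cond M1}--\eqref{cond M4}, \eqref{cond M5'}--\eqref{cond M6'}, and \eqref{cond CP1}--\eqref{cond CP2}. Strong random covering immediately yields \eqref{cond RC}, and the uniform upper bound on the branch count $\#\cZ_\om^{(1)}\leq 1/\dl_1$ (coming from hypothesis \eqref{hyp 5}) together with the pointwise bound $|T_\om'|\leq\Lm$ and the uniform lower bound on the slope forced on each branch by the expansion (via hypothesis \eqref{hyp 2b}, since $|(T_\om^{n_1})'|\geq\lm^{n_1}$ implies $\inf|T_\om^{n_1}|'>0$) ensure that $S_\om^{(1)}\leq \ceil{1/\dl_1}\Lm^t$ is uniformly bounded, which covers \eqref{cond SP1}--\eqref{cond SP3} and the integrability \eqref{cond M3}--\eqref{cond M4}. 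Conditions \eqref{cond M1} and \eqref{cond M2} follow from the $m$-continuity assumption and standard arguments as in \cite{FLQ2}. Condition \eqref{cond GP} is a consequence of the fact that the uniform expansion forces $\diam(Z)\leq \lm^{-k}$ for $Z\in\cZ_\om^{(kn_1)}$, so the partitions separate points.

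For the contracting potential conditions, since $g_\om^{(n)}=|(T_\om^n)'|^{-t}$, I would apply Remark~\ref{rmk:checkCP} with $N_1=n_1$ and $N_2=n_2$. Hypothesis \eqref{hyp 2b} gives
\[
\frac{1}{n_1}\int_\Om\log\|g_\om^{(n_1)}\|_\infty\,dm(\om)\leq -t\log\lm,
\]
while using $\inf\cL_\om^{n_2}\ind\geq \cI_\om^{(n_2)}\cdot\inf g_\om^{(n_2)}\geq \cI_\om^{(n_2)}\Lm^{-tn_2}$ and hypothesis \eqref{hyp 4} gives
\[
\frac{1}{n_2}\int_\Om\log\inf\cL_\om^{n_2}\ind\,dm(\om)\geq \frac{1}{n_2}\int_\Om\log \cI_\om^{(n_2)}\,dm(\om) - t\log\Lm > -t\log\lm.
\]
Combining yields \eqref{eq: on avg CPN1N2}, and Remark~\ref{rmk:checkCP} then gives both \eqref{cond CP1} and \eqref{cond CP2}. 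A further consequence, using sub/supermultiplicativity of $\|g_\om^{(n)}\|_\infty$ and $\inf\cL_\om^n\ind$, is that a sufficiently large common multiple $N_*$ of $n_1$ and $n_2$ satisfies $\int_\Om \log Q_\om^{(N_*)}\,dm(\om)<0$, so $N_*$ from \eqref{eq: def of N} exists and is bounded by an explicit constant.

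To handle \eqref{cond P1}--\eqref{cond P2} and then \eqref{cond M5'}--\eqref{cond M6'}, I would exploit the Adler-type bounded distortion from hypothesis \eqref{hyp 1}: an elementary telescoping estimate using $|T_\om''|/|T_\om'|\leq K$ shows $\var_Z(g_\om^{(n)})\leq C_1 \sup_Z g_\om^{(n)}$ for every $Z\in\cZ_\om^{(n)}$ with $C_1$ depending only on $K$ and $t$. In view of Remark~\ref{rem: when partitions Z_om^n=P_om,n}, this allows choosing
\[
\cP_{\om,N_*}(\hal,\hgm)=\cZ_\om^{(N_*)} \quad\text{with}\quad \hal=\max(C_1,2),\ \hgm=1,
\]
which is finite by hypothesis \eqref{hyp 5}. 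For each $P=Z\in\cZ_\om^{(N_*)}$ one takes $J(P)=Z$, so $N_{\om,N_*}(P)=N_*$, and strong random covering gives $M_\om(J(P))\leq M(N_*)$ uniformly in $\om$.

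The final piece, which I expect to be the most delicate bookkeeping, is verifying the integrability conditions \eqref{cond M5'} and \eqref{cond M6'}. Using $|T_\om'|\leq\Lm$ we get
\[
\log\inf_{J(P)}g_\om^{M_\om(J(P))}\geq -tM(N_*)\log\Lm,
\]
a constant independent of $\om$, so \eqref{cond M5'} holds trivially. For \eqref{cond M6'}, the key estimate is $\|\cL_\om^n\ind_\om\|_\infty\leq S_\om^{(n)}$, and submultiplicativity of $S_\om^{(n)}$ (Lemma~\ref{lem: LY setup}) together with \eqref{cond M3} gives $\log S_\om^{(M(N_*))}\leq \sum_{j=0}^{M(N_*)-1}\log S_{\sg^j(\om)}^{(1)}\in L^1_m(\Om)$, which yields \eqref{cond M6'}. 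With all hypotheses verified, Theorems~\ref{main thm: summary existence of measures and density}--\ref{main thm: eq states} apply. The delicate step is ensuring that the chosen $N_*$ is compatible both with the contracting potential threshold and with being a multiple of $n_1$ and $n_2$, so that the estimates line up sharply; beyond that, the analysis is essentially distortion-controlled bookkeeping.
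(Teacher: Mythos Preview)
Your approach is correct and in fact cleaner than the paper's in one key respect. The paper's proof also invokes Lemma~\ref{lem:RLY1} (via Remark~\ref{rem: LY example remark N1N2}) for \eqref{cond SP1}--\eqref{cond SP3}, \eqref{cond M1}--\eqref{cond M4}, and \eqref{cond CP1}--\eqref{cond CP2}, but when it comes to \eqref{cond P1}--\eqref{cond P2} the paper uses a \emph{cruder} variation estimate: bounding $|(T_\om^j)'|\le\Lm^j$ and $g_\om^{(kn_1)}\le\lm^{-kn_1t}$ pointwise yields $\var_J(g_\om^{(kn_1)})\lesssim (\Lm/\lm^t)^{kn_1}\diam(J)$, which forces the authors to subdivide each $Z\in\cZ_\om^{(kn_1)}$ into $v_{kn_1}=\lceil\Lm/\lm^t\rceil^{kn_1}\lceil\Lm^t/\lm\rceil^{kn_1}$ equal pieces to manufacture a partition $\cP_{\om,kn_1}(2+tK/(\Lm-1),1)$. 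Your sharper estimate---using the change of variables $\int_Z|(T_\om^j)'|\,dx=\diam(T_\om^j(Z))$ and the geometric decay $\diam(T_\om^j(Z))\le\lm^{-\lfloor(n-j)/n_1\rfloor n_1}$ to sum---gives $\var_Z(g_\om^{(n)})\le C_1\sup_Zg_\om^{(n)}$ with $C_1=tKn_1/(1-\lm^{-n_1})$ uniformly in $n$, so the dynamical partition $\cZ_\om^{(n)}$ already works. This buys you simplicity and avoids the extra bookkeeping of locating $J(P)\in\cZ_\om^{(k_1n_1)}$ inside subdivided pieces.

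Two small corrections. First, your explicit bound $S_\om^{(1)}\le\lceil1/\dl_1\rceil\Lm^t$ is wrong: since $g_\om=|T_\om'|^{-t}$, an upper bound requires a \emph{lower} bound on $|T_\om'|$, which you correctly note exists (from $|(T_\om^{n_1})'|\ge\lm^{n_1}$ and $|T_\om'|\le\Lm$ one gets $|T_\om'|\ge\lm^{n_1}/\Lm^{n_1-1}=:\lm_0$), giving $S_\om^{(1)}\le\lceil1/\dl_1\rceil\lm_0^{-t}$. The conclusion is unaffected. Second, your closing worry that $N_*$ must be a common multiple of $n_1$ and $n_2$ is unfounded: since your distortion bound holds for \emph{every} $n$, \eqref{cond P1}--\eqref{cond P2} hold for $\cZ_\om^{(N_*)}$ whatever $N_*$ turns out to be, and strong random covering then gives the uniform bound $M_\om(Z)\le M(N_*)$ needed for \eqref{cond M5'}--\eqref{cond M6'}.
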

\begin{comment}
\flag{Additional Alternate Hypotheses:} 
\begin{enumerate}
	\item $\int_\Om \sup\log|T_\om'| \, dm(\om)\leq \log \Lm$
	\item $\int_\Om \inf\log|T_\om'| \, dm(\om)\geq \log \lm$
\end{enumerate}

content...
\end{comment}
\begin{proof}
	First we note that hypothesis \eqref{hyp 2} implies that we have
	\begin{align}\label{eq: sup inf est of g}
		\Lm^{-kn_0t}\leq g_\om^{(kn_0)}\leq \lm^{-kn_0t}
	\end{align}
	for each $\om\in\Om$ and $k\in\NN$, and therefore $\inf S_{kn_0,T}(\phi_\om)$, $\sup S_{kn_0,T}(\phi_\om)\in L^1_m(\Om)$ for each $k\in\NN$. Furthermore, \eqref{eq: sup inf est of g}, together with hypothesis \eqref{hyp 2}, gives that 
	\begin{align*}
		\frac{1}{n_0}\int_\Om\sup S_{n_0,T}(\phi_\om)-\inf S_{n_0,T}(\phi_\om)\, dm(\om)\leq t\log\frac{\Lm}{\lm}<\frac{1}{n_0}\int_\Om\log\cI_\om^{(n_0)}\,dm(\om).
	\end{align*}
	To see that the last of the hypotheses of Lemma~\ref{lem:RLY1} is satisfied, we note that hypothesis \eqref{hyp 3} implies that there exists $D\in\NN$ such that 
	$$
		\#\cZ_\om^{(1)}\leq D
	$$ 
	for each $\om\in\Om$, which of course implies that $\log\#\cZ_\om^{(1)}\in L^1_m(\Om)$. Thus, Lemma~\ref{lem:RLY1} and Remark~\ref{rem: LY example remark N1N2} imply that $\phi$ is contracting and summable and that conditions \eqref{cond M1}--\eqref{cond M4} hold. Thus, given our strong random covering assumption, we have only to find an appropriate partition $\cP_{\om,n}(\hal,\hgm)$ (satisfying \eqref{cond P1} and \eqref{cond P2}) for which we can check conditions \eqref{cond M5'} and \eqref{cond M6'} for $N_*$. 
	
	To that end, using hypotheses \eqref{hyp 1} and \eqref{hyp 2}, we note that for any $\om\in\Om$, $k\in\NN$, $Z\in\cZ_\om^{(kn_0)}$, and any interval $J\sub Z$ we have that 
	\begin{align}
		\var_J(g_\om^{(kn_0)})
		&\leq 
		2\norm{g_\om^{(kn_0)}}_\infty + \int_J \absval{\left(g_\om^{(kn_0)}(x)\right)'} dx
		\nonumber\\
		&\leq
		2\norm{g_\om^{(kn_0)}}_\infty + t\int_J \sum_{j=0}^{kn_0-1}
		\lt(
		\frac{\absval{T_{\sg^{j}(\om)}''(T_\om^j(x))}}{\absval{T_{\sg^{j}(\om)}'(T_\om^j(x))}}
		\cdot
		\frac
			{\prod_{i=0}^{j-1}\absval{T_{\sg^{i}(\om)}'(T_\om^i(x))}}
			{\prod_{i=0}^{kn_0-1}\absval{T_{\sg^{i}(\om)}'(T_\om^i(x))}^t}%\rt)^t}
			\rt)\, dx
		\nonumber\\
		&\leq 
		2\norm{g_\om^{(kn_0)}}_\infty + tK\cdot\frac{\sum_{j=0}^{kn_0-1}\Lm^j}{\lm^{nt}}\cdot\diam(J)
		\nonumber\\
		&\leq
		2\norm{g_\om^{(kn_0)}}_\infty + \frac{tK}{\Lm-1}\cdot\lt(\frac{\Lm}{\lm^t}\rt)^{kn_0}\cdot \diam(J).
		\label{eq: var interval est}
	\end{align}
	Now we for each $Z\in\cZ_\om^{(n)}$, we subdivide $Z$ into 
	$$
		v_n:=\ceil{\frac{\Lm}{\lm^t}}^n\cdot\ceil{\frac{\Lm^t}{\lm}}^n
	$$ 
	many pieces $P_{Z,1}, \cdots, P_{Z,v_n}$ of equal length. Since hypotheses \eqref{hyp 2}\eqref{hyp 2b} and \eqref{hyp 3} imply that for each $\om\in\Om$, $k\in\NN$, and $Z\in\cZ_\om^{(kn_0)}$ we have that
	\begin{align*}%\label{eq: diam of Z}
		\dl_{kn_0}\leq\diam(Z)\leq \lm^{-kn_0},
	\end{align*}
	then for any $P_{Z,j}\sub Z$ ($1\leq j\leq v_{kn_0}$) we have 
	\begin{align}
		\dl_{kn_0}v_{kn_0}^{-1}\leq
		\diam(P_{Z,j})
		&= 
		\ceil{\frac{\Lm}{\lm^t}}^{-kn_0}\cdot\ceil{\frac{\Lm^t}{\lm}}^{-kn_0}\cdot \diam(Z)
		\leq 
		\ceil{\frac{\Lm}{\lm^t}}^{-kn_0}\cdot\frac{1}{\lm^{kn_0}\lt(\frac{\Lm^t}{\lm}\rt)^{kn_0}}
		\nonumber\\
		&\leq 
		\ceil{\frac{\Lm}{\lm^t}}^{-kn_0}\Lm^{-kn_0t}
		\leq 
		\ceil{\frac{\Lm}{\lm^t}}^{-kn_0}\norm{g_\om^{(kn_0)}}_\infty.
		\label{eq: diam P est}
	\end{align} 
	Thus, inserting \eqref{eq: diam P est} into \eqref{eq: var interval est} gives
	\begin{align}
		\var_{P_{Z,j}}(g_\om^{(kn_0)})
		&\leq 
		2\norm{g_\om^{(kn_0)}}_\infty + \frac{tK}{\Lm-1}\cdot\lt(\frac{\Lm}{\lm^t}\rt)^{kn_0}\cdot \diam(P_{Z,j})
		\nonumber\\
		&\leq 
		2\norm{g_\om^{(kn_0)}}_\infty + \frac{tK}{\Lm-1}\cdot\lt(\frac{\Lm}{\lm^t}\rt)^{kn_0}\cdot \ceil{\frac{\Lm}{\lm^t}}^{-kn_0}\norm{g_\om^{(kn_0)}}_\infty
		\nonumber\\
		&\leq 
		\lt(2+\frac{tK}{\Lm-1}\rt)\norm{g_\om^{(kn_0)}}_\infty.
		\label{eq: var interval est 2}
	\end{align}
	In light of \eqref{eq: var interval est 2} and the fact that $P_{Z,j}\sub Z$ for each $1\leq j\leq v_{kn_0}$, we note that conditions \eqref{cond P1} and \eqref{cond P2} are satisfied for the partition 
	\begin{align*}
		\cP_{\om,kn_0}=\cP_{\om,kn_0}\lt(2+\frac{tK}{\Lm-1}, 1\rt):=\set{P_{Z,j}: 1\leq j\leq v_{kn_0}, \, Z\in\cZ_\om^{(kn_0)}}.
	\end{align*}
	Let $k_*$ be the minimum integer such that
	\begin{align*}
		\int_\Om \log \frac{\lt(5+\frac{tK}{\Lm-1}\rt)\norm{g_\om^{(k_*n_0)}}_\infty}{\inf\cL_\om^{k_*n_0}\ind} \, dm(\om) <0
	\end{align*}
	and let $N_*=k_*n_0$. Now let $k_1\in\NN$ be the first integer such that for each $P\in\cP_{\om,N_*}$ we have
	\begin{align*}
		\lm^{-k_1n_0}\leq \frac{\dl_{N_*}}{2v_{N_*}}\leq\frac{1}{2}\diam(P).
	\end{align*} 
	Thus for each $P\in\cP_{\om,N_*}$ there exists $J\in\cZ_\om^{(k_1n_0)}$ such that $J\sub P$, and by strong random covering we have that $T_\om^{M(k_1n_0)}(J)=I$. 
	Finally, we see that \eqref{cond M5'} and \eqref{cond M6'} are satisfied since for each $\om\in\Om$, each $P\in\cP_{\om,N_*}$, and each $J\sub P$ we have  
	\begin{align*}
		\inf_Jg_\om^{(M(k_1n_0))}\geq \Lm^{-M(k_1n_0)t}
	\end{align*} 	
	and 
	\begin{align*}
		\norm{\cL_\om^{M(k_1n_0)}\ind}_\infty\leq \norm{g_\om^{(M(k_1n_0))}}_\infty D^{M(k_1n_0)}\leq \lm^{-M(k_1n_0)t}D^{M(k_1n_0)}.
	\end{align*}	
\end{proof}
\begin{remark}
	Note that if the set $\set{T_\om:\om \in \Om}$ is finite, i.e. we have only finitely many maps, then hypothesis \eqref{hyp 3} of Lemma~\ref{lem: LY example} always holds.
\end{remark}

\section*{Acknowledgments}
J.A., G.F., and C.G.-T. thank the Centro de Giorgi in Pisa and CIRM in Luminy for their support and hospitality.\\
J.A. is supported by an ARC Discovery project and thanks the School of Mathematics and Physics at the University of Queensland  for their hospitality.\\
G.F., C.G.-T., and S.V. are partially supported by an ARC Discovery Project.\\
S.V. thanks the Laboratoire International Associé LIA LYSM, the INdAM (Italy), the UMI-CNRS 3483, Laboratoire Fibonacci (Pisa) where this work has been completed under a CNRS delegation and the Centro de Giorgi in Pisa for various supports. 

%\nocite{*}
%\def\~{\~}% adds a tilde to something 
\bibliographystyle{plain}
\bibliography{mybib}

\end{document}